\def\N{{\mathbb N}}
\def\Q{{\mathbb Q}}
\def\R{{\mathbb R}}
\def\i{\infty}
\def\e{\varepsilon}
\def\p{\partial}
\def\st{\, \middle| \,}
\def\AA{{\mathcal A}}
\def\BB{{\mathcal B}}
\def\LL{{\mathcal L}}
\def\PP{{\mathcal P}}
\def\SS{{\mathcal S}}
\def\Om{\Omega}
\def\om{\omega}
\DeclareMathOperator*{\Argmin}{Argmin}
\DeclareMathOperator*{\ba}{ba}
\DeclareMathOperator*{\CL}{CL}
\DeclareMathOperator*{\cl}{cl}
\DeclareMathOperator*{\dom}{dom}
\DeclareMathOperator*{\epi}{epi}
\DeclareMathOperator*{\esscap}{ess-\bigcap}
\DeclareMathOperator*{\essinf}{ess-inf}
\DeclareMathOperator*{\esssup}{ess-sup}
\DeclareMathOperator*{\graph}{gph}
\DeclareMathOperator*{\interior}{int}
\DeclareMathOperator*{\lev}{lev}
\DeclareMathOperator*{\lin}{lin}
\DeclareMathOperator*{\LS}{LS}
\newtheorem{corollary}{Corollary}
\newtheorem{definition}{Definition}
\newtheorem{lemma}{Lemma}
\newtheorem{proposition}{Proposition}
\newtheorem{theorem}{Theorem}
\numberwithin{corollary}{section}
\numberwithin{definition}{section}
\numberwithin{lemma}{section}
\numberwithin{proposition}{section}
\numberwithin{theorem}{section}
\def\XXint#1#2#3{{\setbox0=\hbox{$#1{#2#3}{\int}$} 
		\vcenter{\hbox{$#2#3$}}\kern-.5\wd0}}
\title{Non-Separably Valued Orlicz Spaces I
	%\\	\large Part I: Duality
}
\author{Thomas Ruf}
\begin{document}

\begin{abstract}

For a measure space $\Om$ we extend the theory of Orlicz spaces generated by an even convex integrand $\varphi \colon \Om \times X \to \left[ 0, \i \right]$ to the case when the range Banach space $X$ is arbitrary. Besides settling fundamental structural properties such as completeness, we characterize separability, reflexivity and represent the dual space. This representation includes for the first time the case when $X'$ has no Radon-Nikodym property. We apply our theory to represent convex conjugates and Fenchel-Moreau subdifferentials of integral functionals, leading to the first general such result on function spaces with non-separable range space. For this, we prove a new interchange criterion between infimum and integral for non-separable range spaces, which we consider of independent interest.

%Key words: Bochner-Orlicz, Fenchel-Orlicz, Musielak-Orlicz, Orlicz space, generalized Orlicz space, vector valued, non-separable, duality, subdifferential, integral functional, (convex) conjugate, reflexive, Radon-Nikodym property, dual space representation

\end{abstract}

\maketitle

\section{Introduction} \label{sec. intro}

We initiate a theory of non-separably vector valued Orlicz spaces $L_\varphi(\mu)$ generated by an even convex integrand $\varphi \colon \Om \times X \to \left[ 0, \i \right]$ when the range Banach space $X$ is arbitrary. Requiring $\varphi$ to satisfy
$$
\lim_{x \to 0} \varphi(\om, x) = 0; \quad \lim_{\| x \| \to \i} \varphi(\om, x) = \i \quad \text{ for } \mu\text{-a.e. } \om \in \Om,
$$
our $L_\varphi(\mu)$ are those strongly measurable functions with finite Luxemburg norm

\begin{equation} \label{eq. Minkowksi functional}
	\| u \|_\varphi = \inf \left\{ \alpha > 0 \st \int \varphi \left[ \om, \alpha^{-1} u \left( \om \right) \right] \, d \mu \left( \om \right) \le 1 \right\}.
\end{equation}

What is new in our approach is that we overcome the need for separability of $X$ and for the Radon-Nikodym property of the dual space $X'$ while yet allowing a wide class of possibly $\om$-dependent Orlicz integrands. As an application of the theory, we obtain for an integrand $f \colon \Om \times X \to \left[ -\i, \i \right]$ a general representation result for the convex conjugate and the Fenchel-Moreau subdifferential of an integral functional
\begin{equation} \label{eq. int funct}
	I_f(u) = \int f \left( \om, u(\om) \right) \, d \mu(\om)
\end{equation}
on a non-separably valued Orlicz space. Prior such results were either restricted to separable range spaces or had to assume that the measure space carries some topological structure in which the integrand or its subdifferential enjoy at least a sort of semicontinuity, cf., e.g., \cite[Ch. VII, §3]{mesu multi} or \cite[§2.7]{opt nonsmooth}.

%Our strategy of proof for treating the non-separable range space will remain valid in more general situations, this idea opens the door to extending various types of results to non-separably valued function spaces, e.g. representing different kinds of subdifferentials for integral functionals as in \cite{subdif int} or advancing the theory of further non-separably valued function spaces as in \cite{ana on bana}.

The basic theory of vector valued Orlicz spaces for $\dim X = \i$ and an $\om$-dependent Orlicz integrand was initiated by A. Kozek \cite{vvop, vvop 2}, who addressed duality and the representation of Fenchel-Moreau subdifferentials. It was further developed by E. Giner \cite{Giner thèse}, who studied duality and weakly compact subsets. Many interesting results in these fundamental papers were restricted to when the dual space $X'$ is separable. Equivalently, $X$ was separable and $X'$ had the Radon-Nikodym property. Both assumptions entered crucially in the duality theory to identify the adjoint Orlicz space $L_{\varphi^*}(\mu)$ with the function component of the dual space of $L_\varphi(\mu)$. Unless $X'$ has the Radon-Nikodym property, there is no hope of obtaining each element of the function component as a strongly measurable one. Even if this is the case, to equate this function space with $L_{\varphi^*}(\mu)$, one needs to prove that the Luxemburg norm of $I_{\varphi^*}$ defines an equivalent norm on it. The proof of this equivalence rests on the identity $I^*_\varphi = I_{\varphi^*}$ for the convex conjugate with respect to the standard integral pairing
\begin{equation} \label{eq. int pairing}
	\left( u, v \right) \mapsto \int \langle v(\om), u(\om) \rangle \, d \mu(\om).
\end{equation}
If $X$ is separable, then this holds, a result proved by appealing to an interchange criterion between integral and infimum of the form
\begin{equation} \label{eq. inf int arx}
	\inf_u I_f(u) = \int \inf_{x \in X} f(\om, x) \, d \mu(\om).
\end{equation}
The proof of (\ref{eq. inf int arx}) rests crucially on the Aumann or Kuratowski/Ryll-Nardzewski measurable selection theorem and the theory of normal integrands, both of which are not known to admit effective counterparts for multifunctions valued in a non-separable metric space. To surmount the limitation of a separable range space, we prove for $\SS(X)$ the separable subspaces of $X$ a new interchange criterion between infimum and integral of the form
\begin{equation} \label{eq. inf-int switch}
	\inf_u I_f(u) = \int \essinf_{W \in \SS(X) } \inf_{x \in W} f(\om, x) \, d \mu(\om).
\end{equation}
This formula is a remarkable generalization of (\ref{eq. inf int arx}), as it remains valid if $X$ is not separable while still relating the infimum to a pointwise infimum of the integrand, though generally not in the simplest possible way. For in fact, there always exists a $W$ for which the essential infimum function is attained if the common value in (\ref{eq. inf-int switch}) is finite. This opens the door to calculating on a pair of function spaces, whose elements are strongly measurable functions $u \colon \Om \to X$ and weak* measurable ones $v \colon \Om \to X'$, the convex conjugate of $I_f$ with respect to the pairing (\ref{eq. int pairing}) in a form from which the subdifferential can be read. Even though the ensuing subdifferential representation can also be deduced from the case with a separable range space once its form has been recognized or guessed, we seem to be the first to point it out. However, more information is contained in our result: the conjugate representation allows a precise characterization under which circumstances this general subdifferential representation reduces to taking all measurable selections from the integral subdifferential belonging to the dual space, as happens for a separable range space. Thus we can define a class of integrands for which it is possible to obtain to some degree a copy of the separably valued duality theory.

In contrast, another insight from this is negative: the convex conjugate $I^*_\varphi$ is in general not given by $I_{\varphi^*}$. In this case, some new ideas are in order: by an argument exploiting essential infimum functions indexed by the separable subspaces of $X$, we prove the following almost embedding: if $\mu$ is finite, given $\e > 0$, there exists $\Om_\e$ with $\mu\left( \Om \setminus \Om_\e \right) < \e$ such that there hold the continuous embeddings
\begin{equation} \label{eq. a emb}
	L_\i\left( \Om_\e; X \right) \to L_\varphi\left(\Om_\e\right) \to L_1\left(\Om_\e; X\right).
\end{equation}
Again, the remarkable part of (\ref{eq. a emb}) is that it continues to hold for a wide class of $\om$-dependent integrands $\varphi$ even if $X$ is non-separable. Thus, properties of $L_\varphi(\mu)$ can be systematically derived from those of the better understood Bochner-Lebesgue spaces. We will represent the function component of the dual space this way by reducing to the corresponding result about the dual of $L_\i \left( \mu; X \right)$. Together with (\ref{eq. inf-int switch}), the inconspicuous almost embedding (\ref{eq. a emb}) is enough of a basis to build a significant duality theory, including convex conjugacy of a general integral functional on $L_\varphi(\mu)$ for every Banach space $X$ and every measure $\mu$ that has no atom of infinite measure.

%Our strategy for treating the non-separable range space will remain valid in more general situations. For example, to represent or estimate some type of subdifferential for an integral functional as in \cite{subdif int}, one may also invoke the result for a separable range space, conclude that (the restriction of) each subgradient belongs to the subdifferential of all restrictions of the functional to separably valued subspaces and thereby possibly obtain a characterisation in terms of the integrand under which circumstances this general representation reduces to something akin to the case of separable range space, e.g. taking measurable selections from the subdifferential of the integrand. Of course, studying the matter of when this reduction takes place is then another matter to be addressed, hopefully now a more accessible one.

We plan to apply the current theory to evolution equations in non-reflexive Banach spaces in a future work. An important matter in these applications is to recognize compact subsets of $L_\varphi(\mu)$ in various topologies. These investigations are postponed to a subsequent part of the present paper.
\newline

\emph{Open and follow-up questions}: We assume the Orlicz integrand $\varphi$ to vanish continuously at the origin. However, the Orlicz space remains a Banach space if this assumption is dropped. What then is the dual space of $L_\varphi(\mu)$? At least for $\varphi(\om, x) = \varphi(x)$ we know that a discontinuity at the origin can always be avoided by adapting the range space, but in general we cannot say. The assumption of an even Orlicz integrand restricts. Can an analogue theory be developed for Orlicz cones generated by a non-even convex Orlicz integrand $\phi \colon \Om \times X \to \left[ 0, \i \right]$? Can one devise an associated subdifferential calculus for functionals on locally convex cones that, together with the Orlicz cones $L_\phi(\mu)$, provides a systematic, more powerful approach to treating unilateral constraints such as obstacle conditions on spaces of smooth functions or other strong asymmetries of the integrand?
\newline

\emph{Structure of the paper}: we prove in §\ref{sec. inf-int interchange} the interchange criterion Theorem \ref{thm. inf int} providing conditions on a function space and an integrand $f$ under which (\ref{eq. inf-int switch}) holds. From this follows our representation Theorem \ref{thm. conjugate A} for the convex conjugate $I^*_f$ w.r.t. the pairing (\ref{eq. int pairing}), implying that the subdifferential $\p I_f$ are those $v$ whose pointwise restriction to any $W \in \SS(X)$ belongs to the subdifferential of the restricted integrand $f_W$ almost everywhere while providing a characterizing condition on the integrand under which the subdifferential reduces to taking measurable selections from the integrand subdiferential. Two sufficient conditions for this reduction are presented.

In §\ref{sec. L} we define our notion of a Orlicz integrand $\varphi$ and introduce the Orlicz space $L_\varphi(\mu)$. It is a Banach space by Theorem \ref{thm. L complete}. We prove (\ref{eq. a emb}) and that the interchange criterion Theorem \ref{thm. inf int} applies to $L_\varphi(\mu)$.

§\ref{sec. E} introduces the space $E_\varphi(\mu)$ of the closure of simple functions in $L_\varphi(\mu)$ and demonstrates that its elements can approximate all of $L_\varphi(\mu)$ from below, i.e. there exists for $u \in L_\varphi(\mu)$ an isotonic exhausting sequence of measurable sets $\Om_j$ such that $u \chi_{\Om_j} \in E_\varphi(\mu)$. This is an ancillary chapter that in the duality theory will help to identify the function component of the dual space.

In §\ref{sec. C} we study the subspace $C_\varphi(\mu)$ of those elements having absolutely continuous norm, that is, which satisfy the implication
$$
\mu \left( \lim_n E_n \right) = 0 \implies \lim_n \| u \chi_{E_n} \|_\varphi = 0.
$$
It turns out that those elements of $C_\varphi(\mu)$ vanishing outside a $\sigma$-finite set form the maximal linear subspace of $\dom I_\varphi$ if $\varphi$ is real-valued on atoms, see Theorem \ref{thm. C max subspace}. We characterize the separability of $C_\varphi(\mu)$ in Theorem \ref{thm. C sep} as equivalent to $\mu$ and $X$ being separable. Thereby we also fully understand separability of $L_\varphi(\mu)$ because Lemma \ref{lem: sep implies L = C} shows that the identity $C_\varphi(\mu) = L_\varphi(\mu)$ is necessary for $L_\varphi(\mu)$ to be separable. Thus $L_\varphi(\mu)$ is separable iff $\mu$ and $X$ are separable and $\dom I_\varphi$ is linear.

In §\ref{sec. duality}, we obtain an abstract direct sum decomposition of $L_\varphi(\mu)^*$ into three fundamentally different types of functionals. The decomposition parallels those known for additive set functions and measures by the names of Hewitt/Yosida and Lebesgue. In fact, they are a direct consequence of these, an idea in the spirit of E. Giner, who argued analogously for a $\sigma$-finite underlying measure $\mu$. The Hewitt/Yosida decomposition splits a finitely additive set function into a $\sigma$-additive component, i.e. a measure, and a purely finitely additive one. While the classical Lebesgue decomposition of a measure $\nu$ with respect to a $\sigma$-finite measure $\mu$ splits $\nu = \nu_a + \nu_s$ into an absolutely continuous and a singular component, there is the less known de Giorgi decomposition $\nu = \nu_a + \nu_d + \nu_s$ with respect to an arbitrary measure $\mu$ with an additional diffuse component showing up. This decomposition is unique if $\nu$ is finite. We demonstrate that the $\sigma$-additive component of $L_{\varphi^*}(\mu)^*$ enjoys a corresponding decomposition into an absolutely continuous, a diffuse and a purely finitely additive component. We represent the absolutely continuous component, which turn out to agree with both $C_\varphi(\mu)^*$ and the function component of $L_\varphi(\mu)^*$ consisting of equivalence classes of weak* measurable functions that are well-defined a.e. on every $\sigma$-finite set. Moreover, if $X'$ has the Radon-Nikodym property and there holds the identity $I^*_\varphi = I_{\varphi^*}$ on $V_{\varphi^*}(\mu)$, which can be checked by means of the results from §\ref{sec. inf-int interchange}, then we recover the identity $V_{\varphi^*}(\mu) = L_{\varphi^*}(\mu)$ known from the separably valued theory. Theorem \ref{thm. rflxv} characterizes that the Orlicz space is reflexive iff $X$ is reflexive and both $\dom I_\varphi$ and $\dom I^*_\varphi$ are linear. For non-atomic, $\sigma$-finite measures, reflexivity also is equivalent to $\varphi$ and $\varphi^*$ satisfying the $\Delta_2$ growth condition
$$
\forall W \in \SS \left( X \right) \, \exists k \ge 1, f \in L_1(\mu) \colon \varphi \left( \om, 2 x \right) \le \varphi (\om, x) + f(\om) \quad \forall x \in W \, \mu \text{-a.e.}
$$
The section culminates in the convex duality of integral functionals on the Orlicz space, whose conjugate and subdifferential we compute in Theorem \ref{thm. conjugate B}. Conjugates and subdifferentials turn out to behave additively with respect to the direct sum decomposition of the dual. The function component of the subdifferential behaves as in Theorem \ref{thm. conjugate A}.
\newline

\emph{Remark on notation}: $\left( \Om, \AA, \mu \right)$ is a measure space with non-trivial positive measure. $\AA_\mu$ is the completion of $\AA$ w.r.t. $\mu$ and $\bar{\mu}$ is the completion of $\mu$. The ring of sets having finite measure is $\AA_f$, the $\sigma$-ring of sets having $\sigma$-finite measure is $\AA_\sigma$, the $\sigma$-ring of countable unions of atoms and $\sigma$-finite sets is $\AA_{a\sigma}$. If $A \subset \Om$, then $\AA(A)$ denotes the trace $\sigma$-algebra of $\AA$ on $A$. The indicator of a set $S$ is $\chi_S$ with $\chi_S(a) = 1$ if $a \in S$ and $\chi_S(a) = 0$ otherwise. For a sequence $A_n \in \AA$ we write $\lim_n A_n = A$ to mean $\chi_{A_n} \to \chi_{A}$ almost everywhere. $T$ is a topological space, $\left( M, d \right)$ a metric space and $X$ is a real Banach space with dual space $X'$. A ball in $M$ with centre $x$ and radius $r > 0$ is denoted by $B_r(x)$. If no radius is specified, then $r = 1$. If no centre is specified, then $x = 0$ if $M = X$. We write $B_W = B \cap W$ for $W \subset M$. The system $\SS(T)$ are the separable subsets of $T$, $\CL \left( T \right)$ are the closed subsets of $T$, $\LS(T)$ are the lower semicontinuous, proper functions $f \colon T \to \left( - \i, \i \right]$ and $\Gamma \left( X \right)$ are the closed, convex, proper functions $g \colon X \to \left( -\i, \i \right]$. $\LL_0 \left( \Om ; M \right)$ are the strongly measurable functions $u \colon \Om \to M$. The function $u$ is strongly measurable iff there exists a sequence $u_n \colon \Om \to M$ of measurable functions taking finitely many values such that $u_n \to u$ pointwise. This is equivalent to $u$ being the uniform limit of a sequence of measurable functions taking countably many values; or to $u$ being measurable and having a separable range \cite[Prop. 1.9]{Probabilities on B-Spaces}. A function $u \colon \Om \to X'$ is weak* measurable if for any $x \in X$ the function $\langle v(\om), x \rangle$ is measurable.

\section{An inf-int interchange criterion and convex conjugacy} \label{sec. inf-int interchange}

We prove in this section the interchange criterion (\ref{eq. inf-int switch}) and compute with it the convex conjugate of a general integral functional $I_f$. Besides representing the subdifferential, we conclude from the conjugate formula a characterization of those integrands for which integration and convex conjugacy continue to commutate as if $X$ were separable. To make our criterion applicable, we propose two sufficient conditions, cf. Lemmas \ref{lem: brl sff} and \ref{lem: dualizable suff cond}.
%: if $\Om$ is a separable metric Borel space, $X$ a reflexive Banach space and $f$ a normal convex integrand, then $I^*_f(v) = I_{f^*}(v)$ for any strongly measurable function $v$ admissible for the pairing (\ref{eq. int pairing}). If $f$ is strongly measurable in the Attouch-Wets topology, e.g. independent of $\om \in \Om$, then the same is true without further assumptions on $\Om$ or $X$.
Even if $X$ is separable, our result is more general than previous ones since the measure $\mu$ may be arbitrary. The criterion could be further generalized by working with the notion of an integrand decomposable relatively to a function space instead of the function space itself being decomposable, cf. e.g. \cite{inf int} for this idea. We shall briefly relate our result to similar criteria after the proof.

\subsection{The interchange criterion}

Before we can state and prove our interchange criterion, we define necessary notions and provide measure theoretic background material. We work with a metric range space $M$ as this adds no complications.

\begin{definition}[almost decomposable space] \label{def. almost decomposable}
	A space $S$ of (strongly) measurable functions $u \colon \Omega \to M$ is almost decomposable with respect to $\mu$ if for every $u_0 \in S$, every $F \in \AA_f$, every $\e > 0$ and every bounded (strongly) measurable function $u_1 \colon F \to M$ there exists $F_\e \subset F$ with $\mu \left( F \setminus F_\e \right) < \e$ such that the function
	\begin{equation} \label{eq. almost decomposable}
		u(\om) =
		\begin{cases}
			u_0(\om) & \text{ for } \om \in \Om \setminus F_\e, \\
			u_1(\om) & \text{ for } \om \in F_\e
		\end{cases}
	\end{equation}
	belongs to $S$. The space $S$ is decomposable if $F_\e = F$ may be chosen. $S$ is weakly (almost) decomposable if only $u_1 \in S$ are allowed.
\end{definition}

Equivalently, $u_1$ may be unbounded in the definition of almost decomposability. However, the same is not possible for decomposability. If two function spaces defined over the same measure space $\Om$ and the same range space $M$ are almost decomposable and weakly decomposable, then their intersection retains both properties. If $S$ is a weakly decomposable vector space of $X$-valued functions, then its weak decomposability is equivalent to closedness under multiplication by indicators of sets having finite or co-finite measure.

As we aim to prove our interchange criterion for general measures, we need a proposition about divergent integrals.

\begin{proposition} \label{prop. divergent subintegral}
	Let $\alpha \colon \Om \to \left[ 0, \i \right]$ be a measurable function with $\int \alpha \, d \mu = \i$. There either exists $A \in \AA_\sigma$ or an atom $A$ with $\mu \left( A \right) = \i$ such that $\int_A \alpha \, d \mu = \i$.
\end{proposition}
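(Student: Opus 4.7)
My plan is to reduce the problem in two steps: first, locate a large set using the definition of the integral via simple functions; then, apply a structural lemma about sets of infinite measure.

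By definition, $\int \alpha \, d\mu = \infty$ means $\sup\{\int s \, d\mu : 0 \le s \le \alpha,\ s \text{ simple}\} = \infty$. I would split into two sub-cases. In the easier one, every simple $s \le \alpha$ satisfies $\int s \, d\mu < \infty$, so $\supp s \in \AA_f$. Choosing $s_n \le \alpha$ simple with $\int s_n \, d\mu \to \infty$, the set $A = \bigcup_n \supp s_n$ lies in $\AA_\sigma$ (a $\sigma$-ring) and satisfies $\int_A \alpha \, d\mu \ge \sup_n \int s_n \, d\mu = \infty$. Otherwise some simple $s = \sum_{i=1}^{n} c_i \chi_{A_i} \le \alpha$ has $\int s \, d\mu = \infty$, which forces $c_i > 0$ and $\mu(A_i) = \infty$ for some $i$. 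Since $\alpha \ge c_i > 0$ on $A_i$, it will suffice to produce inside $A_i$ a set of the required type having infinite measure.

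For that, I would prove the following structural lemma: any $B \in \AA$ with $\mu(B) = \infty$ either contains some $S \in \AA_\sigma$ with $\mu(S) = \infty$ or contains an atom of infinite measure. Let $c = \sup\{\mu(S) : S \in \AA_\sigma,\ S \subset B\}$. If $c = \infty$, pick $S_n \subset B$ in $\AA_\sigma$ with $\mu(S_n) \ge n$; their union lies in $\AA_\sigma$ and has infinite measure. If $c < \infty$, take $S_n \subset B$ in $\AA_\sigma$ with $\mu(S_n) \to c$, made nested by replacing $S_n$ with $S_1 \cup \cdots \cup S_n$, and set $S_\infty = \bigcup_n S_n$; by continuity of $\mu$ from below, $\mu(S_\infty) = c$, and $T := B \setminus S_\infty$ then has $\mu(T) = \infty$. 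For any measurable $C \subset T$ with $\mu(C) < \infty$, the disjoint union $S_\infty \cup C$ lies in $\AA_\sigma$ and has measure $c + \mu(C) \le c$, forcing $\mu(C) = 0$; hence every measurable subset of $T$ has measure in $\{0, \infty\} = \{0, \mu(T)\}$, which is the defining property of an atom. Applying the lemma to $B = A_i$ from the second sub-case completes the proof.

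The delicate step is the case $c < \infty$ of the structural lemma, where the maximality of $c$ is converted into the atomicity of $B \setminus S_\infty$. Everything else is standard manipulation with simple functions together with the $\sigma$-ring property of $\AA_\sigma$; importantly, no $\sigma$-finiteness of $\mu$ is invoked, which is precisely what the proposition calls for.
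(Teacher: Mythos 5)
Your proof is correct, and it takes a genuinely different route from the paper. The paper first invokes the decomposition of $\mu$ into a purely atomic and a non-atomic part (citing an external result), sets $d\nu_i = \alpha\, d\mu_i$, and in each of the two resulting cases runs a supremum-and-contradiction argument over countable unions of atoms, respectively over $\AA_\sigma$, to locate the desired set. You instead work directly from the definition of $\int \alpha\, d\mu$ as a supremum over simple minorants: either all such minorants have finite integral, in which case the union of their supports is already a $\sigma$-finite set carrying infinite integral, or some minorant forces a level set $A_i$ of infinite measure on which $\alpha$ is bounded below by a positive constant, reducing the problem to your structural dichotomy that an infinite-measure set contains either a $\sigma$-finite subset of infinite measure or an atom of infinite measure. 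The delicate step you flag --- converting maximality of $c = \sup\{\mu(S) : S \in \AA_\sigma,\ S \subset B\}$ in the case $c < \infty$ into atomicity of $B \setminus S_\infty$ --- is sound: any finite-measure subset $C$ of the complement would make $S_\infty \cup C$ a $\sigma$-finite subset of $B$ of measure $c + \mu(C)$, so $\mu(C)=0$, and a set of infinite measure all of whose measurable subsets have measure $0$ or $\infty$ is precisely an atom of infinite measure. What your approach buys is self-containment and transparency (no appeal to the atomic/non-atomic decomposition theorem); what the paper's approach buys is reuse of machinery it needs elsewhere anyway and a slightly more direct handle on the sets $\AA_{a\sigma}$ that organize the rest of its integration theory.
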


\begin{proof}
	Employing \cite[Prop. 1.22]{Lp spaces} and its terminology we find a pair of measures $\mu_i$ with $\mu_1$ purely atomic, $\mu_2$ non-atomic and $\mu = \mu_1 + \mu_2$. Setting $d \nu_i= \alpha d \mu_i$, either $\nu_1$ or $\nu_2$ is infinite. If $\nu_1$ is infinite, let $\AA_a$ be the system of countable unions of atoms and $A_n \in \AA_a$ an isotonic sequence with $\lim_n \nu_1\left(A_n\right) = \sup_{A \in \AA_a} \nu_1\left(A\right)$. There is nothing left to prove if the supremum is infinite. Otherwise, we set $A = \lim_n A_n$. As $\nu_1 \left( A^c \right) = \i$ and $\mu_1$ is purely atomic, the set $A^c \cap \left\{ \alpha > 0 \right\}$ has positive measure	whence it contains an atom $A_\i$. In particular $\nu_1 \left( A_\i \right) > 0$ so that $\nu_1 \left( A_\i \cup A \right)$ surpasses the supremum, a contradiction; the countable union of atoms $A$ either is $\sigma$-finite or contains an atom of infinite measure. In the remaining case if $\nu_2$ is infinite, we consider an isotonic sequence $B_n \in \AA_\sigma$ with $\lim_n \nu_2\left(B_n\right) = \sup_{A \in \AA_\sigma} \nu_2 \left( A \right)$. Again, we are finished if this supremum is infinite. Otherwise, set $B = \lim_n B_n$. Since $\nu_2 \left( B^c \right) = \i$ and $\mu_2$ is non-atomic, there exists $F \subset B^c \cap \left\{ \alpha > 0 \right\}$ with $0 < \mu_2 \left( F \right) < \i$. We may assume $\nu_1\left(F\right) < \i$ without loss of generality so that $F \in \AA_\sigma$. But then $B \cup F \in \AA_\sigma$ and $\nu_2\left( B \cup F \right)$ surpasses the supremum, yielding a contradiction.
\end{proof}

Proposition \ref{prop. divergent subintegral} prompts us to define a notion of integral that will be apt for stating our interchange criterion very concisely. Denoting by $\AA_{a\sigma}$ the $\sigma$-ring of sets arising as a union of countably many $\mu$-atoms and a $\sigma$-finite set, we call a function $\alpha \colon \Om \to \left[ -\i, \i \right]$ such that the restriction of $\alpha$ to any $A \in \AA_{a\sigma}$ is measurable \emph{integrally measurable}. Similarly, we shall say that some measurability property holds integrally if it holds on any atom and every $\sigma$-finite set. In particular, we consider integrally negligible sets, which are defined as sets whose intersection with any atom or $\sigma$-finite set is null. We say that a property holds integrally almost everywhere if it holds except on an integral null set and abbreviate this by i.a.e. A moment's reflection together with \cite[Prop. 1.22]{Lp spaces} shows that a measurable set is integrally null if and only if it is null.
% Sei S = leer integral f.ü. <=> mu(S cap Sigma) = 0 und mu(S cap A) = 0 für Sigma in \AA_sigma und A Atom. Dann mu = mu_1 + mu_2 mit mu_1 nicht-atomar und mu_2 rein atomar. W: mu(S) > 0. Fall mu_1(S) > 0. Dann existiert Atom A mit S >= A somit mu(S cap A) >= mu_1(S cap A) > 0, ein Widerspruch. Fall mu_2(S) > 0. Dann existiert Sigma in \AA_f mit S >= Sigma und mu_2(Sigma) > 0 also mu(S cap Sigma) >= mu_2(S cap Sigma) > 0, ein Widerspruch. Somit ist es dasselbe, wenn S messbar ist.
We define the integral of the integrally measurable positive part $\alpha^+$ as
$$
\int \alpha^+ \, d \mu = \sup_{A \in \AA_{a\sigma} } \int_A \alpha^+ \, d \mu.
$$
As usual, we then define $\int \alpha \, d \mu = \int \alpha^+ \, d \mu - \int \alpha^- \, d \mu$ if one of these integrals is finite. Finally, we set $\int \alpha \, d \mu = \i$ if neither the positive part $\alpha^+$ nor the negative part $\alpha^-$ is thus integrable. If $\alpha$ is measurable, this corresponds to the convention of interpreting $\int \alpha \, d \mu$ as an (extended) Lebesgue integral if $\alpha^+$ or $\alpha^-$ is integrable and setting $+ \i$ if both parts fail to be so. If $\int \alpha \, d \mu$ is finite, then the integrally measurable function $\alpha$ equals a measurable function a.e. since it vanishes outside of a $\sigma$-finite set, on which it is measurable. We call this an \emph{exhausting integral}. This integral is monotone, i.e. if $\alpha \le \beta$ i.a.e. then $\int \alpha \, d \mu \le \int \beta \, d \mu$. Let $\alpha_n$ be a sequence of integrally measurable functions converging to a limit function $\alpha$ locally in $\mu$ and a.e. on every atom. Then there holds the Fatou lemma
$$
\int \alpha^+ \, d \mu \le \liminf_n \int \alpha^+_n \, d \mu.
$$
Indeed, if $A$ is an atom or a set of finite measure, then
$$
\int_A \alpha^+ \, d \mu \le \liminf_n \int_A \alpha^+_n \, d \mu \le \liminf_n \int \alpha^+_n \, d \mu
$$
by the classical Fatou lemma. Taking the supremum over all such $A$ on the left-hand side then yields the claim. More generally, let $\alpha_A \colon A \mapsto \left[-\i, \i\right]$ be a family of measurable functions indexed by $A \in \AA_{a\sigma}$ such that $\alpha_A = \alpha_B$ a.e. on $A \cap B$. Then we define the exhausting integral of the family $\alpha_A$ by means of
$$
\int \alpha^+_A \, d \mu = \sup_{A \in \AA_{a\sigma} } \int_A \alpha^+_A \, d \mu.
$$
This renders the integral of an essential infimum function of an arbitrary family $v_i \colon \Om \to \left[-\i, \i\right]$ of measurable functions meaningful, even though it need only exist on any $\sigma$-finite set by \cite[Lem. 1.108]{Lp spaces} and on any atom by an elementary consideration. Indeed, in the last case, since any extended real-valued function is constant a.e. on an atom, we may define the essential infimum function as the infimum of these constants. If the integral of such a family is finite, then it derives from a $\bar{\mu}$-integrable function $\alpha$ by $\alpha_A = \alpha$ a.e. on each $A \in \AA_{a\sigma}$. To see this, pick $A \in \AA_\sigma$ where the supremum of the exhausting integral is obtained and argue by contradiction that any member of $\alpha_A$ vanishes a.e. outside $A$ as in the proof of Proposition \ref{prop. divergent subintegral}. Monotonicity and the Fatou lemma continue to hold for this type of integral. When we consider integral functionals in the following, we interpret all integrals in this sense. It is worth mentioning that this is reduces to the extended Lebesgue integral if $\Om$ is $\sigma$-finite.

We briefly recapitulate technical background on the measurability of integrands. A set-valued multifunction $\Gamma \colon \Omega \to \mathcal{P} \left( T \right)$ is (Effros) measurable if for every open set $O \subset T$ the set $\Gamma^- \left( O \right) = \left\{ \omega \in \Omega \st \Gamma \left( \omega \right) \cap O \ne \emptyset \right\}$ is measurable. A pre-normal integrand is defined to be a function $f \colon \Om \times T \to \left[ - \i, \i \right]$ such that the epigraphical mapping $S_f \colon \Om \to \PP \left( T \right) \colon \om \mapsto \epi f_\om$ is (Effros) measurable. A pre-normal integrand is normal iff $S_f$ is closed-valued. By Lemma \ref{lem: joint measurability} the normality of an integrand $f \colon \Om \times M \to \left( -\i, \i \right]$ on a separable metric space $M$ is equivalent to lower semicontinuity and $\AA \otimes \BB(M)$-measurability if the measure $\mu$ is complete. In the following, a subscript $f_W$ denotes the restriction of an integrand $f \colon \Om \times M \to \left[ -\i, \i \right]$ in its second component to a subset $W \subset M$.

\begin{definition}[separable measurability] \label{def. separably measurable}
	An integrand $f \colon \Om \times M \to \left[ - \i, \i \right]$ is said to be \emph{separably measurable} if for any $W \in \SS (M)$ the restriction $f_W$ is $\AA \otimes \BB \left( W \right)$-measurable.
\end{definition}

It is equivalent to require that for all $W_0 \in \SS (M)$ there should exist $W \in \SS (M)$ with $W_0 \subset W$ such that $f_{W}$ is $\AA \otimes \BB \left( W \right)$-measurable, since
$$
\AA \otimes \BB \left( W \right) = \left. \AA \otimes \BB \left( X \right) \right|_{\Om \times W}
$$
by \cite[Satz III.5.2]{masstheorie}.
% Wir prüfen, dass ein Erzeuger der ersten sigma-Algebra in der zweiten enthalten ist: Nach Definition gilt AA x BB(X) | Om x W = { B cap Om x W | B in AA x BB(X) }. AA x BB(W) wird erzeugt von Mengen A x W, Om x [B cap W] für A in AA und B in BB(X). Jede solche Menge liegt also in AA x BB(X).
% Wir prüfen, dass ein Erzeuger der zweiten sigma-Algebra in der ersten enthalten ist: Die Mengen Om x A_2 und A_1 x X erzeugen AA x BB(X) für A_1 in AA und A_2 in BB(X). Gemäß Satz III.5.2 bei Elstrod, p. 112, wird also die Spur-Algebra erzeugt von Om x [A_2 cap W] und A_1 x W. Jede solche Menge liegt in AA x BB(W).
In particular, separable measurability reduces to the ordinary one if $M$ is separable. The composition of a separably measurable integrand with a strongly measurable (hence separably valued) function $u \colon \Om \to M$ is measurable as a composition of measurable functions.

	% M muss vollständig sein für messbare Auswahlen.
	% Implizieren die Bedingungen an f, dass die Verknüfung om -> f [ om, u(om) ] mb. für jede stark mb'e Fkt. u : Om -> M messbar ist?

\begin{theorem} \label{thm. inf int}
	Let $M$ be complete and $R$ a space of integrally strongly measurable functions $u \colon \Omega \to M$ that is almost decomposable with respect to $\mu$. Let $f \colon \Om \times M \to \left[ - \i, \i \right]$ be an integrally separably measurable integrand. Suppose that for any atom $A \in \AA$ with $\mu \left( A \right) = \i$ and every $W \in \SS (M)$ there holds $\inf_W f_\om \ge 0$ for a.e. $\om \in A$. Then, if
	$$
	I_f \not \equiv \i \text{ on } R \text{ where } I_f(u) = \int f \left[ \om, u(\om) \right] \, d \mu(\omega),
	$$
	one has
	\begin{equation} \label{eq. inf-int exchange}
		\inf_{u \in R} I_f(u) = \int \essinf_{W \in \SS (M) } \inf_{x \in W} f(\om, x) \, d \bar{\mu}(\omega).
	\end{equation}
	Moreover, if the common value in (\ref{eq. inf-int exchange}) is not $-\i$, then the essential infimum function $\bar{m}$ exists on all of $\Om$ and is attained by a $W \in \SS(M)$. In this case, for $\bar{u} \in R$, one has
	\begin{equation} \label{eq. minimizer pointwise characterization}
		\bar{u} \in \Argmin_{u \in R} I_f \left( u \right) \iff f \left[ \om, \bar{u} (\om) \right] = \bar{m}(\om) \quad \mu \text{-a.e.}
	\end{equation}
\end{theorem}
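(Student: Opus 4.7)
The plan is to first show that $\bar m$ is integrally well-defined and, when the right-hand side is finite, attained by a single $W \in \SS(M)$; then prove the two inequalities in (\ref{eq. inf-int exchange}) by combining measurable selection on $W$ with the almost decomposability of $R$.

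\emph{Well-definedness, attainment, and the $\ge$ direction.} For each $W \in \SS(M)$, the set $\{m_W < c\}$ is the projection onto $\Om$ of the $\AA \otimes \BB(W)$-measurable set $\{(\om,x) \in \Om\times W : f(\om,x) < c\}$; since $W$ is Polish this projection is analytic, hence $\bar\AA$-measurable, so $m_W$ is $\bar\mu$-measurable. The essential infimum of $\{m_W\}_{W\in\SS(M)}$ then exists on every $\sigma$-finite set via \cite[Lem. 1.108]{Lp spaces} and a.e.\ on each atom by a direct argument, yielding an integrally measurable $\bar m$ that makes the right-hand side meaningful. For any $u \in R$, integral strong measurability confines its range on each $A \in \AA_{a\sigma}$ to some $W_A \in \SS(M)$, whence $f(\om,u(\om)) \ge m_{W_A}(\om) \ge \bar m(\om)$ i.a.e., and integrating in the exhausting sense gives $I_f(u) \ge \int \bar m\,d\bar\mu$. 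Combined with the hypothesis $I_f \not\equiv \i$ on $R$, this forces $\int \bar m\,d\bar\mu < \i$. If in addition $\int \bar m\,d\bar\mu > -\i$, the exhausting integral discussion preceding the theorem identifies $\bar m$ i.a.e.\ with a $\bar\mu$-integrable function supported on a $\sigma$-finite set $\Om_0$; in particular $\bar m = 0$ on atoms of infinite measure by the hypothesis $\inf_W f_\om \ge 0$ there. On $\Om_0$, \cite[Lem. 1.108]{Lp spaces} produces $W_n \in \SS(M)$ with $\inf_n m_{W_n} = \bar m$ a.e., and $W := \overline{\bigcup_n W_n} \in \SS(M)$ then satisfies $m_W = \bar m$ i.a.e.\ on $\Om$. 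The characterization (\ref{eq. minimizer pointwise characterization}) of minimizers is immediate once equality holds in (\ref{eq. inf-int exchange}), since the pointwise estimate $f(\om,\bar u(\om)) \ge \bar m(\om)$ together with integral equality forces pointwise equality i.a.e.

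\emph{Inequality $\le$.} Assume first $-\i < \int \bar m\,d\bar\mu < \i$ and fix $u_0 \in R$ with $I_f(u_0) < \i$; then $f(\om,u_0(\om))$ derives from a $\bar\mu$-integrable function, and the hypothesis on atoms of infinite measure forces $f(\om,u_0(\om)) = 0$ a.e.\ on such atoms. Given $\e > 0$, apply Aumann's measurable selection theorem to the multifunction $\om \mapsto \{x \in W : f(\om,x) < \bar m(\om) + \e\}$ on $\Om_0$---its graph is $\bar\AA \otimes \BB(W)$-measurable and its sections are non-empty because $m_W = \bar m$---to obtain a $\bar\AA$-measurable selector $u_\e\colon \Om_0 \to W$. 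Exhaust $\Om_0$ by an isotonic sequence $F_n \in \AA_f$ and $W$ by balls $B_k$, and use almost decomposability with the bounded replacement $u_\e \chi_{F_n \cap \{u_\e \in B_k\}}$ on $F_n$, modulo an exceptional set of measure at most $\e$, to obtain $u_{n,k} \in R$. Choosing $k = k(n) \to \i$ fast enough that the tails of the $\bar\mu$-integrable $f(\om,u_0(\om))$ on the exceptional sets vanish, dominated convergence yields $I_f(u_{n,k(n)}) \le \int \bar m\,d\bar\mu + O(\e)$; letting $\e \to 0$ closes the argument. When $\int \bar m\,d\bar\mu = -\i$, the same construction on $\sigma$-finite subsets of $\Om_0$ where $\int \bar m$ is arbitrarily negative shows $\inf_R I_f = -\i$.

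The main obstacle is the splicing in the last paragraph: almost decomposability only permits bounded replacements on a subset of $F \in \AA_f$ missing up to $\e$ in measure, whereas the Aumann selector $u_\e$ is defined on all of $\Om_0$ and generally unbounded in $W$. Controlling simultaneously the exceptional sets and the tails of the ball exhaustion requires a careful interleaving of $F_n \nearrow \Om_0$ with $B_k \nearrow W$, together with the uniform integrability of $f(\om,u_0(\om))$ guaranteed by $I_f(u_0) < \i$ via the exhausting integral structure.
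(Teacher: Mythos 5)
Your overall strategy coincides with the paper's: identify a single separable $W$ on which the essential infimum is attained, extract a measurable selector from a strict sublevel multifunction of $f_W$, and splice it into $R$ by almost decomposability. However, two steps fail as written. In the upper bound you select from $\left\{ x \in W \st f(\om,x) < \bar{m}(\om) + \e \right\}$ with a \emph{constant} tolerance $\e$. On a $\sigma$-finite set $\Om_0$ of infinite measure this only gives $I_f(u_\e) \le \int \bar{m} \, d\bar{\mu} + \e \, \mu(\Om_0) = \i$, so the claimed bound $\int \bar{m}\, d\bar{\mu} + O(\e)$ hides an infinite constant and the inequality is not established. Worse, in the case $\int \bar{m}\, d\bar{\mu} = -\i$ the set $\left\{ x \st f(\om,x) < \bar{m}(\om) + \e \right\}$ is empty wherever $\bar{m}(\om) = -\i$, so the multifunction admits no selector there. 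The paper's proof avoids both problems simultaneously by selecting below $q_\e(\om) = \e\, p(\om) + \max\left\{ \bar{m}(\om), -\e^{-1} \right\}$ with $p$ a positive \emph{integrable} function on the $\sigma$-finite set: then $q_\e > \bar{m}$ everywhere (non-empty sections even where $\bar{m} = -\i$) while $\int q_\e \, d\mu \to \int \bar{m}\, d\mu$. Without this modification your argument only covers the case $\mu(\Om_0) < \i$ and $\bar{m} > -\i$ a.e.

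The attainment claim also has a gap. Your $W = \overline{\bigcup_n W_n}$ satisfies $m_W = \bar{m}$ a.e.\ only on $\Om_0$; off $\Om_0$ you know $m_W \ge \bar{m} = 0$, but nothing forces equality, since the essential infimum there may be approached only by subspaces disjoint from $\bigcup_n W_n$, and $\Om \setminus \Om_0$ need not be $\sigma$-finite, so \cite[Lem. 1.108]{Lp spaces} cannot be invoked again to enlarge $W$ countably. The paper instead chooses $W$ to contain the ranges of a minimizing sequence $v_n$ and deduces $\inf_W f = \bar{m}$ i.a.e.\ from the sandwich $\int \bar{m}\, d\mu = \lim_n I_f(v_n) \ge \int \inf_W f \, d\mu \ge \int \bar{m}\, d\mu$; the same circle of ideas is what establishes that the essential infimum function exists on all of $\Om$ (not merely integrally), a point your proposal does not address at all. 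The remaining components --- measurability of $m_W$ via the projection theorem on $\sigma$-finite pieces, the lower bound $I_f(u) \ge \int \bar{m}\, d\bar{\mu}$, the ball-and-exhaustion splicing, and the minimizer characterization --- are sound and agree in substance with the paper's argument.
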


We consider essential infimum functions for families of functions $v_i \colon \Om \to \left[ -\i, \i \right]$, $i \in I$ an index, such that there exists a family of measurable functions $u_i$ with $u_i = v_i$ a.e. for any $i \in I$. It is elementary to check by \cite[Def. 1.106]{Lp spaces} that the essential infimum functions of the families $v_i$ and $u_i$ agree in this situation. Any such family $v_i$ admits an essential infimum function on any $A \in \AA_{a\sigma}$ as explained before. The essential infimum function in (\ref{eq. inf-int exchange}) reduces to the pointwise infimum $\inf_M f_\om$ if $M$ itself is separable. We may take all integrals in the ordinary extended Lebesgue sense obeying the convention $+\i - \i = +\i$ if $\mu$ is $\sigma$-finite so that Theorem \ref{thm. inf int} is a genuine generalization of the classical infimum-integral interchange criterion \cite[Thm. 14.60]{Variational Analysis} from $\sigma$-finite $\mu$ and $M = \R^d$ to arbitrary measures and non-separable metric range spaces.

\begin{proof}
	Generalizing \cite[Thm. 14.60]{Variational Analysis}, we follow its basic strategy of proof wherever no adaption is necessary. For $A \in \AA_{a\sigma}$ we set
	$$
	m_A(\om) = \essinf_{W \in \SS (M) } \inf_{x \in W} f(\om, x), \quad \om \in A.
	$$
	For any $u \in R$ with $I_f (u) < \i$ we may apply Proposition \ref{prop. divergent subintegral} to find $A_0 \in \AA_\sigma$ for which
	\begin{equation} \label{eq. exh assumed}
			\int_{\Om \setminus A_0} f(u)^+ \, d \mu = 0; \quad \int_{A_0} f(u)^- \, d \mu = \int f(u)^- \, d \mu.
	\end{equation}
	We used the assumption $\inf_W f_\om \ge 0$ on atoms of infinite measure together with $f(u)^- \le \left( \inf_W f_\om \right)^-$ i.a.e. for $W \in \SS(M)$ containing the range of $u$. Thus, for any sequence $v_n \in \dom I_f(v_n) \cap R$ with $\inf_R I_f = \lim_n I_f(v_n)$ there exists $A_0$ satisfying (\ref{eq. exh assumed}) simultaneously for all $v_n$ hence $	\lim_n I_f(v_n) = \lim_n \int_A f(v_n) \, d \mu \ge \int_A m_A \, d \mu$ whenever $A \in \AA_\sigma$ with $A_0 \subset A$. Taking the supremum over $A \in \AA_\sigma$, we find $\inf_R I_f \ge \int m_A \, d \mu$ with the last integral being the exhausting one of the family $m_A$.

	It remains to prove the opposite inequality when $\inf_R I_f > - \i$. Since $f(u)^+ \ge \left( \inf_W f_\om \right)^+$ i.a.e. for any $W \in \SS(M)$ containing the range of $u$, it suffices to show that for any $A \in \AA_\sigma$ and $\alpha > \int_A m_A \, d \mu$ there exists $u \in R$ with $I_f(u) < \alpha$. To simplify notation, we write $m$ instead of $m_A$. We may enlarge the subspace $W$ so that $m = \inf_W f$ a.e. on $A$ and $W$ is closed. We restrict our consideration to the subspace of $W$-valued functions in $R$, so that we may assume $M$ itself to be separable. Since $\int_A m \, d \mu < \i$, the positive part $m^+$ is integrable on $A$ so that
	$$
	m_\e \left( \om \right) = \max \left\{ m \left( \om \right), - \e^{-1} \right\},
	\quad \lim_{\e \downarrow 0} \int_A m_\e \, d \mu = \int_A m \, d \mu
	$$
	by monotone convergence. The set $A$ being $\sigma$-finite, there exists a non-negative integrable function $p \colon \Om \to \R^+$ that is positive on $A$. Setting $q_\e \left( \om \right) = \e p \left( \om \right) + m_\e \left( \om \right)$, we have $\int_A q_\e \, d \mu \to \int_A m \, d \mu < \alpha$ as $\e \downarrow 0$. Since $q_\e > m$ on $A$, the sets
	$$
	L_\e \left( \om \right) = \left\{ x \in M \st f_\om (x) < q_\e \left( \om \right) \right\}, \quad \om \in A
	$$
	are non-empty. Choose $\e$ small enough that $\int_A q_\e \, d \mu < \alpha$. Let $\AA'$ be the trace $\sigma$-algebra of $\AA$ on $A$. By assumption, the integrand $g := f - q_\e$ is $\AA' \otimes \BB \left( M \right)$-measurable so that the separably valued multifunction $L_\e \colon A \to \PP \left( M \right) \setminus \left\{ \emptyset \right\}$ has the measurable graph $\graph L_\e = \left\{ \left( \om, x \right) \in A \times M \st g_\om \left( x \right) < 0 \right\}$, whence there exists a $\AA'_\mu$-measurable selection $u_1$ by \cite[Thm. 6.10]{Lp spaces}: an $\AA'_\mu$-measurable function $u_1 \colon A \to M$ with $u_1 \left( \om \right) \in L_\e \left( \om \right)$ for all $\om \in A$, i.e.
	$$
	f \left[ \om, u_1 \left( \om \right) \right] < q_\e \left( \om \right), \quad \om \in A.
	$$
	As $M$ is separable, Lemma \ref{lem: strong mb completion} yields a strongly $\AA'$-measurable function $u_2$ with $u_1 = u_2$ a.e. We have $\int_A f \left[ \om, u_2 \left( \om \right) \right] \, d \mu \left( \om \right) < \alpha$. The set $A$ being $\sigma$-finite, we can express $A$ as a union of an isotonic sequence of sets $\Om_n$ with $\mu \left( \Om_n \right) < \i$. Fix $x \in M$ and let $A_n = \left\{ \om \in \Om_n \st d \left[ x, u_2 \left( \om \right) \right] \le n \right\} \in \AA$. Note that $A_n \uparrow A$. The space $R$ being almost decomposable, there exists an isotonic sequence $A'_n \subset A_n$ with $\mu \left( A_n \setminus A'_n \right) < n^{-1}$ such that the function $w_n \colon \Om \to M$ agreeing with $v_0$ on $\Om \setminus A'_n$ and with $u_2$ on $A'_n$ belongs to $R$. Since $A'_n \uparrow A$, we have
	\begin{equation} \label{eq. convergent integrals}
		\int_{A \setminus A'_n} f \left[ \om, v_0 \left( \om \right) \right] \, d \mu \to 0; \quad \int_{A'_n} f \left[ \om, u_2 \left( \om \right) \right] \, d \mu \to \int_A f \left[ \om, u_2 \left( \om \right) \right] \, d \mu
	\end{equation}
	as $n \to \i$ by the theorems of dominated and monotone convergence. Since
	$$
	I_f \left( w_n \right) = \int_{A \setminus A'_n} f \left[ \om, v_0 \left( \om \right) \right] \, d \mu \left( \om \right) + \int_{A'_n} f \left[ \om, u_2 \left( \om \right) \right] \, d \mu \left( \om \right),
	$$
	we have $I_f \left( w_n \right) \to \int_A f \left[ \om, u_2 \left( \om \right) \right] \, d \mu \left( \om \right) < \alpha$ by (\ref{eq. convergent integrals}) hence $I_f \left( w_n \right) < \alpha$ if $n$ is sufficiently large.
	
	Regarding the second part of the claim, we start by showing that the $\AA_\mu$-measurable function $m$ induced by the integrable family $m_A$ indeed defines the essential infimum function in (\ref{eq. inf-int exchange}) on $\Om$. Otherwise there were $W \in \SS(M)$ such that the set $\left\{ m > \inf_W f \right\}$ is not contained in a negligible set.
	
	Assume first that $\left[ m - \inf_W f \right]^+$ is $\AA_\mu$-measurable so that not being contained in a null set is equivalent to having positive measure. No atom $A$ with $\mu \left( A \right) = \i$ may contribute to the positive measure since $m^+_A$ is integrable as the common value (\ref{eq. inf-int exchange}) is not $\i$. Here, we have used the assumption $\inf_W f \ge 0$ a.e. on atoms of infinite measure. Hence some $A \in \AA_\sigma$ contributes to the positive measure by \cite[Prop. 1.22]{Lp spaces}. But then $m_A > \inf_W f$ a.e. on $A$ is contradictory.
	
	If second the function $\left[ m - \inf_W f \right]^+$ is only known to be integrally measurable, attempt its integration w.r.t. the completion $\bar{\mu}$ in the exhausting sense. If the integral is finite, then $\left[ m - \inf_W f \right]^+$ is integrable and integrally measurable hence equals an $\AA$-measurable function a.e. We are back to first the case. If the integral is not finite, then the subintegral over an atom of infinite measure or a $\sigma$-finite set is infinite,
	% independent of \mu or \bar{\mu} since atoms and \sigma-finite sets agree for these measures.
	on which $\left[ m - \inf_W f \right]^+$ is $\AA_\mu$-measurable. Proceed as in the first case, arriving at a contradiction; The subintegral hence the integral is finite. We are back to the integrable second case. We have proved that $\inf_W f \ge m$ a.e. for any $W \in \SS (M)$. It remains to prove that any further measurable function fulfilling this inequality is dominated by $m$ a.e. Let $n \colon \Om \to \left[ -\i, \i \right]$ be such a function and suppose that the set $\left\{ n > m \right\}$ has positive measure. If an atom $A$ with $\mu \left( A \right) = \i$ contributes to the positive measure, we may by $I_f \not \equiv \i$ pick $W \in \SS (M)$ such that $\inf_W f_\om = 0$ a.e. on $A$ hence $0 > m$ a.e. on $A$ so that the contradiction $\int_A m^- \, \mu = \i$ obtains. Therefore some $A \in \AA_\sigma$ contributes to the positive measure. Setting $\bar{n} = \max\{ n,, m \}$ yields the contradiction
	$$
	\inf_{u \in R} I_f (u) = \lim_n I_f (v_n) \ge \int_{A_0 \cup A} \bar{n} \, d \mu > \int_{A_0 \cup A} m \, d \mu = \int_{A_0} m_{A_0} \, d \mu = \inf_{u \in R} I_f (u).
	$$
	To see that the essential infimum function $\bar{m}$ is attained by some $W \in \SS(M)$ if it is integrable, consider again the sequence $v_n$ with $\int_R I_f = \lim_n I_f(v_n)$. Choose $W \in \SS(M)$ containing the range of $v_n$ and observe that $W$ provides the desired subspace as
	$$
	\int \bar{m} \, d \mu = \lim_n I_f(v_n) \ge \int \inf_W f \, d \mu \ge \int \bar{m} \, d \mu.
	$$
	The addendum (\ref{eq. minimizer pointwise characterization}) is equivalent to $\mu \left( \left\{ \om \st f \left[ \om, \bar{u} \left( \om \right) \right] > \bar{m} \left( \om \right) \right\} \right) = 0$ if $\int \bar{m} \, d \mu$ is finite, whence it follows.
\end{proof}

We know of no previous interchange result for a function space $S$ with a non-separable range space except \cite[Thm. 6.1]{Levin conjugate}. There it is proved in the particular case of convex conjugacy that if the function space $S$ is weakly decomposable and $M = X$, then the infimum may be computed by taking the $L_1$-infimum under the integral sign. While this formulation appeals by its elegance, it does not satisfy our need to relate the infimum function under the integral sign to the pointwise infimum of the integrand. Under the mere assumption of weak decomposability, no analogue of our result can be expected in this respect, a property like our almost decomposability is indispensable for it. Our criterion could be generalized to the effect that one could compute the infimum function under the integral in $L_1$ on an (almost) weakly decomposable function space $S$ and then derive our representation of this infimum function in the special case when the space has the stronger property of being almost decomposable.

More recently, interchange criteria for separable range spaces were discussed in \cite{inf int}, including an overview of previous results. We note that, at least for $\sigma$-finite measures, an alternative proof of Theorem \ref{thm. inf int} could be devised by appealing to results of \cite{inf int}. However, since we are interested in bringing the pointwise infimum of the integrand into play, no generalization would result directly from this, even though \cite{inf int} provides conditions that are both necessary and sufficient for essential infima to be interchanged with an integral.

\subsection{Convex conjugacy}

We can now represent the convex conjugate of a general integral functional on a space of strongly measurable functions in duality with a space of weak* measurable ones. Though this result will not apply directly to Orlicz spaces, as their dual space may contain elements that are no functions, it is fundamental in analysing convex conjugacy for the function component of the dual.

\begin{theorem} \label{thm. conjugate A}
	Let $R$ be a linear space of integrally strongly measurable functions $u \colon \Om \to X$ that is almost decomposable with respect to $\mu$. Let $S$ be a linear space of integrally weak* measurable functions $u' \colon \Om \to X'$ such that the bilinear form
	\begin{equation} \label{eq. duality pairing}
		R \times S \to \R \colon \left( u, u' \right) \mapsto \int \langle u' \left( \om \right), u \left( \om \right) \rangle \, d \mu \left( \om \right)
	\end{equation}
	is well-defined. Let $f \colon \Om \times X \to \left[ - \i, \i \right]$ be an integrally separably measurable integrand. Suppose that for $v \in S$, any atom $A \in \AA$ with $\mu \left( A \right) = \i$ and any $W \in \SS(X)$ there holds $\sup_{x \in W} \langle v(\om), x \rangle - f_\om(x) \le 0$ for a.e. $\om \in A$. Then, if
	$$
	I_f \not \equiv \i \text{ on } R \text{ where } I_f(u) = \int f \left[ \om, u(\om) \right] \, d \mu(\om),
	$$
	the convex conjugate $I^*_f$ of $I_f$ at $v$ with respect to the pairing (\ref{eq. duality pairing}) is given by
	\begin{equation} \label{eq. conjugate representation}
		I^*_f \left( v \right) = \int \esssup_{W \in \SS \left( X \right) } \sup_{x \in W} \langle v(\om), x \rangle - f_\om (x) \, d \bar{\mu}(\om).
	\end{equation}
	Denoting by $\SS_u(X)$ the separable subsets almost containing the range of $u$, the Fenchel-Moreau subdifferential of $I_f$ on $\dom I_f$ is given by
	\begin{equation} \label{eq. subdifferential}
		\p I_f(u) = \bigcap_{W \in \SS_u(X) } \left\{ v \in S \st v^*_W(\om) \in \p f_W \left[ \om, u(\om) \right] \text{ a.e.} \right\}.
	\end{equation}
	Moreover, if $v \in \dom I^*_f$, then the following two are equivalent: the mapping $\om \mapsto f^* \left[ \om, v(\om) \right]$ is $\AA_\mu$-measurable and there holds
	\begin{equation} \label{eq. conjugate under integral}
		I^*_f (v) = I_{f^*} (v) = \int f^* \left[ \om, v(\om) \right] \, d \bar{\mu}(\om);
	\end{equation}
	There exists $W \in \SS (X)$ such that
	\begin{equation} \label{eq. ess-inf = inf}
		f^* \left[ \om, v(\om) \right] = \sup_{x \in W} \langle v(\om), x \rangle - f_\om (x) \quad \mu \text{-a.e.}
	\end{equation}
	The intersection in (\ref{eq. subdifferential}) over $W \in \SS_u(X)$ may then be replaced by $W = X$.
\end{theorem}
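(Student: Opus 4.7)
My plan is to deduce every claim from Theorem \ref{thm. inf int} applied to the tilted integrand
$$
g(\om, x) := f_\om(x) - \langle v(\om), x \rangle.
$$
I first verify its hypotheses. For integral separable measurability: on any $A \in \AA_{a\sigma}$ and $W \in \SS(X)$, the map $(\om, x) \mapsto \langle v(\om), x\rangle$ is Carath\'eodory (measurable in $\om$ by weak* measurability, continuous linear in $x$) and hence $\AA(A) \otimes \BB(W)$-measurable; subtracting it from the separably measurable $f$ preserves the property. The infinite-atom condition required of $g$ in Theorem \ref{thm. inf int} is a verbatim rephrasing of the hypothesis imposed on $v$ and $W$. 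That $I_g \not\equiv \infty$ on $R$ follows by choosing $u_0 \in R \cap \dom I_f$ and using well-definedness of the pairing (\ref{eq. duality pairing}).

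Applying Theorem \ref{thm. inf int} to $g$ and using $-\essinf_W \inf_W(-h) = \esssup_W \sup_W h$ immediately yields
$$
I_f^*(v) = -\inf_{u \in R} I_g(u) = \int \esssup_{W \in \SS(X)}\sup_{x \in W}\bigl[\langle v(\om), x\rangle - f_\om(x)\bigr]\,d\bar\mu(\om),
$$
which is (\ref{eq. conjugate representation}). For the subdifferential (\ref{eq. subdifferential}): by Fenchel--Young, $v \in \p I_f(u)$ iff $u \in \Argmin_R I_g$, and the addendum (\ref{eq. minimizer pointwise characterization}) of Theorem \ref{thm. inf int} reads this as $g_\om(u(\om)) = \essinf_{W \in \SS(X)} \inf_{x \in W} g_\om(x)$ $\mu$-a.e. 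For any $W \in \SS_u(X)$ the range of $u$ lies a.e. in $W$, so $\inf_{x \in W} g_\om(x) \le g_\om(u(\om))$ a.e.; combining this with the definitional lower bound $\essinf \le \inf_W g_\om$ a.e. and the attained equality above squeezes $\inf_{x \in W} g_\om(x) = g_\om(u(\om))$ a.e., i.e.\ $u(\om)$ maximises $\langle v(\om),\cdot\rangle - f_\om$ on $W$ a.e., which is $v^*_W(\om) \in \p f_W[\om, u(\om)]$ a.e.\ by the pointwise Fenchel--Young equality on $W$.

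The final equivalence splits in two. For (\ref{eq. ess-inf = inf})$\Rightarrow$(\ref{eq. conjugate under integral}): when $W$ is separable the supremum is a countable supremum of measurable functions, hence measurable, and (\ref{eq. ess-inf = inf}) lets one substitute $f^*(\cdot, v(\cdot))$ for the essential supremum in (\ref{eq. conjugate representation}). For the converse, assume $v \in \dom I_f^*$ and that $f^*(\cdot, v(\cdot))$ is $\AA_\mu$-measurable with $I_{f^*}(v) = I_f^*(v)$. The pointwise inequality $\sup_{x \in W}[\langle v,x\rangle - f_\om(x)] \le f^*[\om, v(\om)]$ lifts to the essential supremum; equality of the two finite integrals then forces a.e.\ equality between the essential supremum and $f^*(\cdot,v(\cdot))$. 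Since $I_f^*(v) < \infty$, the ``attainment by $W$'' addendum of Theorem \ref{thm. inf int}, applied to $g$, furnishes a $W \in \SS(X)$ for which $\sup_{x \in W}[\langle v,x\rangle - f_\om(x)]$ equals the essential supremum a.e., hence equals $f^*(\om, v(\om))$ a.e.; this is (\ref{eq. ess-inf = inf}). Plugging (\ref{eq. ess-inf = inf}) into the squeezing step above replaces the intersection over $W \in \SS_u(X)$ by the single global condition $v(\om) \in \p f_\om[u(\om)]$ a.e., i.e.\ by $W = X$.

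The main obstacle I anticipate is the careful interplay between the ``exhausting integral'' framework and pointwise a.e.\ statements: the Fenchel--Young equality $I_f(u) + I_f^*(v) = \int \langle v, u\rangle\,d\mu$ must be interpreted in this generalised integral, and the reduction from integral equality to pointwise a.e.\ equality of the essential supremum with $f^*(\cdot, v(\cdot))$ relies crucially on both the $\AA_\mu$-measurability hypothesis and the attainment clause, without which $f^*(\cdot, v(\cdot))$ need not even be a measurable function under mere separable measurability of $f$.
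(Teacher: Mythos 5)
Your overall strategy --- apply Theorem \ref{thm. inf int} to the tilted integrand $g_\om(x) = f_\om(x) - \langle v(\om), x\rangle$, read off (\ref{eq. conjugate representation}), and couple the Fenchel--Young identity with the minimizer characterization (\ref{eq. minimizer pointwise characterization}) and the attainment clause --- is exactly the paper's, and the conjugate formula and the final equivalence are handled correctly. Two points need repair. First, (\ref{eq. subdifferential}) is an equality of sets and you only establish the inclusion $\p I_f(u) \subset \bigcap_{W} \left\{ \dots \right\}$: your squeeze passes from ``$g_\om(u(\om)) = \bar{m}(\om)$ a.e.'' to the per-$W$ statements, but the reverse implication is not automatic, since the per-$W$ equalities for $W \in \SS_u(X)$ only give $g_\om(u(\om)) = \inf_W g_\om \ge \bar{m}$ a.e. You either need to invoke the attainment clause once more (enlarge the attaining subspace $W_0$ by the essential range of $u$; the enlarged $W$ lies in $\SS_u(X)$ and still satisfies $\inf_W g_\om = \bar{m}$ a.e., so the hypothesis forces $g_\om(u(\om)) = \bar{m}$ a.e.), or argue directly as the paper does: for arbitrary $w \in R$ pick $W \in \SS_u(X)$ containing the ranges of $u$ and $w$ and integrate the pointwise subgradient inequality on $W$.

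Second, the claim that for separable $W$ the map $\om \mapsto \sup_{x\in W}\left[\langle v(\om),x\rangle - f_\om(x)\right]$ is ``a countable supremum of measurable functions, hence measurable'' fails as stated: $f_\om$ is only jointly measurable on $\Om\times W$, not continuous in $x$, so the supremum cannot be restricted to a countable dense subset of $W$. The measurability you need is supplied either by the infimal measurability of pre-normal integrands (Lemmas \ref{lem: joint measurability} and \ref{lem: equivalence infimal measurability}, which require passing to $\AA_\mu$), or, as the paper argues, a posteriori from the a.e. identity of this supremum with the $\AA_\mu$-measurable essential supremum function $\bar{m}$ constructed in the proof of Theorem \ref{thm. inf int}. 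With these two repairs the argument goes through and coincides with the paper's proof.
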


\begin{proof}
	Invoking Theorem \ref{thm. inf int} we find (\ref{eq. conjugate representation}) once we show that the tilted integrand
	$$
	\Om \times X \to \left[ -\i, \i \right] \colon (\om, x) \mapsto f(\om, x) - \langle v(\om), x \rangle
	$$
	is integrally separably measurable. This obtains since $f$ is i.s.m. by assumption and since the tilt is integrally separably Carathéodory hence i.s.m. so that the difference is i.s.m.
	
	If $X$ is separable, then the Fenchel-Young identity together with (\ref{eq. conjugate representation}) shows that
	\begin{equation} \label{eq. sep sd}
		\p I_f(u) = \left\{ v \in S \st v^*(\om) \in \p f \left[ \om, u(\om) \right] \text{ a.e.} \right\}.
	\end{equation}
	Applying the case of separable $X$ then yields (\ref{eq. subdifferential}): It is obvious that $v \in \p I_f(u)$ must belong to the subdifferential of $I_f$ when the functional is restricted to the subspace $S_W$ consisting of those functions in $S$ taking values in a separable subspace $W \in \SS(X)$. As $S_W$ satisfies the same assumptions as $S$, we have (\ref{eq. sep sd}) on $S_W$ whence the function $v$ belongs to the right-hand side in (\ref{eq. subdifferential}). Conversely, if $v$ belongs to that right-hand side, then obviously
	$$
	I_f(w) \ge I_f(u) + \int \langle v(\om), u(\om) - w(\om) \rangle \, d \mu(\om) \quad \forall w \in S
	$$
	as $u, w \in S$ are almost separably valued. Consequently $v \in \p I_f(u)$.
	
	Regarding the addendum on the conjugate, observe that
	$$
	f^* \left[ \om, u(\om) \right] \ge \esssup_{W \in \SS \left( X \right) } \sup_{x \in W} \langle v(\om), x \rangle - f_\om(x) \ge \sup_{x \in W} \langle v(\om), x \rangle - f_\om(x) \quad \mu \text{-a.e.}
	$$
	Consequently, if $f^*_\om(v)$ is $\AA_\mu$-measurable and (\ref{eq. conjugate under integral}) holds as an identity of real numbers, then (\ref{eq. ess-inf = inf}) obtains since Theorem \ref{thm. inf int} guarantees attainment of the essential supremum function. Conversely, if (\ref{eq. ess-inf = inf}) holds, then the integrals in (\ref{eq. conjugate under integral}) and (\ref{eq. conjugate representation}) agree. The function $f^*_\om(v)$ then equals an $\AA_\mu$-measurable function a.e. hence is $\AA_\mu$-measurable.
	
	The addendum on the subdifferential follows by the Fenchel-Young identity as in the case of (\ref{eq. subdifferential}) when $I^*_f(v) = I_{f^*}(v)$.
\end{proof}

Theorem \ref{thm. conjugate A} suggests to introduce the following notion:

\begin{definition}[dualizable integrand]
	An integrand $f \colon \Om \times X \to \left[ -\i, \i \right]$ that is separably measurable and such that for a weak* measurable function $v \colon \Om \to X'$ there exists $W \in \SS(X)$ with
	\begin{equation} \label{eq. dualizability}
		f^* \left[ \om, v(\om) \right] = \sup_{x \in W} \langle v(\om), x \rangle - f_\om(x) \quad \forall \om \in \Om
	\end{equation}
	is called \emph{dualizable} at $v$. We say that $f$ is dualizable for a space $S$ of such functions if it is dualizable at each $v \in S$.
\end{definition}

We shall also consider integrands that are dualizable a.e. or i.a.e. This is meaningful if $f$ and $v$ are merely integrally measurable. If $f$ is dualizable for $v$ and $W \in \SS(X)$, then the integrand $f_\om(x) - \langle v(\om), x \rangle - f^* \left[ \om, v(\om) \right]$ is $\AA_\mu \otimes \BB(W)$-measurable and thus an $\AA_\mu$-pre-normal integrand on $W$ by Lemma \ref{lem: joint measurability}. As such it is infimally measurable by Lemma \ref{lem: infimal measurability and normality}, its strict sublevel multifunctions are measurable by Lemma \ref{lem: equivalence infimal measurability} and non-empty for positive level values. Hence, we find from them (strongly) $\AA_\mu$-measurable selections by the Aumann theorem \cite[Thm. 6.10]{Lp spaces} if $W$ is closed. Conversely, if the integrand $f_\om(x) - \langle v(\om), x \rangle - f^* \left[ \om, v(\om) \right]$ admits such selections, then it is obvious that it dualizable for $v$. We apply this characterizing observation to discuss our first of two sufficient conditions for dualizability at all strongly measurable functions.

\begin{lemma} \label{lem: brl sff}
	Let $\Om$ be a separable metric Borel space, $X$ a reflexive Banach space and $f \colon \Om \times X \to \left( -\i, \i \right]$ a normal convex integrand. Then $f$ is dualizable for any strongly measurable function $v \colon \Om \to X'$.
\end{lemma}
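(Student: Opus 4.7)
The plan is to construct the desired $W \in \SS(X)$ as the closed linear span of the ranges of countably many strongly $\AA_\mu$-measurable approximate minimizers of the tilted integrand
$$
g(\om, x) := f(\om, x) - \langle v(\om), x \rangle,
$$
which is itself a normal convex integrand on $\Om \times X$: the tilt $(\om, x) \mapsto -\langle v(\om), x \rangle$ is Carath\'eodory (measurable in $\om$ by strong measurability of $v$, continuous linear in $x$) and hence normal, and normality survives addition with the normal convex $f$. Consequently, the sublevel multifunctions $L_r \colon \om \mapsto \{x \in X : g_\om(x) \le r\}$, $r \in \Q$, are closed convex-valued Effros-measurable multifunctions (obtained as countable intersections of strict sublevel multifunctions at rational levels, which are measurable by Lemma \ref{lem: equivalence infimal measurability}), non-empty precisely on the set where $\inf_{x \in X} g_\om(x) \le r$.

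The crucial step is to extract, for each rational $r$, a strongly $\AA_\mu$-measurable selection $\xi_r$ of $L_r$ on the set where $L_r$ is non-empty. To overcome the non-separability of $X$, I would first invoke Troyanski's theorem to renorm $X$ with an equivalent strictly convex reflexive norm, under which every non-empty closed convex subset admits a unique element of minimum norm; letting $\xi_r(\om)$ denote this element of $L_r(\om)$ gives a well-defined single-valued section. Strong measurability is then obtained by intersecting $L_r(\om)$ with balls of measurable radius around the origin --- these intersections are weakly compact by reflexivity --- and combining the standard Borel structure of $\Om$ with uniqueness of the minimizer in a Castaing-type argument within these weakly compact convex-valued multifunctions. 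This selection step is the main obstacle, since classical measurable selection theorems requiring a Polish range space do not apply directly when $X$ is non-separable reflexive; the key leverage is the triple combination of reflexivity, Troyanski renorming, and the Effros measurability of $L_r$ coming from the normality of $g$.

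With the $\xi_r$ in hand, let $W$ be the closed linear span of $\bigcup_{r \in \Q} \range(\xi_r)$. Each $\xi_r$ is strongly measurable and thus has essentially separable range, so the countable union and its closed linear span remain separable, giving $W \in \SS(X)$. The dualizability identity $f^*_\om(v(\om)) = \sup_{x \in W} \langle v(\om), x \rangle - f_\om(x)$ then follows pointwise: at each $\om$ for which $\phi(\om) := \inf_{x \in X} g_\om(x)$ is finite, picking rationals $r_k \downarrow \phi(\om)$ gives $\xi_{r_k}(\om) \in W$ with $g_\om(\xi_{r_k}(\om)) \le r_k \to \phi(\om)$, whence $\inf_{x \in W} g_\om(x) \le \phi(\om) = \inf_{x \in X} g_\om(x)$; the reverse inequality is automatic, and the extremal cases $\phi(\om) = \pm \i$ (the former handled by $r_k \to -\i$, the latter vacuous since then $L_r(\om) = \emptyset$ for every $r$ and both sides equal $-\i$) are straightforward variants. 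Translating $-\inf g_\om$ back to the conjugate delivers the claim, so that $f$ is dualizable at $v$ with witness $W$.
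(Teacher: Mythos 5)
Your overall architecture coincides with the paper's: both proofs reduce dualizability to extracting countably many measurable selections from the (Effros measurable, closed convex valued) sublevel multifunctions of the tilted integrand and then taking $W$ to be the closed span of their ranges; both lean on reflexivity and on Troyanski's renorming. The final assembly step (rational levels $r_k \downarrow \inf_X g_\om$, span of the ranges, the two degenerate cases) is correct and is exactly the ``characterizing observation'' the paper records just before the lemma. The difference is that the paper normalizes by first proving measurability of $\om \mapsto f^*[\om, v(\om)]$ via infimal measurability and then selects from the strict sublevels of $f_\om - \langle v(\om), \cdot \rangle + f^*[\om, v(\om)]$ at positive levels, whereas you select from $\{g_\om \le r\}$ over all rationals; that variation is harmless.

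The genuine gap is the selection step itself, which you correctly flag as ``the main obstacle'' but then dispatch with ``a Castaing-type argument.'' This is circular: the existence of a Castaing representation for a closed-valued Effros measurable multifunction is itself a theorem that requires the range space to be separable (Polish), so it cannot be invoked to produce the selections whose very purpose is to manufacture the separable subspace $W$. Uniqueness of the minimum-norm element under a strictly convex (LUR) renorming gives you a well-defined single-valued section $\xi_r$, but it does not by itself yield either its measurability or the essential separability of its range, and both are needed for strong measurability. The paper closes precisely this hole by a careful adaptation of the Cascales--Kadets--Rodr{\'i}guez selection theorem \cite[Cor.~5.19]{CKR} (checking that the finite-measure and weak-compactness hypotheses there can be relaxed to weak compactness of intersections with closed balls, and that Effros measurability implies the $\mathcal{M}^{cc}$-measurability required there in a reflexive space), and it is only at the very end, via \cite[Prop.~1.11]{Probabilities on B-Spaces}, that the hypothesis that $\Om$ is a \emph{separable metric Borel space} is used to guarantee that the Borel measurable selections so obtained have separable range. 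Your proposal neither supplies a substitute selection theorem nor explains where the separability of $\Om$ enters, so as written the central step of the lemma remains unproved.
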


\begin{proof}
	Any Borel measurable map on $\Om$ into another metric space has a separable range by \cite[Prop. 1.11]{Probabilities on B-Spaces}. The integrand $f_\om(x) - \langle v(\om), x \rangle$ is infimally measurable in the sense of Definition \ref{def. infimal measurability} by Lemma \ref{lem: normal sums} and an easy limiting argument that approximates $v$ pointwise by a sequence of simple functions. Thus the function $f^* \left[ \om, v(\om) \right] = - \inf_{x \in X} f_\om(x) - \langle v(\om), x \rangle$ is measurable by Lemma \ref{lem: equivalence infimal measurability} whereby we recognize $f_\om(x) - \langle v(\om), x \rangle + f^* \left[ \om, v(\om) \right]$ as infimally measurable. Consequently, its (strict) sublevel multifunctions are measurable by Lemma \ref{lem: equivalence infimal measurability} and non-empty for positive level values. We now want to apply \cite[Cor. 5.19]{CKR} to obtain Borel-measurable selections from the sublevels and thus conclude dualizability by our initial comment and the observation before this lemma. Note in this regard that $X$ is locally uniformly rotund by reflexivity \cite{lur}. Literally, the result \cite[Cor. 5.19]{CKR} requires a finite measure space and weakly compact convex values of the epigraphical multifunction. However, an extended inspection of the proof reveals that the statement holds on any measurable space and only the intersection of any value of $\epi \left( f_\om - \langle v(\om), \cdot \rangle + f^* \left[ \om, v(\om) \right] \right)$ with any closed ball centred at the origin needs to be weakly compact and convex. To see this, check in \cite[Lem. 5.3]{CKR} that the cardinality $\gamma$ may be countably infinite on any measurable space and observe in \cite[Lem. 5.11]{CKR} that the proof still works if the sublevel sets of the function $g$ therein have compact intersections with the values of the multifunction therein. Finally, by a limiting argument approximating any bounded closed convex (hence weakly compact) set $K$ by the open sets $K_\e = K + B_\e$ for $\e > 0$ and then approximating any closed convex set by bounded closed convex sets, it is easy to check that the $\mathcal{M}^{cc}$-measurability required in \cite[Cor. 5.19]{CKR} is implied by Effros measurability in a reflexive space so that in total our adapted application of \cite[Cor. 5.19]{CKR} has been warranted and the proof is complete.
\end{proof}

Assuming the continuum hypothesis, the above argument still works for any $\sigma$-algebra $\AA$ whose cardinality is at most $\left| \R \right|$, cf. the remarks after \cite[Prop. 1.11]{Probabilities on B-Spaces}. In particular, this covers the case of any countably generated $\sigma$-algebra $\AA$, see \cite{Probabilities on B-Spaces}.

We now state but do not prove here our second sufficient condition for dualizability.

\begin{lemma} \label{lem: dualizable suff cond}
	Let $f \colon \Om \times X \to \left( - \i, \i \right]$ be an integrand identified with the mapping $f \colon \Om \to \LS(X)$. If $f$ is strongly measurable in the Attouch-Wets topology on $\LS(X)$, then it is dualizable for any strongly measurable function $v \colon \Om \to X'$. In particular, any autonomous integrand $f \in \LS(X)$ is thus dualizable.
\end{lemma}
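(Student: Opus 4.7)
The plan is to use the Attouch-Wets strong measurability of $f$ to extract a countable AW-dense family $\{g_j\} \subset \LS(X)$, combine it with a countable norm-dense family $\{v_\ell\}$ in the separable range of $v$, and build $W \in \SS(X)$ from a countable collection of near-maximizers of the truncated conjugates $\sup_{x \in B_N} \langle v_\ell, x \rangle - g_j(x)$.

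For separable measurability, Attouch-Wets strong measurability renders the epigraphical multifunction $\om \mapsto \epi f_\om$ Effros measurable into the closed subsets of $X \times \R$, so $f$ is a normal integrand on $\Om \times X$; its restriction to $\Om \times W_0$ for $W_0 \in \SS(X)$ is a normal integrand on a separable metric space, hence $\AA \otimes \BB(W_0)$-measurable by Lemma \ref{lem: joint measurability}.

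For dualizability, I would fix a strongly measurable $v$ with separable range $U = \overline{v(\Om)} \in \SS(X')$ and a norm-dense sequence $\{v_\ell\} \subset U$, and let $\{g_j\}_{j \in \N}$ be an AW-dense sequence in $f(\Om)$ supplied by the strong measurability of $f$. For each quadruple $(j, \ell, N, k) \in \N^4$ I would select $x_{j, \ell, N, k} \in B_N$ so that $\langle v_\ell, x_{j, \ell, N, k} \rangle - g_j(x_{j, \ell, N, k})$ approximates $\sup_{x \in B_N} \langle v_\ell, x \rangle - g_j(x)$ within $1/k$, and set $W = \overline{\mathrm{span}}\{x_{j, \ell, N, k}\} \in \SS(X)$. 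The central claim is that this $W$ satisfies the dualizability equation at every $\om$. For fixed $\om$ I would extract a subsequence $g_{j_n} \to f_\om$ in AW and $v_{\ell_n} \to v(\om)$ in norm, then use that AW convergence of $g_{j_n} \to f_\om$ restricts to Hausdorff convergence of the epigraphs on each bounded ball $B_N \times [-N, N]$. The truncated map $g \mapsto \sup_{x \in B_N} \langle v, x \rangle - g(x)$ is $N$-Lipschitz in $v \in X'$ and uniformly continuous in $g$ under Hausdorff convergence on $B_N \times \R$, so a diagonal extraction over $(n, N)$ recovers $\sup_{x \in B_N} \langle v(\om), x \rangle - f_\om(x)$ from elements of $W$; letting $N \uparrow \i$ yields $f^*_\om(v(\om))$.

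The autonomous case is immediate, with $\{g_j\} = \{f\}$ a single value and no AW limit required; the construction reduces to selecting $\epsilon$-maximizers in each $B_N$ for each $v_\ell$, and the $N$-Lipschitz bound on the truncated conjugate $\sup_{B_N} \langle \cdot, x \rangle - f(x)$ in $v$, together with $\sup_{B_N}(\cdot) \uparrow f^*$ as $N \uparrow \i$, immediately delivers the equality on all of $U$. The main obstacle is the triple diagonal extraction in the non-autonomous case: coordinating $n \to \i$ (AW approximation of $f_\om$), $\ell_n \to \i$ (norm approximation to $v(\om)$), and $N \uparrow \i$ (truncation radius) so that the $N$-Lipschitz bounds on the truncated conjugates absorb the approximation errors. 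Handling the AW convergence on $B_N \times \R$ uniformly in $N$ by reverting to the definition of AW distance via truncated Hausdorff distances is the technical crux, and is what allows the argument to proceed without assuming either convexity of $f$ or reflexivity of $X$.
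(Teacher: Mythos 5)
The paper itself does not contain a proof of this lemma; it is deferred to the thesis \cite{drokto}, so no direct comparison with an in-paper argument is possible and your proposal has to be judged on its own. Two of its three components are workable: the separable measurability claim can be salvaged (most cleanly by noting that $\om \mapsto d\left[(x,t), \epi f_\om\right]$ is measurable in $\om$ and jointly continuous in $(x,t)$, so the sublevel sets of $f_{W_0}$ are product measurable), and the autonomous case is essentially sound, since for a single $g \in \LS(X)$ the two maps $v \mapsto \sup_{x \in B_N \cap W} \left( \langle v, x\rangle - g(x) \right)$ and $v \mapsto \sup_{x \in B_N} \left( \langle v, x\rangle - g(x) \right)$ are $N$-Lipschitz, agree on the dense set $\{v_\ell\}$ by your choice of near-maximizers, hence agree on all of $U = \overline{v(\Om)}$, and $N \uparrow \i$ finishes.

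The non-autonomous case has a genuine gap at the transfer step. Your $W$ is spanned by near-maximizers $x^*$ of the tilted \emph{approximants} $\langle v_\ell, \cdot\rangle - g_j$, but the dualizability identity evaluates $f_\om$, not $g_j$, at the points of $W$. Attouch--Wets convergence $g_j \to f_\om$ controls epigraphs, not pointwise values: from smallness of the $\rho$-Hausdorff distance between $\epi g_j$ and $\epi f_\om$ one only obtains some $z$ with $\|z - x^*\|$ small and $f_\om(z) \le g_j(x^*) + \delta$, and this $z$ has no reason to lie in the closed span $W$; at the point $x^* \in W$ itself, $f_\om(x^*)$ may be $+\i$ even though $g_j(x^*)$ is moderate, since $f_\om$ is merely lower semicontinuous and can jump upward off the trace of $\epi g_j$. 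The continuity of the truncated conjugate that you invoke is a statement about the \emph{values} $\sup_{B_N}\left(\langle v,\cdot\rangle - g_j\right) \to \sup_{B_N}\left(\langle v,\cdot\rangle - f_\om\right)$; without equi-coercivity it says nothing about where these suprema are nearly attained, which is the standard caveat of epigraphical convergence (infimal values pass to the limit, approximate minimizers do not). Repeating the Lipschitz-in-$v$ argument a second time does not close the loop either, because it would require Attouch--Wets convergence of the restrictions $g_j\vert_W \to f_\om\vert_W$, and intersecting epigraphs with $W \times \R$ is not an Attouch--Wets-continuous operation. So the triple diagonal extraction you identify as the technical crux is not where the difficulty lies; what is missing is a mechanism forcing $W$ to contain near-maximizers for every $f_\om$ in the AW-closure of $\{g_j\}$, and I do not see how to produce one from your construction without additional structure (for instance convexity of $f_\om$, which would allow passing to conjugates via the bicontinuity of the Legendre transform in the Attouch--Wets topology -- but the lemma, as you note, is stated without convexity).
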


For information about the Attouch-Wets topology, see \cite{closed sets}

\begin{proof}
	See \cite{drokto}.
\end{proof}

\section{Orlicz spaces} \label{sec. L}

In this section we define the notion of an Orlicz integrand $\varphi$ and show how it induces the Banach spaces $L_\varphi(\mu)$ of vector-valued functions called Orlicz spaces, whose basic properties like completeness, decomposability and embedding properties we study. As $L_\varphi(\mu)$ enjoys better properties when each of its elements vanishes outside a $\sigma$-finite set, we characterize this behaviour in terms of the Orlicz integrand. Similar spaces can be found in the literature under various names, such as Fenchel-Orlicz, generalized Orlicz, or Musielak-Orlicz spaces.

\subsection{Generator integrands}

As mentioned in the introduction, we never impose any kind of uniform behaviour w.r.t. $\om \in \Om$ on Orlicz integrands. Instead

\begin{definition}[Orlicz integrand] \label{def. Orlicz integrand}
	An even function $\varphi \in \Gamma(X)$ satisfying $\lim_{x \to 0} \varphi(x) = 0$ and $\lim_{\| x \| \to \i} \varphi(x) = \i$ is an \emph{Orlicz function}. A map $\varphi \colon \Om \times X \to \left[ 0, \i \right]$ is an Orlicz integrand if
	\begin{enumerate}[label = \textnormal{\alph*)'}]
		\item the map $x \mapsto \varphi_\om \left( x \right)$ is an Orlicz function for a.e. $\om \in \Om$; \label{en. it. Orlicz integrand a.e.}
		\item the integrand $\varphi$ is integrally separably measurable. \label{en. it. Orlicz integrand int sep norm}
	\end{enumerate}
\end{definition}

By convexity and evenness, a Orlicz integrand assumes a global minimum at the origin hence is non-negative. Remember that the request of dualizability is trivially satisfied if $X$ is separable.

\begin{proposition} \label{prop. coercivity}
	For a convex function $\phi \colon X \to \left( -\i, \i \right]$ with $\phi(0) = 0$ there holds
	\begin{equation}\label{eq. coercivity}
		\lim_{\| x \| \to \i} \phi \left( x \right) = \i \iff \, \exists r > 0 \colon \inf_{\| x \| = r} \phi \left( x \right) > 0 \iff \liminf_{\| x \| \to \i} \frac{\phi \left( x \right)}{\| x \|} > 0.
	\end{equation}
\end{proposition}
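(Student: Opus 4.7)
The plan is to establish the cycle of implications (1) $\Rightarrow$ (2) $\Rightarrow$ (3) $\Rightarrow$ (1), where (1), (2), (3) denote the three conditions in (\ref{eq. coercivity}) respectively. The only nontrivial step is (2) $\Rightarrow$ (3); the other two are essentially bookkeeping on the definitions of the limit and liminf.

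The key technical tool is the one-sided homogeneity estimate
$$
\phi(\lambda x) \ge \lambda \phi(x) \quad \text{for all } \lambda \ge 1,\, x \in X,
$$
which follows from the convexity of $\phi$ together with $\phi(0) = 0$: writing $x = \tfrac{1}{\lambda}(\lambda x) + (1 - \tfrac{1}{\lambda}) \cdot 0$ and applying convexity gives $\phi(x) \le \tfrac{1}{\lambda} \phi(\lambda x)$, which rearranges to the claimed inequality. This estimate is the single piece of convexity information I will use.

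For (1) $\Rightarrow$ (2), I choose any fixed positive threshold, say $M = 1$, and use the definition of $\lim_{\|x\| \to \i} \phi(x) = \i$ to produce an $R > 0$ with $\phi(x) \ge 1$ whenever $\|x\| \ge R$. Then $r := R$ fulfills $\inf_{\|x\| = r} \phi(x) \ge 1 > 0$. For (2) $\Rightarrow$ (3), let $r, c > 0$ be such that $\phi(x) \ge c$ on $\{\|x\| = r\}$. Given any $y$ with $\|y\| > r$, set $\lambda = \|y\|/r > 1$ and $x = y/\lambda$, so $\|x\| = r$. Then the homogeneity estimate and the lower bound on $\phi(x)$ yield
$$
\phi(y) = \phi(\lambda x) \ge \lambda \phi(x) \ge \frac{c}{r} \|y\|,
$$
from which $\liminf_{\|y\| \to \i} \phi(y)/\|y\| \ge c/r > 0$. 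Finally, (3) $\Rightarrow$ (1) is immediate: if the liminf equals some $2c > 0$, pick $R$ so that $\phi(y)/\|y\| \ge c$ whenever $\|y\| \ge R$; then $\phi(y) \ge c\|y\| \to \i$.

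I do not foresee a genuine obstacle: the substantive step (2) $\Rightarrow$ (3) is a direct application of the convexity inequality, and the interpretation of the three quantifiers in (\ref{eq. coercivity}) makes the other two implications a matter of instantiating the definitions. The only mildly delicate point is to keep straight that $\phi$ is allowed to take the value $+\i$, but since $\phi(0) = 0 < \i$ the convexity inequality remains valid with the usual conventions, and the estimate $\phi(\lambda x) \ge \lambda \phi(x)$ still provides the required lower bound (trivially so when $\phi(x) = +\i$).
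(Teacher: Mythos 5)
Your proof is correct and follows essentially the same route as the paper: the cyclic implications with the middle step resting on the convexity inequality $\phi(\lambda x) \ge \lambda \phi(x)$ for $\lambda \ge 1$, which is exactly the paper's observation that convexity makes the quotient $\phi(x)/\|x\|$ non-decreasing along rays. You have merely written out in full what the paper states in one sentence.
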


\begin{proof}
	The first statement implies the second, the second implies the third as convexity renders the quotient non-decreasing in $\| x \|$, and the third implies the first.
\end{proof}

The following lemma reveals why our notion of a Orlicz integrand is apt for duality theory:

\begin{lemma} \label{lem: gen iff conj}
	$\varphi \in \Gamma \left( X \right)$ is an Orlicz function iff $\varphi^* \in \Gamma \left( X' \right)$ is one.
\end{lemma}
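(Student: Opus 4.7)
The plan is to reduce the biconditional to two symmetric equivalences: first, that coercivity $\lim_{\|x\|\to\i}\varphi(x)=\i$ of $\varphi$ corresponds to continuity at the origin $\lim_{v\to 0}\varphi^*(v)=0$ of $\varphi^*$; and second, that continuity at the origin of $\varphi$ corresponds to coercivity of $\varphi^*$. Together with the routine facts that evenness transfers under conjugation (by substituting $x\mapsto -x$ under the supremum defining $\varphi^*$ and, in the reverse direction, under the supremum defining $\varphi=\varphi^{**}$) and that $\lim_{x\to 0}\varphi(x)=0$ forces $\varphi(0)=0$ (since $0$ then lies in the interior of $\dom\varphi$, on which convexity gives continuity) whence $\varphi^*(0)=-\inf\varphi=0$ and $\varphi^*\ge 0$, this will deliver the full equivalence.

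For the forward direction of the first equivalence I would extract from Proposition \ref{prop. coercivity} a linear lower bound $\varphi(x)\ge c\|x\|$ on $\|x\|\ge R$ for some $c,R>0$. Splitting the supremum defining $\varphi^*(v)$ at $\|x\|=R$ and using $\varphi\ge 0$ yields $\varphi^*(v)\le \|v\|R$ for $\|v\|\le c$, forcing $\varphi^*(v)\to 0$. For the reverse direction, if $\varphi^*$ is bounded by $1$ on some ball $\{\|v\|\le\delta\}\subset X'$, then Fenchel--Moreau and the Hahn--Banach identity $\sup_{\|v\|\le\delta}\langle v,x\rangle=\delta\|x\|$ give $\varphi(x)=\varphi^{**}(x)\ge \delta\|x\|-1$, which is coercive.

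The companion equivalence is treated by the analogue of the same two estimates after swapping the roles of $\varphi$ and $\varphi^*$: in the forward direction, boundedness of $\varphi$ by $\e$ on some ball $\{\|x\|\le\delta\}$ yields $\varphi^*(v)\ge \delta\|v\|-\e$, which is coercive; in the reverse direction, a linear minorant of $\varphi^*$ at infinity (from Proposition \ref{prop. coercivity} applied to $\varphi^*$) combined with the biconjugate identity $\varphi=\varphi^{**}$ and the same splitting yields $\varphi(x)\le R\|x\|$ for small $x$, hence continuity at the origin. I expect no real obstacle; the only point to watch is the Hahn--Banach computation $\sup_{\|v\|\le r}\langle v,x\rangle=r\|x\|$ for $v\in X'$, $x\in X$, which holds without any reflexivity assumption and is precisely what lets the argument go through for non-reflexive $X$.
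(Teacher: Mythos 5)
Your proof is correct and follows essentially the same route as the paper: the same two estimates (linear growth of $\varphi$ at infinity via Proposition \ref{prop. coercivity} gives $\varphi^*(v)\le r\|v\|$ near the origin, and $\varphi<\e$ on a small ball gives $\varphi^*(v)\ge\delta\|v\|-\e$), with coercivity read as boundedness of sublevel sets. The only difference is presentational: the paper proves a single implication and gets the converse for free from $\varphi=\varphi^{**}$ under the $(X,X')$ pairing, whereas you write out both directions explicitly, correctly flagging the Hahn--Banach identity $\sup_{\|v\|\le r}\langle v,x\rangle=r\|x\|$ as the only point where the two directions are not literally symmetric.
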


\begin{proof}
	It suffices to prove that $\varphi^*$ is an Orlicz function if $\varphi$ is one since $\varphi$ is conjugate to $\varphi^*$ for the duality between $X$ and $X'$. The function $\varphi^*$ is even. As $\varphi$ has bounded sublevel sets, we see that $\varphi^*$ vanishes continuously at the origin and since $\varphi$ vanishes continuously at the origin, we see that $\varphi^*$ has bounded sublevel sets. More precisely
	$$
	\exists r, s > 0 \colon \| x \| > r \implies \varphi(x) > s \| x \|
	$$
	by Proposition \ref{prop. coercivity} hence there holds
	$$
	\| x' \| < s \implies \varphi^* \left( x' \right) \le \sup_{\| x \| < r} \langle x', x \rangle \le r \| x' \|.
	$$
	For the second claim, note
	$$
	\forall \e > 0 \, \exists \delta > 0 \colon \| x \| < \delta \implies \varphi(x) < \e.
	$$
	Therefore
	$$
	\varphi^* \left( x' \right) \ge \sup_{\| x \| < \delta} \langle x', x \rangle - \sup_{\| x \| < \delta} \varphi (x) \ge \delta \| x' \| - \e
	$$
	so that $\varphi^*$ has bounded sublevel sets.
\end{proof}

Lemma \ref{lem: gen iff conj} implies that the conjugate integrand $\varphi^*_\om \left( x' \right) = \sup_{x \in X} \langle x', x \rangle - \varphi_\om(x)$ retains the Orlicz integrand property iff it is integrally separably measurable. For dualizable Orlicz integrands, this is the case:

\begin{lemma} \label{lem: conj sep norm}
	Let the Orlicz integrand $\varphi$ be dualizable for a decomposable space $S$ of strongly measurable functions	and let $\mu$ have no atom of infinite measure. Then the integrand $\varphi^*$ is integrally separably measurable. If $\mu$ is complete, then it suffices if $S$ is almost decomposable.
\end{lemma}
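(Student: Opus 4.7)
The plan is to verify, for every $W' \in \SS(X')$ and every $A \in \AA_\sigma$, that $\varphi^*|_{A \times W'}$ is $\AA(A) \otimes \BB(W')$-measurable; since $\mu$ has no atom of infinite measure, every atom lies in $\AA_f \subset \AA_\sigma$, and this is precisely what integral separable measurability of $\varphi^*$ demands. Because $\varphi^*_\om$ is convex and lower semicontinuous on $X'$ by Lemma \ref{lem: gen iff conj}, I would approach this joint measurability by first establishing slice measurability of $\varphi^*$ in $\om$ for each $x'$ in a countable dense subset of $W'$, and then invoking the classical criterion that a convex lsc integrand on a separable Banach space with this property is a normal integrand and hence jointly $\AA \otimes \BB(W')$-measurable.

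For slice measurability, I would combine dualizability with decomposability. Fix $x' \in W'$ and $F \in \AA_f$, and choose some $u_0 \in S$. Decomposability of $S$ shows that the bounded strongly measurable function $v := x' \chi_F + u_0 \chi_{\Om \setminus F}$ belongs to $S$. Dualizability of $\varphi$ at $v$ furnishes $W_v \in \SS(X)$ with
$$
\varphi^*\bigl[\om, v(\om)\bigr] = \sup_{x \in W_v} \langle v(\om), x\rangle - \varphi_\om(x) \quad \forall \om \in \Om,
$$
which on $F$ reads $\varphi^*(\om, x') = \sup_{x \in W_v}[\langle x', x\rangle - \varphi_\om(x)]$. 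For $\om$ in the full-measure set where $\varphi_\om$ is an Orlicz function, the concave usc map $x \mapsto \langle x', x\rangle - \varphi_\om(x)$ on $W_v$ is continuous at $0 \in W_v$, so by the standard density argument for concave usc functions continuous at an interior point, the supremum over $W_v$ collapses to the supremum over a countable dense $D_v \subset W_v$. The latter is an $\AA(F)$-measurable countable supremum by integral separable measurability of $\varphi$ on $W_v$, so $\varphi^*(\cdot, x')$ is measurable on $F$, and by covering $A$ with an isotonic sequence $F_k \uparrow A$ in $\AA_f$ also on $A$.

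Since $\varphi^*_\om$ is convex lsc on the separable Banach space $W'$ and slice measurability holds for each $x'$ in a countable dense subset, the classical normality criterion for convex lsc integrands on separable Banach spaces yields joint $\AA(A) \otimes \BB(W')$-measurability, completing the proof. For the second clause, the argument runs unchanged under almost decomposability with complete $\mu$: the modified function $v_\e$ is supported on $F_\e \subset F$ with $\mu(F \setminus F_\e) < \e$, giving measurability of $\varphi^*(\cdot, x')$ on each $F_{1/n}$; taking $n \to \i$ and invoking completeness of $\mu$ to absorb the vanishing residuals extends measurability to all of $F$, and thence to $A$.

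The principal obstacle is justifying the passage from slice measurability to joint $\AA \otimes \BB(W')$-measurability without resorting to measure completion in the decomposable case. This relies on the classical fact that a convex lsc integrand on a separable Banach space, measurable in $\om$ for each $x$ in a dense subset, is a normal integrand, proved via exhibiting a countable family of measurable affine minorants obtained by selection from the subdifferential. A secondary subtlety is that the concave usc density argument collapsing $\sup_{W_v}$ to $\sup_{D_v}$ is a priori valid only on the full-measure set where $\varphi_\om$ is genuinely Orlicz at the origin; the complementary null set must be extracted in $\AA$ (in the decomposable case) or absorbed by $\AA_\mu$ (in the almost decomposable case) to deliver the sharp conclusion.
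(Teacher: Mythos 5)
Your strategy is essentially the one the paper uses: reduce to a set of finite measure, plant the constant functions $x' \chi_F$ into $S$ via (almost) decomposability, invoke dualizability to write $\varphi^*(\cdot, x')$ as a supremum over a separable $W_v \subset X$, collapse that supremum to a countable one using continuity of $\varphi_\om$ at the origin, and then upgrade from a countable dense set of $x' \in W'$ to joint measurability on $\Om \times W'$. The first three steps are sound, modulo two small points the paper handles explicitly: each $W_v$ should be enlarged to a closed separable linear subspace so that the interior/density argument for the concave map $x \mapsto \langle x', x \rangle - \varphi_\om(x)$ makes sense, and the countably many $W_v$ should be merged into a single $W = \bigcup_v W_v$ so that one jointly measurable comparison function serves all $x'$ at once.

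The genuine gap is in the last step. The ``classical criterion'' you invoke --- that a convex lsc integrand on a separable Banach space which is measurable in $\om$ for each $x'$ in a fixed countable dense subset is normal, hence jointly measurable --- is false in that generality: take $f_\om$ to be the indicator of a single point $h(\om)$ for a non-measurable $h$ avoiding the dense set; every slice is identically $+\i$, yet $f$ is not jointly measurable. The domain of $\varphi^*_\om$ moves with $\om$, so a fixed dense set gives no control by itself. What rescues the argument is the structure supplied by Lemma \ref{lem: gen iff conj}: $\varphi^*_\om$ is an even convex lsc function, finite and continuous on a ball about the origin, and radially nondecreasing. Its values on a dense sequence therefore determine it first on $\interior \dom \varphi^*_\om$ (which contains $0$) by convex continuity, and then everywhere via $\varphi^*_\om(v) = \lim_{r \uparrow 1} \varphi^*_\om(r v)$, using lower semicontinuity together with radial monotonicity. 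This is exactly how the paper closes: it introduces the jointly measurable $g_\om = \sup_{x \in W} \langle \cdot, x \rangle - \varphi_\om(x)$, observes $\varphi^*_\om \ge g_\om$ with equality on the dense sequence, and concludes $\varphi^* = g$ globally by this two-step argument. Your alternative repair via ``measurable affine minorants selected from the subdifferential'' is also problematic as stated, since $\p \varphi^*_\om(v)$ lives in the bidual of $W'$, where no effective measurable selection theorem is available in this setting. Replace the appeal to the classical criterion by the explicit Orlicz-function argument and the proof closes.
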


\begin{proof}
	Since $\mu$ has no atom of infinite measure, we are left to demonstrate that the restriction of $\varphi^*$ to a $\sigma$-finite set in the first component and a separable set $V \in \SS\left( X' \right)$ in the other component is measurable. It suffices therefore to assume that $\mu$ is $\sigma$-finite. Given $F_n \in \AA_f$ with $\Om = \bigcup_n F_n$, it suffices if given $\e > 0$, we obtain $F_\e \subset F$ with $\mu\left( F \setminus F_\e \right) < \e$ and such that $\varphi^*$ is $\AA\left( F_\e \right) \otimes \BB(V)$-measurable if $\mu$ is complete. If $\mu$ is incomplete, it suffices if the same holds with $F_\e = F$. Because then $\varphi^*$ equals a measurable function $\mu$-a.e. in the first case and everywhere in the second case hence is measurable.
	%Genauer: Es gibt Ausnahmemenge N_1 in \Om, sodass varphi* auf \Om\N_1 x V messbar ist.
	
	Let $v_n \in V$ be a dense sequence. Using the almost decomposability of $S$, we find
	$$
	G_n \in \AA(F);
	\quad v_n \chi_{G_n} \in S;
	\quad \mu\left( F \setminus G_n \right) \le 2^{-n} \e.
	$$
	If $S$ is decomposable, we may pick $G_n = F$ instead. We find $W_n \in \SS(X)$ with
	$$
	\varphi^*_\om(v_n) = \sup_{x \in W_n} \langle v_n, x \rangle - \varphi_\om(x) \quad \text{for a.e. } \om \in G_n.
	$$
	Consequently, there holds for $W = \bigcup_{n \ge 1} W_n \in \SS(X)$ and $E_\e = \bigcup_{n \ge 1} G_n$ that
	$$
	\mu \left( F \setminus E_{\e_n} \right) \le \e; \quad \varphi^*_\om(v_n) = \sup_{x \in W} \langle v_n, x \rangle - \varphi_\om(x) \quad \text{for a.e. } \om \in E_\e \quad \forall n \in \N.
	$$
	Setting $g(v) = \sup_{x \in W} \langle v, x \rangle - \varphi_\om(x)$ for $v \in V$ we have $\varphi^*_\om \ge g_\om$ and $\varphi^*_\om(v_n) = g_\om(v_n)$ for $n \in \N$ hence $\varphi^*_\om$ and $g_\om$ agree on $\interior \dom g_\om \ne \emptyset$ for all $\om \in E_\e$ by convex continuity in the interior by \cite[§3.2, Thm. 1]{theory extremal}. By lower semicontinuity and since both $r \mapsto \varphi^*_\om \left( r v \right)$ and $r \mapsto g_\om \left( r v \right)$ for $r \ge 0$ are non-decreasing for all $v \in V$, we deduce $\varphi^* = g$ globally. As $g$ is $\AA( E ) \otimes \BB(V)$-measurable, our claim obtains.
\end{proof}

For later reference we record another simple observation about Orlicz functions.

\begin{proposition} \label{prop. Orlicz function and convergence}
	Let $\varphi \in \Gamma(X)$ be an Orlicz function. A sequence $x_n \in X$ converges iff
	$$
	\forall k \in \N \, \exists N \in \N \colon m, n \ge N \implies \varphi \left[ k \left( x_m - x_n \right) \right] < k^{-1}.
	$$
\end{proposition}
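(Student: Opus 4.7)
The plan is a straightforward two-direction argument. I expect the forward direction to be immediate from the continuity of $\varphi$ at the origin, while the backward direction requires extracting quantitative norm control from the qualitative $\varphi$-smallness and is the one place where the coercivity condition \ref{prop. coercivity} genuinely enters.

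\emph{Forward direction.} Assume $x_n \to x$ in $X$. Then $x_m - x_n \to 0$ as $m, n \to \infty$, and for each fixed $k \in \N$ also $k \left( x_m - x_n \right) \to 0$. Since $\varphi$ is an Orlicz function, $\lim_{y \to 0} \varphi(y) = 0$, so $\varphi \left[ k \left( x_m - x_n \right) \right] \to 0$ as $m, n \to \infty$, and eventually drops below $k^{-1}$. This gives the stated Cauchy-type condition.

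\emph{Backward direction.} By completeness of $X$ it suffices to prove that $x_n$ is Cauchy in norm. Coercivity of $\varphi$ together with Proposition \ref{prop. coercivity} yields $r > 0$ and $s = \inf_{\| y \| = r} \varphi(y) > 0$. The key quantitative step is the estimate
\[
\varphi(y) \ge \frac{\| y \|}{r} \, s \quad \text{whenever } \| y \| \ge r,
\]
which follows from convexity applied to $r y / \| y \| = (r/\| y \|) y + (1 - r/\| y \|) \cdot 0$ together with $\varphi(0) = 0$. Now fix $k \in \N$ with $k \ge s^{-1}$ and pick $N$ from the hypothesis. For $m, n \ge N$ set $y = k(x_m - x_n)$. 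If $\| y \| \ge r$, the above estimate gives $(\| y \|/r) s \le \varphi(y) < k^{-1}$, whence $\| x_m - x_n \| < r / (k^2 s) \le r/k$; if $\| y \| < r$, then directly $\| x_m - x_n \| < r/k$. Either way, $\| x_m - x_n \| < r/k$ for $m, n \ge N$. Letting $k \to \infty$ shows $x_n$ is Cauchy and hence convergent.

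The main obstacle, if any, is remembering to invoke Proposition \ref{prop. coercivity} to replace the qualitative coercivity assumption with the uniform lower bound on a sphere; once that is available, the convexity-plus-$\varphi(0)=0$ inequality above does all the work.
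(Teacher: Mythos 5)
Your proof is correct. The paper records this proposition without proof (as a ``simple observation''), and your argument is exactly the intended one: continuity of $\varphi$ at the origin for the forward direction, and for the converse the uniform lower bound $\varphi(y) \ge s\|y\|/r$ for $\|y\| \ge r$ obtained from Proposition \ref{prop. coercivity} via convexity and $\varphi(0)=0$, which converts $\varphi$-smallness into norm-smallness and yields the Cauchy property.
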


We close this section remarking that in the literature we find divergent names and definitions for Orlicz functions, which are sometimes equivalent to Definition \ref{def. Orlicz integrand} or whose apparently greater generality is to some extent spurious. For example, Orlicz functions that are discontinuous or lack bounded sublevel sets may be adapted to match our definition without essentially altering their Orlicz space. This is significant for the scope of our theory, but is not logically necessary for its understanding so that we refer the interested reader to \cite[§3]{drokto}.

\subsection{Definition and basic properties} \label{ssec. def Orlicz}

For a Orlicz integrand $\varphi$ and $u \in \mathcal{L}_0 \left( \Om ; X \right)$ we set
$$
I_\varphi (u) = \int \varphi \left[ \om, u \left( \om \right) \right] \, d \mu \left( \om \right); \quad \| u \|_\varphi = \inf \left\{ \alpha > 0 \st I_\varphi \left( \alpha^{-1} u \right) \le 1 \right\}.
$$
The Minkowski functional $\| \cdot \|_\varphi$ is a seminorm on its domain $\mathcal{L}_\varphi(\mu)$, for which we write $\mathcal{L}_\varphi$ if no other measure is involved. By Proposition \ref{prop. Orlicz function and convergence} the kernel of $\| \cdot \|_\varphi$ is characterized as the functions that vanish a.e. Factoring out the kernel, we arrive at the \emph{Orlicz space} $L_\varphi(\mu)$ on which $\| \cdot \|_\varphi$ is called the \emph{Luxemburg norm}. An equivalent norm is given by the \emph{Amemiya norm}
$$
\vvvert u \vvvert_\varphi = \inf_{\alpha > 0} \alpha^{-1} \left[ 1 + I_\varphi \left( \alpha u \right) \right]
$$
with
$$
\| u \|_\varphi \le \vvvert u \vvvert_\varphi \le 2 \| u \|_\varphi \quad \forall u \in L_\varphi(\mu)
$$
according to \cite[Thm. 1.10]{modular alt}.
% Geht auch zu Fuß: Ersterseits \vvvert \cdot \vvvert_\varphi \ge \inf_{\alpha > 0} \alpha^{-1} \| \alpha \cdot \|_\varphi = \| \cdot \|_\varphi. Zweiterseits \vvvert \cdot \vvvert_\varphi \le \inf_{\alpha \ge 1} 2 \max\{ 1, \alpha^{-1} I_\varphi \left( \alpha \cdot \right) \} = 2 \max\{ 1, I_\varphi \}, sodass \| \cdot \|_\varphi \le 1 \implies I_\varphi \le 1 \implies \vvvert \cdot \vvvert_\varphi \le 2 wegen der Homogenität der Orlicz-Norm, die klar ist. Orlicz-Norm konvex: (alpha, u) -> alpha [ 1 + I_varphi(alpha^-1 u) ] konvex (als Perspektive konvexer Funktion). Darum ||| u |||_varphi <= alpha_lambda [1 + I_varphi(alpha^-1_lambda u) ] <= (1 - lambda) alpha_0 [1 + I_varphi(alpha^-1_0 u) ] + lambda alpha_1 [1 + I_varphi(alpha^-1_1 u) ]. Nimm inf über alle alpha_0 und alpha_1 um die Konvexität zu schließen.
Similarly, we define the \emph{dual Luxemburg norm} and the \emph{dual Amemiya norm} on the dual space $L_\varphi(\mu)^*$ as
$$
\| v \|^*_\varphi = \inf \left\{ \alpha > 0 \st I^*_\varphi \left( \alpha^{-1} v \right) \le 1 \right\}, \quad \vvvert v \vvvert^*_\varphi = \inf_{\alpha > 0} \alpha \left[ 1 + I^*_\varphi \left( \alpha^{-1} v \right) \right].
$$
Again
$$
\| v \|^*_\varphi \le \vvvert v \vvvert^*_\varphi \le 2 \| v \|^*_\varphi \quad \forall v \in L_\varphi(\mu)^*.
$$
One may easily check that the dual Amemiya norm agrees with the canonical operator norm induced by the Luxemburg norm. In the same way, the Amemiya norm agrees with the operator norm that $L_\varphi(\mu)$ carries as a subset of its bidual space.
% <v, u> = a <a^-1 v, u> <= a [ I_varphi(u) + I*_varphi(a^-1 v) ] hence N_1(v) = sup_{I_varphi(u) <= 1} <v, u> <= inf_{a > 0} [ 1 + I*_vaprhi(a^-1 v) ] = N_2(v) so that N_1 <= N_2. For the converse inequality let e > 0 and a > 0. For v in L_varphi(mu)* we find u in L_varphi(mu) with I_varphi(u) + I*_varphi(a^-1 v) <= <a^-1 v, u> + e. If I_varphi(u) <= 1, then sup_{I_varphi(u) <= 1} a [ I_vaprhi(u) + I*_varphi(a^-1 v) ] <= N_1(v) + e so that taking inf_{a > 0} yields N_2(v) <= N_1(v) + e. If infty > I_varphi(u) > 1, then || u ||_varphi > 1 by convex continuity along rays. Hence for b > || u ||_varphi > 1 there holds e + <a^-1 v, b^-1 u> >= b^-1 [ I*_varphi(a^-1 v) + I_varphi(u) ] >= I*_varphi(a^-1 b^-1 v) + I_varphi(b^-1 u) >= I*_varphi(a^-1 b^-1 v) + 1. Sending b -> || u ||_varphi yields a e + <v, u / | u |_varphi> >= a [ I*_varphi(a^-1 v / | u |_varphi) + 1 ] so that taking (i) the infimum over a > 0 (ii) the supremum over all u with || u ||_varphi > 1 yields N_1(v) >= N_2(v). Sening e -> 0+ proves N_1 = N_2.
%Naturally this raises the question how the dual Luxemburg norm is related to the convex conjugate $\varphi^*$ and the adjoint Orlicz space $L_{\varphi^*}(\mu)$.\footnote{We tacitly assume $\varphi^*$ a Orlicz integrand whenever $L_{\varphi^*}(\mu)$ shows up.} The answer is not simple and several cases depending on the properties of $\varphi$ and $X$ must be distinguished, cf. §\ref{sec. duality}.
For frequent later use, we record the following useful inequalities relating in particular $I_\varphi$ and $I^*_\varphi$ with their Luxemburg norms.

\begin{lemma} \label{lem: modular-norm}
	Let $V$ be a real vector space, $f \colon V \to \left[ 0, \i \right]$ a convex function with $f(0) = 0$ and left-continuous, i.e. $\lim_{\lambda \uparrow 1} f\left(\lambda v\right) = f(v)$ for $v \in V$; Let $p \colon V \to \left[ 0, \i \right]$ be the Minkowski functional of the sublevel set $\left\{ f \le 1 \right\}$. Then there hold the following inequalities:
	\begin{enumerate}[label = \textnormal{\alph*)}]
		\item $p(v) \le 1 \implies f(v) \le p(v)$,
		\item $1 < p(v) \implies f(v) \ge p(v)$,
		\item $p(v) \le 1 + f(v)$.
	\end{enumerate}
\end{lemma}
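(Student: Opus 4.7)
The plan is to reduce everything to the one-dimensional auxiliary function $g_v(\lambda) := f(\lambda v)$ for $\lambda \ge 0$. Since $f$ is convex with $f(0) = 0$, the function $g_v$ is convex on $[0, \infty)$ with $g_v(0) = 0$, so the secant ratio $\lambda \mapsto g_v(\lambda)/\lambda$ is non-decreasing on $(0, \infty)$. Rewriting the Minkowski functional as $p(v) = \inf \left\{ \alpha > 0 \st g_v(1/\alpha) \le 1 \right\}$, I will first extract the technical fact that $g_v(1/p(v)) \le 1$ whenever $p(v) > 0$: for any $\lambda \in (0, 1/p(v))$ pick $\alpha \in (p(v), 1/\lambda)$ with $f(v/\alpha) \le 1$; then convexity together with $f(0) = 0$ gives $f(\lambda v) = f(\lambda\alpha \cdot v/\alpha) \le \lambda\alpha f(v/\alpha) \le 1$, and the left-continuity hypothesis (applied along the ray through $v/p(v)$) transfers the bound to $\lambda = 1/p(v)$ in the limit.

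For part (a) with $0 < p(v) \le 1$, I will decompose $v = p(v) \cdot (v/p(v)) + (1 - p(v)) \cdot 0$ and apply convexity together with $f(0) = 0$ to obtain $f(v) \le p(v) f(v/p(v)) \le p(v)$, using the auxiliary fact just established. The edge case $p(v) = 0$ is handled separately: the definition of $p$ then forces $g_v \le 1$ on all of $(0, \infty)$, but a ratio that is both non-decreasing and eventually bounded by $1/\lambda \to 0$ must vanish identically, so $g_v \equiv 0$ and in particular $f(v) = 0 = p(v)$.

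For part (b) with $p(v) > 1$, I will invoke the non-decreasing ratio in the opposite direction. Any $\lambda \in (1/p(v), 1)$ must satisfy $f(\lambda v) > 1$, since otherwise $1/\lambda < p(v)$ would belong to the set whose infimum defines $p(v)$. Combined with the ratio inequality $f(v) \ge g_v(\lambda)/\lambda$ valid for $\lambda \le 1$, this gives $f(v) > 1/\lambda$, and sending $\lambda \downarrow 1/p(v)$ yields $f(v) \ge p(v)$.

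Part (c) reduces to a direct convexity computation: if $f(v) = \infty$ the inequality is trivial; otherwise, writing $v/(1+f(v))$ as the convex combination $\tfrac{1}{1+f(v)} v + \tfrac{f(v)}{1+f(v)} \cdot 0$ gives $f(v/(1+f(v))) \le f(v)/(1+f(v)) \le 1$, so $1 + f(v)$ belongs to the set whose infimum defines $p(v)$. I expect the main obstacle to be the careful deployment of left-continuity at $\lambda = 1/p(v)$ in establishing $g_v(1/p(v)) \le 1$: without this hypothesis the sublevel set $\left\{ f \le 1 \right\}$ need not be relatively closed along rays through the origin, and the sharp boundary behaviour supporting both (a) and (b) would break down.
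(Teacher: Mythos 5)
Your argument is correct. Note that the paper does not spell out a proof at all: it simply points to the proof of Corollary 2.1.15 in Diening--Harjulehto--H\"ost\"o--R\r{u}\v{z}i\v{c}ka, remarking that only the stated hypotheses (convexity, $f(0)=0$, left-continuity) are actually used there. What you have written is in substance a self-contained reconstruction of that standard argument: the monotone secant ratio $\lambda \mapsto f(\lambda v)/\lambda$ coming from convexity and $f(0)=0$, the ``unit ball property'' $f\left(v/p(v)\right) \le 1$ obtained from left-continuity along the ray through $v/p(v)$, and the direct convex-combination estimate for (c). You correctly identify left-continuity as the one hypothesis that cannot be dispensed with for the boundary case $p(v)=1$ in (a) and for passing to the limit $\lambda \downarrow 1/p(v)$ in (b), and your handling of the degenerate cases $p(v)=0$ and $p(v)=\infty$ is sound. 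The only thing your write-up buys beyond the paper is that it makes the lemma independent of the external reference; conversely, the paper's one-line citation is adequate precisely because the cited proof goes through verbatim under the weaker assumptions.
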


\begin{proof}
	This follows from the proof of \cite[Cor. 2.1.15]{var exp}, where the same assertion is made for $f$ a semimodular, but only the assumptions above are actually used.
\end{proof}

To prove that $L_\varphi(\mu)$ is complete, we first record a simple observation that will frequently be used to reduce considerations for $\sigma$-finite measures to finite ones.

\begin{proposition} \label{prop. equivalent finite measure}
	Let $\mu$ be $\sigma$-finite and $f \colon \Om \to \R$ a positive integrable function. For the finite measure $d \nu = f d \mu$ and the Orlicz integrand $\phi = \frac{\varphi}{f}$ there holds $L_\varphi(\mu) = L_\phi \left( \nu \right)$.
\end{proposition}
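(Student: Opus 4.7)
The plan is to verify that the two Orlicz spaces carry literally the same Minkowski functional, so the claimed equality becomes a tautology once the modulars are shown to coincide. The underlying observation is that because $f$ is strictly positive, $\mu$ and $\nu$ are mutually absolutely continuous (with Radon-Nikodym derivative $f$ one way and $f^{-1}$ the other), so the classes of $\mu$-null and $\nu$-null sets agree, and hence the class $\mathcal{L}_0(\Om;X)$ of strongly measurable functions is insensitive to the switch from $\mu$ to $\nu$. Moreover, $\nu$ inherits $\sigma$-finiteness (in fact, finiteness) from the integrability of $f$, so our notion of the exhausting integral reduces to the usual Lebesgue integral in both cases.

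First I would verify that $\phi(\om,x)=\varphi(\om,x)/f(\om)$ is genuinely an Orlicz integrand in the sense of Definition~\ref{def. Orlicz integrand}. For a.e.\ $\om$, the function $x \mapsto \phi_\om(x)$ is even, convex, lower semicontinuous, proper, satisfies $\lim_{x\to 0}\phi_\om(x)=0$ (dividing a limit-zero quantity by the finite positive number $f(\om)$ preserves the limit), and $\lim_{\|x\|\to\i}\phi_\om(x)=\i$ (for the same reason). Integral separable measurability of $\phi$ reduces to that of $\varphi$ since division by the measurable positive function $f(\om)$ commutes with restriction to $W\in\SS(X)$ in the second variable and preserves $\AA\otimes\BB(W)$-measurability.

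Next comes the heart of the argument: for every $u\in\mathcal{L}_0(\Om;X)$, the classical density formula for $d\nu=f\,d\mu$ applied to the non-negative integrand $\varphi[\om,u(\om)]$ yields
\begin{equation*}
  I_\phi(u;\nu)=\int\frac{\varphi[\om,u(\om)]}{f(\om)}\,f(\om)\,d\mu(\om)=\int\varphi[\om,u(\om)]\,d\mu(\om)=I_\varphi(u;\mu),
\end{equation*}
valid as an identity in $[0,\i]$; the cancellation $(\varphi/f)\cdot f=\varphi$ is legitimate $\mu$-a.e.\ since $f>0$ $\mu$-a.e. Replacing $u$ by $\alpha^{-1}u$ and taking the infimum over admissible $\alpha$ shows that the Luxemburg functionals coincide pointwise on $\mathcal{L}_0(\Om;X)$, so $\mathcal{L}_\varphi(\mu)=\mathcal{L}_\phi(\nu)$ with identical seminorms. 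Quotienting by the common kernel of functions vanishing a.e.\ (recall the two null-set structures agree) produces the claimed equality $L_\varphi(\mu)=L_\phi(\nu)$ as normed spaces.

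The only place requiring any care — and it is the closest thing to an obstacle — is the verification of the integrand axioms for $\phi$, specifically ensuring that integral separable measurability survives division by $f$; this is immediate because $f$ is measurable and strictly positive, and measurability in the second variable is unaffected. Everything else is bookkeeping with the density transformation.
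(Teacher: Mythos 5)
Your proposal is correct and is exactly the argument the paper has in mind: the paper states this proposition without proof, treating the change-of-density identity $I_\phi(u;\nu)=\int(\varphi/f)\,f\,d\mu=I_\varphi(u;\mu)$ and the resulting coincidence of Luxemburg functionals as immediate. Your additional checks (that $\phi$ remains an Orlicz integrand and that the $\mu$- and $\nu$-null sets agree because $f>0$) are the right points to verify and are handled correctly.
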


The existence of such a function $f$ is equivalent to the $\sigma$-finiteness of $\mu$. We need to get one last measure theoretic generality out of our way: the space $L_\varphi(\mu)$ does not change if $\mu$ is replaced by its completion $\bar{\mu}$. More precisely, the total set of a.e. equivalence classes of strongly measurable functions w.r.t. $\mu$ does not change under completion as can be seen by appealing to Lemma \ref{lem: strong mb completion}. In this sense, there exists a canonical isometric isomorphism between $L_\varphi(\mu)$ and $L_\varphi \left( \bar{\mu} \right)$. We now prove the completeness of $L_\varphi(\mu)$ for an arbitrary measure $\mu$. The adaptation of the usual proof for Lebesgue spaces is not completely trivial in the case of a non-$\sigma$-finite measure due to the $\om$-dependence of the Orlicz integrand $\varphi$.

\begin{theorem} \label{thm. L complete}
	$L_\varphi(\mu)$ in the Luxemburg-norm $\| \cdot \|_\varphi$ is a Banach space. Each convergent sequence in $L_\varphi (\mu)$ has a subsequence that converges a.e. to its limit.
\end{theorem}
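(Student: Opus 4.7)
The plan is to adapt the classical completeness proof for Lebesgue spaces by translating Luxemburg-norm estimates into modular estimates via Lemma \ref{lem: modular-norm}, and by exploiting the exhausting integral together with its Fatou lemma from \S\ref{sec. inf-int interchange} to accommodate $\mu$ being possibly non-$\sigma$-finite. The subsequence statement falls out for free, since an i.a.e.\ limit of any subsequence of a norm-convergent sequence must coincide with the norm limit.

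First, I would take a Cauchy sequence $\left( u_n \right) \subset L_\varphi(\mu)$ and extract indices $n_1 < n_2 < \cdots$ with $\| u_{n_{k+1}} - u_{n_k} \|_\varphi \le 4^{-k}$. Then $\| 2^k ( u_{n_{k+1}} - u_{n_k} ) \|_\varphi \le 2^{-k}$, so Lemma \ref{lem: modular-norm}(a), applied to the convex functional $I_\varphi$ (left-continuous by monotone convergence) and its Minkowski functional $\| \cdot \|_\varphi$, yields $I_\varphi \bigl( 2^k ( u_{n_{k+1}} - u_{n_k} ) \bigr) \le 2^{-k}$. Monotone convergence in the exhausting sense then shows $\sum_k \varphi_\om \bigl[ 2^k ( u_{n_{k+1}}(\om) - u_{n_k}(\om) ) \bigr]$ has exhausting integral at most $1$, hence is finite i.a.e., and in particular its general term tends to $0$ i.a.e.

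Second, I would upgrade this to i.a.e.\ Cauchyness of $\bigl( u_{n_k}(\om) \bigr)_k$ in $X$. On any $\sigma$-finite set or atom of finite measure, Proposition \ref{prop. coercivity} applied to a.e.\ $\om$ provides a radius $R_\om$ with $\varphi_\om(x) \le 1 \Rightarrow \| x \| \le R_\om$; hence eventually $\| 2^k ( u_{n_{k+1}}(\om) - u_{n_k}(\om) ) \| \le R_\om$ and the $X$-norm increments are summable. On an atom $A$ of infinite measure each $u_{n_k}$ is a.e.\ constant, $u_{n_k} \equiv x_k$ on $A$, and $I_\varphi \bigl( 4^k ( u_{n_{k+1}} - u_{n_k} ) \bigr) \le 1$ forces $\varphi_\om( 4^k ( x_{k+1} - x_k ) ) = 0$ a.e.\ on $A$ since $\mu(A) = \infty$; the bounded zero set of $\varphi_\om$ again yields summable $X$-norm increments. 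Thus $u_{n_k}(\om)$ converges i.a.e.\ to some $u(\om) \in X$, and $u$ is strongly measurable because the countable union of the separable ranges of the $u_{n_k}$ is separable and a.e.\ limits of measurable $X$-valued functions are measurable.

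Finally, I would establish $u_n \to u$ in $\| \cdot \|_\varphi$, from which $u = ( u - u_N ) + u_N \in L_\varphi(\mu)$ follows. Given $\e > 0$, pick $N$ with $\| u_m - u_n \|_\varphi < \e$ for $m, n \ge N$, so $I_\varphi \bigl( \e^{-1} ( u_{n_k} - u_n ) \bigr) \le 1$ whenever $n_k, n \ge N$. Lower semicontinuity of $\varphi_\om$, the i.a.e.\ convergence $u_{n_k} \to u$, and Fatou in the exhausting sense combine to give $I_\varphi \bigl( \e^{-1} ( u - u_n ) \bigr) \le \liminf_k I_\varphi \bigl( \e^{-1} ( u_{n_k} - u_n ) \bigr) \le 1$, i.e.\ $\| u - u_n \|_\varphi \le \e$. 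The main obstacle is the treatment of atoms of infinite measure: there the Luxemburg seminorm can vanish on a whole bounded set of constants, so pointwise $X$-Cauchyness is not automatic from norm-Cauchyness, and one must exploit the rate of the fast Cauchy subsequence together with coercivity of $\varphi_\om$ to convert modular control into genuine $X$-Cauchyness.
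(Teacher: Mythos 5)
Your proof is correct, but the core step --- extracting an a.e.\ convergent subsequence from a norm-Cauchy sequence --- runs along a genuinely different track than the paper's. The paper splits off a $\sigma$-finite set $A$ outside of which all the relevant modulars vanish, so that Proposition \ref{prop. Orlicz function and convergence} yields pointwise convergence there at once; on $A$ it reduces to a finite measure via Proposition \ref{prop. equivalent finite measure} and proves convergence \emph{in measure} by contradiction, using Markov's inequality and the measurability of $\om \mapsto \inf_{\| x \| > \e} \varphi \left( \om, r x \right) / \| x \|$, which is why it first arranges $\mu$ complete and $X$ separable so that $\varphi$ becomes a normal integrand. You instead take a fast Cauchy subsequence, convert the norm bounds into summable modular bounds via Lemma \ref{lem: modular-norm}, and argue pointwise: for a.e.\ fixed $\om$ the sublevel set $\left\{ \varphi_\om \le 1 \right\}$ is bounded by some $R_\om$, so $\varphi_\om \left[ 2^k \left( u_{n_{k+1}}(\om) - u_{n_k}(\om) \right) \right] \to 0$ eventually forces $\| u_{n_{k+1}}(\om) - u_{n_k}(\om) \| \le 2^{-k} R_\om$ and the increments are absolutely summable in the complete space $X$. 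This is a Riesz--Fischer-type argument that bypasses convergence in measure, the reduction to a finite measure, and every measurability consideration beyond that of $\om \mapsto \varphi_\om \left[ 2^k \left( u_{n_{k+1}}(\om) - u_{n_k}(\om) \right) \right]$; the price is a separate (and correct) treatment of atoms of infinite measure via a.e.\ constancy, a case the paper's set $A$ absorbs automatically since such atoms lie in $\Om \setminus A$. Both proofs share the closing Fatou step that upgrades a.e.\ convergence of the subsequence to norm convergence of the whole sequence, and neither uses continuity of $\varphi_\om$ at the origin, only lower semicontinuity, convexity, $\varphi_\om(0) = 0$ and bounded sublevel sets.
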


The following proof remains valid if the Orlicz integrand $\varphi$ has no point of continuity on a set of positive measure.

\begin{proof}
	Since completeness is preserved under isometry, we may assume $\mu$ complete without loss of generality. It is standard to check that $L_\varphi (\mu)$ is a normed linear space. We extend the completeness proof of \cite[Thm. 2.4]{vvop} to the non-$\sigma$-finite case. It suffices to prove that any Cauchy sequence $u_n$ has a norm convergent subsequence that converges a.e. We claim that it is enough to supply a subsequence $u_{n_k}$ that converges a.e. Because then the a.e. limit of $u_{n_k}$ agrees with some strongly measurable function $u$ a.e. so that the Fatou lemma implies
	\begin{align*}
		\int \varphi \left[ \om, \lambda \left( u - u_{n_\ell} \right) \right] \, d \mu
		& \le \int \liminf_k \varphi \left[ \om, \lambda \left( u_{n_k} - u_{n_\ell} \right) \right] \, d \mu \\
		& \le \liminf_k \int \varphi \left[ \om, \lambda \left( u_{n_k} - u_{n_\ell} \right) \right] \, d \mu
	\end{align*}
	whence $\| u - u_{n_\ell} \|_\varphi \le \liminf_k \| u_{n_k} - u_{n_\ell} \|_\varphi$ follows. Consequently $u \in L_\varphi$ and $u_n \to u$ thus obtaining completeness. Since each member of the sequence $u_n$ is almost separably valued, we may assume $X$ to be separable without loss of generality. As $\mu$ is complete, we may then also assume that $\varphi$ is an integrally normal integrand on $X$ by Lemma \ref{lem: joint measurability}.
	$$
	\forall m \in \N \, \exists M \in \N \colon n_1, n_2 \ge M \implies \| u_{n_1} - u_{n_2} \| < \frac{1}{m}.
	$$
	The set
	$$
	A = \bigcup_{m \ge 1} \bigcup_{n_1, n_2 \ge M(m)} \left\{ \varphi \left( \om, m \left[ u_{n_1}(\om) - u_{n_2}(\om) \right] \right) > 0 \right\}
	$$
	is a countable union of sets permitting positive integrable functions hence $\sigma$-finite. We claim that
	\begin{equation} \label{eq. a.e. on complement}
		\exists \lim_n u_n(\om) \in X \quad \text{for a.e. } \om \in \Om \setminus A.
	\end{equation}
	Indeed
	$$
	n_1, n_2 \ge M(m) \implies \varphi \left( \om, m \left[ u_{n_1}(\om) - u_{n_2}(\om) \right] \right) = 0 \quad \forall \om \in \Om \setminus A.
	$$
	Therefore (\ref{eq. a.e. on complement}) follows from Proposition \ref{prop. Orlicz function and convergence} by Definition \ref{def. Orlicz integrand}. We have reduced to the problem of extracting from $u_n$ a subsequence that converges a.e. on the $\sigma$-finite set $A$. Thus we may from now on assume that $\mu$ is $\sigma$-finite without loss of generality hence we may take $\mu$ finite by possibly modifying the integrand and measure as in Proposition \ref{prop. equivalent finite measure}. We argue by contradiction that $u_n$ converges in measure: suppose that there exists $\e > 0$ and $\delta > 0$ such that for any subsequence of $n$ there exists a subsubsequence $n_k$ with
	$$
	C_\e := \left\{ \| u_{n_k} - u_{n_\ell} \| > \e \right\}, \quad \mu \left( C_\e \right) > \delta.
	$$
	Note that the sets
	$$
	\left\{ \inf_{\| x \| > \e} \frac{\varphi \left( \om, r x\right)}{\| x \|} > 1 \right\}
	$$
	are measurable by normality of $\varphi$ and Lemma \ref{lem: equivalence infimal measurability}. The measure $\mu$ being finite, we find $r > 0$ so large that
	$$
	\mu \left( \om \in C_\e \st \inf_{\| x \| > \e} \frac{\varphi \left( \om, r x\right)}{\| x \|} > 1 \right) > \mu \left( C_\e \right) - \frac{\delta}{2}
	$$
	hence by Definition of $C_\e$ there follows
	$$
	\mu \left( \om \in C_\e \st \varphi \left( \om, r \left[ u_{n_k} - u_{n_\ell} \right] \right) > \e \right) > \frac{\delta}{2}.
	$$
	However, by Markov's inequality, we have
	$$
	\mu \left( \om \in C_\e \st \varphi \left( \om, r \left[ u_{n_k} - u_{n_\ell} \right] \right) > \e \right) \le \frac{1}{\e} \int \varphi \left( \om, r \left[ u_{n_k} - u_{n_\ell} \right] \right) \, d \mu \xrightarrow{k, \ell \to \i} 0.
	$$
	We have arrived at a contradiction; $u_n$ converges in measure hence admits an a.e. convergent subsequence on $A$ thus on $\Om$.
\end{proof}

An important difference between the well-known Bochner-Lebesgue spaces $L_p\left(\mu ; X \right)$ for $1 \le p < \i$ and a general Orlicz space is the possibility that an element of $L_\varphi(\mu)$ need not vanish outside a $\sigma$-finite set. Many results about $L_p\left(\mu ; X \right)$ are easy to prove for $\sigma$-finite measures and may then be transferred to the case of an arbitrary measure by using this observation. Also, functions vanishing off a $\sigma$-finite set appear naturally when one characterizes the maximal linear subspace of $\dom I_\varphi$, cf. Theorem \ref{thm. C max subspace}. In order to capture this behaviour in our theory, we introduce
\begin{definition}[$\sigma$-finite concentration] \label{def. sigma-finite concentration}
	A function $u \colon \Om \to X$ is \emph{$\sigma$-finitely concentrated} iff it vanishes outside a $\sigma$-finite set. For $L \subset L_\varphi(\mu)$ we denote by $L^\sigma$ the subset of $\sigma$-finitely concentrated elements in $L$.
\end{definition}

\begin{lemma} \label{lem: Lvarphisigma closed linear subspace}
	The space $L^\sigma_\varphi(\mu)$ is a closed linear subspace of $L_\varphi(\mu)$.
\end{lemma}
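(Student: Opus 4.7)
The plan is to verify the two defining properties separately, with linearity being a routine union argument and closedness following from the a.e. subsequence extraction already provided by Theorem \ref{thm. L complete}.

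For the linear subspace property, I would take $u, v \in L^\sigma_\varphi(\mu)$ and scalars $\alpha, \beta \in \R$. By hypothesis there exist $A_u, A_v \in \AA_\sigma$ outside of which $u$ and $v$ respectively vanish. Since $\AA_\sigma$ is a $\sigma$-ring, $A_u \cup A_v \in \AA_\sigma$, and the combination $\alpha u + \beta v$ vanishes outside this $\sigma$-finite set. This shows $\alpha u + \beta v \in L^\sigma_\varphi(\mu)$.

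For closedness, I would take a sequence $u_n \in L^\sigma_\varphi(\mu)$ with $u_n \to u$ in $\| \cdot \|_\varphi$ and show that $u$ is $\sigma$-finitely concentrated. Pick $A_n \in \AA_\sigma$ such that $u_n$ vanishes outside $A_n$, and set $A = \bigcup_n A_n \in \AA_\sigma$. Every $u_n$ then vanishes outside $A$. By Theorem \ref{thm. L complete} there exists a subsequence $u_{n_k}$ converging to $u$ pointwise a.e., so the pointwise limit vanishes a.e. on $\Om \setminus A$. Thus $u = u \chi_A$ a.e., exhibiting $u$ as $\sigma$-finitely concentrated.

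I do not anticipate a genuine obstacle here: the only subtlety is that one must work with equivalence classes and invoke the a.e.-convergent subsequence from Theorem \ref{thm. L complete} rather than arguing directly from norm convergence, but the argument is otherwise bookkeeping with the $\sigma$-ring structure of $\AA_\sigma$.
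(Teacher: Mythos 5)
Your proof is correct and follows exactly the route the paper intends: the paper's own proof is the single line ``By Theorem \ref{thm. L complete}.'', i.e.\ precisely the a.e.-convergent subsequence argument you spell out, with linearity handled by the same routine union of $\sigma$-finite supports. No gaps.
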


\begin{proof}
	By Theorem \ref{thm. L complete}.
	%Linearity is clear. For any convergent sequence $u_n \in L^\sigma_\varphi(\mu)$ we find an a.e. convergent subsequence by Theorem \ref{thm. L complete} and a set $\Sigma \in \AA_\sigma$ outside of which all $u_n$ vanish so that $L^\sigma_\varphi(\mu)$ is closed.
\end{proof}

The property $L_\varphi(\mu) = L_\varphi^\sigma(\mu)$ can be characterized for the extensive class of separably measurable Orlicz integrands. Since this result is not needed in the following, we only state it here for the interested reader and refer to \cite{drokto} for a proof.

\begin{theorem} \label{thm. Lvarphi = Lvarphisigma characterization}
	For $W \subset X$ we set $A_W = \left\{ \om \st \exists x \in W \setminus \left\{ 0 \right\} \colon \varphi \left( \om, x \right) = 0 \right\}$. If for every $W_0 \in \SS \left( X \right)$ there exists $W \in \SS \left( X \right)$ with $W_0 \subset W$ such that $A_W \in \AA_\mu$ and $A_W$ is $\sigma$-finite, then $L_\varphi(\mu) = L_\varphi^\sigma(\mu)$. If $\varphi$ is separably measurable, then the converse is true as well.
\end{theorem}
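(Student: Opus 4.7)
The plan is to analyse the non-vanishing set $\left\{ u \ne 0 \right\}$ of an arbitrary $u \in L_\varphi(\mu)$ by splitting it along the dichotomy between $A_W$ and its complement for a suitable $W \in \SS(X)$ that almost contains the range of $u$. On $A_W^c$ the Orlicz function $\varphi_\om$ is strictly positive on $W \setminus \left\{ 0 \right\}$, so the finiteness of the modular $I_\varphi(\alpha^{-1} u)$ forces $\sigma$-finite support there through a Markov-type estimate, while $A_W$ itself is $\sigma$-finite by hypothesis. The converse is argued by contraposition: if the criterion fails at some $W_0$, separable measurability still puts $A_{W_0}$ in $\AA_\mu$, so $A_{W_0}$ must itself be non-$\sigma$-finite, and Aumann selection from the non-trivial $\varphi_\om$-null directions inside $W_0$ then delivers an element of $L_\varphi(\mu) \setminus L_\varphi^\sigma(\mu)$.

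For the forward direction let $u \in L_\varphi(\mu)$ and pick $\alpha > 0$ with $I_\varphi(v) \le 1$ for $v = \alpha^{-1} u$. Strong measurability places the common range of $u$ and $v$ in some $W_0 \in \SS(X)$, and the hypothesis supplies $W \in \SS(X)$ with $W_0 \subset W$, $A_W \in \AA_\mu$ and $A_W$ $\sigma$-finite. For every $\om \in A_W^c$ the Orlicz function $\varphi_\om$ is strictly positive on $W \setminus \left\{ 0 \right\}$, hence $\varphi(\om, v(\om)) > 0$ whenever $v(\om) \ne 0$. The composition $\om \mapsto \varphi(\om, v(\om))$ is measurable by Definition \ref{def. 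Orlicz integrand} \ref{en. it. Orlicz integrand int sep norm} combined with strong measurability of $v$, so $B_n = \left\{ \om \in A_W^c \st \varphi(\om, v(\om)) > 1/n \right\}$ lies in $\AA_\mu$ and Markov's inequality gives $\mu(B_n) \le n I_\varphi(v) \le n$. Therefore $A_W^c \cap \left\{ u \ne 0 \right\} \subset \bigcup_n B_n$ is $\sigma$-finite, and adjoining $A_W$ exhibits $\left\{ u \ne 0 \right\}$ as a $\sigma$-finite set.

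For the converse assume $\varphi$ separably measurable, $L_\varphi(\mu) = L_\varphi^\sigma(\mu)$, and suppose (for contradiction) that the criterion fails at some $W_0 \in \SS(X)$; replacing $W_0$ by $\cl \lin W_0$ we may take $W_0$ to be a closed separable subspace. Separable measurability renders the set $\left\{ (\om, x) \in \Om \times (W_0 \setminus \left\{ 0 \right\}) \st \varphi(\om, x) = 0 \right\}$ an element of $\AA \otimes \BB(W_0)$, whose $\Om$-projection is $A_{W_0}$; the measurable projection theorem places $A_{W_0}$ in $\AA_\mu$, so the failure of the criterion must come from $A_{W_0}$ being non-$\sigma$-finite. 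The multifunction $\Gamma(\om) = \left\{ x \in W_0 \setminus \left\{ 0 \right\} \st \varphi(\om, x) = 0 \right\}$ has $\AA_\mu \otimes \BB(W_0)$-measurable graph and non-empty values on $A_{W_0}$, so Aumann's theorem produces an $\AA_\mu$-measurable selector $u_0 \colon A_{W_0} \to W_0 \setminus \left\{ 0 \right\}$, which by Lemma \ref{lem: strong mb completion} agrees almost everywhere with a strongly $\AA$-measurable function. Extending by $0$ off $A_{W_0}$ yields $u \in \LL_0(\Om; X)$ with $\varphi(\om, u(\om)) = 0$ a.e., so $I_\varphi(u) = 0$ and $\|u\|_\varphi \le 1$, while $\left\{ u \ne 0 \right\}$ coincides with $A_{W_0}$ up to a null set and is therefore still non-$\sigma$-finite, contradicting $L_\varphi = L_\varphi^\sigma$.

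The main technical hurdle is the measurability of $A_W$: being the projection of a jointly measurable set it sits a priori only in the analytic hierarchy, and $\AA_\mu$-membership requires the measurable projection theorem, which tacitly leans on the completeness of $\mu$. A secondary subtlety is that Aumann selection delivers only an $\AA_\mu$-measurable map, so the transition to a strongly $\AA$-measurable representative via Lemma \ref{lem: strong mb completion} must preserve the non-$\sigma$-finiteness of $\left\{ u \ne 0 \right\}$; this is automatic since any such adjustment changes the selector only on a $\mu$-null set.
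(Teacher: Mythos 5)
The paper itself defers the proof of this theorem to \cite{drokto}, so there is no in-paper argument to compare against; I judge your proposal on its own terms. Your forward implication is essentially sound: splitting $\left\{ u \ne 0 \right\}$ along $A_W$ for an enlarged $W$ almost containing the range of $u$, and using strict positivity of $\varphi_\om$ on $W \setminus \left\{ 0 \right\}$ off $A_W$, is the right mechanism. One caveat: Definition \ref{def. Orlicz integrand} only makes $\varphi$ \emph{integrally} separably measurable, so $\om \mapsto \varphi\left[\om, v(\om)\right]$, and hence your sets $B_n$, are a priori only integrally measurable; the Markov estimate has to be run through the exhausting integral (bound $\mu\left( B_n \cap A \right)$ for $A \in \AA_{a\sigma}$, then use Proposition \ref{prop. divergent subintegral} and \cite[Prop. 1.22]{Lp spaces} to conclude that the $\AA_\mu$-measurable set $\left\{ u \ne 0 \right\} \setminus A_W$ is $\sigma$-finite, excluding atoms of infinite measure via a.e.-constancy of $v$ there). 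These repairs stay inside the paper's framework.

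The converse contains a genuine gap. You invoke the measurable projection theorem to place $A_{W_0}$ in $\AA_\mu$ and then Aumann's theorem to select from $\Gamma$ on $A_{W_0}$ --- but $A_{W_0}$ is precisely the set you are arguing is \emph{not} $\sigma$-finite, and both theorems, in the forms available (and in the only way \cite[Thm. 6.10]{Lp spaces} is ever used in this paper, namely after restriction to a $\sigma$-finite set), require a complete $\sigma$-finite measure space. For a general $\mu$ the projection of an $\AA \otimes \BB\left( W_0 \right)$-set is merely Souslin-$\AA$, and Souslin sets need not belong to $\AA_\mu$: for the counting measure on $\left( \R, \BB(\R) \right)$ one has $\AA_\mu = \BB(\R)$ while analytic non-Borel sets exist. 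Completeness, which you flag as the tacit assumption, is not the obstruction --- the conclusion is membership in $\AA_\mu$ anyway; $\sigma$-finiteness is. The same defect hits the selection step: Aumann does not deliver a selector over a non-$\sigma$-finite base, and Kuratowski--Ryll-Nardzewski would require Effros measurability of $\om \mapsto \left\{ \varphi_\om = 0 \right\} \cap W_0$, which the paper itself only extracts from joint measurability via Aumann (Lemma \ref{lem: joint measurability}), i.e. again only $\sigma$-finitely. Closing the converse needs a different mechanism --- for instance exploiting that $\left\{ \varphi_\om = 0 \right\} \cap W_0$ is a closed, bounded, convex, symmetric set to detect non-triviality and build a selection from countably many measurable test conditions, or an exhaustion over $\AA_f$. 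As written, the step ``the measurable projection theorem places $A_{W_0}$ in $\AA_\mu$'' and the subsequent selection are not justified.
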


The result applies in particular if the minimum of $\varphi$ at zero is strict for a.e. $\om \in \Om$ as happens for the Bochner-Lebesgue spaces with $1 \le p < \i$.

\subsection{Embeddings and almost embeddings}

We close the section by proving (\ref{eq. a emb}), which will be instrumental in deducing properties of $L_\varphi(\mu)$ from those of the better understood Bochner-Lebesgue spaces. We prepare this result with a simple embedding lemma providing continuous inclusions between Orlicz spaces in terms of their integrands.

\begin{lemma} \label{lem: embedding}
	Let $\varphi$ and $\phi$ be Orlicz integrands such that
	$$
	\exists L > 0, f \in L_1(\mu) \colon \varphi \left( \om, x \right) \le \phi \left( \om, L x \right) + f \left( \om \right).
	$$
	Then
	$$
	\vvvert u \vvvert_\varphi \le L \left( 1 + \| f \|_{L_1} \right) \vvvert u \vvvert_\phi \quad \forall u \in L_\phi(\mu).
	$$
\end{lemma}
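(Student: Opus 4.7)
The strategy is to compare the two Amemiya norms directly through a change of scale $\alpha \mapsto \alpha/L$, exploiting the pointwise domination in the hypothesis. Since $\vvvert \cdot \vvvert_\phi$ and $\vvvert \cdot \vvvert_\varphi$ are the infima over $\alpha > 0$ of the functionals $\alpha \mapsto \alpha^{-1}\left[ 1 + I_\phi(\alpha u) \right]$ and its $\varphi$-analogue respectively, everything reduces to relating these two quantities at matched values of $\alpha$.

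First, I would substitute $x = \alpha u(\om)$ (for arbitrary $\alpha > 0$) into the pointwise hypothesis $\varphi(\om, x) \le \phi(\om, L x) + f(\om)$ and integrate, interpreting both sides as exhausting integrals as in Section~\ref{sec. inf-int interchange}. Outside the $\sigma$-finite set $\{f \ne 0\}$ the inequality $\varphi(\om, \alpha u(\om)) \le \phi(\om, L \alpha u(\om))$ holds pointwise, while on that $\sigma$-finite set classical monotonicity of the Lebesgue integral applies. This yields
\begin{equation*}
I_\varphi(\alpha u) \le I_\phi(L \alpha u) + \| f \|_{L_1}.
\end{equation*}

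Next, given $\beta > 0$, I would apply the previous inequality with $\alpha = \beta / L$, then add $1$ and multiply by $L/\beta$ to obtain
\begin{equation*}
\frac{L}{\beta}\bigl[ 1 + I_\varphi((\beta/L) u) \bigr] \le \frac{L}{\beta}\bigl[ 1 + \| f \|_{L_1} + I_\phi(\beta u) \bigr].
\end{equation*}
Using the elementary bound $1 + a + b \le (1 + a)(1 + b)$ for $a, b \ge 0$ with $a = \| f \|_{L_1}$ and $b = I_\phi(\beta u)$, the right-hand side is in turn majorised by
\begin{equation*}
L \bigl( 1 + \| f \|_{L_1} \bigr) \cdot \frac{1}{\beta} \bigl[ 1 + I_\phi(\beta u) \bigr].
\end{equation*}
The left-hand side is precisely the value at $\alpha = \beta/L$ of the functional whose infimum defines $\vvvert u \vvvert_\varphi$, so it is an upper bound for $\vvvert u \vvvert_\varphi$. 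Taking the infimum over $\beta > 0$ on the right then gives the asserted inequality.

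There is no substantive obstacle here; the only care needed is bookkeeping for the exhausting integral. Since $f \in L_1(\mu)$ forces $\{f \ne 0\}$ to be $\sigma$-finite and the pointwise integrand inequality is trivial on the complement, no non-measurable quantities appear, and the chain of estimates above is valid regardless of whether $\mu$ is $\sigma$-finite or $u$ is $\sigma$-finitely concentrated.
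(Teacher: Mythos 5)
Your argument is correct and is essentially the paper's own proof: both integrate the pointwise domination to get $I_\varphi(\alpha u) \le I_\phi(L\alpha u) + \| f \|_{L_1}$, apply the bound $1 + a + b \le (1+a)(1+b)$ inside the Amemiya infimum, and finish with the homogeneity $\vvvert L u \vvvert_\phi = L \vvvert u \vvvert_\phi$ (which your substitution $\alpha = \beta/L$ makes explicit). The extra care you take with the exhausting integral on $\{ f \ne 0 \}$ is harmless and consistent with the paper's conventions.
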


\begin{proof}
	For $u \in L_\phi(\mu)$ there holds
	\begin{align*}
		\vvvert u \vvvert_\varphi
		\le \inf_{\alpha > 0} \alpha^{-1} \left[ 1 + \| f \|_{L_1} + I_\phi \left( L \alpha u \right) \right]
		& \le \left( 1 + \| f \|_{L_1} \right) \vvvert L u \vvvert_\phi \\
		& = L \left( 1 + \| f \|_{L_1} \right) \vvvert u \vvvert_\phi. \qedhere
	\end{align*}
\end{proof}

\begin{lemma} \label{lem: a emb}
	Let $\mu$ be finite. Then there exists an isotonic family $\Om_\e \in \AA$ with $\lim_{\e \downarrow 0} \mu \left( \Om \setminus \Om_\e \right) = 0$ such that there hold the continuous embeddings $L_\i \left( \Om_\e ; X \right) \to L_\varphi \left( \Om_\e \right) \to L_1 \left( \Om_\e ; X \right)$ via identical inclusion.
\end{lemma}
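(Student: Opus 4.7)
The plan is to realize the almost embedding by constructing $\Om_\e$ as the intersection of two sets, each tailored to one of the two embeddings, and then force isotonicity by taking unions over a countable discretization of $\e$. The key tool in both cases is an Egorov-type argument applied to a measurable function built as an essential supremum (respectively infimum) over $W \in \SS(X)$ of a quantity encoding local boundedness (respectively coercivity) of $\varphi_\om$.

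For the upper embedding $L_\i(\Om_\e;X) \to L_\varphi(\Om_\e)$, I would use that $\varphi_\om$ is convex with $\varphi_\om(0)=0$ and $\lim_{x \to 0}\varphi_\om(x)=0$, so $\varphi_\om$ is continuous on a neighborhood of the origin. For each $n \in \N$ define
\[
p_n(\om) = \esssup_{W \in \SS(X)} \sup_{x \in W,\,\|x\|\le 1/n} \varphi(\om,x),
\]
where the inner supremum is taken over a fixed countable dense subset of $W$ to ensure measurability; on the neighborhood of $0$ where $\varphi_\om$ is continuous this agrees with the true sup. Then $p_n$ is measurable, monotonically decreasing in $n$, and $p_n(\om) \to 0$ for $\mu$-a.e.\ $\om$. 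Since $\mu$ is finite, Egorov yields $\Om'_\e$ with $\mu(\Om\setminus\Om'_\e)<\e/2$ and some $n_0$ so that $p_{n_0} \le 1/(1+\mu(\Om))$ uniformly on $\Om'_\e$. Given $u \in L_\i(\Om'_\e;X)$ with $\|u\|_\i \le M$, strong measurability provides $W \in \SS(X)$ essentially containing the range of $u$, whence $\varphi(\om, u(\om)/(n_0 M))\le p_{n_0}(\om) \le 1/(1+\mu(\Om))$ a.e.\ on $\Om'_\e$, so $I_\varphi(u/(n_0 M);\Om'_\e)\le 1$ and $\|u\|_{L_\varphi(\Om'_\e)}\le n_0 \|u\|_\i$.

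For the lower embedding $L_\varphi(\Om_\e) \to L_1(\Om_\e;X)$, I would invoke Proposition \ref{prop. coercivity}, which with convexity and $\varphi_\om(0)=0$ gives, for a.e.\ $\om$, constants $\delta_\om, r_\om>0$ with $\varphi(\om,x)\ge \delta_\om\|x\|-\delta_\om r_\om$ for all $x$. For $\delta, r > 0$ set
\[
B_{\delta,r} = \left\{ \om \st \essinf_{W \in \SS(X)} \inf_{x \in W}\bigl[\varphi(\om,x)-\delta\|x\|+\delta r\bigr] \ge 0 \right\},
\]
which is measurable (the essential infimum function being meaningful by the material preceding Theorem \ref{thm. inf int}, $\mu$ being finite). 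The pointwise bound shows $\bigcup_{\delta,r \in \Q^+} B_{\delta,r} = \Om$ up to a null set, so by monotonicity some $\Om''_\e := B_{\delta,r}$ satisfies $\mu(\Om\setminus\Om''_\e)<\e/2$. For $u \in L_\varphi(\Om''_\e)$ with $\|u\|_\varphi \le 1$, taking $W \ni u$ strongly measurable and applying Lemma \ref{lem: modular-norm}(a), one gets $\|u(\om)\|\le \delta^{-1}(\varphi(\om,u(\om))+\delta r)$ a.e., and integration yields $\|u\|_{L_1(\Om''_\e;X)}\le \delta^{-1}(1+\delta r\mu(\Om))\|u\|_\varphi$.

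Setting $\Om_\e = \Om'_\e\cap\Om''_\e$ gives $\mu(\Om\setminus\Om_\e)<\e$ and both continuous embeddings via identical inclusion. To obtain an isotonic family indexed by $\e \downarrow 0$, I would first perform the construction for $\e_k = 2^{-k}$, then replace $\Om_{\e_k}$ by $\bigcup_{j \ge k}\Om_{\e_j}$ (preserving the embeddings, as any strongly measurable $u$ on the union is strongly measurable on each piece, and the constants are determined by the parameters chosen at step $k$), and finally define $\Om_\e = \Om_{\e_k}$ for $\e_{k+1} \le \e < \e_k$. The main obstacle I anticipate is the measurability bookkeeping around the essential supremum function $p_n$: one must verify that, despite $X$ being non-separable and the pointwise envelope $\sup_{\|x\|\le 1/n}\varphi_\om(x)$ being potentially non-measurable, the measurable function $p_n$ still satisfies $p_n(\om)\to 0$ a.e., which follows because each $h^W_n(\om) := \sup_{x \in D_W,\,\|x\|\le 1/n}\varphi(\om,x)$ is dominated by the pointwise envelope and the essential supremum inherits the a.e.\ pointwise bound by a straightforward null-set argument.
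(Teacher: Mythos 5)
Your proposal is correct and follows essentially the same route as the paper: both build $\Om_\e$ as the intersection of a set on which $\varphi_\om$ is uniformly small near the origin (giving $L_\i \to L_\varphi$) and a set on which $\varphi_\om$ is uniformly coercive (giving $L_\varphi \to L_1$), handling non-separability by essential suprema/infima over $W \in \SS(X)$ that, by directedness and \cite[Thm. 1.108]{Lp spaces}, are attained by a single separable subspace. The only point to tighten is that both ``$p_n \to 0$ a.e.'' and ``$\bigcup_{\delta, r} B_{\delta, r} = \Om$ a.e.'' must be derived from this attainment by a countable subfamily rather than from the (possibly non-measurable) pointwise envelopes over all of $X$, which is exactly the device the paper's proof uses via its essential intersections $\esscap_{W} E'_\e(W)$ and $\esscap_{W} E''_\e(W)$.
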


\begin{proof}
	Consider for $W \in \SS(X)$ the $\AA_\mu$-measurable sets
	$$
	E'_\e(W) = \left\{ \sup \varphi_\om \left( B_{\e, W} \right) \le 1 \right\}, \quad E''_\e(W) = \left\{ \inf \varphi_\om \left( W \setminus \overline{B}_{1 / \e} \right) \ge 1 \right\}.
	$$
	Measurability follows from separable measurability of $\varphi$. More precisely, the epigraphical multifunction of $\varphi$ is $\AA_\mu$-measurable by Lemma \ref{lem: joint measurability}. Hence it is $\AA_\mu$-measurable by the Hess theorem \cite[Thm. 6.5.14]{closed sets} as the pre-image under the epigraphical multifunction of the Wijsman-closed set
	$$
	\bigcap_{x \in B_{\e, W} \times \left[ 1, \i \right) } \left\{ F \in \CL(X \times \R) \st d_x(F) = 0 \right\}.
	$$
	For the second set, this follows from the infimal measurability of normal integrands by Lemma \ref{lem: equivalence infimal measurability}.
	% For the first set, this is the pre-image of the $\AA_\mu$-Wijsman-measurable epigraphical multifunction under the closed set \bigcap_{x \in B_{\e, W} \times \left[ 1, \i \right) } \left\{ F \in \CL(X \times \R) \st d_x(F) = 0 \right\}. Here B_{\e, W} = W \cap B_\e. For the second set, this follows from the infimal measurability of normal integrands.
	We may by \cite[Thm. 1.108]{Lp spaces} define the essential intersections
	$$
	\Om'_\e = \esscap_{W \in \SS(X) } E', \quad \Om''_\e = \esscap_{W \in \SS(X) } E''.
	$$
	By the same theorem and since $E'$ and $E''$ are decreasing w.r.t. $W$ there exist $W'_\e, W''_\e \in \SS(X)$ with $\Om' = E' \left( W'_\e \right)$ and $\Om'' = E'' \left( W''_\e \right)$ a.e. so that for any null sequence $\e_n$ we find $W' \in \SS(X)$ and $W'' \in \SS(X)$ independent of $n$ with $\Om'_{\e_n} = E' \left( W' \right)$ and $\Om''_{\e_n} = E'' \left( W'' \right)$ a.e. hence
	$$
	\lim_{\e \to 0} \mu \left( \Om \setminus \Om'_\e \right) = \lim_{\e \to 0} \mu \left( \Om \setminus \Om''_\e \right) = 0
	$$
	because $\varphi$ is an Orlicz integrand thus vanishing continuously at the origin with bounded sublevels. Setting $\Om_\e = \Om' \cap \Om''$, we have $\lim_{\e \to 0} \mu \left( \Om \setminus \Om_\e \right) = 0$. Denoting by $I_{B_X}$ the indicator in the sense of convex analysis of the unit ball $B_X$ we have $\varphi_\om(x) \le I_{B_X} \left( \frac{x}{\e} \right) + 1$ a.e. on $\Om'$ and $\e \| x \| \le \varphi_\om(x) + \frac{1}{\e}$ a.e. on $\Om''$ for all $x \in W$ for any $W \in \SS(X)$ hence
	$$
	\vvvert u \vvvert_\varphi \le \frac{1}{\e} \left[ 1 + \mu \left( \Om \right) \right] \vvvert u \vvvert_\i, \quad
	\e \vvvert u \vvvert_1 \le \left( 1 + \frac{\mu(\Om)}{\e} \right) \vvvert u \vvvert_\varphi
	$$
	by Lemma \ref{lem: embedding} as any $u \in L_\varphi(\mu)$ is almost separably valued.
\end{proof}

We found the idea for Lemma \ref{lem: a emb} in \cite[Thm. 3.2]{Kolm Riesz}, where the corresponding statement for separable range spaces is attributed to \cite{Giner thèse}. In view of §\ref{sec. inf-int interchange} it becomes important to understand almost decomposability of $L_\varphi(\mu)$ and its subspaces. Obviously, $L_\varphi(\mu)$ and $L_\varphi^\sigma(\mu)$ are weakly decomposable. We also have

\begin{corollary} \label{cor. L a decomp}
	$L_\varphi(\mu)$ and $L_\varphi^\sigma(\mu)$ are almost decomposable.
\end{corollary}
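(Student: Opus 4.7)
The plan is to deduce almost decomposability directly from the almost embedding Lemma~\ref{lem: a emb} together with weak decomposability, which is a remark already made in the paragraph preceding the corollary.

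Given $u_0 \in L_\varphi(\mu)$, $F \in \AA_f$, $\e > 0$, and a bounded strongly measurable $u_1 \colon F \to X$, I would work inside the finite measure space $(F, \AA(F), \mu\!\restriction_F)$ and apply Lemma~\ref{lem: a emb} there. This yields a measurable set $F_\e \subset F$ with $\mu(F \setminus F_\e) < \e$ and a continuous inclusion $L_\i(F_\e ; X) \hookrightarrow L_\varphi(F_\e)$. Because $u_1$ is bounded, its restriction to $F_\e$ belongs to $L_\i(F_\e ; X)$, hence to $L_\varphi(F_\e)$; extending by zero, the function $u_1 \chi_{F_\e}$ belongs to $L_\varphi(\mu)$.

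Next I would handle the other half of the glued function. Since $\varphi$ is non-negative, $\| u_0 \chi_E \|_\varphi \le \| u_0 \|_\varphi$ for every measurable $E$, so $u_0 \chi_{F_\e} \in L_\varphi(\mu)$ and hence also $u_0 \chi_{\Om \setminus F_\e} = u_0 - u_0 \chi_{F_\e} \in L_\varphi(\mu)$; this is the weak decomposability noted after Definition~\ref{def. almost decomposable}. The glued function in (\ref{eq. almost decomposable}) is then the sum of two elements of $L_\varphi(\mu)$ and so lies in $L_\varphi(\mu)$, proving the first assertion.

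For the subspace $L^\sigma_\varphi(\mu)$, I would merely add the observation that if $u_0$ vanishes off a $\sigma$-finite set $N$, then the glued function vanishes off $N \cup F_\e$, which is again $\sigma$-finite because $F_\e \subset F$ has finite measure; combined with the first part, this gives membership in $L^\sigma_\varphi(\mu)$. I do not expect any genuine obstacle here: all the substance is in Lemma~\ref{lem: a emb}, and the remaining work is just bookkeeping between $\mu$ and $\mu\!\restriction_F$ and the sum decomposition of the glued function.
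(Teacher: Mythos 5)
Your proposal is correct and follows essentially the same route as the paper: the paper likewise reduces, via weak decomposability and linearity, to showing that $v \chi_{F_\e} \in L_\varphi(\mu)$ for a bounded $v$ on $F \in \AA_f$, and obtains this directly from Lemma \ref{lem: a emb}. The extra bookkeeping you supply for the $\sigma$-finitely concentrated case is consistent with what the paper leaves implicit.
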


\begin{proof}
	Let $F \in \AA_f$ and $v \in L_\i \left( F ; X \right)$. Since $L_\varphi(\mu)$ and $L_\varphi^\sigma(\mu)$ are weakly decomposable and linear, it suffices to prove that for $\e > 0$ there exists $F_\e \subset F$ with $\mu \left( F \setminus F_\e \right) < \e$ and $v \chi_{F_\e} \in L_\varphi(\mu)$, which follows from Lemma \ref{lem: a emb}.
	%The convergence $\lim_{\lambda \to 0} \varphi \left[ \om, \lambda v \left( \om \right) \right] = 0$ a.e. yields by Egorov's theorem a set $F_\e \subset F$ with $\mu \left( F \setminus F_\e \right) < \e$ such that this convergence is uniform on $F_\e$ hence $v \chi_{F_\e} \in L_\varphi(\mu)$.
\end{proof}

\section{The closure of simple functions} \label{sec. E}

We compile in this ancillary section basic facts about the space $E_\varphi(\mu)$ of the closure of simple functions in $L_\varphi(\mu)$. Even though the simple functions are in general not dense in $L_\varphi(\mu)$, their closure $E_\varphi(\mu)$ can still be used to approximate all of $L_\varphi(\mu)$ in a suitable sense, at least on $\sigma$-finite sets.

\begin{definition}[convergence from below] \label{def. conv f bel}
	A sequence $u_n \colon \Om \to X$ of measurable functions \emph{converges from below} to $u$ iff there exists a sequence $\Om_n \in \AA$ with $\mu \left( \lim_n \Om \setminus \Om_n \right) = 0$ and $u_n = u \chi_{\Om_n}$. We write $u_n \uparrow u$ if $u_n$ converges from below to $u$. We say that a convergence from below is monotonic if the sequence $\Om_n$ increases.
\end{definition}

We shall define the class of absolutely continuous functionals as those enjoying continuity from below and vanishing outside a $\sigma$-finite set. Such a functional is determined by its action on any almost decomposable subspace of $L_\varphi(\mu)$, for which $E_\varphi(\mu)$ is an example. This is the content of the next two lemmas and our primary use for $E_\varphi(\mu)$ in the duality theory. The space $E_\varphi(\mu)$ is obviously weakly decomposable. We also have

\begin{lemma} \label{lem: E a decomp}
	$E_\varphi(\mu)$ and $E_\varphi^\sigma(\mu)$ are almost decomposable.
\end{lemma}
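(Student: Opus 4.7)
The plan is to mimic the proof of Corollary \ref{cor. L a decomp}, using Lemma \ref{lem: a emb} to transfer uniform approximation of bounded strongly measurable functions by simple functions into $L_\varphi$-norm approximation after slightly shrinking the support.

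First I would note that $E_\varphi(\mu)$ and $E_\varphi^\sigma(\mu)$ are closed (hence complete) linear subspaces of $L_\varphi(\mu)$ that are weakly decomposable: simple functions are closed under multiplication by indicators of measurable sets, and this property transfers to the closure. Linearity is immediate. By the remark after Definition \ref{def. almost decomposable}, it therefore suffices to prove that for every $F \in \AA_f$, every bounded strongly measurable $u_1 \colon F \to X$ and every $\e > 0$ there exists $F_\e \subset F$ with $\mu(F \setminus F_\e) < \e$ such that $u_1 \chi_{F_\e} \in E_\varphi(\mu)$. Since $F_\e$ has finite hence $\sigma$-finite measure, such $u_1 \chi_{F_\e}$ will automatically belong to $E_\varphi^\sigma(\mu)$ as well, so both claims are handled simultaneously.

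Next, apply Lemma \ref{lem: a emb} to the finite measure space $\left( F, \AA(F), \mu|_F \right)$. This provides an isotonic family $F_\delta \in \AA(F)$ with $\mu \left( F \setminus F_\delta \right) \to 0$ as $\delta \downarrow 0$ and a continuous inclusion $L_\i \left( F_\delta ; X \right) \to L_\varphi \left( F_\delta \right)$ realised by the identity map. Pick $\delta > 0$ small enough that $\mu \left( F \setminus F_\delta \right) < \e$ and set $F_\e = F_\delta$.

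Now since $u_1$ is bounded and strongly measurable, its range is separable by \cite[Prop. 1.9]{Probabilities on B-Spaces}; a standard partitioning of this bounded separable range into finitely many sets of diameter less than $1/n$ produces $\AA$-measurable simple functions $s_n \colon F \to X$ with $\| s_n - u_1 \|_\i \to 0$. Multiplying by $\chi_{F_\e}$ keeps each $s_n \chi_{F_\e}$ simple and gives $s_n \chi_{F_\e} \to u_1 \chi_{F_\e}$ in $L_\i \left( F_\e ; X \right)$. By the continuous embedding from Lemma \ref{lem: a emb}, this convergence persists in $L_\varphi \left( F_\e \right)$ and thus in $L_\varphi(\mu)$. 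Hence $u_1 \chi_{F_\e}$ is a norm limit of simple functions, that is $u_1 \chi_{F_\e} \in E_\varphi(\mu)$, and as noted above also in $E_\varphi^\sigma(\mu)$, proving the lemma.

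There is no real obstacle here: the nontrivial work has been carried out in Lemma \ref{lem: a emb}, and the argument only packages the resulting continuous embedding $L_\i \to L_\varphi$ on $F_\e$ with the elementary uniform approximability of bounded strongly measurable functions by simple ones.
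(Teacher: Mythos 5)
Your proof is correct. The reduction via weak decomposability and linearity to showing $u_1 \chi_{F_\e} \in E_\varphi(\mu)$ for bounded $u_1$ on $F \in \AA_f$ is exactly the paper's, as is the treatment of $E_\varphi^\sigma(\mu)$, but the core step is executed differently. The paper does not invoke Lemma \ref{lem: a emb} here; it applies Egorov's theorem twice, first to the convergence $\varphi \left[ \om, \lambda v(\om) \right] \to 0$ as $\lambda \to 0$ to get $v \chi_{E_\e} \in L_\varphi(\mu)$, then, for each $k$, to the convergence $\varphi \left[ \om, k \left( v - v_n \right) \right] \to 0$ to obtain sets $F_{k,\e}$ on whose intersection $F_\e$ the simple approximants converge in Luxemburg norm. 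You instead route everything through the almost embedding $L_\i \left( F_\e ; X \right) \to L_\varphi \left( F_\e \right)$ of Lemma \ref{lem: a emb} and transport the uniform approximation of $u_1$ by simple functions across that continuous inclusion. Your route is more modular and shorter, and it parallels the paper's own proof of Corollary \ref{cor. L a decomp}; the price is that it leans on the heavier machinery of Lemma \ref{lem: a emb} (essential intersections over $\SS(X)$), whereas the paper's Egorov argument is self-contained and only uses that $\varphi$ vanishes continuously at the origin along the values of $v$. Both yield the same conclusion with the same loss of measure $\e$.
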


\begin{proof}
	Remembering the remark below Definition \ref{def. almost decomposable} on intersections of almost decomposable spaces, we need only consider $E_\varphi(\mu)$ since $E_\varphi^\sigma(\mu) = E_\varphi(\mu) \cap L_\varphi^\sigma(\mu)$ and these spaces are weakly decomposable in addition to $L_\varphi^\sigma(\mu)$ being almost decomposable by Corollary \ref{cor. L a decomp}. Let $F \in \AA_f$ and $v \in L_\i \left( F ; X \right)$. Since $E_\varphi(\mu)$ is weakly decomposable and linear, it suffices to prove that for every $\e > 0$ there exists $F_\e \subset F$ with $\mu \left( F \setminus F_\e \right) < \e$ and $v \chi_{F_\e} \in E_\varphi(\mu)$. We find by Egorov's theorem a subset $E_\e \subset F$ with $\mu \left( F \setminus E_\e \right) < \frac{\e}{2}$ and $\lim_{\lambda \to 0} \varphi \left[ \om, \lambda v(\om) \right] = 0$ uniformly on $E_\e$ hence $v \chi_{E_\e} \in L_\varphi(\mu)$. Pick $v_n$ a sequence of simple functions with $v_n \to v$ a.e. By Egorov's theorem we find a sequence $F_{k, \e} \subset E_\e$ with $\mu \left( E_\e \setminus F_{k, \e} \right) < 2^{- k - 1} \e$ and $\lim_n \varphi \left[ \om, k \left[ v(\om) - v_n(\om) \right] \right) = 0$ uniformly on $F_{k, \e}$ for fixed $k$. Consequently the same holds on $F_\e = \bigcap_{k \ge 1} F_{k, \e}$ for all $k \ge 1$ so that $v_n \chi_{F_\e} \to v \chi_{F_\e}$ in $L_\varphi(\mu)$ by definition of the Luxemburg norm. In conclusion $v \chi_{F_\e} \in E_\varphi(\mu)$ and $\mu \left( F \setminus F_\e \right) = \mu \left( F \setminus E_\e \right) + \mu \left( E_\e \setminus F_\e \right) \le \frac{\e}{2} + \frac{\e}{2} = \e$.
	% Detail: \mu \left( E_\e \setminus F_{k, \e} \right) < \frac{\e}{2^{k + 1} } \iff \mu \left( E_\e \cap F_{k, \e}^c \right) < \frac{\e}{2^{k + 1} }. Hence \mu \left( E_\e \setminus \bigcap_k F_{k, \e} \right) = \mu \left( E_\e \cap \bigcup_k F^c_{k, \e} \right) \le \sum_k \mu \left( E_\e \cap F^c_{k, \e} \right) \le \frac{\e}{2}.
\end{proof}

\begin{lemma} \label{lem: decomp ss abs cont dense}
	Given $u \in L^\sigma_\varphi(\mu)$ and an almost decomposable subspace $L \subset L_\varphi(\mu)$, there exists a sequence $u_n \in L$ with $u_n \uparrow u$ monotonically.
\end{lemma}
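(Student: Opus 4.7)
The plan is to build $u$ step by step starting from $0 \in L$, exploiting the almost decomposability of $L$ at each step to enlarge the subset of a fixed $\sigma$-finite support of $u$ on which the approximant already coincides with $u$.

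First I would fix $\Sigma \in \AA_\sigma$ with $u = 0$ off $\Sigma$, write $\Sigma = \bigcup_n F_n$ for an increasing sequence $F_n \in \AA_f$, and truncate in the range as well by setting $B_n := F_n \cap \left\{ \|u\| \le n \right\}$. Since $u$ is everywhere finite-valued, the $B_n$ are increasing with $\bigcup_n B_n = \Sigma$, and each $u|_{B_n}$ is a bounded strongly measurable function on $B_n \in \AA_f$, making it a legal candidate for the role of $u_1$ in Definition \ref{def. almost decomposable}.

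Second, I would iterate Definition \ref{def. almost decomposable}. Starting from $w_0 = 0 \in L$ and $\Sigma_0 = \emptyset$, suppose inductively that $w_{n-1} \in L$ satisfies $w_{n-1} = u \chi_{\Sigma_{n-1}}$ with measurable $\Sigma_{n-1} \subset B_{n-1}$. Applying almost decomposability with $u_0 = w_{n-1}$, $F = B_n$, $u_1 = u|_{B_n}$ and $\e = 2^{-n}$, I obtain $F'_n \subset B_n$ with $\mu \left( B_n \setminus F'_n \right) < 2^{-n}$ such that the function $w_n$ equal to $u$ on $F'_n$ and to $w_{n-1}$ on $\Om \setminus F'_n$ lies in $L$. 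Setting $\Sigma_n := \Sigma_{n-1} \cup F'_n \subset B_n$, the inductive hypothesis gives $w_n = u \chi_{\Sigma_n}$, and the $\Sigma_n$ increase by construction.

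Third, I would verify the covering property via the Borel-Cantelli lemma. As $\sum_n \mu \left( B_n \setminus F'_n \right) < \sum_n 2^{-n} < \i$, the set of $\om$ lying in $B_n \setminus F'_n$ for infinitely many $n$ is $\mu$-null. Since a.e. $\om \in \Sigma$ satisfies $\om \in B_n$ for all sufficiently large $n$, such $\om$ must lie in $F'_n \subset \Sigma_n$ eventually, so $\bigcup_n \Sigma_n \supset \Sigma$ up to a null set. Setting $\Om_n := \Sigma_n \cup \left( \Om \setminus \Sigma \right) \in \AA$, one has $w_n = u \chi_{\Om_n}$ because $u$ vanishes on $\Om \setminus \Sigma$, while the decreasing sets $\Om \setminus \Om_n = \Sigma \setminus \Sigma_n$ converge to a $\mu$-null set in the sense of the indicator-a.e. convention; this is exactly $w_n \uparrow u$ monotonically.

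The main point of care will be the double truncation $B_n = F_n \cap \left\{ \|u\| \le n \right\}$, forced by the requirement that the plugged-in function $u_1$ in Definition \ref{def. almost decomposable} be bounded; $u$ itself may be unbounded even on sets of finite measure, so truncating only in time is not enough. Once this is in place, the inductive extension is routine and the summability of $\e_n = 2^{-n}$ makes Borel-Cantelli deliver a.e. coverage of $\Sigma$ for free.
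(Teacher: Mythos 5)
Your proof is correct and follows essentially the same route as the paper: fix a $\sigma$-finite support $\Sigma=\bigcup_n F_n$ and use almost decomposability on an exhausting sequence of finite-measure sets to produce $u\chi_{G_n}\in L$ with $G_n$ increasing and covering $\Sigma$ up to a null set. The paper's proof is terser — it invokes the remark below Definition \ref{def. almost decomposable} that $u_1$ may equivalently be unbounded and simply asserts the increasing sequence exists — whereas you make both points explicit via the range truncation $B_n=F_n\cap\{\|u\|\le n\}$ and the iterative construction with Borel--Cantelli, which is a faithful elaboration of the same argument.
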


\begin{proof}
	Let $u$ vanish outside $\Sigma \in \AA_\sigma$ with $\Sigma = \bigcup_n F_n$ for an isotonic sequence $F_n \in \AA_f$. By the first remark below Definition \ref{def. almost decomposable} it is immaterial that $u$ might be unbounded so that there exists an increasing sequence of sets $G_n \subset F_n$ with $\lim_n \mu \left( F_n \setminus G_n \right) = 0$ and $u \chi_{G_n} \in L$ hence $u \chi_{G_n} \uparrow u$.
\end{proof}

\section{Absolutely continuous norms} \label{sec. C}

In this section we study properties of the space $C_\varphi(\mu)$ of the elements in $L_\varphi(\mu)$ whose norm is absolutely continuous, i.e. for which $\lim_n \| u \chi_{E_n} \|_\varphi = 0$ whenever $E_n \in \AA$ is a sequence with $\mu\left( \lim_n E_n \right) = 0$. In the scalar theory $X = \R$, this space is important because $C_\varphi(\mu)^* = L_{\varphi^*}(\mu)$ if $\varphi$ is real-valued, inducing a weak* topology on $L_{\varphi^*}(\mu)$ that can serve to compensate if $L_{\varphi^*}(\mu)$ lacks reflexivity. The situation is similar, yet somewhat more complicated for the vector valued case. Nevertheless, our main interest in $C_\varphi(\mu)$ lies in its role of inducing a weak* topology on the function component of the dual space of $L_\varphi(\mu)$. This will only fully come to bear in the successor paper of the present one. Besides, the space $C_\varphi(\mu)$ is the key to understanding separability and reflexivity of its superspace $L_\varphi(\mu)$, as mentioned in the introduction. Indeed, the linearity of $\dom I_\varphi$ is necessary for both these properties to occur as well shall see. Since $C_\varphi(\mu)$ turns out to be the maximal linear subspace of $\dom I_\varphi$, so that the linearity of this domain is under mild conditions equivalent to $C_\varphi(\mu) = L_\varphi(\mu)$, settling these matters for $C_\varphi(\mu)$ solves the actual questions.

\subsection{Basic properties}

We start by proving the basic characterization of $C_\varphi(\mu)$ as the maximal linear Banach subspace of $\dom I_\varphi$.

\begin{lemma} \label{lem: C closed subspace}
	$C_\varphi(\mu)$ and $C_\varphi^\sigma(\mu)$ are closed linear subspace of $L_\varphi(\mu)$ and $L_\varphi^\sigma(\mu)$.
\end{lemma}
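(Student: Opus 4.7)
The plan is to verify linearity directly from the definition via the seminorm properties of the Luxemburg norm, and then to establish closedness by a standard $\varepsilon/2$-approximation argument, after which the corresponding statement for $C_\varphi^\sigma(\mu)$ is an immediate corollary using Lemma \ref{lem: Lvarphisigma closed linear subspace}.

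For linearity, I would observe that if $u, v \in C_\varphi(\mu)$ and $\lambda \in \R$, then for any $E_n \in \AA$ with $\mu(\lim_n E_n) = 0$ one has
$$
\| (\lambda u + v) \chi_{E_n} \|_\varphi \le |\lambda| \, \| u \chi_{E_n} \|_\varphi + \| v \chi_{E_n} \|_\varphi \xrightarrow{n \to \i} 0
$$
by the seminorm axioms of the Luxemburg norm, so that $\lambda u + v \in C_\varphi(\mu)$.

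For closedness, let $u_k \in C_\varphi(\mu)$ with $u_k \to u$ in $L_\varphi(\mu)$ and let $E_n \in \AA$ satisfy $\mu(\lim_n E_n) = 0$. Given $\e > 0$, I would pick $k$ with $\| u - u_k \|_\varphi < \e/2$ and then exploit the obvious bound $\| w \chi_{E_n} \|_\varphi \le \| w \|_\varphi$ valid for any $w \in L_\varphi(\mu)$, yielding
$$
\| u \chi_{E_n} \|_\varphi \le \| (u - u_k) \chi_{E_n} \|_\varphi + \| u_k \chi_{E_n} \|_\varphi \le \tfrac{\e}{2} + \| u_k \chi_{E_n} \|_\varphi.
$$
Since $u_k \in C_\varphi(\mu)$, the second summand is less than $\e/2$ for all $n$ sufficiently large, whence $\limsup_n \| u \chi_{E_n} \|_\varphi \le \e$ for arbitrary $\e$; so $u \in C_\varphi(\mu)$.

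For the $\sigma$-finitely concentrated variant, I would write $C_\varphi^\sigma(\mu) = C_\varphi(\mu) \cap L_\varphi^\sigma(\mu)$ and conclude by intersecting the closed linear subspace just obtained with the closed linear subspace provided by Lemma \ref{lem: Lvarphisigma closed linear subspace}. I do not anticipate any real obstacle; the only subtle point is the monotonicity $\| w \chi_{E_n} \|_\varphi \le \| w \|_\varphi$, which follows at once from the definition of the Luxemburg norm since $I_\varphi(\alpha^{-1} w \chi_{E_n}) \le I_\varphi(\alpha^{-1} w)$ by non-negativity of $\varphi$.
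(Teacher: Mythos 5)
Your proposal is correct and follows exactly the route the paper takes, which simply states that linearity is clear, closedness follows by an $\e/2$-argument, and the $\sigma$-concentrated case reduces to Lemma \ref{lem: Lvarphisigma closed linear subspace}; you have merely spelled out the details, including the key monotonicity $\| w \chi_{E_n} \|_\varphi \le \| w \|_\varphi$. No gaps.
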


\begin{proof}
	Linearity is clear. Closedness of $C_\varphi(\mu)$ follows by an obvious $\frac{\e}{2}$-argument. The case of $C_\varphi^\sigma(\mu)$ then obtains by Lemma \ref{lem: Lvarphisigma closed linear subspace}.
\end{proof}

\begin{theorem} \label{thm. C max subspace}
	For $A_\lambda = \left\{ u \in L_\varphi \st \lambda u \in \dom I_\varphi \right\}$ there holds
	\begin{equation} \label{eq. line space has abs cont norm}
		\bigcap_{\lambda > 0} A_\lambda = \bigcap_{n \in \N} A_n \subset C_\varphi^\sigma(\mu).
	\end{equation}
	If $\varphi$ is real-valued on atoms of finite measure, the inclusion in (\ref{eq. line space has abs cont norm}) is an equality. It is proper if $\varphi$ is not real-valued on an atom of finite measure.
\end{theorem}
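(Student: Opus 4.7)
\emph{Proof proposal.} The plan is to establish the four claims in order: (i) the equality $\bigcap_{\lambda > 0} A_\lambda = \bigcap_{n \in \N} A_n$, (ii) the inclusion into $C_\varphi^\sigma(\mu)$, (iii) the reverse inclusion under the real-valuedness hypothesis, and (iv) the properness counterexample. For (i), convexity does the job: given $0 < \lambda \le n$, the identity $\lambda u = (\lambda/n)(nu) + (1 - \lambda/n) \cdot 0$ together with $\varphi_\om(0) = 0$ yields $\varphi_\om(\lambda u) \le (\lambda/n) \varphi_\om(nu)$, hence $I_\varphi(\lambda u) \le (\lambda/n) I_\varphi(nu) < \i$. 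For (ii), let $u \in \bigcap_n A_n$. To get $\sigma$-finite concentration, I apply Markov's inequality to the integrable function $\om \mapsto \varphi(\om, nu(\om))$ to see that each $\{\varphi(nu) > 0\}$ is $\sigma$-finite, and invoke the coercivity from Definition \ref{def. Orlicz integrand} to cover $\{u \ne 0\}$ by $\bigcup_n \{\varphi(nu) > 0\} \in \AA_\sigma$. For absolute continuity, take any $\lambda > 0$ and any $E_k$ with $\mu(\lim_k E_k) = 0$; since $\chi_{E_k} \to 0$ a.e., the dominated convergence theorem (dominated by $\varphi(\om, \lambda u)$) gives $I_\varphi(\lambda u \chi_{E_k}) = \int_{E_k} \varphi(\om, \lambda u) \, d\mu \to 0$, so for $k$ large $\|u \chi_{E_k}\|_\varphi \le 1/\lambda$; arbitrariness of $\lambda$ completes the claim.

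The hardest step will be (iii). Using linearity of $C_\varphi^\sigma(\mu)$ I reduce to proving that $v \in C_\varphi^\sigma(\mu)$ forces $I_\varphi(v) < \i$ (apply to $v = \lambda u$). I would split the argument into two stages. Stage one rules out that $N := \{\varphi(\om, v) = \i\}$ has positive measure: by hypothesis $\varphi$ is real-valued on every atom inside the $\sigma$-finite support of $v$, so $N$ is purely non-atomic; but for $\alpha \in (0, 1]$ convexity with $\varphi_\om(0) = 0$ gives $\varphi_\om(v) \le \alpha \varphi_\om(v/\alpha)$, forcing $\varphi_\om(v/\alpha) = \i$ throughout $N$, so $\|v \chi_E\|_\varphi \ge 1$ for every non-null $E \subset N$. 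Non-atomicity of $N$ inside some $F \in \AA_f$ meeting $N$ lets me extract disjoint non-null $E_k \subset N$, so that $\chi_{E_k} \to 0$ pointwise (each $\om$ lies in at most one $E_k$), contradicting $v \in C_\varphi(\mu)$. Stage two assumes $\varphi(v) < \i$ a.e.\ and $I_\varphi(v) = \i$ for contradiction. Writing the support $\Sigma = \bigcup_m F_m$ with $F_m \uparrow$ and $\mu(F_m) < \i$, either some sub-integral $\int_{F_{m_0}} \varphi(v) \, d\mu$ is already infinite --- in which case $G_M := \{\varphi(v) > M\} \cap F_{m_0}$ satisfies $G_M \downarrow \{\varphi(v) = \i\} \cap F_{m_0}$, a null set, yet $I_\varphi(v \chi_{G_M}) = \int_{G_M} \varphi(v) = \i$ because $\int_{F_{m_0} \setminus G_M} \varphi(v) \le M \mu(F_{m_0}) < \i$ --- or all sub-integrals are finite but diverge, so $E_m := \Sigma \setminus F_m$ has $\chi_{E_m} \to 0$ a.e.\ while $I_\varphi(v \chi_{E_m}) = \i$. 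In either case absolute continuity gives $\|v \chi_{G_M}\|_\varphi \to 0$ (resp.\ $\|v \chi_{E_m}\|_\varphi \to 0$), and Lemma \ref{lem: modular-norm}(a) forces the corresponding modular to tend to $0$, contradicting its infinite value.

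For (iv), when $\varphi$ fails to be real-valued on some atom $A$ of finite measure, I pick $c \in X$ with $\varphi(\om, c) = \i$ a.e.\ on $A$ and set $v = c \chi_A$. Continuity of $\varphi_\om$ at the origin ensures $\varphi(\om, c/\alpha) \mu(A) \to 0$ as $\alpha \to \i$, so $\|v\|_\varphi < \i$, while $I_\varphi(v) = \varphi(\om, c) \mu(A) = \i$, giving $v \notin A_1$. Yet $v \in C_\varphi^\sigma(\mu)$: since $A$ is an atom, any sequence $E_k$ with $\chi_{E_k} \to 0$ a.e.\ must eventually satisfy $\mu(A \cap E_k) = 0$, so $v \chi_{E_k} = 0$ for large $k$ and absolute continuity holds vacuously, witnessing the strict inclusion.
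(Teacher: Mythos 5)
Your proposal is correct and follows essentially the same route as the paper: the same monotonicity argument for the first identity, absolute continuity of the integral plus the $\sigma$-finiteness of $\bigcup_n\{\varphi(nu)>0\}$ for the inclusion, non-atomicity of $\{\varphi(v)=\infty\}$ (forced by real-valuedness on finite-measure atoms) combined with evanescent sequences to establish equality, and the same counterexample $x\chi_A$ for properness. The only difference is cosmetic: where the paper bounds $I_\varphi(nu)\le 1+\lambda\int f\,d\mu$ directly via the sets $\{\varphi(nu)\le\lambda f\}$, you reach the same conclusion by contradiction through an exhaustion of the $\sigma$-finite support and Lemma \ref{lem: modular-norm}, which is equally valid.
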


\begin{proof}
	The first identity in (\ref{eq. line space has abs cont norm}) holds since $A_\lambda$ decreases as $\lambda$ increases. Ad inclusion: for $n \in \N$, $u \in \bigcap_{\lambda > 0} A_\lambda$ and an evanescent sequence $E_j \in \AA$ there holds
	$$
	\lim_j I_\varphi \left( n u \chi_{E_j} \right) = \lim_j \int_{E_j} \varphi\left[\om, u(\om) \right] \, d \mu(\om) = 0
	$$
	by absolute continuity of the integral hence $u \in C_\varphi(\mu)$. As each set in the union $\left\{ u \ne 0 \right\} = \bigcup_{n \in \N} \left\{ \varphi \left( n u \right) > 0 \right\}$ permits a positive integrable function hence is $\sigma$-finite, we conclude $u \in C_\varphi^\sigma(\mu)$.
	
	Ad addendum: fixing $u \in C^\sigma_\varphi(\mu)$ we may assume $\mu$ to be $\sigma$-finite. We claim that each set $R = R_n = \left\{ \varphi \left( n u \right) = \i \right\}$ and hence their union is null. Since $\varphi$ is real on atoms with finite measure, $R$ contains no atom hence $\mu$ is non-atomic on $R$ thus has the finite subset property there \cite[Def. 1.16, Rem. 1.19]{Lp spaces}. If $\mu \left( R \right) > 0$, there exists a sequence $Q_m \subset R$ with $\mu \left( Q_m \right) \searrow 0$ so that the contradiction $0 = \liminf_{m \to \i} \| u \chi_{Q_m} \|_\varphi \ge \frac{1}{n} > 0$ obtains and the claim follows. Let $f \colon \Om \to \R$ be an integrable positive function and consider the sets $A_{\lambda, n} = \left\{ \varphi \left( n u \right) \le \lambda f \right\}$ for $\lambda > 0$. By $\mu \left( \bigcup_{n \ge 1} R_n \right) = 0$ there holds $\mu\left( \lim_{\lambda \to \i} \Om \setminus A_{\lambda, n} \right) = 0$ so that for $\lambda$ sufficiently large we have $\| u \chi_{\Om \setminus A_{\lambda, n} } \|_\varphi < \frac{1}{n}$ hence
	\begin{align*}
		I_\varphi \left( n u \right)
		& = \int_{\Om \setminus A_{\lambda, n} } \varphi \left[ \om, n u \left( \om \right) \right] \, d \mu \left( \om \right)
		+ \int_{A_{\lambda, n} } \varphi \left[ \om, n u \left( \om \right) \right] \, d \mu \left( \om \right) \\
		& \le 1 + \lambda \int f \, d \mu < \i
	\end{align*}
	whence $u \in \bigcap_{\lambda > 0} A_\lambda$ follows. Regarding the necessity of $\varphi$ being real-valued on each atom $A \in \AA_f$, consider $x$ such that $\varphi \left( \om, x \right) = \i$ a.e. on $A$. Then
	\begin{equation*}
		x \chi_A \in C_\varphi \setminus \bigcap_{\lambda > 0} A_\lambda. \qedhere
	\end{equation*}
\end{proof}

\begin{corollary} \label{cor. C sigma fin}
	Let $\varphi$ be real-valued on atoms of finite measure and let $\mu$ have no atom of infinite measure. Then $C_\varphi(\mu) = C^\sigma_\varphi(\mu)$.
\end{corollary}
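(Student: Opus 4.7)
The inclusion $C^\sigma_\varphi(\mu)\subseteq C_\varphi(\mu)$ is immediate from the definitions, so the task is the reverse one: show that each $u\in C_\varphi(\mu)$ is $\sigma$-finitely concentrated (which, combined with the already-assumed membership in $C_\varphi(\mu)$, places $u$ in $C^\sigma_\varphi(\mu)$). Equivalently, by the addendum of Theorem~\ref{thm. C max subspace}, it suffices to show $C_\varphi(\mu)\subseteq\bigcap_{n\in\N}A_n$. I would argue by contradiction, assuming $u\in C_\varphi(\mu)$ yet $S:=\{u\ne 0\}$ is not $\sigma$-finite, and engineering a disjoint sequence that witnesses a failure of absolute continuity.

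By the hypothesis that $\mu$ admits no atom of infinite measure, every atom essentially contained in $S$ has finite measure. If only countably many atoms are so contained, their union $S_a$ is measurable and $\sigma$-finite, so $S_{na}:=S\setminus S_a$ remains non-$\sigma$-finite and carries a non-atomic restriction of $\mu$; a Zorn-type maximality argument exploiting the finite-subset property of non-atomic measures then furnishes an uncountable pairwise disjoint family $\{E_\alpha\}_{\alpha\in I}\subset\AA$ of subsets of $S_{na}$ with $0<\mu(E_\alpha)<\infty$. Should uncountably many atoms sit inside $S$ instead, they themselves directly provide the desired family $\{E_\alpha\}_{\alpha\in I}$. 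Either way I obtain an uncountable disjoint family of measurable finite-positive-measure subsets of $S$, on each of which $u\ne 0$ a.e.

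On each $E_\alpha$ the coercivity of the Orlicz integrand (Proposition~\ref{prop. coercivity} applied $\om$-wise) yields $\varphi_\om(\lambda u(\om))\to\i$ as $\lambda\to\i$ a.e., whence Fatou's lemma gives $I_\varphi(\lambda u\chi_{E_\alpha})\to\i$. Crucially, the hypothesis that $\varphi$ is real-valued on atoms of finite measure prevents $I_\varphi(\lambda u\chi_{E_\alpha})$ from being $+\i$ for all $\lambda>0$ when $E_\alpha$ is an atom, so a finite $\lambda_\alpha$ with $I_\varphi(\lambda_\alpha u\chi_{E_\alpha})>1$ exists. Pigeonholing the $\lambda_\alpha$'s via the partition $(0,\i)=\bigsqcup_{k\ge 0}(k,k+1]$, some $K\in\N$ satisfies $\lambda_\alpha\le K$ for uncountably many $\alpha$; I extract a countable subfamily $\{E_{\alpha_k}\}_{k\in\N}$. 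By Lemma~\ref{lem: modular-norm}(a), $I_\varphi(\lambda_{\alpha_k}u\chi_{E_{\alpha_k}})>1$ forces $\|\lambda_{\alpha_k}u\chi_{E_{\alpha_k}}\|_\varphi>1$, hence $\|u\chi_{E_{\alpha_k}}\|_\varphi>1/K$ uniformly in $k$. But pairwise disjointness gives $\chi_{E_{\alpha_k}}\to 0$ pointwise, so $\mu(\lim_k E_{\alpha_k})=0$, and the absolute continuity built into $C_\varphi(\mu)$ demands $\|u\chi_{E_{\alpha_k}}\|_\varphi\to 0$---the desired contradiction. The main obstacle will be the non-atomic exhaustion step producing an uncountable disjoint family via Zorn and the finite-subset property, together with the careful use of the finite-measure atom hypothesis to guarantee $\lambda_\alpha<\i$ in the atomic branch.
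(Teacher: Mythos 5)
Your argument is correct, but it is a genuinely different route from the paper's. The paper disposes of the corollary in three lines by combining two pieces of machinery it has already built: the equality $C^\sigma_\varphi(\mu)=\bigcap_{n}A_n$ from Theorem \ref{thm. C max subspace} (which is where the real-valuedness hypothesis actually enters) gives some $n$ with $I_\varphi(nu)=\i$ for a putative $u\in C_\varphi\setminus C^\sigma_\varphi$, and Proposition \ref{prop. divergent subintegral} localizes this divergence onto a $\sigma$-finite set $\Sigma$, contradicting $u\chi_\Sigma\in C^\sigma_\varphi(\mu)=\bigcap_n A_n$. You instead prove $\sigma$-finite concentration directly: the atomic/non-atomic dichotomy plus Zorn and the finite subset property (which the paper itself invokes in the proof of Theorem \ref{thm. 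C max subspace}) produce an uncountable disjoint family of finite-positive-measure subsets of $\{u\ne 0\}$; coercivity of $\varphi_\om$ and Fatou give $I_\varphi(\lambda_\alpha u\chi_{E_\alpha})>1$ for finite $\lambda_\alpha$; pigeonholing bounds uncountably many $\lambda_\alpha$ by some $K$; and Lemma \ref{lem: modular-norm}(a) yields $\|u\chi_{E_{\alpha_k}}\|_\varphi>1/K$ along a disjoint, hence evanescent, sequence, contradicting absolute continuity. Your proof is longer but self-contained, bypassing Theorem \ref{thm. C max subspace} entirely; a consequence worth noting is that the real-valuedness hypothesis does no work in your argument (the "crucial" remark is not load-bearing: even if $I_\varphi(\lambda u\chi_{E_\alpha})=+\i$ for every $\lambda>0$, any finite $\lambda_\alpha$, say $\lambda_\alpha=1$, already gives a modular exceeding $1$), so you in fact prove the slightly stronger statement that every element of $C_\varphi(\mu)$ is $\sigma$-finitely concentrated whenever $\mu$ has no atom of infinite measure. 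The paper's proof buys brevity at the price of importing the hypothesis needed for Theorem \ref{thm. C max subspace}; yours buys independence from that theorem at the price of the measure-theoretic exhaustion argument.
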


\begin{proof}
	Suppose there were $u \in C_\varphi \setminus C^\sigma_\varphi$. Then there exists $n \in \N$ with $\int \varphi(n u) \, d \mu = \i$ by Theorem \ref{thm. C max subspace}. Since we assume that no atom of infinite measure exists, Proposition \ref{prop. divergent subintegral} yields a set
	$$
	\Sigma \in \AA_\sigma,
	\quad \int_\Sigma \varphi(nu) \, d \mu = \i,
	$$
	which is impossible by Theorem \ref{thm. C max subspace} because $u \chi_\Sigma \in C^\sigma_\varphi(\mu)$.
\end{proof}

\begin{corollary} \label{cor. linear domain}
	If $\dom I_\varphi$ is linear, then $L_\varphi(\mu) = C^\sigma_\varphi(\mu)$. Conversely, if $L_\varphi(\mu) = C^\sigma_\varphi(\mu)$ and $\varphi$ is real-valued on atoms of finite measure, then $\dom I_\varphi$ is linear.
\end{corollary}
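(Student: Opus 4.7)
Both directions are essentially bookkeeping on top of Theorem \ref{thm. C max subspace}; the principal task is to feed the right hypothesis into the right half of that theorem.

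For the forward direction, fix $u \in L_\varphi(\mu)$. By definition of the Luxemburg norm there exists $\alpha > 0$ with $I_\varphi(\alpha^{-1} u) \le 1 < \infty$, so $\alpha^{-1} u \in \dom I_\varphi$. Assuming $\dom I_\varphi$ is linear, every scalar multiple of $\alpha^{-1} u$ stays in $\dom I_\varphi$; in particular $n u = n\alpha \cdot (\alpha^{-1} u) \in \dom I_\varphi$ for every $n \in \N$. Hence $u \in \bigcap_{n \in \N} A_n$, and the inclusion in (\ref{eq. line space has abs cont norm}) of Theorem \ref{thm. C max subspace} places $u$ in $C_\varphi^\sigma(\mu)$.

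For the converse, assume $L_\varphi(\mu) = C_\varphi^\sigma(\mu)$ and that $\varphi$ is real-valued on atoms of finite measure. Under this extra assumption the inclusion in Theorem \ref{thm. C max subspace} is an equality, so $C_\varphi^\sigma(\mu) = \bigcap_{\lambda > 0} A_\lambda$. Given $u, v \in \dom I_\varphi$, both lie in $L_\varphi(\mu) = C_\varphi^\sigma(\mu) = \bigcap_{\lambda > 0} A_\lambda$, so $\lambda u \in \dom I_\varphi$ and $\lambda v \in \dom I_\varphi$ for every $\lambda > 0$. For negative scalars one invokes evenness of $\varphi$, which gives $I_\varphi(-w) = I_\varphi(w)$ and thus closure of $\dom I_\varphi$ under the map $w \mapsto -w$. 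Finally, convexity gives
\begin{equation*}
\varphi\bigl(\om, u(\om) + v(\om)\bigr) \le \tfrac{1}{2} \varphi\bigl(\om, 2 u(\om)\bigr) + \tfrac{1}{2} \varphi\bigl(\om, 2 v(\om)\bigr)
\end{equation*}
pointwise, and integration yields $I_\varphi(u+v) \le \tfrac{1}{2} I_\varphi(2u) + \tfrac{1}{2} I_\varphi(2v) < \infty$, because $2u, 2v \in \dom I_\varphi$ by the previous step. Thus $u + v \in \dom I_\varphi$, and $\dom I_\varphi$ is a linear subspace.

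There is no genuine obstacle: the only point requiring a hint of care is making sure the equality (not just the inclusion) in Theorem \ref{thm. C max subspace} is available for the converse — which is precisely why the hypothesis that $\varphi$ is real-valued on atoms of finite measure appears there and here. Everything else is a one-line application of convexity and evenness.
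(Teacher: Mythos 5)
Your proposal is correct and follows the same route as the paper, which proves the corollary in one line by combining Theorem \ref{thm. C max subspace} with the observation that $\lin \dom I_\varphi = L_\varphi(\mu)$; you merely spell out the details (in the converse, one can even shortcut your convexity step by noting that $L_\varphi(\mu) = \bigcap_{\lambda>0} A_\lambda$ already forces $\dom I_\varphi = L_\varphi(\mu)$, which is a vector space). No gaps.
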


\begin{proof}
	By Theorem \ref{thm. C max subspace} since $\lin \dom I_\varphi = L_\varphi$.
\end{proof}

Theorem \ref{thm. C max subspace} allows a simple characterization of Orlicz integrands for which all elements of $L_\varphi(\mu)$ have absolutely continuous norms in terms of a growth condition often dubbed $\Delta_2$ or doubling condition. Similar conditions and their role in the theory of Orlicz spaces are well-known in the scalar and vector valued cases, cf. \cite{Orlicz scalar, vvop 2}.

\begin{definition}[$\Delta_2$-condition] \label{def. Delta2 cond}
	We say the Orlicz integrand $\varphi$ satisfies the \emph{$\Delta_2$-condition} and write $\varphi \in \Delta_2$ iff
	\begin{align*}
		& \forall A \in \AA_{a\sigma} \, \forall S \in \SS \left( X \right) \, \exists k \ge 1, f \in L_1(\mu) \colon \\
		& \varphi \left( \om, 2 x \right) \le k \varphi (\om, x) + f(\om) \quad \forall x \in S, \text{ a.e. } \om \in A.
	\end{align*}
\end{definition}

\begin{lemma} \label{lem: linear domain 2}
	There holds $L_\varphi(\mu) = C^\sigma_\varphi(\mu)$ if $\varphi \in \Delta_2$. If $\mu$ is non-atomic, then $\varphi \in \Delta_2$ is also necessary for $L_\varphi(\mu) = C^\sigma_\varphi(\mu)$ to hold.
\end{lemma}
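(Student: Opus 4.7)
The plan is to reduce the forward implication to linearity of $\dom I_\varphi$ via Corollary \ref{cor. linear domain}, and to establish the converse by its contrapositive, constructing a function $u \in L_\varphi(\mu) \setminus C_\varphi^\sigma(\mu)$ directly from the $\Delta_2$ failure and invoking Theorem \ref{thm. C max subspace}. By convexity of $\varphi(\om, \cdot)$ together with $\varphi(\om, 0) = 0$, linearity of $\dom I_\varphi$ is equivalent to the single implication $u \in \dom I_\varphi \Rightarrow 2u \in \dom I_\varphi$.

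For the forward direction, fix $u \in \dom I_\varphi$ and suppose toward contradiction that $I_\varphi(2u) = \i$. By the definition of the exhausting integral there is a sequence $A_n \in \AA_{a\sigma}$ with $\int_{A_n} \varphi[\om, 2u(\om)] \, d \mu \to \i$. The decisive observation is that $\AA_{a\sigma}$ is a $\sigma$-ring, so $A_* := \bigcup_n A_n \in \AA_{a\sigma}$. Applying the $\Delta_2$ condition to $A_*$ and any $S \in \SS(X)$ containing the range of $u$ produces $k \ge 1$ and $f \in L_1(\mu)$ with $\varphi(\om, 2u) \le k \varphi(\om, u) + f(\om)$ a.e.\ on $A_*$. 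Integrating over $A_*$ gives $\int_{A_*} \varphi(\om, 2u) \, d \mu \le k I_\varphi(u) + \| f \|_{L_1} < \i$, contradicting $\int_{A_*} \varphi(\om, 2u) \, d \mu \ge \sup_n \int_{A_n} \varphi(\om, 2u) \, d \mu = \i$.

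For the converse I would argue by contraposition. Under $\mu$ non-atomic one has $\AA_{a\sigma} = \AA_\sigma$, and the atomic hypothesis of Theorem \ref{thm. C max subspace} holds vacuously. Given $\varphi \notin \Delta_2$, the failure yields $A_0 \in \AA_\sigma$ and $S_0 \in \SS(X)$; by passing to a finite-measure subset on which the failure persists I may assume $\mu(A_0) < \i$. For each $n \in \N$, invoke the failure with $k = n$ and $f = \e_n \chi_{A_0}$ for suitable $\e_n > 0$ with $\sum_n \e_n \mu(A_0) < \i$, obtaining $x_n \in S_0$ and measurable $E_n \subset A_0$ of positive measure on which $\varphi(\om, 2 x_n) > n \varphi(\om, x_n) + \e_n$; in particular $\varphi(\om, x_n) < \varphi(\om, 2 x_n)/n$ there. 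After possibly rescaling $x_n$ by a small factor so that $\varphi(\om, 2 x_n)$ is finite on a positive-measure subset and truncating $E_n$ to a level set on which this integrand is essentially bounded, I would exploit non-atomicity to carve pairwise disjoint $F_n \subset E_n$ of calibrated positive measure with $\int_{F_n} \varphi(\om, x_n) \, d \mu \le 2^{-n}$ and $\int_{F_n} \varphi(\om, 2 x_n) \, d \mu \ge 1$. Then $u = \sum_n x_n \chi_{F_n}$ is strongly measurable with separable range in $\{ 0 \} \cup S_0$, $\sigma$-finitely supported, and satisfies $I_\varphi(u) \le \sum_n 2^{-n} < \i$ yet $I_\varphi(2u) \ge \sum_n 1 = \i$; hence $u \in L_\varphi(\mu)$ while $2u \notin \dom I_\varphi$, so $u \notin A_2 \supset C_\varphi^\sigma(\mu)$ by Theorem \ref{thm. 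C max subspace}.

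The principal obstacle is the calibration in the converse: forcing $\int_{F_n} \varphi(\om, 2 x_n) \, d \mu$ to stay bounded below by $1$ while keeping $\int_{F_n} \varphi(\om, x_n) \, d \mu$ summable requires combining the exponential gap $\varphi(\om, x_n) < \varphi(\om, 2 x_n)/n$ with the intermediate-value property of $F \mapsto \int_F \varphi(\om, 2 x_n) \, d \mu$ afforded by non-atomicity inside a sublevel set on which $\varphi(\om, 2 x_n)$ is essentially bounded. By comparison, the forward direction is almost immediate once the $\sigma$-ring closure of $\AA_{a\sigma}$ under countable unions is recognized as the tool for localizing the $\Delta_2$ inequality to a single set capturing all the mass of the divergent integral.
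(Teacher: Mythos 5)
Your forward direction is correct and is essentially the paper's argument: both reduce to the linearity of $\dom I_\varphi$ via Theorem \ref{thm. C max subspace} and localize the divergence of $I_\varphi(2u)$ to a single set in $\AA_{a\sigma}$ on which the $\Delta_2$ inequality is integrated. You do the localization by taking the union of an exhausting sequence (using that $\AA_{a\sigma}$ is a $\sigma$-ring), the paper invokes Proposition \ref{prop. divergent subintegral}; this is a cosmetic difference.

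The converse, however, has a genuine gap. The paper does not attempt a direct construction: it restricts to a $\sigma$-finite set and a closed separable subspace on which $\varphi$ is jointly measurable, observes that $L_\varphi = C_\varphi^\sigma$ forces $\dom I_\varphi \subset \dom I_\phi$ for $\phi_\om(x) = \varphi_\om(2x)$, and then cites Kozek \cite[Thm. 1.7]{vvop 2} for the implication to $\Delta_2$. Your direct construction fails at exactly the step you flag as the ``principal obstacle''. From the failure of the single instance $(k,f) = (n, \e_n\chi_{A_0})$ you obtain a positive-measure set $E_n$ on which $\varphi(\om,2x_n) > n\varphi(\om,x_n) + \e_n$, but nothing forces $\int_{E_n}\varphi(\om,2x_n)\,d\mu \ge 1$: the violation may occur where $\varphi(\om,2x_n)$ is barely above $\e_n$ and $\mu(E_n)$ is tiny, so the total mass available can be arbitrarily small. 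Non-atomicity gives the intermediate-value property for $F \mapsto \int_F \varphi(\om,2x_n)\,d\mu$, which lets you attain any value \emph{up to} $\int_{E_n}\varphi(\om,2x_n)\,d\mu$, never above it; hence no $F_n$ with $\int_{F_n}\varphi(\om,2x_n)\,d\mu \ge 1$ need exist, and your series for $I_\varphi(2u)$ need not diverge. Repairing this requires choosing the pairs $(k_n, f_n)$ adaptively (with $f_n$ depending on the pieces already constructed) and a case analysis on whether $\varphi(\om,x_n)$ vanishes and whether $\varphi(\om,2x_n)$ is finite --- which is precisely the content of the cited theorem. Two further, smaller problems: extracting a \emph{constant} witness $x_n \in S_0$ valid on a positive-measure set is not justified if the exceptional null set in Definition \ref{def. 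Delta2 cond} is uniform in $x$ (one then only gets an $S_0$-valued measurable selection, which is what Kozek's argument uses); and your proposed rescaling of $x_n$ ``by a small factor'' to force finiteness of $\varphi(\om,2x_n)$ destroys the gap inequality, since $\varphi(\om,2\lambda x_n) > n\varphi(\om,\lambda x_n) + \e_n$ does not follow from the unscaled inequality (the infinite case should instead be handled separately, where it immediately yields $u \in \dom I_\varphi$ with $2u \notin \dom I_\varphi$).
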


\begin{proof}
	The first claim will follow by Theorem \ref{thm. C max subspace} once we prove that $\dom I_\varphi$ is linear if $\varphi \in \Delta_2$. As $\dom I_\varphi$ is an absolutely convex set, its linearity is equivalent to the implication
	$$
	I_\varphi(u) < \i \implies I_\varphi(2u) < \i.
	$$
	Arguing by contradiction, we assume $I_\varphi(2u) = \i$. Proposition \ref{prop. divergent subintegral} yields $A \in \AA_{a\sigma}$ with $I_\varphi\left( 2u \chi_A \right) = \i$. As $u$ is almost separably valued, the assumption $\varphi \in \Delta_2$ yields
	$$
	I_\varphi \left( 2 u \chi_A \right) \le k I_\varphi\left( u \chi_A \right) + \int_A f \, d \mu < \i
	$$
	whence we have arrived at a contradiction. Regarding the necessity, let $\Sigma \in \AA_\sigma$ and $S \in \SS \left( X \right)$ as in Definition \ref{def. Delta2 cond}. Since $\varphi$ is integrally separably measurable, there exists a closed subspace $W \in \SS \left( X \right)$ with $S \subset W$ such that the restriction $\left. \varphi \right|_{\Sigma \times W}$ is $\AA(\Sigma) \otimes \BB(W)$-measurable. Hence we may via restriction assume that $\mu$ is $\sigma$-finite and $\varphi$ is $\AA \otimes \BB$-measurable on a separable space. Let $\phi_\om (x) = \varphi_\om \left( 2 x \right)$ so that our assumption $L_\varphi(\mu) = C_\varphi(\mu)$ implies $\dom I_\varphi \subset \dom I_\phi$ by Theorem \ref{thm. C max subspace} as $\mu$ is non-atomic. Hence $\varphi \in \Delta_2$ follows by \cite[Thm. 1.7]{vvop 2}.
\end{proof}

If $\mu$ has an atom, then $L_\varphi(\mu) = C^\sigma_\varphi(\mu)$ may hold even if $\varphi \notin \Delta_2$. For example, consider $\R^n$ as an Orlicz space of real valued functions on the uniform measure space $\left\{ 1, \dots , n \right\}$ and take any real-valued map $\varphi \in \Gamma(\R)$ with $\varphi \notin \Delta_2$ as the Orlicz function.

\subsection{Decomposability}

As $C_\varphi(\mu)$ is the predual of the function component in $L_\varphi(\mu)^*$ if $\varphi$ is real-valued, it is interesting to understand convex duality also on $C_\varphi(\mu)$ as this implies, for example, weak* lower semicontinuity for functionals that arise as convex conjugates w.r.t. this pairing. $C_\varphi(\mu)$ is weakly decomposable. We also have

\begin{lemma} \label{lem: C a decomp}
	If $\varphi$ is real-valued, then $C_\varphi(\mu)$ and $C_\varphi^\sigma(\mu)$ are almost decomposable.
\end{lemma}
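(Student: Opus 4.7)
The plan is to establish almost decomposability for both $C_\varphi(\mu)$ and $C_\varphi^\sigma(\mu)$ by a single construction. Fix $u_0$ in the relevant space, $F \in \AA_f$, $\e > 0$, and a bounded strongly measurable function $u_1 \colon F \to X$ with $\| u_1 \|_\i \le M$; after correcting on a null set, its range lies in a closed separable subspace $W \in \SS(X)$. I want to produce $F_\e \subset F$ with $\mu(F \setminus F_\e) < \e$ such that the pieced-together function $u = u_0 \chi_{\Om \setminus F_\e} + u_1 \chi_{F_\e}$ remains in the same space as $u_0$.

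The crux will be to force $u_1 \chi_{F_\e}$ into $C_\varphi^\sigma(\mu)$. Since this function is supported on a set of finite measure, hence $\sigma$-finitely concentrated, Theorem \ref{thm. C max subspace} reduces the task to ensuring $I_\varphi \left( n u_1 \chi_{F_\e} \right) < \i$ for every $n \in \N$. For this I plan to introduce the auxiliary function
$$
g_R(\om) := \sup_{x \in B_R \cap W} \varphi_\om(x), \quad R > 0.
$$
Since $\varphi$ is real-valued and $\varphi_\om \in \Gamma(X)$ for a.e. $\om$, the restriction $\varphi_\om|_W$ is a real-valued lsc convex function on the Banach space $W$, hence continuous by a standard Baire category argument. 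Separable measurability of $\varphi$ then makes $g_R$ an $\AA(F)$-measurable function, obtainable as the supremum of $\varphi(\cdot, x)$ over $x$ in a countable dense subset of $B_R \cap W$, while continuity of $\varphi_\om|_W$ forces $g_R(\om) < \i$ for a.e. $\om \in F$.

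With $\mu(F) < \i$, I would then for each $n$ pick a level $k_n$ so large that $F_n := F \cap \left\{ g_{nM} \le k_n \right\}$ satisfies $\mu(F \setminus F_n) < 2^{-n} \e$, and set $F_\e := \bigcap_n F_n$. This yields $\mu(F \setminus F_\e) < \e$ together with the uniform bound
$$
I_\varphi \left( n u_1 \chi_{F_\e} \right) \le k_n \mu(F_\e) < \i \quad \forall n \in \N,
$$
whence $u_1 \chi_{F_\e} \in C_\varphi^\sigma(\mu)$ by Theorem \ref{thm. C max subspace}. The conclusion follows since $C_\varphi(\mu)$ and $C_\varphi^\sigma(\mu)$ are both closed under multiplication by indicators of measurable sets and are linear subspaces by Lemma \ref{lem: C closed subspace}: $u_0 \chi_{\Om \setminus F_\e}$ lies in the same space as $u_0$, while $u_1 \chi_{F_\e} \in C_\varphi^\sigma(\mu) \subset C_\varphi(\mu)$. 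I expect the main obstacle to lie precisely in arranging simultaneous finiteness of $I_\varphi(n u_1 \chi_{F_\e})$ across all $n$; this is where the auxiliary function $g_R$ and, through its pointwise finiteness, the real-valuedness hypothesis on $\varphi$ are genuinely used.
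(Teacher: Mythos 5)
Your overall architecture matches the paper's: reduce via weak decomposability and linearity to producing $F_\e \subset F$ with $u_1 \chi_{F_\e} \in C_\varphi(\mu)$, and obtain this membership from Theorem \ref{thm. C max subspace} by forcing $I_\varphi\left( n u_1 \chi_{F_\e} \right) < \i$ for all $n$. The gap lies in the step where you claim that $g_R(\om) = \sup_{x \in B_R \cap W} \varphi_\om(x) < \i$ because $\varphi_\om|_W$ is a real-valued lower semicontinuous convex function on a Banach space and hence continuous. Continuity of a convex function on an infinite-dimensional Banach space does \emph{not} imply boundedness on bounded sets. For instance, on $\ell_2$ the function $f(x) = \| x \|^2 + \sum_{n} n \left( \max \left\{ \left| x_n \right| - \tfrac{1}{2}, 0 \right\} \right)^2$ is an even, continuous, coercive convex function vanishing at the origin, hence an Orlicz function in the sense of Definition \ref{def. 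Orlicz integrand}, yet $f(e_n) \ge n/4$ shows it is unbounded on the unit ball. For the autonomous integrand $\varphi(\om, x) = f(x)$ and $W$ containing the $e_n$, your $g_R$ is identically $+\i$ once $R \ge 1$, every $F_n$ is empty, and the construction collapses even though the lemma remains true.

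The repair is to discard the supremum over $x$ and bound only what is actually integrated: the composition $h_n(\om) = \varphi\left[ \om, n u_1(\om) \right]$ is a single real-valued measurable function on the finite-measure set $F$ (real-valuedness of $\varphi$ enters exactly here), so $\mu\left( F \cap \left\{ h_n > k \right\} \right) \to 0$ as $k \to \i$; choosing $k_n$ with $\mu\left( F \cap \left\{ h_n > k_n \right\} \right) < 2^{-n} \e$ and setting $F_n = F \cap \left\{ h_n \le k_n \right\}$, $F_\e = \bigcap_n F_n$ yields $I_\varphi\left( n u_1 \chi_{F_\e} \right) \le k_n \mu(F) < \i$ for every $n$. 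This is precisely the paper's argument; no boundedness of $\varphi_\om$ on balls is needed. The remainder of your proof, namely the initial reduction and the final assembly via linearity and closedness under multiplication by indicators, is correct.
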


\begin{proof}
	Let $F \in \AA_f$ and $v \in L_\i \left( F ; X \right)$. Since $C_\varphi(\mu)$ and $C_\varphi^\sigma(\mu)$ are weakly decomposable linear spaces, it suffices by a remark below Definition \ref{def. almost decomposable} to prove that for $\e > 0$ there exists a set $F_\e \subset F$ with $\mu \left( F \setminus F_\e \right) < \e$ and $v \chi_{F_\e} \in C_\varphi(\mu)$. Since $\varphi$ is real-valued, we find for $k \in \N$ a set $F_{k, \e} \subset F$ with $\mu \left( F \setminus F_{k, \e} \right) < 2^{-k} \e$ and $\sup_{\om \in F_{k, \e} } \varphi \left[ \om, k v \left( \om \right) \right] < \i$. Thus for $F_\e = \bigcap_k F_{k, \e}$ there holds $ \mu \left( F \setminus F_\e \right) < \e$ and $v \chi_{F_\e} \in C_\varphi(\mu)$ by Theorem \ref{thm. C max subspace}.
\end{proof}

If $\varphi$ is not real-valued, then the maximal linear subspace of $\dom I_\varphi$ may be trivial hence Lemma \ref{lem: C a decomp} ceases to hold. Consider the example $L_\i\left( \left[ 0, 1 \right] ; X \right)$ with the Orlicz function $I_{B_X}$.

For every countable family in $L^\sigma_\varphi(\mu)$, there exists an evanescent sequence of sets outside which each element has absolutely continuous norm. This observation will yield insights into the dual spaces of $C_\varphi(\mu)$ and $L_\varphi(\mu)$.

\begin{lemma} \label{lem: norm almost abs cont}
	If $\varphi$ is real-valued, then for any sequence $u_k \in L_\varphi^\sigma(\mu)$ there exists a decreasing sequence $E_\ell \in \AA$ with $\mu \left( \lim_\ell E_\ell \right) = 0$ and $u_k \chi_{\Om \setminus E_\ell} \in C_\varphi^\sigma(\mu)$.
\end{lemma}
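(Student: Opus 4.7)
The plan is to leverage Theorem \ref{thm. C max subspace}: since $\varphi$ is real-valued (hence \emph{a fortiori} real-valued on atoms of finite measure), the inclusion $\bigcap_{n \in \N} A_n \subset C_\varphi^\sigma(\mu)$ there is available, so it suffices to construct a decreasing sequence $E_\ell \in \AA$ with $\mu(\lim_\ell E_\ell) = 0$ such that for every $k, n, \ell \in \N$,
\[
I_\varphi\bigl(n u_k \chi_{\Om \setminus E_\ell}\bigr) = \int_{\Om \setminus E_\ell} g_{k, n} \, d \mu < \i,
\]
where $g_{k, n}(\om) := \varphi[\om, n u_k(\om)]$. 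Because $\varphi$ is real-valued, each $g_{k, n}$ is a finite-valued measurable function, and it vanishes off the $\sigma$-finite set $\Sigma := \bigcup_k \{u_k \ne 0\}$. Writing $\Sigma = \bigcup_m F_m$ with $F_m \in \AA_f$ increasing then reduces the entire argument to a $\sigma$-finite stage.

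The construction is a doubly indexed diagonal truncation. I enumerate the countable family $\{g_{k, n}\}_{k, n \in \N}$ as $\{g_i\}_{i \in \N}$; the finiteness of each $g_i$ and of $\mu(F_m)$ forces $\mu(F_m \cap \{g_i > N\}) \downarrow 0$ as $N \uparrow \i$, so I pick $N_{i, m} \in \N$ with $\mu(F_m \cap \{g_i > N_{i, m}\}) < 2^{-i - m}$. Setting
\[
H_m := F_m \cap \bigcap_{i \in \N} \{g_i \le N_{i, m}\}, \qquad E_\ell := \Om \setminus \Bigl[(\Om \setminus \Sigma) \cup \bigcup_{m \le \ell} H_m\Bigr],
\]
the sequence $E_\ell$ is decreasing by construction and $\mu(F_m \setminus H_m) \le \sum_{i \in \N} 2^{-i - m} = 2^{-m}$.

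To verify the required properties, Borel--Cantelli applied to the summable sequence $\mu(F_m \setminus H_m) \le 2^{-m}$, combined with $F_m \uparrow \Sigma$, gives $\mu(\Sigma \setminus \bigcup_m H_m) = 0$, whence $\mu(\bigcap_\ell E_\ell) = 0$. For fixed $k, n, \ell$ and $i \in \N$ the enumeration index of $(k, n)$, the function $u_k \chi_{\Om \setminus E_\ell}$ vanishes off the $\sigma$-finite set $\bigcup_{m \le \ell} H_m$, while the pointwise bound $g_i \le N_{i, m}$ on $H_m$ yields
\[
I_\varphi\bigl(n u_k \chi_{\Om \setminus E_\ell}\bigr) \le \sum_{m \le \ell} N_{i, m}\, \mu(F_m) < \i.
\]
Thus Theorem \ref{thm. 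C max subspace} places $u_k \chi_{\Om \setminus E_\ell} \in C_\varphi^\sigma(\mu)$. The main obstacle is arranging a \emph{single} set $\Om \setminus E_\ell$ on which every $g_{k, n}$ is simultaneously integrable while still forcing $\bigcap_\ell E_\ell$ to be null; this is precisely what the doubly summable weight $2^{-i - m}$ combined with Borel--Cantelli delivers, and it is also the step where the real-valuedness of $\varphi$ enters crucially, since without it the truncation thresholds $N_{i, m}$ need not exist.
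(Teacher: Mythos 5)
Your proof is correct, but it takes a more direct route than the paper. The paper's own argument first reduces to a finite measure via Proposition \ref{prop. equivalent finite measure} and then combines two previously established facts: the almost decomposability of $C_\varphi^\sigma(\mu)$ (Lemma \ref{lem: C a decomp}) and the approximation-from-below Lemma \ref{lem: decomp ss abs cont dense}, which together yield for each $u_k$ a sequence of sets $D_{k,\ell}$ with $\mu(D_{k,\ell}) \le 2^{-k-\ell}$ and $u_k \chi_{\Om \setminus D_{k,\ell}} \in C_\varphi^\sigma(\mu)$; taking $E_\ell = \bigcup_k D_{k,\ell}$ finishes. You instead bypass both lemmas and work directly from the inclusion $\bigcap_n A_n \subset C_\varphi^\sigma(\mu)$ of Theorem \ref{thm. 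C max subspace}, truncating the level sets of the countable family $g_{k,n} = \varphi(\cdot, n u_k)$ on an exhaustion $F_m \uparrow \Sigma$ with doubly summable errors $2^{-i-m}$ and invoking Borel--Cantelli. This is essentially an inlining of the content of Lemmas \ref{lem: C a decomp} and \ref{lem: decomp ss abs cont dense} (whose proofs rest on the same real-valuedness-enables-truncation mechanism), so conceptually the two arguments are close; what your version buys is self-containedness and a cleaner treatment of unbounded $u_k$, since you truncate $\varphi(\cdot, n u_k)$ rather than $u_k$ itself, whereas the paper's route has the advantage of reusing structural lemmas that are needed elsewhere anyway. All the steps check out: $g_{k,n}$ is measurable on the $\sigma$-finite set $\Sigma$ by integral separable measurability of $\varphi$, is a.e. finite by real-valuedness so the thresholds $N_{i,m}$ exist by continuity from above on the finite-measure sets $F_m$, the sets $E_\ell$ are decreasing with $\bigcap_\ell E_\ell \subset \limsup_m (F_m \setminus H_m)$ null, and $I_\varphi(n u_k \chi_{\Om\setminus E_\ell}) < \i$ for every $n$ places $u_k\chi_{\Om\setminus E_\ell}$ in $\bigcap_n A_n$.
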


\begin{proof}
	We may assume $\mu$ to be $\sigma$-finite. Hence we may take $\mu$ to be finite by Proposition \ref{prop. equivalent finite measure}. Since $C_\varphi^\sigma(\mu)$ is almost decomposable by Lemma \ref{lem: C a decomp}, we find by Lemma \ref{lem: decomp ss abs cont dense} sequences of sets $D_{k, \ell} \in \AA$ decreasing in $\ell$ with
	$$
	\mu\left( D \right) \le 2^{- k - \ell};	\quad
	u_k \chi_{\Om \setminus D} \in C_\varphi^\sigma(\mu).
	$$
	Hence, for $E_\ell = \bigcup_{k \ge 1} D$ we have
	\begin{equation*}
		\mu\left( E_\ell \right) \le \sum_{k \ge 1} \mu\left( D \right) \le 2^{- \ell}; \quad
		u_k \chi_{\Om \setminus E_\ell} \in C_\varphi^\sigma(\mu) \quad \forall k \in \N. \qedhere
	\end{equation*}
\end{proof}

As our last fundamental fact on $C_\varphi(\mu)$ and a first step towards investigating separability, we prove the denseness of simple functions.

\begin{lemma} \label{lem: C simple dense}
	There holds $C_\varphi^\sigma(\mu) \subset E_\varphi^\sigma(\mu)$. If $\varphi$ is real-valued, then integrable simple functions are dense in $C_\varphi^\sigma(\mu)$.
\end{lemma}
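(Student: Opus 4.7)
The plan is to treat the two statements separately, as the first follows almost mechanically from the preceding infrastructure.

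For the inclusion $C_\varphi^\sigma(\mu) \subset E_\varphi^\sigma(\mu)$, let $u \in C_\varphi^\sigma(\mu)$. Since $E_\varphi(\mu)$ is almost decomposable by Lemma \ref{lem: E a decomp}, Lemma \ref{lem: decomp ss abs cont dense} yields a monotone approximation from below $u_n = u \chi_{\Om_n} \in E_\varphi(\mu)$ with $\Om_n$ increasing and $\mu(\lim_n \Om \setminus \Om_n) = 0$. The defining absolute continuity of the norm applied to the evanescent sequence $\Om \setminus \Om_n$ gives
$$\| u - u_n \|_\varphi = \| u \chi_{\Om \setminus \Om_n} \|_\varphi \to 0.$$
Since $E_\varphi(\mu)$ is closed by definition and $L_\varphi^\sigma(\mu)$ is closed by Lemma \ref{lem: Lvarphisigma closed linear subspace}, their intersection $E_\varphi^\sigma(\mu)$ is closed, so $u \in E_\varphi^\sigma(\mu)$.

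For the addendum, I would reduce the problem to approximating bounded representatives supported in finite measure sets, and then invoke Lemma \ref{lem: a emb}. Write the $\sigma$-finite support of $u$ as $\Sigma = \bigcup_n F_n$ with $F_n$ increasing of finite measure and set $G_n = F_n \cap \{ \| u \| \le n \} \in \AA$ (measurable by strong measurability of $u$). Then $\Sigma \setminus \bigcup_n G_n$ is null, so absolute continuity gives $u \chi_{G_n} \to u$ in $L_\varphi(\mu)$, and it suffices to approximate each bounded, finitely supported $u \chi_{G_n}$ by integrable simple functions. Apply Lemma \ref{lem: a emb} on the finite measure space $F_n$ to obtain $\Om_\e \subset F_n$ with $\mu(F_n \setminus \Om_\e) < \e$ and a continuous embedding $L_\i(\Om_\e; X) \to L_\varphi(\Om_\e)$. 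Choosing, by strong measurability, simple functions $s_k \to u \chi_{G_n}$ uniformly on $F_n$, the embedding yields $s_k \chi_{\Om_\e} \to u \chi_{G_n \cap \Om_\e}$ in $L_\varphi$, while $\| u \chi_{G_n} - u \chi_{G_n \cap \Om_\e} \|_\varphi \to 0$ as $\e \downarrow 0$ by absolute continuity of the norm. A diagonal selection $(\e_j, k_j)$ then produces integrable simple functions $s_{k_j} \chi_{\Om_{\e_j}}$ converging to $u \chi_{G_n}$ in $L_\varphi(\mu)$, and combining with the outer approximation $u \chi_{G_n} \to u$ finishes the claim.

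The delicate step is the second one: uniform convergence of simple functions alone does not control the modular without extra information, so the role of Lemma \ref{lem: a emb} (which is where the assumption that $\varphi$ is real-valued ultimately matters, since it guarantees the constructed $s_{k_j} \chi_{\Om_{\e_j}}$ lie in $L_\varphi(\mu)$) is essential. The book\-keeping for the triple limit in $n$, $\e$ and $k$ is routine once the correct order of approximations has been identified. No further measurability issues arise, as all approximations remain inside the $\sigma$-finite support of $u$.
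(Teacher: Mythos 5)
Your first paragraph is correct and is a genuinely different (and cleaner) route than the paper's: the paper obtains $C_\varphi^\sigma(\mu)\subset E_\varphi^\sigma(\mu)$ as a by-product of its Egorov construction (with the sets $D_j$ set to $\emptyset$), whereas you get it directly from Lemma \ref{lem: E a decomp}, Lemma \ref{lem: decomp ss abs cont dense}, the definition of absolute continuity of the norm, and closedness of $E_\varphi^\sigma(\mu)$. That part stands.

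The addendum, however, has a genuine gap. What the lemma must deliver -- see how it is invoked in the remark below Lemma \ref{lem: rev H\"older} (where one needs $\cl L = C_\varphi^\sigma(\mu)$ for $L$ the simple functions \emph{in} $C_\varphi^\sigma(\mu)$) and in Theorem \ref{thm. C sep} -- is that the integrable simple functions \emph{belonging to} $C_\varphi^\sigma(\mu)$ are dense in $C_\varphi^\sigma(\mu)$. Your approximants $s_{k}\chi_{\Om_{\e}}$ land in $L_\varphi(\mu)$ (indeed in $E_\varphi(\mu)$) via the embedding of Lemma \ref{lem: a emb}, but nothing in your argument places them in $C_\varphi^\sigma(\mu)$: a function $x\chi_A$ with $A\in\AA_f$ can fail to have absolutely continuous norm even when $\varphi$ is real-valued, namely whenever $\int_A\varphi(\om,\lambda x)\,d\mu = \i$ for some $\lambda$, since then $\| x\chi_{A\cap E_m}\|_\varphi$ does not tend to $0$ along every evanescent sequence $E_m$. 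Relatedly, your diagnosis of where real-valuedness enters is off: Lemma \ref{lem: a emb} holds for arbitrary Orlicz integrands and membership in $L_\varphi(\mu)$ is not the issue. The hypothesis is needed exactly to run Lemma \ref{lem: C a decomp} and Lemma \ref{lem: norm almost abs cont}, which is how the paper shrinks the supports of its simple approximants by a further evanescent family so that they acquire absolutely continuous norm. Your construction can be repaired the same way -- intersect $\Om_\e$ with the almost-full sets furnished by Lemma \ref{lem: C a decomp} for each of the finitely many values of $s_k$ -- but as written the proof proves a weaker statement than the one the paper subsequently relies on.
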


\begin{proof}
	It suffices to consider $\sigma$-finite measures $\mu$ hence we may equivalently consider a modified Orlicz integrand $\phi$ and a finite measure $\nu$ as in Proposition \ref{prop. equivalent finite measure}. Note however that we want to obtain a density set of $\mu$-integrable simple functions. Pick a sequence of simple functions with $u_n \to u$ a.e. so that for $m \in \N$ there holds
	\begin{equation} \label{eq. convergence ae}
		\lim_n \phi \left( \om, m \left[ u(\om) - u_n(\om) \right] \right) = 0 \text{ for a.e. } \om \in \Om
	\end{equation}
	since $\phi$ vanishes continuously at the origin. For $k \in \N$ we find by Egorov's theorem a set $B = B_{k, m} \in \AA$ with $\nu \left( \Om \setminus B \right) < 2^{- m} k^{-1}$ and such that (\ref{eq. convergence ae}) uniformly on $B$. Hence, setting $C_k = \bigcap_{m \ge 1} B_{k, m}$, we have $\nu \left( \Om \setminus C_k \right) < k^{-1}$ and the convergence (\ref{eq. convergence ae}) holds uniformly on $C_k$. In particular, the simple functions $u_n \chi_{C^c_k}$ eventually belong to $L_\varphi(\mu)$ hence to $E_\varphi(\mu)$. Lemma \ref{lem: norm almost abs cont} yields a sequence $D_j \in \AA$ with $\mu\left( \lim_j D_j \right) = 0$ such that for all $n \in \N$ sufficiently large there holds $u_n \chi_{C_k \cap D^c_j} \in C_\varphi^\sigma(\mu)$ for all $j \in \N$ if $\varphi$ is real-valued. Otherwise we set $D_j = \emptyset$. In the former case, pick $F_j \in \AA_f$ with $\Om = \bigcup_j F_j$ and possibly replace $D_j$ by $D_j \cup \Om \setminus F_j$ so that $\mu \left( \Om \setminus D_j \right) < \i$ hence each $u_n \chi_{C_k \cap D^c_j}$ is a $\mu$-integrable simple function.
	% Dabei nutzt man die \sigma-Endlichkeit von \mu und fügt zu D_j ein E_j hinzu mit E_j^c einer Folge aus \AA_f, die isoton gegen \Om aufsteige.
	Since $u$ has absolutely continuous norm, there exists for any given $\e > 0$ a $\delta > 0$ such that there holds $\| u \chi_{C^c_k \cup D_j} \|_\varphi < \e$ whenever $j, k > \delta^{-1}$. Therefore
	\begin{align*}
		\| u - u_n \chi_{C_k \cap D^c_j} \|_\varphi
		& \le \| u \chi_{C^c_k \cup D_j} \| + \| \left( u - u_n \right) \chi_{C_k \cap D^c_j} \|_\varphi \\
		& < \e + \| \left( u - u_n \right) \chi_{C_k \cap D^c_j} \|_\varphi \xrightarrow{n \to \i} \e.
	\end{align*}
	Since $\e > 0$ is arbitrary, the proof is complete.
\end{proof}

\subsection{Separability}

Before we can characterize separability of $C_\varphi(\mu)$, hence of $L_\varphi(\mu)$ if the spaces agree, we recall the notion of a separable measure \cite[§3.5]{Orlicz scalar}.

\begin{definition}[separable measure] \label{def. sep mes}
	The measure $\mu$ is called separable if $\left( \AA, d_\mu \right)$ is a separable space for the pseudometric $d_\mu \left( A, B \right) = \arctan \mu \left( A \Delta B \right)$.
\end{definition}

This notion relates to a separable measurable spaces $\left( \Om, \AA \right)$: if $\AA = \sigma \left( A_n \colon n \ge 1 \right)$ and the measure $\mu$ is $\sigma$-finite, then $\mu$ is separable and the countable algebra $\alpha \left( A_n \colon n \ge 1 \right)$ generated by the sequence $A_n$ is dense in $\left( \AA, d_\mu \right)$. This is implicit in the proof of \cite[Thm. 2.16]{Lp spaces}. A measure is separable iff its completion is.

\begin{theorem} \label{thm. C sep}
	Let $\varphi$ be real-valued. If $\mu$ and $X$ are separable, then there exists a dense sequence of integrable simple functions in $C_\varphi^\sigma(\mu)$ hence the space is separable. Conversely, if $C_\varphi^\sigma(\mu)$ is separable, then $X$ is separable. If in addition $\mu$ has no atom of infinite measure, then $\mu$ also is separable.
\end{theorem}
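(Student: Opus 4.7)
For the forward direction, I would pick a countable dense $X_0 \subset X$ and, using separability of $\mu$, a countable $d_\mu$-dense subset $\AA_0 \subset \AA_f$, and show that the family $\mathcal{F}$ of finite sums $\sum_i y_i \chi_{B_i}$ with $y_i \in X_0 \cup \{0\}$ and $B_i \in \AA_0$ is dense in $C_\varphi^\sigma(\mu)$. A Borel--Cantelli subsequence argument applied to the $d_\mu$-density of $\AA_0$ in $\AA_f$ shows that any $A \in \AA_f$ lies modulo null sets inside $\Sigma_0 = \bigcup_{B \in \AA_0} B$, so every $u \in C_\varphi^\sigma$ is supported in $\Sigma_0$, and Proposition \ref{prop. equivalent finite measure} then reduces matters to finite $\mu$. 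By Lemma \ref{lem: C simple dense} it suffices to approximate an integrable simple $v = \sum x_i \chi_{A_i}$. I would pick $\Om_\delta$ from Lemma \ref{lem: a emb}, use absolute continuity of $v \in C_\varphi$ to make $\| v \chi_{\Om \setminus \Om_\delta} \|_\varphi$ small, then approximate $v \chi_{\Om_\delta}$ by an element of $\mathcal{F}$ with $y_i \in X_0$ close to $x_i$ and $B_i \in \AA_0 \cap \AA(\Om_\delta)$ close to $A_i \cap \Om_\delta$ in $d_\mu$, combining the bound $L_\i(\Om_\delta; X) \hookrightarrow L_\varphi(\Om_\delta)$ for value approximation and absolute continuity of the norms of $x_i \chi_{A_i \cap \Om_\delta}$ and $y_i \chi_{\Om_\delta}$ (both in $C_\varphi^\sigma$ by Theorem \ref{thm. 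C max subspace}) for set approximation. The main obstacle here is that forcing $B_i \subset \Om_\delta$ is essential, since $y_i \chi_{B_i} \in C_\varphi$ is not automatic off $\Om_\delta$ and the term $\| y_i \chi_{B_i \setminus A_i} \|_\varphi$ would otherwise resist $d_\mu$-smallness; the resolution is to work with the restricted family $\AA_0 \cap \AA(\Om_\delta)$, which remains $d_\mu$-dense in $\AA(\Om_\delta)$ by intersection.

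For the converse that $X$ is separable, I would pick any $A \in \AA_f$ with $\mu(A) > 0$, apply Lemma \ref{lem: a emb} to $A$ to obtain $A' \subset A$ with $\mu(A \setminus A') < \mu(A)/2$ and both embeddings $L_\i(A'; X) \hookrightarrow L_\varphi(A') \hookrightarrow L_1(A'; X)$, and consider the linear map $\iota \colon X \to C_\varphi^\sigma(\mu)$ defined by $x \mapsto x \chi_{A'}$. This is well-defined: $x \chi_{A'}$ together with all its rescalings lies in $L_\i(A')$ and hence in $L_\varphi(A')$ by the first embedding, placing $x \chi_{A'}$ in $\bigcap_{\lambda > 0} \dom I_\varphi$ and thus in $C_\varphi^\sigma$ by Theorem \ref{thm. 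C max subspace}. The two embeddings render $\iota$ a linear bi-Lipschitz homeomorphism onto its image, and separability of $C_\varphi^\sigma$ descends to $X$.

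For the converse that $\mu$ is separable, under the additional no-infinite-atom hypothesis, I would first show $\mu$ is $\sigma$-finite: the union $\Sigma$ of the $\sigma$-finite supports of a countable dense $\{u_n\} \subset C_\varphi^\sigma$ is itself $\sigma$-finite, and any element of $C_\varphi^\sigma$ vanishes outside $\Sigma$ by passing to an a.e.-convergent subsequence via Theorem \ref{thm. L complete}; if $\Om \setminus \Sigma$ had positive measure, the no-infinite-atom hypothesis would yield $A \in \AA_f$ inside $\Om \setminus \Sigma$ of positive measure, and the construction of the first converse would produce a nonzero $x_0 \chi_{A'} \in C_\varphi^\sigma$ supported off $\Sigma$, a contradiction. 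Second, with $X$ already known separable, I would pick a countable topology base $\{V_k\}$ of $X$ and form the countably generated $\BB = \sigma(\{u_n^{-1}(V_k) \colon n, k \in \N\})$; for any $A \in \AA_f$ and fixed $x_0 \ne 0$, an a.e.-convergent subsequence of $\{u_n\}$ approaching $x_0 \chi_{A'}$ recovers $\chi_{A'}$ as a $\BB$-measurable function up to null, so $A' \in \BB$ modulo null sets; since $\mu(A \setminus A') \to 0$ and $\mu$ is $\sigma$-finite, the countable algebra generated by $\{u_n^{-1}(V_k)\}$ is $d_\mu$-dense in $\AA$, yielding that $\mu$ is separable.
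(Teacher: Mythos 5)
There is a genuine gap that runs through all three parts of your plan: you repeatedly infer from the almost embedding $L_\i\left(\Om_\delta; X\right) \hookrightarrow L_\varphi\left(\Om_\delta\right)$ of Lemma \ref{lem: a emb} that a bounded function such as $y_i \chi_{\Om_\delta}$ or $x \chi_{A'}$ lies in $C_\varphi^\sigma(\mu)$ ``by Theorem \ref{thm. C max subspace}''. The embedding only gives $\| \lambda y \chi_{\Om_\delta} \|_\varphi < \i$ for every $\lambda$, i.e. membership in the vector space $L_\varphi(\mu)$, whereas Theorem \ref{thm. C max subspace} requires $I_\varphi\left( \lambda y \chi_{\Om_\delta} \right) < \i$ for every $\lambda$; finiteness of the Luxemburg norm does not imply finiteness of the modular. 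The sets $\Om_\delta$ control $\varphi_\om$ only on the ball $B_\delta$ and outside $\overline{B}_{1/\delta}$, hence say nothing about $\varphi_\om(\lambda y)$ at the intermediate scales that matter. Concretely, take $X = \R$, $\Om = (0,1)$ with Lebesgue measure and $\varphi(\om, x) = \left( |x| - 1 \right)^+ / \om$: this is a real-valued Orlicz integrand, one may take $\Om_\delta = \Om$ for all small $\delta$, and $2 \chi_\Om$ has Luxemburg norm $2$ while $I_\varphi\left( 2 \chi_\Om \right) = \int_0^1 \om^{-1} \, d\om = \i$ and $\| 2 \chi_{(0,\e)} \|_\varphi = 2$ for every $\e > 0$. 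So $2\chi_{\Om_\delta} \notin C_\varphi(\mu)$ and its norm is not absolutely continuous. Consequently your set-approximation step (making $\| y_i \chi_{B_i \setminus A_i} \|_\varphi$ small via $d_\mu$-smallness) fails, your map $\iota \colon x \mapsto x \chi_{A'}$ need not take values in $C_\varphi^\sigma(\mu)$, and the nonzero element ``supported off $\Sigma$'' in your $\sigma$-finiteness argument is not produced.

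The repair is the paper's Lemma \ref{lem: C a decomp}: since $\varphi$ is real-valued, $C_\varphi(\mu)$ is almost decomposable, i.e. for each fixed value $x$ and each $F \in \AA_f$ one can choose an exceptional set \emph{depending on $x$} (via Egorov applied to $\om \mapsto \varphi(\om, k x)$ for all $k$ simultaneously) outside of which $x \chi_{F_\e} \in C_\varphi(\mu)$. This is what the paper uses in all three places: the countable dense family is $x_k \chi_{B(k,m)} \chi_{A_\ell}$ with $B(k,m)$ tailored to $x_k$ rather than a universal $\Om_\delta$; the converse on $X$ proceeds by showing every element of $C_\varphi^\sigma(\mu)$ takes values a.e. in a fixed separable $S$ and then uses almost decomposability to force $S = X$; and both the $\sigma$-finiteness of $\mu$ and the generation of $\AA$ modulo null sets from $u_n^{-1}(G_m)$ are again extracted from almost decomposability. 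Apart from this substitution your outline matches the paper's (reduction via Lemma \ref{lem: C simple dense} and Proposition \ref{prop. equivalent finite measure}, separate value and set approximation, recovery of $\AA$ from preimages of a dense sequence), but as written the key membership claims are false and the argument does not close.
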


\begin{proof}
	$\implies$: we shall pass to several subsequences in the proof none of which we relabel. Lemma \ref{lem: C simple dense} reduces our task to constructing a sequence whose closure includes each function $x \chi_A \in C_\varphi(\mu)$ with $x \in X$ and $A \in \AA_f$. Let $x_k \in X$ and $E_\ell \in \AA$ be dense sequences. We pass to the subsequence of members with $A_\ell \in \AA_f$. Observe that $A_\ell$ still is dense in $\AA_f$ as $d_\mu$ is continuous. In particular, each $A \in \AA_f$ is contained in the $\sigma$-finite set $\bigcup_{\ell \ge 1} A_\ell$ except for a null set. Therefore we may assume $\mu$ to be $\sigma$-finite hence finite by Proposition \ref{prop. equivalent finite measure}. Lemma \ref{lem: C a decomp} yields sets $B = B(k, m) \in \AA$ such that $\mu \left( \Om \setminus B \right) < \frac{1}{m}$ and $x_k \chi_B \in C_\varphi(\mu)$. We claim the countable family $x_k \chi_{B(k, m)} \chi_{A_\ell}$ for $k, \ell, m \in \N$ to yield the required sequence. Indeed, each function of the form $x_k \chi_B \chi_A$ belongs to its closure since there exists a subsequence with $d_\mu \left( A, A_\ell \right) \to 0$ as $\ell \to \i$. Pick a subsequence with $x_k \to x$ so that Egorov's theorem yields for $\delta > 0$ a set $C_\delta \in \AA$ such that $\mu \left( C_\delta \right) < \delta$ and for any given $n \in \N$ there holds $\varphi \left[ n \left( x - x_k \right) \right] \to 0$ uniformly on $\Om \setminus C_\delta$ as $k \to \i$. Because $x \chi_A$ has absolutely continuous norm, we find for any given $\e > 0$ a $\delta_1 > 0$ such that $\| x \chi_A \chi_{B^c \cup C_\delta} \| < \e$ whenever $\delta, m^{-1} < \delta_1$. Choosing $\delta, m^{-1}$ sufficiently small and combining the last two statements yields
	\begin{align*}
		\| x \chi_A - x_k \chi_A \chi_B \chi_{C^c_\delta} \|_\varphi
		& \le \| x \chi_A \chi_{B^c \cup C_\delta} \|_\varphi
		+ \| \left( x - x_k \right) \chi_{A \cap B \cap C^c_\delta} \|_\varphi \\
		& < \e + \| \left( x - x_k \right) \chi_{A \cap B \cap C^c_\delta} \|_\varphi
		\xrightarrow{k \to \i} \e
	\end{align*}
	Sending $\e \to 0^+$ completes the first part of the proof.
	
	$\impliedby$: let $u_n \in C_\varphi^\sigma(\mu)$ be a dense sequence. The set $\bigcup_{n \ge 1} u_n \left( \Om \right)$ is almost separably valued hence there exists a null set $N$ such that
	$$
	\overline{\bigcup_{n \ge 1} u_n \left( \Om \setminus N \right) } =: S \in \SS(X).
	$$
	But then every element of $C_\varphi^\sigma(\mu)$ is $S$-valued a.e. since convergence in $C_\varphi^\sigma(\mu)$ implies convergence a.e. up to subsequences. In conclusion $S = X \in \SS(X)$ because $C_\varphi^\sigma(\mu)$ is almost decomposable by Lemma \ref{lem: C a decomp}. Now, consider the separable $\sigma$-algebra $\AA' = \sigma \left( u_n \colon n \ge 1 \right)$. To see that $\AA'$ is separable, note that it is generated by the sets $u^{-1}_n \left( G_m \right)$ for $G_m$ a sequence generating the Borel $\sigma$-algebra $\BB(X)$. As $u_n$ is dense, each element $u \in C_\varphi^\sigma(\mu)$ is $\AA'$-measurable so that since $C_\varphi^\sigma(\mu)$ is almost decomposable, we deduce $\AA_f \subset \AA'_\mu$ hence $\AA_\sigma \subset \AA'_\mu$. Therefore our proof will be finished if we prove that $\mu$ is $\sigma$-finite. As each member of the dense sequence $u_n$ is $\sigma$-finitely concentrated, all elements of $C_\varphi^\sigma(\mu)$ vanish outside some $A \in \AA_\sigma$ that is independent of the element under consideration. Suppose $\mu \left( \Om \setminus A \right) > 0$. Since $\mu$ has no atom of infinite measure, we find $B \in \AA_f$ with $B \subset \Om \setminus A$ and $\mu \left( B \right) > 0$. As $C_\varphi^\sigma(\mu)$ is almost decomposable, there exists a non-trivial element $v \in C_\varphi^\sigma(\mu)$ vanishing outside of $B$ hence $v$ does not belong to the closure of $u_n$, which contradicts density of this sequence.
\end{proof}

Theorem \ref{thm. C sep} settles the separability of $L_\varphi(\mu)$ if it happens to coincide with its subspace $C_\varphi^\sigma(\mu)$. For the interest of the reader we remark that this coincidence is also necessary for $L_\varphi(\mu)$ to be separable. As the proof of this result requires facts about weak topologies on $L_\varphi(\mu)$ that will be proved in §\ref{sec. duality} and part II of this paper, we omit it here and refer to \cite{drokto} instead.

\begin{lemma} \label{lem: sep implies L = C}
	If $L_\varphi(\mu)$ is separable and $\mu$ has no atom of infinite measure, then $L_\varphi(\mu) = C_\varphi(\mu)$. The same is true for $L_\varphi^\sigma(\mu)$ and $C_\varphi^\sigma(\mu)$ without restriction on the measure.
\end{lemma}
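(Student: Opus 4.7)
The plan is to prove the contrapositive in both assertions: from any $u \in L_\varphi(\mu) \setminus C_\varphi(\mu)$ respectively $u \in L_\varphi^\sigma(\mu) \setminus C_\varphi^\sigma(\mu)$ I shall produce an uncountable $\delta$-separated subset of the ambient Orlicz space, contradicting separability. The elementary starting observation is that for disjoint $A, B \in \AA$ the pointwise identity $|u(\chi_A - \chi_B)| = |u|\chi_{A \cup B}$ together with the radial monotonicity of $\varphi_\om$ (by convexity, evenness and $\varphi_\om(0) = 0$) gives $I_\varphi(\lambda(u\chi_A - u\chi_B)) \ge I_\varphi(\lambda u\chi_A)$ for every $\lambda > 0$, whence
$$
\| u\chi_A - u\chi_B \|_\varphi \ge \| u\chi_A \|_\varphi
$$
by the very definition of the Luxemburg norm. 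It therefore suffices to exhibit uncountably many essentially pairwise disjoint sets $B_\xi$ with $\| u\chi_{B_\xi} \|_\varphi \ge \delta$ for a common $\delta > 0$.

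I would first reduce the unrestricted statement to the $\sigma$-concentrated one. If $L_\varphi(\mu)$ is separable and $\mu$ has no atom of infinite measure, I claim $L_\varphi(\mu) = L_\varphi^\sigma(\mu)$. Indeed, any $u$ with non-$\sigma$-finite support would, after splitting $\mu$ into its purely atomic and non-atomic components via \cite[Prop. 1.22]{Lp spaces}, be non-trivial on either uncountably many atoms or on a non-$\sigma$-finite non-atomic region. In the first case uncountably many distinct atoms at a common Luxemburg-size level produce the required separated family, while in the second the finite subset property of non-atomic measures carves out uncountably many pairwise disjoint finite-measure subsets with $\| u\chi_\cdot \|_\varphi$ bounded below by a common $\delta > 0$. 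Either way the starting observation contradicts separability. Since $C_\varphi \cap L_\varphi^\sigma = C_\varphi^\sigma$ by definition, once $L_\varphi = L_\varphi^\sigma$ the first statement collapses onto the second.

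For the second statement, pick $u \in L_\varphi^\sigma \setminus C_\varphi^\sigma$. The failure of absolute continuity furnishes $\delta > 0$ and a decreasing sequence $E_n \in \AA$ inside the $\sigma$-finite support of $u$ with $\mu(\bigcap_n E_n) = 0$ and $\| u\chi_{E_n} \|_\varphi \ge 2\delta$. Proposition \ref{prop. equivalent finite measure} reduces us to finite $\mu$, and Lemma \ref{lem: a emb} then provides exhausting sets $\Om_\e$ carrying the continuous embeddings $L_\i(\Om_\e; X) \hookrightarrow L_\varphi(\Om_\e) \hookrightarrow L_1(\Om_\e; X)$. On such an $\Om_\e$ the Luxemburg lower bound transfers to an $L_1$ lower bound, and I would construct by induction a dyadic tree $\{B_s\}_{s \in 2^{<\N}}$ of disjoint measurable subsets of $\bigcup_n E_n$ by bisecting the $L_1$-mass of $u$ at each node (using the intermediate value theorem on the distribution function for the non-atomic part and distributing the countably many relevant atoms one by one), then lifting the resulting lower bound back to the Orlicz norm via the $L_\i \hookrightarrow L_\varphi$ embedding after a mild truncation. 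Each branch $\alpha \in 2^\N$ furnishes a set $B_\alpha$ with $\| u\chi_{B_\alpha} \|_\varphi \ge \delta'$ for a $\delta' > 0$ independent of $\alpha$, and distinct branches give essentially disjoint $B_\alpha$, completing the separation argument.

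The principal obstacle is the inductive bisection: the Luxemburg norm is not additive over disjoint unions, so halving the $L_1$-mass does not halve the Orlicz norm, and one must guard against a geometric loss that could drive the bound to zero along a branch. The almost embedding of Lemma \ref{lem: a emb} is the key device, sandwiching the Orlicz norm between the $L_\i$- and $L_1$-norms on each exhausting set: additive bookkeeping is done in $L_1$ while the final Orlicz lower bound is recovered through the $L_\i$ embedding after passing to a slightly smaller exhausting subset. Summing the errors along a branch as a summable geometric series then yields a uniformly positive terminal norm $\delta'$. An alternative route suggested by the author's pointer to \S\ref{sec. duality} and part II would bypass the explicit tree by extracting a singular functional on $L_\varphi(\mu)$ annihilating $C_\varphi^\sigma(\mu)$ and using weak* compactness in a separable predual to manufacture uncountably many such functionals, but the embedding-based construction sketched above is the more elementary of the two.
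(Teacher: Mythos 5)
First, a point of comparison: the paper does not actually prove this lemma — it explicitly defers the proof to \cite{drokto}, citing the need for weak-topology facts from \S\ref{sec. duality} and part II — so your attempt can only be judged on its own terms. Your overall architecture (contradict separability by producing an uncountable, uniformly separated family of the form $u\chi_{B_\xi}$ with the $B_\xi$ essentially disjoint) is sound, the starting inequality $\|u\chi_A - u\chi_B\|_\varphi \ge \|u\chi_A\|_\varphi$ is correct, and the reduction to $L_\varphi = L^\sigma_\varphi$ via a maximal disjoint family plus pigeonhole on norm levels is essentially right (state the pigeonhole explicitly in the non-atomic case too).

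The genuine gap is in the core step, the passage from $u \in L^\sigma_\varphi(\mu) \setminus C^\sigma_\varphi(\mu)$ to the uncountable separated family. Your transfers through Lemma \ref{lem: a emb} point the wrong way: the chain $L_\i(\Om_\e;X) \to L_\varphi(\Om_\e) \to L_1(\Om_\e;X)$ gives $\|v\|_1 \le C\|v\|_\varphi \le C'\|v\|_\i$, so a Luxemburg lower bound yields an $L_\i$ lower bound, not an $L_1$ one, and an Orlicz \emph{lower} bound can only be recovered from an $L_1$ lower bound, not from the $L_\i \to L_\varphi$ embedding. Worse, no uniform $L_1$ lower bound is available in principle: on $\Om_\e$ the function $u$ is integrable, so $\|u\chi_{E_n}\|_{L_1(\Om_\e)} \to 0$ as $\mu(E_n)\to 0$ by absolute continuity of the integral — the failure of absolute continuity of the \emph{Orlicz} norm is exactly the phenomenon that is invisible in $L_1$. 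The dyadic bisection of $L_1$-mass therefore halves to zero along every branch (this is not an "error term" that a geometric series can absorb), and branches of a nested tree produce nested, not disjoint, sets. The repair is elementary and needs no embedding: take a decreasing sequence $E_n$ with $\mu\bigl(\bigcap_n E_n\bigr)=0$ and $\|u\chi_{E_n}\|_\varphi \ge 2\delta$; using left-continuity of the modular (monotone convergence gives $I_\varphi(\delta^{-1}u\chi_{E_n\setminus E_m}) \uparrow I_\varphi(\delta^{-1}u\chi_{E_n})$ as $m\to\i$, up to the null intersection), find for each $n$ an $m>n$ with $\|u\chi_{E_n\setminus E_m}\|_\varphi \ge \delta$, and iterate to obtain disjoint blocks $C_j$ with $\|u\chi_{C_j}\|_\varphi \ge \delta$. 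Then set $B_S = \bigcup_{j\in S}C_j$ for $S$ ranging over an uncountable almost disjoint family of infinite subsets of $\N$; since $S\triangle S'$ is infinite, $\|u\chi_{B_S}-u\chi_{B_{S'}}\|_\varphi \ge \delta$, contradicting separability. With this replacement your proof goes through, and it is in fact more elementary than the route the paper gestures at.
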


%\begin{proof}
%	Let $u \in L_\varphi(\mu) \setminus C_\varphi(\mu)$ so that we may pick $E_n \in \AA$ with
%	$$
%	\mu\left( \lim_n E_n \right) = 0,
%	\quad \| u \chi_{E_n} \|_\varphi \ge \delta > 0.
%	$$
%	By Lemma \ref{lem: equi norm} we find $v_n \in V_{\varphi^*}(\mu)$ with
%	$$
%	\vvvert v_n \vvvert_\varphi^* \le 1,
%	\quad \lim_n \int_{E_n} \langle v_n, u \rangle \, d \mu \ge \frac{\delta}{2}.
%	$$
%	These integrals being finite, it is not restrictive to assume that $\mu$ is $\sigma$-finite by restricting it to a set outside which the sequence of integrands $\langle v_n, u \rangle$ vanishes. Separability of $L_\varphi(\mu)$ yields a weak* convergent subsequence (not relabelled) of $v_n$ that is weak* equi-integrable on $L_\varphi(\mu)$ since by Lemma \ref{lem: closed subspaces} and Theorem \ref{thm. A = V} the space $V_{\varphi^*}(\mu)$ is sequentially weak* closed and by Theorem \ref{thm. V compact} the sequence $v_n$ is weak* equi-integrable in the sense of Definition \ref{def. weak eq-int} hence we arrive at the contradiction
%	\begin{equation*}
%		0 < \frac{\delta}{2} \le \lim_n \int_{E_n} \langle v_n, u \rangle \, d \mu = 0.
%	\end{equation*}
%	The addendum follows by the first part and restriction of the measure.
%\end{proof}

The above results on separability seem to be new for $\om$-dependently generated Orlicz spaces even if $X = \R$, though then known sufficient conditions \cite[Thm. 7.10]{modular alt} and \cite[Thm. 3.52]{Orlicz neu} come close to ours. The autonomous, scalar case of our result can be found in \cite[§3.5, Thm. 1]{Orlicz scalar}.

\section{Duality theory} \label{sec. duality}

In this section, we obtain an abstract direct sum decomposition of $L_\varphi(\mu)^*$ into three fundamentally different types of functionals: Absolutely continuous, diffuse and purely finitely additive ones. We represent the absolutely continuous component, which turn out to agree with both $C_\varphi(\mu)^*$ and the function component of $L_\varphi(\mu)^*$. We then characterize the reflexivity of $L_\varphi(\mu)$ and represent the convex conjugate and the subdifferential of a general integral functional (\ref{eq. int funct}) on $L_\varphi(\mu)$.

\subsection{Types of functionals} \label{ssec. functionals}

Denote by $\ba\left( \AA \right)$ the linear space of bounded, finitely additive, real set functions on $\AA$ with the total variation norm
$$
\| \nu \| = \left| \nu \right|(\Om) = \sup \left\{ \sum_{i = 1}^n \left| \nu(A_i) \right| \st A_i \in \AA \text{ measurable parition of } \Om \right\}.
$$
For $\nu \in \ba\left( \AA \right)$ consider the positive part
$$
\nu^+ \colon \Sigma \to \R \colon A \mapsto \sup_{B \subset A} \nu(B).
$$
The negative part is defined as $\nu^- = \left( - \nu \right)^+$. Then $\nu^+$ and $\nu^-$ belong to $\ba \left( \AA \right)$ and there hold
\begin{equation} \label{eq. Jordan decomp}
	\nu = \nu^+ - \nu^-; \quad \left| \nu \right| = \nu^+ + \nu^-; \quad \| \nu \| = \| \nu^+ \| + \| \nu^- \|.
\end{equation}
Cf. \cite[Thm.III.1.8]{lin op I}. One has the following refinement of the classical Hewitt-Yosida theorem due to Giner:

\begin{theorem} \label{thm. giner 1.3.5}
	The space $\ba\left( \AA \right)$ is a direct topological sum of its linear subspaces $\Sigma(\AA)$ and $F(\AA)$ consisting of the $\sigma$-additive and the purely finitely additive elements, respectively. The projectors onto $\Sigma(\AA)$ and $F(\AA)$ are monotone, i.e.
	\begin{equation} \label{eq. monotonicity}
		\nu = \nu_\sigma + \nu_f \ge 0 \implies \nu_\sigma \ge 0; \quad \nu_f \ge 0.
	\end{equation}
	Furthermore, there holds
	\begin{equation} \label{eq. norm decomp}
		\| \nu \| = \| \nu_\sigma \| + \| \nu_f \| \quad \forall \nu \in \ba\left( \AA \right).
	\end{equation}
	Finally, setting $\nu_B = \nu \left( \cdot \cap B \right)$ for $\nu \in \ba\left( \AA \right)$ and $B \in \AA$, there holds $\nu_B \in \ba\left( \AA \right)$ and
	\begin{equation} \label{eq. comm}
		\left( \nu_a \right)_B = \left( \nu_B \right)_a; \quad \left( \nu_f \right)_B = \left( \nu_B \right)_f.
	\end{equation}
\end{theorem}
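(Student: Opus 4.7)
The plan is to bootstrap from the classical Hewitt-Yosida theorem by first treating positive set functions and then extending to signed ones via the Jordan decomposition (\ref{eq. Jordan decomp}), with the refined properties flowing from the uniqueness of the splitting. For $\nu \ge 0$ in $\ba(\AA)$, I would construct $\nu_\sigma$ as the largest $\sigma$-additive minorant of $\nu$, defined by
$$
\nu_\sigma(A) = \sup\{\lambda(A) : 0 \le \lambda \le \nu, \, \lambda \in \Sigma(\AA)\}.
$$
The admissible family is upward directed — given $\lambda_1, \lambda_2$ one constructs $\lambda_1 \vee \lambda_2$ via a Hahn-type partition for $\lambda_1 - \lambda_2$, which remains $\sigma$-additive and dominated by $\nu$ — so $\nu_\sigma$ is itself $\sigma$-additive by monotone convergence. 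Then $\nu_f := \nu - \nu_\sigma \ge 0$ lies in $F(\AA)$ by maximality of $\nu_\sigma$: any $\sigma$-additive $0 \le \lambda \le \nu_f$ would make $\nu_\sigma + \lambda$ a strictly larger admissible minorant. Both components are non-negative by construction.

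For signed $\nu \in \ba(\AA)$ I would apply the positive case to $\nu^\pm$ and set $\nu_\sigma := (\nu^+)_\sigma - (\nu^-)_\sigma$, $\nu_f := (\nu^+)_f - (\nu^-)_f$. Uniqueness of the splitting follows from $\Sigma(\AA) \cap F(\AA) = \{0\}$: a $\sigma$-additive set function that is at the same time purely finitely additive trivially dominates itself as a non-negative $\sigma$-additive candidate in its own variation and must therefore vanish. Uniqueness bestows linearity on the projectors onto $\Sigma(\AA)$ and $F(\AA)$. The monotonicity (\ref{eq. monotonicity}) for arbitrary $\nu \ge 0$ is then immediate from applying the construction to $\nu^+ = \nu$, $\nu^- = 0$.

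For the norm identity (\ref{eq. norm decomp}) I would invoke the classical Hewitt-Yosida mutual singularity property: for $\nu \ge 0$ there exists a decreasing sequence $E_n \in \AA$ with $\nu_\sigma(E_n) \downarrow 0$ and $\nu_f(\Om \setminus E_n) = 0$. This immediately gives $\|\nu\| = \nu(\Om) = \nu_\sigma(\Om) + \nu_f(\Om) = \|\nu_\sigma\| + \|\nu_f\|$ in the positive case. For signed $\nu$ I would refine the singularity sequences for $\nu^+$ and $\nu^-$ to a common sequence separating all four parts $(\nu^\pm)_\sigma$, $(\nu^\pm)_f$ simultaneously, and conclude the norm identity by combining this with (\ref{eq. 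Jordan decomp}). The resulting bounds $\|\nu_\sigma\|, \|\nu_f\| \le \|\nu\|$ give continuity of the projectors and hence the direct topological sum.

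Lastly, (\ref{eq. comm}), reading $\nu_a$ as the $\sigma$-additive component $\nu_\sigma$, is a uniqueness argument: writing $\nu_B = (\nu_\sigma)_B + (\nu_f)_B$, it suffices to verify $(\nu_\sigma)_B \in \Sigma(\AA)$ (immediate, as restriction preserves $\sigma$-additivity) and $(\nu_f)_B \in F(\AA)$. The latter holds because any $\sigma$-additive $0 \le \lambda \le |(\nu_f)_B|$ satisfies $\lambda \le |\nu_f|$ globally and therefore vanishes by pure finite additivity of $\nu_f$. The main obstacle I expect is the norm identity in the third paragraph: it is the quantitative manifestation of the mutual singularity between $\nu_\sigma$ and $\nu_f$, and its passage from the positive to the signed case needs care because the Jordan decomposition of a finitely additive set function, unlike the $\sigma$-additive case, is not supported by a Hahn partition of $\Om$, so the singularity sequences for $\nu^+$ and $\nu^-$ must be merged carefully rather than glued along disjoint sets.
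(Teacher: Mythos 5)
Your proposal is correct, but it distributes the work differently from the paper. The paper cites the classical Hewitt--Yosida theorem for existence, uniqueness and monotonicity of the splitting and only proves the two refinements (\ref{eq. norm decomp}) and (\ref{eq. comm}), following Giner; you instead rebuild the classical part via the largest $\sigma$-additive minorant $\nu_\sigma(A) = \sup\{\lambda(A) : 0 \le \lambda \le \nu,\ \lambda \in \Sigma(\AA)\}$, which is a legitimate self-contained route and makes the monotonicity (\ref{eq. monotonicity}) transparent -- though you should say a word on why the directed supremum is again $\sigma$-additive (take an increasing sequence realizing $\sup\lambda(\Om)$ and check its setwise limit dominates every admissible $\lambda$ on every set) and on why $F(\AA)$ is a linear subspace, which your uniqueness argument via $\Sigma(\AA)\cap F(\AA)=\{0\}$ tacitly uses. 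The genuine divergence is in (\ref{eq. norm decomp}): the paper's proof is a purely algebraic squeeze, $\|\nu\| \le \|\nu_\sigma\| + \|\nu_f\| = \|(\nu_\sigma)^+\| + \|(\nu_\sigma)^-\| + \|(\nu_f)^+\| + \|(\nu_f)^-\| \le \|(\nu^+)_\sigma\| + \|(\nu^-)_\sigma\| + \|(\nu^+)_f\| + \|(\nu^-)_f\| = \nu^+(\Om) + \nu^-(\Om) = \|\nu\|$, where the middle inequality $(\nu_\sigma)^\pm \le (\nu^\pm)_\sigma$ comes from linearity and monotonicity of the projectors and (\ref{eq. Jordan decomp}); no singularity sets appear. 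Your route through the mutual singularity of $\nu_\sigma$ and $\nu_f$ and a merged exceptional sequence does work (apply the Hewitt--Yosida separation to $(\nu^+)_\sigma + (\nu^-)_\sigma$ versus $(\nu^+)_f + (\nu^-)_f$, which dominate $|\nu_\sigma|$ and $|\nu_f|$ respectively), and the worry you flag about the absence of a Hahn partition is resolved exactly by that merge -- but it is heavier than necessary, since the nontrivial inequality $\|\nu_\sigma\| + \|\nu_f\| \le \|\nu\|$ already follows from the triangle inequality together with $\|(\nu^\pm)_\sigma\| + \|(\nu^\pm)_f\| = \nu^\pm(\Om)$. Your argument for (\ref{eq. comm}) (restriction preserves $\sigma$-additivity, domination $|(\nu_f)_B| \le |\nu_f|$ preserves pure finite additivity, then uniqueness) is essentially the paper's.
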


\begin{proof}
	The first part of the theorem up to (\ref{eq. monotonicity}) is classical, cf. \cite[Thm. 1.24]{fadd}.
	% Linearität der Unterräume ebenda in Thm.en 1.14 und 1.16.
	The rest is due to \cite[Cor. A1.4]{Giner thèse}. We repeat his argument for the sake of completeness since the source is hard to obtain. Ad (\ref{eq. norm decomp}): We start by showing $\nu^+_\sigma \le \left( \nu^+ \right)_\sigma$. There holds $\nu_\sigma = \left( \nu^+ \right)_\sigma - \left( \nu^- \right)_\sigma$ since the projector onto $\Sigma(\AA)$ is linear.
	$$
	\nu^+(A) = \sup_{B \subset A} \nu_\sigma(B) \le \sup_{B \subset A} \left( \nu^+ \right)_\sigma(B) = \left( \nu^+ \right)_\sigma(A)
	$$
	by (\ref{eq. Jordan decomp}). Next, we check that $\nu^+_\sigma \le \left( \nu^+ \right)_\sigma$.
	$$
	\nu_\sigma = \left( \nu^+ \right)_\sigma - \left( \nu^- \right)_\sigma = \nu^+_\sigma - \nu^-_\sigma \implies \left( \nu^+ \right)_\sigma - \nu^+_\sigma = \left( \nu^- \right)_\sigma - \nu^-_\sigma \ge 0.
	$$
	In the same way we obtain $\nu^+_f \le \left( \nu^+ \right)_f$ and $\nu^-_f \le \left( \nu^- \right)_f$. Now, we prove the announced identity of norms.
	\begin{align*}
		\| \nu \| \le
		\| \nu_\sigma \| + \| \nu_f \|
		& = \| \nu^+_\sigma \| + \| \nu^-_\sigma \| + \| \nu^+_f \| + \| \nu^-_f \| \\
		& \le \| \left( \nu^+ \right)_\sigma \| + \| \left( \nu^- \right)_\sigma \| + \| \left( \nu^+ \right)_f \| + \| \left( \nu^- \right)_f \| \\
		& = \left( \nu^+ \right)_\sigma \left( \Om \right) + \left( \nu^+ \right)_f \left( \Om \right) + \left( \nu^- \right)_\sigma \left( \Om \right) + \left( \nu^- \right)_f \left( \Om \right) \\
		& = \nu^+ \left( \Om \right) + \nu^- \left( \Om \right) = \| \nu^+ \| + \| \nu^- \| = \| \nu \|.
	\end{align*}
	Ad (\ref{eq. comm}): Since $\left( \nu_A \right)^+ = \left( \nu^+ \right)_A$ and $\left( \nu_A \right)^- = \left( \nu^- \right)_A$ for $A \in \AA$. We may assume $\nu \ge 0$. Hence
	$$
	0 \le \left( \nu_\sigma \right)_A = \nu_\sigma \left( \cdot \cap A \right) \le \nu_\sigma \in \Sigma(\AA)
	$$
	so that $\left( \nu_\sigma \right)_A \in A(\AA)$ and likewise we obtain $\left( \nu_f \right)_A \in S(\AA)$ by definition of a purely finitely additive measure. Since $\nu_A = \left( \nu_\sigma \right)_A + \left( \nu_f \right)_A$, one deduces from the uniqueness of the decomposition the claimed result.
\end{proof}

Let $\nu \colon \AA \to \left[ 0, \i \right]$ be a measure. By a result due to E. de Giorgi \cite[Thm. 1.114]{Lp spaces} we can decompose $\nu$ into the sum of three measures
\begin{equation} \label{eq. decom de Giorgi}
	\nu = \nu_a + \nu_d + \nu_s
\end{equation}
with $\nu_a \ll \mu$ and $\nu_d$ diffuse with respect to $\mu$. Moreover, if $\nu$ is $\sigma$-finite, then these three measures are mutually singular and $\nu_s \perp \mu$. Cf. \cite{Lp spaces} for the terminology. The decomposition (\ref{eq. decom de Giorgi}) is constructed explicitly in \cite{Lp spaces}:
\begin{equation} \label{eq. ac def}
	\nu_a(A) = \sup \left\{ \int_A u \, d \mu \st u \colon \Om \to \left[ 0, \i \right] \text{ with } \int_E u \, d \mu \le \nu(E) \text{ if } E \subset A \right\},
\end{equation}
\begin{equation} \label{eq. d def}
	\nu_d(A) = \sup \left\{ \nu(E) \st E \subset A \colon E' \subset E \land \nu(E') > 0 \implies \mu(E') = \i \right\},
\end{equation}
and
\begin{equation} \label{eq. s def}
	\nu_s(A) = \sup \left\{ \nu(E) \st E \subset A \text{ with } \mu(E) = 0 \right\}.
\end{equation}
All functions and sets in these definitions are assumed measurable. If $\nu$ is a signed measure, then $\nu^+$ and $\nu^-$ are mutually singular by \cite[Thm. 1.178]{Lp spaces} and we can decompose $\nu^+$ and $\nu^-$ according to (\ref{eq. decom de Giorgi}). We then define $\nu_a = \left( \nu^+ \right)_a - \left( \nu^- \right)_a$ etc. While every diffuse measure is absolutely continuous, $\nu_a$ given by (\ref{eq. ac def}) is distinguished against the diffuse part under additional assumptions:

\begin{proposition} \label{prop. sgm spprt}
	Let $\nu$ be finite. Then there exists a set $\Sigma \in \AA$ that is $\sigma$-finite for $\mu$ with
	\begin{equation} \label{eq. abs cnt cnc}
		\nu_a(A) = \nu_a \left( A \cap \Sigma \right) \quad \forall A \in \AA.
	\end{equation}
	% Remains valid if $\nu$ is $\sigma$-finite and $\mu$ has no atom of infinite measure.
\end{proposition}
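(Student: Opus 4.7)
The plan is to reduce to a positive finite $\nu$ and then exploit the explicit formula (\ref{eq. ac def}) for the absolutely continuous part. If $\nu$ is signed and finite, Jordan's decomposition yields positive finite measures $\nu^+$ and $\nu^-$, and the text's definition gives $\nu_a = (\nu^+)_a - (\nu^-)_a$; so if I can produce $\mu$-$\sigma$-finite supports $\Sigma^\pm$ for $(\nu^\pm)_a$, their union does the job for $\nu_a$. Hence from now on I would assume $\nu \ge 0$.

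Since $\nu_a(\Om) \le \nu(\Om) < \i$, formula (\ref{eq. ac def}) applied with $A = \Om$ furnishes a maximizing sequence: measurable $u_n \colon \Om \to \left[ 0, \i \right]$ with $\int_E u_n \, d\mu \le \nu(E)$ for every $E \in \AA$ and $\int u_n \, d\mu \to \nu_a(\Om)$. Each such $u_n$ satisfies $\int u_n \, d\mu \le \nu(\Om) < \i$, hence is $\mu$-integrable, so $\{ u_n > 0 \}$ is $\mu$-$\sigma$-finite. I would then set $\Sigma = \bigcup_n \{ u_n > 0 \}$, a $\mu$-$\sigma$-finite set, and reduce the claim (\ref{eq. abs cnt cnc}) to showing $\nu_a(\Om \setminus \Sigma) = 0$; since $\nu_a$ is a positive measure, monotonicity and additivity then give the full statement.

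To establish the vanishing I would argue by contradiction. If $\nu_a(\Om \setminus \Sigma) > 0$, the supremum in (\ref{eq. ac def}) with $A = \Om \setminus \Sigma$ supplies a measurable $v \ge 0$ with $\int_E v \, d\mu \le \nu(E)$ for every measurable $E \subset \Om \setminus \Sigma$ and $\int_{\Om \setminus \Sigma} v \, d\mu > 0$; replacing $v$ with $v \chi_{\Om \setminus \Sigma}$ preserves both properties, so I may assume $v$ vanishes on $\Sigma$. Since $u_n$ vanishes on $\Om \setminus \Sigma$ by construction, the sum $w_n = u_n + v$ obeys
$$
\int_E w_n \, d\mu = \int_{E \cap \Sigma} u_n \, d\mu + \int_{E \setminus \Sigma} v \, d\mu \le \nu(E \cap \Sigma) + \nu(E \setminus \Sigma) = \nu(E)
$$
for every $E \in \AA$. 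Thus $w_n$ is a competitor in the sup defining $\nu_a(\Om)$, forcing $\int u_n \, d\mu + \int v \, d\mu \le \nu_a(\Om)$; letting $n \to \i$ yields $\int v \, d\mu \le 0$, contradicting the choice of $v$.

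The only subtle point is checking that $u_n + v$ remains feasible, and this is immediate from the disjointness of the supports of $u_n$ (contained in $\Sigma$) and $v$ (contained in $\Om \setminus \Sigma$), so no deeper obstacle arises. If anything warrants care, it is the reduction to $\nu \ge 0$: one must verify that the definition $\nu_a = (\nu^+)_a - (\nu^-)_a$ is genuinely supported on $\Sigma^+ \cup \Sigma^-$ as a signed measure, which follows at once from each summand being so supported.
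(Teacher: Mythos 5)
Your proof is correct. It follows the same basic strategy as the paper's — extract a $\mu$-$\sigma$-finite set $\Sigma$ from the supports of (near-)optimal competitors in the formula (\ref{eq. ac def}) and show $\nu_a$ vanishes off it — but the key technical step is different. The paper invokes the attainment of the supremum in (\ref{eq. ac def}) (citing \cite[Lem.~1.102]{Lp spaces}) to get a single integrable maximizer $u$, sets $\Sigma = \left\{ u \ne 0 \right\}$, and concludes in one line from $\nu_a(\Om) = \int u \, d\mu = \lim_n \int_{F_n} u \, d\mu \le \sup_n \nu_a\left( \Om \cap F_n \right)$. You avoid the attainment lemma entirely: you work with a maximizing sequence $u_n$, take $\Sigma = \bigcup_n \left\{ u_n > 0 \right\}$, and rule out $\nu_a\left( \Om \setminus \Sigma \right) > 0$ by observing that a competitor $v$ supported off $\Sigma$ could be added to each $u_n$ (feasibility being preserved because the constraint in (\ref{eq. ac def}) splits over the disjoint supports), which would push past the supremum. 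Your route is more self-contained and elementary, at the cost of an extra contradiction argument; the paper's is shorter but leans on an external attainment result. Your explicit treatment of the signed case via the Jordan decomposition is a harmless addition the paper leaves implicit.
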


\begin{proof}
	The claim follows if we prove that
	$$
	\nu_a(\Om) = \sup \left\{ \nu_a(\Om \cap F) \st F \in \AA_f \right\}.
	$$
	By \cite[Lem. 1.102]{Lp spaces} there exists $u$ attaining the supremum in (\ref{eq. ac def}) for $A = \Om$. As $\nu$ is finite, $u$ is integrable hence vanishes outside a $\sigma$-finite set. Therefore we find a sequence of sets $F_n$ with $\mu \left( F_n \right) < \i$ increasing towards $\left\{ u \ne 0 \right\}$. Consequently
	$$
	\nu_a(\Om) = \lim_n \int_{F_n} u \, d \mu \le \lim_n \nu_a \left( \Om \cap F_n \right) \le \nu_a(\Om)
	$$
	yields the claim.
\end{proof}

Whenever we say that a finite measure $\nu$ is absolutely continuous with respect to $\mu$ in the following, we mean this to include the property (\ref{eq. abs cnt cnc}). In analogy to Theorem \ref{thm. giner 1.3.5} we can decompose $\Sigma(\AA)$ with respect to $\mu$ into a direct topological sum.

\begin{theorem} \label{thm. tp decom de Giorgi}
	The space $\Sigma \left( \AA \right)$ is a direct topological sum of its subspaces $A(\mu)$, $D(\mu)$ and $S(\mu)$ consisting of the absolutely continuous, the diffuse and the singular elements with respect to $\mu$, respectively. The projectors onto the subspaces are monotone, i.e.
	\begin{equation} \label{eq. mon}
		\nu = \nu_a + \nu_d + \nu_s \ge 0 \implies \nu_a \ge 0; \quad \nu_d \ge 0; \quad \nu_s \ge 0.
	\end{equation}
	Furthermore, there holds
	\begin{equation} \label{eq. nrm dcmp}
		\| \nu \| = \| \nu_a \| + \| \nu_d \| + \| \nu_s \| \quad \forall \nu \in \Sigma\left( \AA \right).
	\end{equation}
	Finally, setting $\nu_A = \nu \left( \cdot \cap A \right)$ for $\nu \in \Sigma\left( \AA \right)$ and $A \in \AA$, there holds $\nu_A \in \Sigma\left( \AA \right)$ and
	\begin{equation} \label{eq. cmmt}
		\left( \nu_a \right)_A = \left( \nu_A \right)_a; \quad \left( \nu_d \right)_A = \left( \nu_A \right)_d \quad \left( \nu_s \right)_A = \left( \nu_A \right)_s.
	\end{equation}
\end{theorem}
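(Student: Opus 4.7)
The plan is to mirror Giner's argument for Theorem \ref{thm. giner 1.3.5}, bootstrapping from the de Giorgi decomposition (\ref{eq. decom de Giorgi}) of positive finite measures to signed ones via Hahn-Jordan. Since every $\nu \in \Sigma(\AA)$ is bounded, both $\nu^\pm$ are finite positive measures, and I would define
\[
\nu_a := (\nu^+)_a - (\nu^-)_a, \quad \nu_d := (\nu^+)_d - (\nu^-)_d, \quad \nu_s := (\nu^+)_s - (\nu^-)_s
\]
by applying (\ref{eq. decom de Giorgi}) to each piece. Each component lies in the corresponding linear subspace because $A(\mu), D(\mu), S(\mu)$ are closed under signed linear combinations --- for $A(\mu)$ one unites the two $\sigma$-finite supports furnished by Proposition \ref{prop. sgm spprt} --- and summing recovers $\nu$.

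For uniqueness (hence the direct sum), I would establish $A(\mu) \cap \left[ D(\mu) + S(\mu) \right] = \{0\}$ and $D(\mu) \cap S(\mu) = \{0\}$. If $\mu_0 \in A(\mu)$ equals $\nu_d + \nu_s$ with $\nu_d \in D(\mu)$ and $\nu_s \in S(\mu)$, then $\mu_0$ concentrates on a $\sigma$-finite set $\Sigma$ (Proposition \ref{prop. sgm spprt}), $\nu_s$ concentrates on a $\mu$-null set $N$, and any diffuse positive measure gives mass zero to every $\sigma$-finite set by (\ref{eq. d def}) --- otherwise $\sigma$-additivity would yield a positive-$\nu_d$-subset of finite $\mu$-measure, which is inadmissible in the defining supremum. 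Evaluating $\mu_0 = \nu_d + \nu_s$ on $\Sigma \cap N^c$ gives $\mu_0 = 0$ there, and absolute continuity finishes on $\Sigma \cap N$. Monotonicity (\ref{eq. mon}) is then immediate from the observation that $\nu \geq 0$ forces $\nu^- = 0$, so each $\nu_i = (\nu^+)_i \geq 0$.

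For the norm identity (\ref{eq. nrm dcmp}), the decisive step is that $(\nu^+)_i$ and $(\nu^-)_i$ are mutually singular for every $i \in \{a, d, s\}$. Choosing a Hahn set $P$ with $\nu^+(P^c) = \nu^-(P) = 0$, each $\nu^\pm$ agrees with its own restriction to $P$ resp.\ $P^c$, so the supremal formulas (\ref{eq. ac def})--(\ref{eq. s def}) automatically concentrate each $(\nu^+)_i$ on $P$ and each $(\nu^-)_i$ on $P^c$. Uniqueness of Jordan then forces $\nu_i^\pm = (\nu^\pm)_i$, so
\[
\sum_i \| \nu_i \| = \sum_i \bigl[ (\nu^+)_i(\Om) + (\nu^-)_i(\Om) \bigr] = \nu^+(\Om) + \nu^-(\Om) = \| \nu \|
\]
via $\nu^\pm = \sum_i (\nu^\pm)_i$ for the nonnegative de Giorgi components; the reverse inequality is triangle, so equality holds and the topological direct sum follows. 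Commutativity (\ref{eq. cmmt}) reduces via $(\nu_A)^\pm = (\nu^\pm)_A$ to $\nu \geq 0$, where $0 \leq (\nu_i)_A \leq \nu_i$ keeps each restriction in the correct subspace and uniqueness forces $(\nu_A)_i = (\nu_i)_A$. The main obstacle I anticipate is verifying the mutual singularity claim cleanly for the diffuse component, as the admissibility criterion in (\ref{eq. d def}) must be shown to localize to $P$ without loss of generality; the remainder is bookkeeping parallel to Giner's argument for Theorem \ref{thm. giner 1.3.5}.
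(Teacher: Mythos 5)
Your proposal is correct and follows essentially the same route as the paper: decompose the Jordan parts $\nu^{\pm}$ via de Giorgi, prove uniqueness from the concentration properties (a $\sigma$-finite carrier for $A(\mu)$ by Proposition \ref{prop. sgm spprt}, a $\mu$-null carrier for $S(\mu)$, and vanishing of diffuse measures on $\sigma$-finite sets), and deduce (\ref{eq. mon}), (\ref{eq. nrm dcmp}) and (\ref{eq. cmmt}) from the mutual singularity of the components together with monotonicity of the supremal formulas (\ref{eq. ac def})--(\ref{eq. s def}). The obstacle you flag at the end dissolves immediately: each component satisfies $(\nu^{+})_i \le \nu^{+}$, so $\nu^{+}(P^c)=0$ already forces $(\nu^{+})_i(P^c)=0$ with no need to localize the admissibility criterion in (\ref{eq. d def}).
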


\begin{proof}
	Since it is trivial to check that $A(\mu)$, $D(\mu)$ and $S(\mu)$ are linear subspaces, we start by proving uniqueness of the decomposition. Suppose
	$$
	\nu = \nu^i_a + \nu^i_d + \nu^i_s, \quad i \in \left\{ 1, 2 \right\}
	$$
	with $\nu^i_a \ll \mu$ and satisfying (\ref{eq. abs cnt cnc}) but not necessarily given by (\ref{eq. ac def}). Ditto for $\nu^i_d$ and $\nu^i_s$. Suppose first $\nu^i_s = 0$ for all $i$. By our definition of absolute continuity, there exists $\Sigma \in \AA_\sigma$ with $\nu^i_a(A) = \nu^i_a(A \cap \Sigma)$ for all $A \in \AA$ and $i \in \left\{ 1, 2 \right\}$. We have $\nu^i_d(\Sigma) = 0$ due to diffusivity. Hence the finite measure $\nu^1_a - \nu^2_a = \nu^2_d - \nu^1_d$ vanishes on $\Sigma$ and $\Om \setminus \Sigma$ hence $\nu^1_a = \nu^2_a$ and $\nu^1_d = \nu^2_d$.
	
	We come to the general case. As $\nu$ is finite, we have $\nu^i_s \perp \mu$ by \cite[Thm. 1.114]{Lp spaces} so that there exist $S_i \in \AA$ with $\mu \left( S_i \right) = 0$ and $\nu^i_s \left( A \cap S_i \right) = \nu^i_s(A)$ for all $A \in \AA$. Setting $S = S_1 \cup S_2$ we have $\mu(S) = 0$ thus $\nu^i_a$ and $\nu^i_d$ vanish on $S$. Consequently, the restriction of $\nu$ to $S$ agrees with the restrictions of both $\nu^i_s$ so that $\nu^1_s = \nu^2_s$. Now, the uniqueness of $\nu^i_a$ and $\nu^i_d$ follows by the first case. The mutual singularity of the components of (the mutually singular positive and negative parts of) $\nu$ yields (\ref{eq. mon}) and (\ref{eq. nrm dcmp}).
	
	$\nu_A \in \Sigma(\AA)$ is immediate. For the rest, it suffices to consider $\nu \ge 0$. We claim that
	$$
	\left( \nu_A \right)_a \le \left( \nu_a \right)_A; \quad \left( \nu_A \right)_d \le \left( \nu_d \right)_A; \quad \left( \nu_A \right)_s \le \left( \nu_s \right)_A.
	$$
	Directly from (\ref{eq. ac def}) we have that $\nu_a$ is monotone, i.e. $\nu^1 \le \nu^2 \implies \nu^1_a \le \nu^2_a$. The same is true for $\nu_d$ and $\nu_s$ by (\ref{eq. d def}) and (\ref{eq. s def}). Also, restriction of a measure is monotone. Therefore
	$$
	\left( \nu_A \right)_a \le \nu_a \implies \left( \nu_A \right)_a = \left( \left( \nu_A \right)_a \right)_A \le \left( \nu_a \right)_A.
	$$
	In the same way, we obtain $\left( \nu_A \right)_d \le \left( \nu_d \right)_A$ and $\left( \nu_A \right)_s \le \left( \nu_s \right)_A$. Finally
	$$
	\nu_A = \left( \nu_A \right)_a + \left( \nu_A \right)_d + \left( \nu_A \right)_s \le \left( \nu_a \right)_A + \left( \nu_d \right)_A + \left( \nu_s \right)_A = \nu_A
	$$
	obtains the claim.
\end{proof}

The following was first observed in \cite{Giner thèse}.

\begin{proposition} \label{prop. giner 1.3.2}
	For all $\ell \in L_\varphi(\mu)^*$ and $u \in L_\varphi(\mu)$ the mapping
	$$
	\nu_{\ell, u} \colon \AA \to \R \colon A \mapsto \ell \left( u \chi_A \right)
	$$
	is an additive set function of bounded variation. Moreover
	$$
	\forall A \in \Sigma \quad \nu_{\ell, u \chi_A} = \nu_{\ell, u} \left( \cdot \cap A \right) = \left( \nu_{\ell, u} \right)_A.
	$$
	The mapping
	$$
	\nu \colon L_\varphi(\mu)^* \times L_\varphi(\mu) \to \ba\left( \Sigma \right) \colon \left( \ell, u \right) \to \nu_{\ell, u}
	$$
	is bilinear and continuous.
\end{proposition}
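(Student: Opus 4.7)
The plan is to unwind each claim from the definition $\nu_{\ell,u}(A) = \ell(u\chi_A)$, the linearity of $\ell$, and the basic order and evenness properties of the Orlicz integrand $\varphi$. The first step is to note that $u\chi_A \in L_\varphi(\mu)$ whenever $u \in L_\varphi(\mu)$ and $A \in \AA$, because $\varphi \ge 0$ with $\varphi_\om(0) = 0$ a.e.\ so $I_\varphi(\alpha^{-1} u\chi_A) \le I_\varphi(\alpha^{-1} u)$ for every $\alpha > 0$, giving
$$
\| u\chi_A \|_\varphi \le \| u \|_\varphi.
$$
In particular $\nu_{\ell,u}$ is well-defined on $\AA$. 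Finite additivity is then immediate: if $A, B \in \AA$ are disjoint then $u\chi_{A \cup B} = u\chi_A + u\chi_B$ pointwise, so by linearity of $\ell$ we get $\nu_{\ell,u}(A \cup B) = \nu_{\ell,u}(A) + \nu_{\ell,u}(B)$.

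The key step is bounded variation, where I would use the evenness of $\varphi$. Given a measurable partition $A_1, \ldots, A_n$ of $\Om$, set $\epsilon_i = \sgn \nu_{\ell,u}(A_i) \in \{-1, 0, 1\}$ and put $w = \sum_{i=1}^n \epsilon_i u \chi_{A_i}$. Then $w(\om) \in \{\pm u(\om), 0\}$ pointwise, and by the evenness of $\varphi_\om$ and $\varphi_\om(0) = 0$ we have $\varphi(\om, w(\om)) \le \varphi(\om, u(\om))$ a.e., hence $\| w \|_\varphi \le \| u \|_\varphi$. Therefore
$$
\sum_{i=1}^n | \nu_{\ell,u}(A_i) | = \sum_{i=1}^n \epsilon_i \ell(u\chi_{A_i}) = \ell(w) \le \| \ell \|_{L_\varphi(\mu)^*} \| u \|_\varphi,
$$
and taking the supremum over partitions yields $\| \nu_{\ell,u} \| \le \| \ell \|_{L_\varphi(\mu)^*} \| u \|_\varphi < \infty$, so $\nu_{\ell,u} \in \ba(\AA)$.

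The restriction identity is purely algebraic: for $B \in \AA$ one has $\nu_{\ell, u\chi_A}(B) = \ell(u\chi_A\chi_B) = \ell(u\chi_{A\cap B}) = \nu_{\ell,u}(A\cap B) = (\nu_{\ell,u})_A(B)$. Bilinearity of $(\ell, u) \mapsto \nu_{\ell,u}$ follows from linearity of $\ell$ in its first argument and from the pointwise linearity $(u_1 + \lambda u_2)\chi_A = u_1\chi_A + \lambda u_2\chi_A$ in its second. Finally, the inequality $\| \nu_{\ell,u} \| \le \| \ell \|_{L_\varphi(\mu)^*} \| u \|_\varphi$ established above is exactly the continuity of this bilinear map from $L_\varphi(\mu)^* \times L_\varphi(\mu)$ into $\ba(\AA)$.

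I do not expect a genuine obstacle here: the substantive point is the sign-manipulation that converts the variation of $\nu_{\ell,u}$ into a single evaluation $\ell(w)$, and this succeeds precisely because evenness of the Orlicz integrand renders $\| w \|_\varphi = \| u \|_\varphi$ for any sign-twist of $u$. All remaining assertions are direct calculations from the definition.
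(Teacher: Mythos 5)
Your proof is correct. The only step where you genuinely diverge from the paper is the bounded-variation estimate. The paper bounds the supremum $\sup_{A \in \AA} \left| \ell \left( u \chi_A \right) \right| \le \| \ell \|^*_\varphi \| u \|_\varphi$ (using, as you do, that $\| u \chi_A \|_\varphi \le \| u \|_\varphi$ since $\varphi \ge 0 = \varphi_\om(0)$) and then invokes the standard equivalence $\| \nu \|_\i \le \left| \nu \right| \left( \Om \right) \le 2 \| \nu \|_\i$ between the sup functional and the total variation of a bounded finitely additive set function, citing Dunford--Schwartz. You instead estimate the total variation directly by the sign-twist $w = \sum_i \epsilon_i u \chi_{A_i}$, whose Luxemburg norm is at most $\| u \|_\varphi$ by evenness of $\varphi_\om$ together with $\varphi_\om(0) = 0$, so that $\sum_i \left| \nu_{\ell, u} \left( A_i \right) \right| = \ell(w) \le \| \ell \|^*_\varphi \| u \|_\varphi$. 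Your route avoids the external norm-equivalence lemma and yields the sharper constant $1$ instead of $2$ in the continuity estimate for the bilinear map; the paper's route is shorter because it outsources the variation estimate to a classical fact. The additivity, the restriction identity and bilinearity are handled identically in both arguments. One cosmetic remark: in your closing sentence you assert $\| w \|_\varphi = \| u \|_\varphi$, but since some $\epsilon_i$ may vanish only the inequality $\| w \|_\varphi \le \| u \|_\varphi$ holds in general --- which is all your displayed argument actually uses.
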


\begin{proof}
	It is obvious that $\nu_{\ell, u}$ is an additive set function.
	%The set function $\nu_{\ell, u}$ is additive because $A \cap B = \emptyset \iff \chi_{A \cup B} = \chi_A + \chi_B$. The other equality holds due to $\chi_A \chi_B = \chi_{A \cap B}$.
	The variation of $\nu_{\ell, u}$ is bounded since
	\begin{equation} \label{eq. bln cnt}
		\| \nu_{\ell, u} \|_\i = \sup_{A \in \Sigma} \left| \langle \ell, u \chi_A \rangle \right| \le \| \ell \|^*_\varphi \| u \|_\varphi.
	\end{equation}
	We used the equivalence of norms $\| \cdot \|_\i \le \left| \, \cdot \, \right| \left( \Om \right) \le 2 \| \cdot \|_\i$ for the total variation norm, cf. \cite{lin op I}. The bilinearity holds because $\langle \cdot, \cdot \rangle$ is bilinear. The continuity follows by (\ref{eq. bln cnt}).
\end{proof}

We now generalize the abstract dual space decomposition \cite[Thm. 1.3.7]{Giner thèse} to an arbitrary measure, obtaining an additional diffuse component that drops out if $\mu$ is $\sigma$-finite. Like \cite[Thm. 1.3.7]{Giner thèse} our result easily extends to a broader class of spaces than Orlicz spaces. We do not pursue this here.

\begin{definition}
	A continuous linear functional $\ell \in L_\varphi(\mu)^*$ belongs to the absolutely continuous functionals $A_{\varphi^*}(\mu)$ if $\nu_{\ell, u}$ belongs to $A(\mu)$ for every $u \in L_\varphi(\mu)$. The diffuse functionals $D_{\varphi^*}(\mu)$ and the purely finitely additive ones $F_{\varphi^*}(\mu)$ are defined analogously with $D(\mu)$ and $F(\AA)$ taking the role of $A(\mu)$.
\end{definition}

\begin{theorem} \label{thm. xtnsn Giner}
	There holds
	\begin{equation} \label{eq. dual decom}
		L_\varphi(\mu)^* = A_{\varphi^*}(\mu) \oplus D_{\varphi^*}(\mu) \oplus F_{\varphi^*}(\mu).
	\end{equation}
	More explicitly, every $\ell \in L_\varphi(\mu)^*$ has a unique sum decomposition
	$$
	\ell = \ell_a + \ell_d + \ell_f
	$$
	with $\ell_a \in A_{\varphi^*}(\mu)$, $\ell_d \in D_{\varphi^*}(\mu)$ and $\ell_f \in F_{\varphi^*}(\mu)$. There holds
	\begin{equation} \label{eq. norm decom}
		\| \, \ell \, \|^*_\varphi = \| \, \ell_a \, \|^*_\varphi + \| \, \ell_d \, \|^*_\varphi +	\| \, \ell_f \, \|^*_\varphi.
	\end{equation}
\end{theorem}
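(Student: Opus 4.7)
The plan is to lift the set-function decompositions of Theorems~\ref{thm. giner 1.3.5} and~\ref{thm. tp decom de Giorgi} to $L_\varphi(\mu)^*$ via the bilinear pairing $(\ell,u) \mapsto \nu_{\ell,u}$ from Proposition~\ref{prop. giner 1.3.2}. For each $u$, observe that $u\chi_N = 0$ in $L_\varphi(\mu)$ whenever $\mu(N)=0$, whence $\nu_{\ell,u}(B) = \ell(u \chi_B) = 0$ for every measurable $B \subset N$, i.e. $|\nu_{\ell,u}| \ll \mu$. Monotonicity~(\ref{eq. monotonicity}) of the Yosida projector passes this to $|(\nu_{\ell,u})_\sigma| \ll \mu$, so the singular component of the de Giorgi splitting vanishes by (\ref{eq. s def}). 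Hence $\nu_{\ell,u} = \nu^a_u + \nu^d_u + \nu^f_u$ uniquely, with summands in $A(\mu)$, $D(\mu)$, $F(\AA)$.

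Next I would set $\ell_i(u) := \nu^i_u(\Om)$ for $i \in \{a,d,f\}$. Bilinearity of $\nu$ and linearity of the two projectors yield the linearity of $\ell_i$. To verify that $\ell_i$ lies in the asserted subspace, combine Proposition~\ref{prop. giner 1.3.2}'s identity $\nu_{\ell,v\chi_A} = (\nu_{\ell,v})_A$ with the commutativity relations~(\ref{eq. comm}) and~(\ref{eq. cmmt}):
\begin{equation*}
\nu_{\ell_i,v}(A) = \ell_i(v\chi_A) = \nu^i_{v\chi_A}(\Om) = (\nu^i_v)_A(\Om) = \nu^i_v(A),
\end{equation*}
so $\nu_{\ell_i,v} = \nu^i_v$ lies in $A(\mu)$, $D(\mu)$ or $F(\AA)$ as required. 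Continuity, with the bound $\|\ell_i\|^*_\varphi \le \|\ell\|^*_\varphi$, follows from $|\ell_i(v)| \le \|\nu^i_v\| \le \|\nu_{\ell,v}\| \le \|\ell\|^*_\varphi \|v\|_\varphi$: the second inequality uses the set-function norm identities (\ref{eq. norm decomp}) and (\ref{eq. nrm dcmp}) (with the singular component vanishing), while the last follows by a Hahn-partition/sign-flip argument exploiting the evenness of $\varphi$: for any measurable partition $\{A_j\}$ and signs $\epsilon_j \in \{-1,+1\}$, the function $\tilde v := \sum_j \epsilon_j v\chi_{A_j}$ satisfies $\|\tilde v\|_\varphi = \|v\|_\varphi$, whence $\|\nu_{\ell,v}\| = \sup_{\tilde v} |\ell(\tilde v)| \le \|\ell\|^*_\varphi \|v\|_\varphi$. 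Uniqueness of $\ell = \ell_a + \ell_d + \ell_f$ transfers from pointwise uniqueness at the set-function level.

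The main obstacle will be the additive norm identity~(\ref{eq. norm decom}). The $\le$ direction is triangle inequality; the reverse demands combining near-optimal witnesses $u_i$ with $\|u_i\|_\varphi \le 1$ and $\ell_i(u_i) > \|\ell_i\|^*_\varphi - \e/3$ into a single $u$ of norm at most $1$ with $\ell(u) > \sum_i \ell_i(u_i) - o(1)$. My plan is to exploit near-disjoint supports: Proposition~\ref{prop. sgm spprt} supplies a common $\sigma$-finite set $\Sigma$ supporting the three absolutely continuous components $\nu_{\ell_a,u_i}$ simultaneously; the standard characterization of purely finitely additive set functions supplies a decreasing sequence $E_n \downarrow \emptyset$ with $E_n \in \AA_\sigma$ capturing the three purely finitely additive components in the sense that $|\nu_{\ell_f,u_i}|(\Om \setminus E_n) \to 0$ for $i \in \{a,d,f\}$. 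Setting
\begin{equation*}
u_n := u_a \chi_{\Sigma \setminus E_n} + u_d \chi_{\Om \setminus (\Sigma \cup E_n)} + u_f \chi_{E_n},
\end{equation*}
the pairwise disjointness of supports and evenness of $\varphi$ give $\|u_n\|_\varphi \le 1$; absolute continuity of the $\nu_{\ell_a,u_i}$ handles the $a$-component (since $\mu(\Sigma \cap E_n) \to 0$), diffuseness handles the $d$-component (since $\Sigma \cup E_n \in \AA_\sigma$, on which any diffuse measure vanishes by (\ref{eq. d def})), and the capturing property handles the $f$-component, yielding $\ell(u_n) \to \ell_a(u_a)+\ell_d(u_d)+\ell_f(u_f) > \sum_i \|\ell_i\|^*_\varphi - \e$. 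The delicate step is the simultaneous construction of the capturing sequence $E_n$ with $\mu(E_n) < \infty$ and $\mu(E_n \cap \Sigma) \to 0$, which I would handle by a diagonal extraction across the three individual capturing sequences combined with the additional integrability of each $|\nu_{\ell_f,u_i}|$ on $\mu$-finite pieces of $\Sigma$.
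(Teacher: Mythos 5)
Your construction of the decomposition itself is sound and is essentially the paper's: you lift the Hewitt--Yosida and de Giorgi splittings through $\nu_{\ell,u}$, define $\ell_i(u)=\nu^i_u(\Om)$, and verify membership via the commutation relations; the observation that $\nu_{\ell,u}$ and hence $(\nu_{\ell,u})_\sigma$ vanish on $\mu$-null sets, killing the singular de Giorgi component, is exactly right, and your sign-flip bound for $\|\nu_{\ell,v}\|$ is fine. The paper performs the splitting in two stages ($\sigma$-additive versus purely finitely additive first, then absolutely continuous versus diffuse), but that difference is cosmetic for existence and uniqueness.

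The norm identity (\ref{eq. norm decom}) is where your argument has two genuine gaps. First, disjointness of supports and evenness of $\varphi$ do \emph{not} give $\| u_n \|_\varphi \le 1$: for functions with disjoint supports the \emph{modular} is additive, so $I_\varphi(u_n)$ can be as large as $I_\varphi(u_a)+I_\varphi(u_d)+I_\varphi(u_f)\le 3$, and then the Luxemburg norm of $u_n$ may well exceed $1$. Evenness plays no role here. The paper repairs this by budgeting the modular: it enlarges $\Sigma$ so that one summand contributes modular $0$ (e.g.\ $I_\varphi(v_n\chi_{\Om\setminus\Sigma})=0$ because $\{\varphi(\cdot,v_n(\cdot))>0\}$ is $\sigma$-finite), and chooses $\delta$ so that $\nu(A)<\delta$ forces $I_\varphi(v_n\chi_A)<1-I_\varphi(u_n)$, keeping the total modular below $1$. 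Second, you cannot take the capturing sets $E_n$ for the purely finitely additive components to lie in $\AA_\sigma$ (nor to decrease to $\emptyset$): the Hewitt--Yosida localization only makes $\lambda(E_n)$ small for a \emph{chosen} finite countably additive $\lambda$, and $E_n$ may be non-$\sigma$-finite off the support of $\lambda$. Consequently your appeal to diffuseness to conclude $\nu_{\ell_d,u_d}(\Sigma\cup E_n)=0$ fails on $E_n\setminus\Sigma$, and likewise the smallness of $\nu_{\ell_a,u_i}(E_n)$ is not controlled by $\mu(E_n\cap\Sigma)$ alone unless you have first arranged that these measures are concentrated on $\Sigma$. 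The paper's fix is to apply the Hewitt--Yosida localization to the countably additive measure $\nu+|\nu_{\ell_\sigma,u_n}|+|\nu_{\ell_\sigma,v_n}|$, so that smallness of the $\sigma$-additive variations on $A_n$ is built into the choice of $A_n$ rather than inferred from $\sigma$-finiteness. Both defects are repairable, but as written the single-step three-witness combination does not go through.
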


\begin{proof}
	For uniqueness and existence, we adapt the argument in \cite[Thm. 1.3.7]{Giner thèse} to our setting. Uniqueness follows since $\ell$ has at most one sum decomposition $\ell = \ell_\sigma + \ell_f$ with $\ell_\sigma \in \Sigma_{\varphi^*}(\mu)$ and $\ell_f \in F_{\varphi^*}(\mu)$ by \ref{thm. giner 1.3.5}, while $\ell_\sigma$ has at most one sum decomposition $\ell_\sigma = \ell_a + \ell_d$ with $\ell_a \in A_{\varphi^*}(\mu)$ and $\ell_d \in D_{\varphi^*}(\mu)$.
	
	Regarding existence, we first prove that
	$$
	L_\varphi(\mu)^* = \Sigma_{\varphi^*}(\mu) \oplus F_{\varphi^*}(\mu).
	$$
	We set
	\begin{equation} \label{eq. prts}
		\ell_\sigma(u) = \left( \nu_{\ell, u } \right)_\sigma(\Om); \quad \ell_f(u) = \left( \nu_{\ell, u } \right)_f(\Om).
	\end{equation}
	These functions belong to $L_\varphi(\mu)^*$ since the mappings
	$$
	\nu_{\ell, \cdot}; \quad \left( \nu_{\ell, \cdot} \right)_\sigma; \quad \left( \nu_{\ell, \cdot} \right)_\sigma(\Om); \quad \left( \nu_{\ell, \cdot} \right)_f; \quad \left( \nu_{\ell, \cdot} \right)_f(\Om)
	$$
	are linear and continuous from $L_\varphi(\mu)$ to their respective image spaces by Proposition \ref{prop. giner 1.3.2} and Theorem \ref{thm. giner 1.3.5}. Continuity of the projectors enters. Clearly $\ell = \ell_\sigma + \ell_f$. We have $\ell_\sigma \in \Sigma_{\varphi^*}(\mu)$ since for every $A \in \AA$ there holds
	\begin{equation} \label{eq. msr}
		\ell_\sigma \left( u \chi_A \right) = \left( \nu_{\ell, u \chi_A} \right)_\sigma (\Om) = \left( \left( \nu_{\ell, u} \right)_A \right)_\sigma (\Om) = \left( \left( \nu_{\ell, u} \right)_\sigma \right)_A (\Om) = \left( \nu_{\ell, u} \right)_\sigma (A)
	\end{equation}
	by Proposition \ref{prop. giner 1.3.2} and Theorem \ref{thm. giner 1.3.5}. In the same way one checks $\ell_f \in F_{\varphi^*}(\mu)$.
	
	Let us decompose $\Sigma_{\varphi^*}(\mu) = A_{\varphi^*}(\mu) \oplus D_{\varphi^*}(\mu)$ to finish existence. We set
	$$
	\ell_a(u) = \left( \nu_{\ell_\sigma, u } \right)_a (\Om); \quad \ell_d(u) = \left( \nu_{\ell_\sigma, u } \right)_d (\Om).
	$$
	Similar to (\ref{eq. prts}) these functions belong to $L_\varphi(\mu)^*$ by Proposition \ref{prop. giner 1.3.2} and Theorem \ref{thm. tp decom de Giorgi}. Note $\ell_\sigma = \ell_a + \ell_d$ since $\ell_\sigma(0) = 0$ renders the singular part of $\nu_{\ell_\sigma, u}$ trivial. As in (\ref{eq. msr}) one checks $\ell_a \in A_{\varphi^*}(\mu)$ and $\ell_d \in D_{\varphi^*}(\mu)$ by Proposition \ref{prop. giner 1.3.2} and Theorem \ref{thm. tp decom de Giorgi}.
	
	Ad (\ref{eq. norm decom}): we start by proving that $\| \, \ell \, \|^*_\varphi = \| \, \ell_\sigma \, \|^*_\varphi + \| \, \ell_f \, \|^*_\varphi$. Let $u_n$ and $v_n$ be sequences in $L_\varphi(\mu)$ with $\| u_n \|_\varphi < 1$ and $\| v_n \| < 1$ such that $\lim_n \ell_\sigma (u_n) = \| \, \ell_\sigma \, \|^*_\varphi$ and $\lim_n \ell_f (v_n) = \| \, \ell_f \, \|^*_\varphi$. Let $\Sigma \in \AA_\sigma$ with $I_\varphi \left( u_n \chi_\Sigma \right) = I_\varphi \left( u_n \right)$ and $I_\varphi \left( v_n \chi_\Sigma \right) = I_\varphi \left( v_n \right)$ for all $n \in \N$.
	% Stabilität unter Trunkation geht ein.
	We find $\nu$ a finite measure that is equivalent to $\mu_\Sigma$ with $\frac{d \nu}{d \mu} = f$ a positive integrable function on $\Sigma$ vanishing on $\Om \setminus \Sigma$. By absolute continuity of the integral there exists for $n \in \N$ a $\delta > 0$ such that
	$$
	\nu(A) < \delta \implies I_\varphi \left( v_n \chi_A \right) < 1 - I_\varphi(u_n). 
	$$
	Applying \cite[Thm. 1.19]{fadd} to the countably additive measure $\nu + \left| \nu_{\ell_\sigma, u_n} \right| + \left| \nu_{\ell_\sigma, v_n} \right|$ and the purely finitely additive set function $\left| \nu_{\ell_f, u_n} \right| + \left| \nu_{\ell_f, v_n} \right|$,	we find $A_n \in \AA$ with $\nu \left( A_n \right) < \delta$ while
	$$
	\left| \ell_\sigma \left( u_n \chi_A \right) \right| < \frac{1}{n}; \quad \left| \ell_\sigma \left( v_n \chi_A \right) \right| < \frac{1}{n}
	$$
	and
	$$
	\ell_f \left( u_n \chi_{A_n} \right) = \ell_f(u_n); \quad \ell_f \left( v_n \chi_{A_n} \right) = \ell_f(v_n).
	$$
	In particular, we have
	\begin{equation} \label{eq. rf}
		\lim_n \ell_f \left( u_n \chi_{\Om \setminus A_n} \right) = \lim_n \ell_\sigma \left( v_n \chi_{A_n} \right) = 0.
	\end{equation}
	Hence
	$$
	I_\varphi \left( u_n \chi_{\Om \setminus A_n} + v_n \chi_{A_n} \right) = I_\varphi \left( u_n \chi_{\Om \setminus A_n} \right) + I_\varphi \left( v_n \chi_{A_n} \right) < 1
	$$
	so that $\| u_n \chi_{\Om \setminus A_n} + v_n \chi_{A_n} \|_\varphi \le 1$. This together with (\ref{eq. rf}) yields
	\begin{align*}
		\| \, \ell \, \|^*_\varphi
		& \le \| \, \ell_\sigma \, \|^*_\varphi + \| \, \ell_f \, \|^*_\varphi
		= \lim_n \ell_\sigma(u_n) + \lim_n \ell_f(v_n) \\
		& = \lim_n \ell_\sigma \left( u_n \chi_{\Om \setminus A_n} \right) + \lim_n \ell_f \left( v_n \chi_{A_n} \right) \\
		& = \lim_n \ell \left( u_n \chi_{\Om \setminus A_n} + v_n \chi_{A_n} \right) \le \| \, \ell \, \|^*_\varphi.
	\end{align*}
	
	We finish the proof of (\ref{eq. norm decom}) by showing that $\| \, \ell_\sigma \, \|^*_\varphi = \| \, \ell_a \, \|^*_\varphi + \| \, \ell_d \, \|^*_\varphi$. Let $u_n$ and $v_n$ be as above but now with $\lim_n \ell_a (u_n) = \| \, \ell_a \, \|^*_\varphi$ and $\lim_n \ell_d (v_n) = \| \, \ell_d \, \|^*_\varphi$ instead of the corresponding conditions above.
	
	As $\ell_a \in A_{\varphi^*}(\mu)$, we may arrange that all members of the sequences $\nu_{\ell_a, u_n}$ and $\nu_{\ell_a, v_n}$ vanish off $\Sigma$ by possibly enlarging the set while keeping it $\sigma$-finite by Proposition \ref{prop. sgm spprt}, i.e. $\ell_a \left( u_n \right) = \ell_a \left( u_n \chi_\Sigma \right)$ and $\ell_a \left( v_n \right) = \ell_a \left( v_n \chi_\Sigma \right)$ for all $n \in \N$. Remember that $\nu_{\ell_d, u}$ for every $u \in L_\varphi(\mu)$ vanishes on any $\sigma$-finite set by diffusivity. Consequently
	\begin{align*}
		\| \, \ell_\sigma \, \|^*_\varphi
		& \ge \lim_n \ell_\sigma \left( u_n \chi_\Sigma + v_n \chi_{\Om \setminus \Sigma} \right)
		= \lim_n \ell_a \left( u_n \chi_\Sigma \right) + \lim_n \ell_d \left( v_n \chi_{\Om \setminus \Sigma} \right) \\
		& = \lim_n \ell_a ( u_n ) + \lim_n \ell_d ( v_n )
		= \| \, \ell_a \, \|^*_\varphi + \| \, \ell_d \, \|^*_\varphi \ge \| \, \ell_\sigma \, \|^*_\varphi. \qedhere
	\end{align*}
\end{proof}

\subsection{Representation results}

Throughout this subsection, we assume that $\mu$ has no atom of infinite measure. We denote by $\mathcal{W} \left( \Om ; X' \right)$ the space of weak* measurable functions $w \colon \Om \to X'$. Let $w_1, w_2 \in \mathcal{W} \left( \Om ; X' \right)$. We say that $w_1 = w_2$ weak* a.e. if $\langle w_1, x \rangle = \langle w_2 = x \rangle$ a.e. for every $x \in X$ with the exceptional null set possibly depending on $x$. We call a mapping $v \colon \AA_{a\sigma} \to \mathcal{W} \left( \Om ; X' \right)$ with $v_A = v_B$ weak* a.e. on $A \cap B$ for $A, B \in \AA_\sigma$ a linear weak* integrand. Since any strongly measurable function $u \colon \Om \to X$ is the pointwise limit a a sequence of simple functions, the assignment $\om \to \langle v_A(\om), u(\om) \rangle$ defines a family of functions indexed by $A \in \AA_\sigma$ for which we can attempt an exhausting integration, cf. the explanation before Theorem \ref{thm. inf int}. In this sense, a linear weak* integrand induces an integral functional. We introduce the space
$$
\mathcal{V}_{\varphi^*}(\mu) = \left\{ v \st v \colon \Om \to X' \text{ a linear weak* integrand and } \vvvert v \vvvert^*_\varphi < \i \right\}
$$
with the operator seminorm
$$
\vvvert v \vvvert^*_\varphi = \sup_{\| u \|_\varphi \le 1} \int \langle v(\om), u(\om) \rangle \, d \mu(\om).
$$
Applying the standard procedure of identifying elements whose difference belongs to the kernel of $\vvvert \cdot \vvvert^*_\varphi$, we obtain a normed space $V_{\varphi^*}$ of continuous linear functionals on $L_\varphi$. It is insightful to describe this kernel more explicitly. There holds $\vvvert v \vvvert^*_\varphi = 0$ iff $\sup_{u \in L_\varphi} \int_\Sigma \langle v_\Sigma, u \rangle \, d \mu = 0$ for all $\Sigma \in \AA$. To interchange this supremum with the integral, we want to apply Theorem \ref{thm. inf int} and need to check its assumptions. First, the space $L_\varphi$ is almost decomposable. Second, the integrand is Carathéodory hence separably measurable. Third, $\mu$ is restricted to the $\sigma$-finite set $\Sigma$ where it has no atom of infinite measure. Fourth, the value of the supremum is not $+ \i$ by assumption. Therefore we conclude
$$
\int_\Sigma \esssup_{W \in \SS(X)} \sup_{x \in W} \langle v_\Sigma(\om), x \rangle \, d \mu(\om) = \sup_{u \in L_\varphi} \int \langle v_\Sigma, u \rangle \, d \mu = 0
$$
hence $\esssup_{W \in \SS(X)} \sup_{x \in W} \langle v_\Sigma(\om), x \rangle = 0$ a.e. This is equivalent to (the equivalence class of) $v$ vanishing i.a.e. if $X$ is separable. In general, it suggests one should think of equivalence classes in $V_{\varphi^*}$ as linear integrands that accept strongly measurable functions as sensible arguments. We call $V_{\varphi^*}$ the space of \emph{linear integrands} on $L_\varphi$. We shall prove that the absolutely continuous functionals $A_{\varphi^*}$ agree with $V_{\varphi^*}$. One half of this inclusion is easy to obtain:
\begin{proposition} \label{prop. iso A to V}
	Identifying $v \in V_{\varphi^*}(\mu)$ with the continuous linear functional
	$$
	L_\varphi(\mu) \to \R \colon u \mapsto \int \langle v \left( \om \right), u \left( \om \right) \rangle \, d \mu \left( \om \right)
	$$
	induces an isometric embedding
	$$
	V_{\varphi^*}(\mu) \to A_{\varphi^*}(\mu).
	$$
\end{proposition}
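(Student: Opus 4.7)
The plan is to verify the four ingredients packed into the claim: (i) each $v \in V_{\varphi^*}(\mu)$ induces a well-defined bounded linear functional $\ell_v \in L_\varphi(\mu)^*$; (ii) $\ell_v$ actually belongs to $A_{\varphi^*}(\mu)$; (iii) the correspondence $v \mapsto \ell_v$ is linear and injective; (iv) it preserves norms. Well-definedness is essentially immediate from the way $V_{\varphi^*}(\mu)$ was set up: the exhausting integral $\int \langle v(\om), u(\om) \rangle \, d\mu(\om)$ is finite whenever $\| u \|_\varphi \le 1$ because $\vvvert v \vvvert^*_\varphi < \i$, and homogeneity extends this to arbitrary $u \in L_\varphi(\mu)$, yielding a linear functional with operator norm at most $\vvvert v \vvvert^*_\varphi$. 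Two representatives of the same equivalence class in $V_{\varphi^*}(\mu)$ produce the same $\ell_v$, since their difference has vanishing operator seminorm. Linearity of $v \mapsto \ell_v$ is clear from bilinearity of the pairing and the linearity properties of the exhausting integral; injectivity follows because $\ell_v = 0$ forces $\vvvert v \vvvert^*_\varphi = \sup_{\| u \|_\varphi \le 1} \ell_v(u) = 0$.

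The heart of the proof is step (ii), identifying $\ell_v$ as an absolutely continuous functional in the de Giorgi sense of Theorem \ref{thm. tp decom de Giorgi}. Fix $u \in L_\varphi(\mu)$ and consider $\nu_{\ell_v, u}(B) = \ell_v(u \chi_B)$ for $B \in \AA$. Since the family $\{\langle v_A, u \chi_A \rangle\}_{A \in \AA_{a\sigma}}$ is integrable in the exhausting sense with finite total value, the discussion preceding Theorem \ref{thm. inf int} supplies a $\bar{\mu}$-integrable function $\alpha_u$ with $\alpha_u = \langle v_A, u \chi_A \rangle$ a.e. on every $A \in \AA_{a\sigma}$. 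Such an $\alpha_u$ vanishes outside the $\sigma$-finite set $\Sigma_u = \{ \alpha_u \ne 0 \}$, whence
\begin{equation*}
\nu_{\ell_v, u}(B) \;=\; \int_{B \cap \Sigma_u} \alpha_u \, d\mu.
\end{equation*}
Thus $\nu_{\ell_v, u}$ is a finite signed measure concentrated on a $\sigma$-finite set and vanishing on $\mu$-null sets. Applying the defining formulas (\ref{eq. d def}) and (\ref{eq. s def}) forces its diffuse and singular parts to be trivial, so by the uniqueness of the de Giorgi decomposition $\nu_{\ell_v, u}$ coincides with its absolutely continuous part, which is exactly the condition $\ell_v \in A_{\varphi^*}(\mu)$.

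Finally, isometry is almost tautological once the previous steps are in place: the dual Amemiya norm on $L_\varphi(\mu)^*$ agrees with the operator norm induced by the Luxemburg norm, and by definition
\begin{equation*}
\vvvert v \vvvert^*_\varphi = \sup_{\| u \|_\varphi \le 1} \int \langle v(\om), u(\om) \rangle \, d\mu(\om) = \sup_{\| u \|_\varphi \le 1} \left| \ell_v(u) \right|,
\end{equation*}
the last equality obtained by replacing $u$ with $-u$. The main obstacle in this plan is step (ii): the $\sigma$-finite concentration of $\nu_{\ell_v, u}$ is what keeps $\ell_v$ in the absolutely continuous component rather than leaking into the diffuse or singular ones, and it depends crucially on the exhaustion feature of the integral of linear weak* integrands developed in \S\ref{sec. inf-int interchange}, which is what allows the $\sigma$-finite support $\Sigma_u$ to be extracted from the mere finiteness of $\vvvert v \vvvert^*_\varphi$.
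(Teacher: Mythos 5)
Your proposal is correct and follows essentially the same route as the paper: the paper's (much terser) proof likewise deduces membership in $A_{\varphi^*}(\mu)$ from the fact that $\nu_{\ell_v,u}$ is given by integrating a $\mu$-integrable density, which is $\sigma$-additive, vanishes on null sets, and is concentrated on a $\sigma$-finite set, and likewise gets isometry from the coincidence of the dual Amemiya norm with the operator norm. Your step (ii) merely spells out what the paper compresses into ``by dominated convergence and since an integrable function vanishes off a $\sigma$-finite set.''
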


\begin{proof}
	The induced functional is absolutely continuous by dominated convergence and since an integrable function vanishes off a $\sigma$-finite set. The embedding is obviously isometric if $V_{\varphi^*}(\mu)$ and $A_{\varphi^*}(\mu)$ carry the operator norm.
\end{proof}

In preparation of proving the converse inclusion, we need to study Hölder and reverse Hölder inequalities to determine if a given measurable function belongs to $V_{\varphi^*}$ or $L_{\varphi^*}$.\footnote{Whenever the space $L_{\varphi^*}(\mu)$ appears, we tacitly assume $\varphi^*$ to be an Orlicz integrand.} We start by observing that each element of $L_\varphi$ induces at least one continuous linear functional on $L_{\varphi^*}$ and vice versa.

\begin{lemma} \label{lem: Hölder inequality}
	For $u \in L_\varphi(\mu)$ and $v \in L_{\varphi^*}(\mu)$ there holds
	\begin{equation} \label{eq. Hölder inequality}
		\int \left| \langle v \left( \om \right), u \left( \om \right) \rangle \right| \, d \mu \left( \om \right) \le 2 \| v \|_{\varphi^*} \| u \|_\varphi.
	\end{equation}
\end{lemma}

\begin{proof}
	Fenchel-Young inequality and Lemma \ref{lem: modular-norm}.
\end{proof}

\begin{lemma} \label{lem: rev Hölder}
	Let $L \subset L_\varphi(\mu)$ be either linear and almost decomposable or such that $L \chi_A = L$ for all $A \in \AA$ while the closure $\cl L$ is linear and almost decomposable. Let a weak* measurable function $v \colon \Om \to X'$ satisfy
	\begin{equation} \label{eq. rev Höld}
		\int \langle v(\om), u(\om) \rangle \, d \mu(\om) \le \| u \|_\varphi \quad \forall u \in L.
	\end{equation}
	Then $v \in V_{\varphi^*}(\mu)$ with $\vvvert v \vvvert_\varphi^* \le 1$. In particular then, there holds $\vvvert v \vvvert_{\varphi^*} \le 1$ if $I^*_\varphi = I_{\varphi^*}$ on $\lin(v)$. If moreover $v$ is strongly measurable, then $v \in L_{\varphi^*}(\mu)$.
\end{lemma}

The lemma applies if $L$ are the simple functions in $L_\varphi^\sigma(\mu)$ or if $\varphi$ is real-valued and $L$ are the simple functions in $C_\varphi^\sigma(\mu)$. These assertions follow from Lemmas \ref{lem: E a decomp} and \ref{lem: C a decomp} together with \ref{lem: C simple dense}.

\begin{proof}
	We start by proving that (\ref{eq. rev Höld}) holds for $u \in \cl L$ if $L \chi_A = L$ for $A \in \AA$, thereby reducing to the case when $L$ is linear and almost decomposable. Pick by Theorem \ref{thm. L complete} a sequence $u_n \in L$ with $u_n \to u$ in $L_\varphi(\mu)$ and a.e. Let $\Om^+ = \left\{ \langle v, u \rangle > 0 \right\}$ and $\Om^+_n = \left\{ \langle v, u_n \rangle > 0 \right\}$. We have $u_n \chi_{\Om^+_n} \in L$ by assumption. The Fatou lemma yields
	$$
	\int_{\Om^+} \langle v, u \rangle \, d \mu \le \liminf_n \int_{\Om^+_n} \langle v, u_n \rangle \, d \mu \le \lim_n \| u_n \|_\varphi = \| u \|_\varphi.
	$$
	Since $\cl L$ is linear, we may argue analogously for the negative part $\langle v, u \rangle^-$ so that $\langle v, u \rangle$ is integrable and (\ref{eq. rev Höld}) holds for $u \in \cl L$. We may from now on assume $L$ linear and almost decomposable.
	
	We claim that (\ref{eq. rev Höld}) holds for all $u \in L_\varphi(\mu)$. Otherwise there were $u \in L_\varphi(\mu)$ with $\langle v, u \rangle \ge 0$ a.e. and $\int \langle v, u \rangle \, d \mu = \i$. According to Proposition \ref{prop. divergent subintegral} there is $\Sigma \in \AA_\sigma$ with $\int_\Sigma \langle v, u \rangle \, d \mu = \i$ since we ruled out atoms with infinite measure. Choose $u_n \in L$ with $u_n \uparrow u$ on $\Sigma$ by Lemma \ref{lem: decomp ss abs cont dense} and recognize the contradiction
	$$
	\i = \lim_n \int_\Sigma \langle v, u_n \rangle \, d \mu \le \lim_n \| u_n \chi_\Sigma \|_\varphi = \| u \chi_\Sigma \|_\varphi \le \| u \|_\varphi < \i.
	$$
	Consequently, the integral $\int \langle v, u \rangle \, d \mu$ exists for every $u \in L_\varphi(\mu)$.
	% Auf positiven und negativen Teil von <v, u> anzuwenden, indem man u chi_{<v, u> > 0} und u chi_{<v, u> < 0} betrachtet.
	In particular, there exists $\Sigma \in \AA_\sigma$ outside of which $\langle v, u \rangle$ vanishes, so that we find
	$$
	\int \langle v, u \rangle \, d \mu = \lim_n \int_\Sigma \langle v, u_n \rangle \, d \mu \le \lim_n \| u_n \chi_\Sigma \|_\varphi \le \| u \|_\varphi.
	$$	
	Therefore $v \in V_{\varphi^*}(\mu)$ with $\vvvert v \vvvert_\varphi^* \le 1$. The addenda are obvious by definition of the dual Amemiya norm.
\end{proof}

\begin{corollary} \label{cor. Lux-Orl equi}
	Let $L \subset L_\varphi(\mu)$ be an almost decomposable linear subspace. If $\varphi$ is dualizable i.a.e. for every element of $L_{\varphi^*}(\mu)$, then
	\begin{equation} \label{eq. Lux-Orl equi}
		\vvvert v \vvvert_{\varphi^*} = \sup_{u \in B_L} \langle v, u \rangle \quad \forall v \in L_{\varphi^*}(\mu).
	\end{equation}
\end{corollary}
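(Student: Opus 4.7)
My plan is to reduce the claimed identity to Lemma \ref{lem: rev H�lder} after reconciling the two norms $\vvvert v \vvvert_{\varphi^*}$ (Amemiya norm built from the conjugate integrand $\varphi^*$) and $\vvvert v \vvvert^*_\varphi$ (dual Amemiya norm built from $I^*_\varphi$) via the dualizability hypothesis. The only nontrivial direction will then be the ``$\ge$'' estimate, which is exactly what Lemma \ref{lem: rev H�lder} supplies.

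The first step I would carry out is to observe that for every $v \in L_{\varphi^*}(\mu)$ and every $\alpha > 0$, the function $\alpha v$ still lies in $L_{\varphi^*}(\mu)$, so by hypothesis $\varphi$ is dualizable i.a.e. at $\alpha v$. The equivalence of (\ref{eq. conjugate under integral}) and (\ref{eq. ess-inf = inf}) in the addendum to Theorem \ref{thm. conjugate A} then yields $I^*_\varphi(\alpha v) = I_{\varphi^*}(\alpha v)$ for every $\alpha > 0$. Substituting $\beta = \alpha^{-1}$ into the defining infimum of $\vvvert v \vvvert^*_\varphi$ matches it with that of $\vvvert v \vvvert_{\varphi^*}$, so
$$
\vvvert v \vvvert^*_\varphi = \vvvert v \vvvert_{\varphi^*}.
$$
Since $\vvvert \cdot \vvvert^*_\varphi$ was recorded just after its definition to coincide with the canonical operator norm on $L_\varphi(\mu)^*$ induced by $\| \cdot \|_\varphi$, the inclusion $B_L \subset B_{L_\varphi(\mu)}$ immediately yields the trivial direction $\sup_{u \in B_L} \langle v, u \rangle \le \vvvert v \vvvert^*_\varphi = \vvvert v \vvvert_{\varphi^*}$.

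For the substantive reverse direction, I would set $N = \sup_{u \in B_L} \langle v, u \rangle$ and dispose of the case $N = 0$ by running the argument below on $\varepsilon^{-1} v$ for each $\varepsilon > 0$ and letting $\varepsilon \to 0^+$. Assuming $N > 0$, homogeneity of the Luxemburg norm rewrites the definition of $N$ as $\int \langle N^{-1} v, u \rangle \, d \mu \le \| u \|_\varphi$ for every $u \in L$. Since $L$ is an almost decomposable linear subspace by hypothesis, since $v$ is weak* measurable as a strongly measurable element of $L_{\varphi^*}(\mu)$, and since $I^*_\varphi = I_{\varphi^*}$ on $\lin(v) = \lin(N^{-1} v)$ has just been established, Lemma \ref{lem: rev H�lder} applies and produces $\vvvert N^{-1} v \vvvert_{\varphi^*} \le 1$, i.e.\ $\vvvert v \vvvert_{\varphi^*} \le N$, completing the proof.

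The only delicate point I anticipate is justifying that dualizability i.a.e.\ at $v$ actually delivers $I^*_\varphi(v) = I_{\varphi^*}(v)$, since the addendum to Theorem \ref{thm. conjugate A} formally states its equivalence under the assumption $v \in \dom I^*_f$. This is not a genuine obstacle: the pointwise identity (\ref{eq. dualizability}) holding i.a.e.\ forces $\varphi^*(\om, v(\om))$ to be integrally measurable, and since $\varphi^*(\om, v(\om)) \ge \esssup_{W' \in \SS(X)} \sup_{x \in W'} \langle v(\om), x \rangle - \varphi_\om(x) \ge \sup_{x \in W} \langle v(\om), x \rangle - \varphi_\om(x) = \varphi^*(\om, v(\om))$ for the witnessing $W$, the two sides of $I^*_\varphi(v) = I_{\varphi^*}(v)$ agree via (\ref{eq. conjugate representation}) irrespective of whether either integral is finite.
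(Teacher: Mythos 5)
Your proof is correct, and its first half coincides with the paper's: both arguments begin by invoking Theorem \ref{thm. conjugate A} (via the dualizability hypothesis, with the atom condition vacuous under the subsection's standing assumption that $\mu$ has no atom of infinite measure) to obtain $I^*_\varphi = I_{\varphi^*}$ on $L_{\varphi^*}(\mu)$, whence the Amemiya and dual Amemiya norms coincide there; your careful remark about bypassing the $v \in \dom I^*_f$ proviso by arguing directly from the conjugate representation \eqref{eq. conjugate representation} is sound and, if anything, more explicit than the paper's one-line appeal to the theorem. Where you diverge is the second half. The paper finishes in one stroke: since $L$ is almost decomposable and the functional induced by $v$ is absolutely continuous, Lemma \ref{lem: decomp ss abs cont dense} lets one approximate any unit-ball element of $L_\varphi(\mu)$ from below by elements of $B_L$, so the supremum over $B_L$ already equals the operator norm, i.e.\ the dual Amemiya norm. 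You instead normalize by $N = \sup_{u \in B_L}\langle v, u\rangle$ and feed the resulting inequality into the reverse H\"older lemma to conclude $\vvvert N^{-1}v \vvvert^*_\varphi \le 1$. This is a legitimate and cleanly packaged alternative; note only that the reverse H\"older lemma's own proof runs through Lemma \ref{lem: decomp ss abs cont dense} in exactly the same way, so your route is a slight detour through a prepackaged version of the paper's mechanism rather than a genuinely independent argument. The paper's version is marginally shorter and avoids the separate treatment of the degenerate case $N = 0$ (which your $\e^{-1}v$ device does handle correctly, since linearity of $L$ forces all the pairings to vanish when $N = 0$).
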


\begin{proof}
	By Theorem \ref{thm. conjugate A} there holds $I^*_\varphi = I_{\varphi^*}$ on $L_{\varphi^*}(\mu)$ since we ruled out atoms of infinite measure. Therefore the Amemiya norm $\vvvert \cdot \vvvert_{\varphi^*}$ and the dual Amemiya norm $\vvvert \cdot \vvvert^*_\varphi$ coincide there. Because $L$ is almost decomposable and the functional induced by $v$ is absolutely continuous, the right-hand side in (\ref{eq. Lux-Orl equi}) coincides with the operator norm, that is, the dual Amemiya norm $\vvvert v \vvvert_\varphi^*$ according to Lemma \ref{lem: decomp ss abs cont dense}.
\end{proof}

Our definition of $F_{\varphi^*}(\mu)$ agrees with the so-called singular functionals of Kozek \cite{vvop, vvop 2} and Castaing/Valadier \cite[Ch. VIII, §1]{mesu multi} if $\mu$ is $\sigma$-finite. We prove this to make the results in \cite{mesu multi} available to us.

\begin{lemma} \label{lem: F if mu sgm-f}
	Let $\ell \in L_\varphi(\mu)^*$ and $C_n \in \AA$ a sequence with $\mu \left( \lim_n C_n \right) = 0$ such that $\ell \left( u \chi_{\Om \setminus C_n} \right) = 0$ for all $u \in L_\varphi(\mu)$. Then $\ell \in F_{\varphi^*}(\mu)$. If $\mu$ is $\sigma$-finite, the converse is true as well.
\end{lemma}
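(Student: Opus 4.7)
The plan is to exploit the Hewitt-Yosida decomposition of Theorem \ref{thm. giner 1.3.5} together with the observation that every set function $\nu_{\ell,u}$ is automatically absolutely continuous with respect to $\mu$: if $N\in\AA$ is $\mu$-null, then $u\chi_N = 0$ in $L_\varphi(\mu)$ so $\nu_{\ell,u}(N) = \ell(u\chi_N) = 0$. This absolute continuity---though not required by the definition of $F_{\varphi^*}(\mu)$---is the bridge between pure finite additivity and the almost-null carrier sequence $C_n$.

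For the forward implication, I would first pass to the decreasing sequence $\tilde C_n = \bigcup_{k\ge n} C_k$, noting that the hypothesis transfers (for $v := u\chi_{\Om\setminus\tilde C_n}\in L_\varphi(\mu)$ and $k\ge n$ one has $v\chi_{\Om\setminus C_k} = v$, so $\ell(u\chi_{\Om\setminus\tilde C_n}) = \ell(v\chi_{\Om\setminus C_k}) = 0$) and that $\lim_n\tilde C_n = \bigcap_n\tilde C_n = \limsup_n C_n$ coincides $\mu$-a.e. with $\lim_n C_n$ and is therefore $\mu$-null. Fix $u\in L_\varphi(\mu)$ and substitute $u\chi_B$ for $u$ in the hypothesis (permissible since $L_\varphi(\mu)$ is weakly decomposable) to obtain $\nu_{\ell,u}(B) = 0$ for every measurable $B\subset\Om\setminus\tilde C_n$; summing over a finite partition yields $|\nu_{\ell,u}|(\Om\setminus\tilde C_n) = 0$. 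By the monotonicity of the Hewitt-Yosida projectors of Theorem \ref{thm. giner 1.3.5}, $(|\nu_{\ell,u}|)_\sigma \le |\nu_{\ell,u}|$, so $(|\nu_{\ell,u}|)_\sigma(\Om\setminus\tilde C_n) = 0$; $\sigma$-additivity together with the decreasing structure of $\tilde C_n$ concentrates this measure on $\bigcap_n\tilde C_n$, which is $\mu$-null, and the inherited absolute continuity forces $(|\nu_{\ell,u}|)_\sigma = 0$. The inequality $|(\nu_{\ell,u})_\sigma| \le (|\nu_{\ell,u}|)_\sigma$ extracted from the proof of Theorem \ref{thm. giner 1.3.5} then yields $(\nu_{\ell,u})_\sigma = 0$ for every $u$, i.e., $\ell\in F_{\varphi^*}(\mu)$.

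For the converse under $\sigma$-finiteness of $\mu$, I would reduce to a finite equivalent measure via Proposition \ref{prop. equivalent finite measure} and then interpret $\sigma\colon\AA\to L_\varphi(\mu)^*$, $\sigma(A) = \ell\chi_A$ (meaning $u\mapsto\ell(u\chi_A)$), as a bounded finitely additive vector measure whose scalarisations $\nu_{\ell,u}$ are all purely finitely additive and $\mu$-absolutely continuous. The target is a single sequence $C_n$ with $\mu(C_n) < 2^{-n}$ and $\|\ell\chi_{\Om\setminus C_n}\|^*_\varphi = 0$; monotonising via tails and invoking Borel-Cantelli then produces $\mu(\lim_n C_n) = 0$ and the required vanishing of $\ell$ off $C_n$. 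I would obtain the uniform control by applying \cite[Thm.~1.19]{fadd}---already used in the proof of Theorem \ref{thm. xtnsn Giner}---iteratively to suitable scalarisations of $\sigma$, or alternatively by invoking a vector-valued Yosida-Hewitt result directly. The main obstacle is precisely this passage from pointwise to uniform pure finite additivity: classical Yosida-Hewitt supplies a carrier sequence for each individual $u$, and consolidating these into a single sequence valid for all $u\in L_\varphi(\mu)$ is the delicate point that genuinely requires $\sigma$-finiteness and exploits the operator-norm structure of $\sigma$ together with the almost decomposability of $L_\varphi(\mu)$.
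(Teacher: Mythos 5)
Your forward implication is correct and, modulo cosmetics, is the paper's argument: the paper verifies directly that any countably additive $\nu$ with $0\le\nu\le\left|\nu_{\ell,u}\right|$ vanishes (using $\nu^{\pm}_{\ell,u}\left(\Om\setminus C_n\right)=0$, continuity of $\nu$ along $\chi_{\Om\setminus C_n}\to\chi_{\Om\setminus\lim_n C_n}$ and the automatic $\mu$-absolute continuity of $\nu_{\ell,u}$ that you also observe), whereas you route the same facts through the monotonicity of the Hewitt/Yosida projectors to kill $\left(\left|\nu_{\ell,u}\right|\right)_\sigma$. Both are sound and of comparable length; your passage to the tails $\tilde C_n$ is harmless but not needed.

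The converse is where your proposal has a genuine gap. You correctly isolate the difficulty -- Theorem 1.19/1.22 of Hewitt--Yosida gives, for each fixed $u$, small sets carrying $\nu_{\ell,u}$, and one must produce a \emph{single} sequence $C_n$ working for \emph{all} $u\in L_\varphi(\mu)$ -- but "applying the theorem iteratively to suitable scalarisations" or "invoking a vector-valued Yosida--Hewitt result" is not an argument; no candidate family of scalarisations is exhibited, and it is precisely the consolidation step that such a hand-wave leaves open. The paper's resolution is a norm-attainment trick that you do not have: choose $u_m$ with $\| u_m\|_\varphi<1$ and $\ell\left(u_m\right)>\vvvert\ell\vvvert^*_\varphi-\tfrac1m$, apply the Hewitt--Yosida separation to $\nu_{\ell,u_m}$ against an equivalent finite measure $\nu=f\,d\mu$ to get $E(m,n)$ with $\nu\left(E(m,n)\right)<2^{-n-m}$, $\ell\left(u_m\chi_{E(m,n)}\right)=\ell\left(u_m\right)$ and $I_\varphi\left(u_m\chi_{E(m,n)}\right)<\tfrac12$, and set $C_n=\bigcup_{m}E(m,n)$. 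If some $v$ with $\| v\|_\varphi<\tfrac12$ had $\ell\left(v\chi_{\Om\setminus C_n}\right)=a>0$, then for $m$ with $\tfrac1m<a$ the test function $w=u_m\chi_{E(m,n)}+v\chi_{\Om\setminus C_n}$ satisfies $I_\varphi(w)<1$, hence $\| w\|_\varphi\le1$, yet $\ell(w)=\ell\left(u_m\right)+\ell\left(v\chi_{\Om\setminus C_n}\right)>\vvvert\ell\vvvert^*_\varphi$, a contradiction with the operator norm. Without this (or an equally concrete substitute) your converse direction is a statement of intent, not a proof; everything before it (reduction to finite $\mu$, the Borel--Cantelli bookkeeping giving $\mu\left(\lim_n C_n\right)=0$) is fine but peripheral.
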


\begin{proof}
	$\implies$: given any $u \in L_\varphi(\mu)$ we need to show that $\nu_{\ell, u} \in F(\AA)$. Let $\nu \in \Sigma(\AA)$ with $0 \le \nu \le \left| \nu_{\ell, u} \right|$. We have
	$$
	\nu^+_{\ell, u} \left( \Om \setminus C_n \right) = \sup_{B \subset \Om \setminus C_n} \nu_{\ell, u}(B) = 0.
	$$
	The same is true for the negative part $\nu^-_{\ell, u}$. Consequently
	$$
	0 \le \nu \left( \Om \right) = \lim_n \nu \left( \Om \setminus C_n \right) = \lim_n \left| \nu_{\ell, u} \right| \left( \Om \setminus C_n \right) = 0
	$$
	hence $\nu$ vanishes identically. We conclude $\nu_{\ell, u} \in F(\AA)$ by arbitrariness of $\nu$ and definition of $F(\AA)$.
	
	$\impliedby$: Let $\ell \in F_{\varphi^*}(\mu)$ and
	$$
	u_m \in L_\varphi(\mu); \quad \| u_m \|_\varphi < 1; \quad \ell \left( u_m \right) > \vvvert \ell \vvvert^*_\varphi - \frac{1}{m}.
	$$
	Let $\nu$ be a finite measure with $d \nu = f d \mu$ for a positive integrable function $f$. For $m, n \in \N$ there exists by \cite[Thm. 1.22]{fadd} a set $E = E(m, n) \in \AA$ with
	$$
	\nu \left( E \right) <2^{- n - m}; \quad \ell \left( u_m \chi_E \right) = \ell \left( u_m \right); \quad I_\varphi \left( u_m \chi_E \right) < \frac{1}{2}.
	$$
	Setting $C = C_n = \bigcup_{m \ge 1} E(m, n)$, we argue by contradiction that $\ell \left( u \chi_{\Om \setminus C} \right) = 0$ for every $u \in L_\varphi(\mu)$. Suppose
	$$
	v \in L_\varphi(\mu); \quad \| v \|_\varphi < \frac{1}{2}; \quad \ell \left( v \chi_{\Om \setminus C} \right) = a > 0.
	$$
	Pick $m$ sufficiently large that $\frac{1}{m} < a$. Then
	$$
	I_\varphi \left( u_m \chi_{E(m, n) } + v \chi_{\Om \setminus C_n} \right) = I_\varphi \left( u_m \chi_E \right) + I_\varphi \left( v \chi_{\Om \setminus C} \right) < \frac{1}{2} + \frac{1}{2} = 1
	$$
	hence $\| u_m \chi_E + v \chi_{\Om \setminus C} \|_\varphi \le 1$. Consequently
	$$
	\ell \left( u_m \chi_E + v \chi_{\Om \setminus C} \right) = \ell \left( u_m \right) + \ell \left( v \chi_{\Om \setminus C} \right) > \vvvert \ell \vvvert^*_\varphi - \frac{1}{m} + a > \vvvert \ell \vvvert^*_\varphi
	$$
	yields a contradiction.
\end{proof}

We are now ready to recast the characterization in \cite{mesu multi} of the absolutely continuous functionals in the dual space $L_\i \left( \mu ; X \right)^*$ to match our setting. This will be the foundation on which we build the general case by means of the almost embedding Lemma \ref{lem: a emb}.

\begin{proposition} \label{prop. CaVa}
	Let $\mu$ be $\sigma$-finite and $\ell \in A_1 \left( \mu ; X' \right)$ an absolutely continuous element of $L_\i \left( \mu ; X \right)^*$. Then there exists a weak* measurable function $v \colon \Om \to X'$ such that
	\begin{equation} \label{eq. id A1 and V!}
		\ell(u) = \int \langle v(\om), u(\om) \rangle \, d \mu(\om) \quad \forall u \in L_\i \left( \mu; X \right).
	\end{equation}
	In particular, we have an isometric isomorphism $A_1 \left( \mu ; X' \right) = V_1 \left( \mu ; X' \right)$ via this identification.
\end{proposition}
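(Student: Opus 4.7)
\emph{Plan.} The aim is to construct $v$ by combining, for each $x \in X$, the scalar Radon--Nikodym derivative of $A \mapsto \ell(x\chi_A)$ into a single weak* measurable function, following the strategy of \cite[Ch.~VIII, \S 1]{mesu multi}. First I would reduce to a finite, complete measure: by Proposition \ref{prop. equivalent finite measure} one may replace $\mu$ by an equivalent finite measure without altering $L_\i(\mu;X)$, and passing to the completion preserves both the space and its dual, so henceforth $\mu$ is finite and complete.

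For each $x \in X$ define $\nu_x \colon \AA \to \R$ by $\nu_x(A) := \ell(x\chi_A)$. Absolute continuity of $\ell \in A_1(\mu; X')$ makes $\nu_x$ countably additive and $\mu$-absolutely continuous, while boundedness of $\ell$ as a functional on $L_\i(\mu; X)$ gives the uniform estimate $|\nu_x(A)| \le \|\ell\|\,\|x\|$ for every $A \in \AA$. Scalar Radon--Nikodym then provides $g_x \in L_1(\mu)$ with $\nu_x(A) = \int_A g_x\,d\mu$, and the assignment $x \mapsto g_x$ is linear and bounded from $X$ to $L_1(\mu)$. Equivalently, $A \mapsto \ell(\,\cdot\,\chi_A)$ is an $X'$-valued vector measure with bounded semivariation.

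The core step is to combine these $g_x$—well defined only up to $x$-dependent null sets—into a single weak* measurable $v \colon \Om \to X'$ with $\langle v(\om), x\rangle = g_x(\om)$ almost everywhere for every $x$. For this I would invoke the Ionescu--Tulcea lifting theorem on $L_\i(\mu)$: applied to suitable bounded projections of the $g_x$ (for instance, to the densities $g_{x,A} = \chi_A g_x$ for $A$ running through sets where $g_x$ is bounded, or directly to $g_x$ after a truncation controlled by the uniform bound on the vector measure), a lifting selects canonical representatives respecting linear combinations and scalar multiples. Defining $\langle v(\om), x\rangle$ as the lifted representative of $g_x$ at $\om$ produces, pointwise in $\om$, a linear functional on $X$, and the estimate $|\ell(x\chi_A)| \le \|\ell\|\,\|x\|$ controls it to give $v(\om) \in X'$. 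Weak* measurability is immediate by construction.

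With $v$ in hand, the identity $\ell(u) = \int \langle v(\om), u(\om)\rangle\,d\mu(\om)$ holds on simple functions $u = \sum c_i \chi_{A_i}$ directly by the construction of the $g_{c_i}$, and extends to all $u \in L_\i(\mu; X)$ by uniform approximation, exploiting the almost separable valuedness of $u$ and continuity of both sides in $\|\cdot\|_\i$. The isometric isomorphism $A_1(\mu;X') = V_1(\mu;X')$ then results from combining this representation with Proposition \ref{prop. iso A to V}, since $\|\ell\| = \sup_{\|u\|_\i \le 1} |\int \langle v, u\rangle\,d\mu| = \vvvert v \vvvert^*_1$. I expect the main obstacle to be the simultaneous lifting step: non-separability of $X$ forbids a countable-exhaustion argument for merging the $x$-dependent null sets, so one genuinely needs the Ionescu--Tulcea lifting to manufacture a selection of representatives that is linear in $x$ pointwise in $\om$, together with the uniform boundedness furnished by the vector-measure structure.
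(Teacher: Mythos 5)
Your route is genuinely different from the paper's. The paper never constructs the density: it computes the operator norm on $V_1\left(\mu;X'\right)$ via Theorem \ref{thm. inf int}, identifies $V_1\left(\mu;X'\right)$ isometrically with the space $L^1_{X'}\left[X\right]$ of Castaing--Valadier, imports their decomposition of $L_\i\left(\mu;X\right)^*$ into $L^1_{X'}\left[X\right]$ plus singular functionals, checks via Lemma \ref{lem: F if mu sgm-f} that these singular functionals are exactly $F_1\left(\mu;X'\right)$, and then concludes $A_1 = V_1$ because both are complements of $F_1$ in the decomposition of Theorem \ref{thm. xtnsn Giner} (the diffuse part vanishing for $\sigma$-finite $\mu$) and $V_1 \hookrightarrow A_1$ isometrically by Proposition \ref{prop. iso A to V}. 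What you propose is in effect a self-contained proof of the cited Castaing--Valadier representation: scalar Radon--Nikodym for each $\nu_x(A)=\ell\left(x\chi_A\right)$, followed by an Ionescu--Tulcea lifting to glue the $x$-dependent densities into one weak* measurable $v$. Your version buys independence from \cite{mesu multi} at the price of invoking the lifting theorem on a complete finite measure; the paper's version buys brevity and reuses machinery it needs elsewhere anyway.

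One step of your argument must be tightened. A lifting acts on $L_\i(\mu)$, whereas the $g_x$ are only in $L_1(\mu)$, and the bound you quote, $\left|\nu_x(A)\right|\le\|\ell\|\,\|x\|$, does not make $g_x$ essentially bounded. Your first suggested remedy --- restricting to sets where $g_x$ is bounded --- fails: those sets depend on $x$, the resulting truncations are not linear in $x$, and over non-separable $X$ the exceptional sets cannot be merged. The correct normalization comes from a point you only half-state: the $X'$-valued set function $m(A)\colon x\mapsto\ell\left(x\chi_A\right)$ has bounded \emph{variation}, not merely bounded semivariation, since for any finite measurable partition $\sum_i\|m(A_i)\|_{X'}$ is a supremum of values $\ell\left(\sum_i x_i\chi_{A_i}\right)$ with $\|x_i\|\le1$ and hence is at most $\|\ell\|$. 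Writing $d\left|m\right|=h\,d\mu$ with $h\in L_1(\mu)$ gives $\left|g_x\right|\le\|x\|\,h$ a.e., so the quotients $g_x/h$ (set to $0$ where $h=0$) lie in the ball of radius $\|x\|$ of $L_\i(\mu)$ and depend linearly on $x$ as elements of $L_\i(\mu)$; the lifting $\rho$ is applied to these, and one sets $\langle v(\om),x\rangle = h(\om)\,\rho\left(g_x/h\right)(\om)$. With that correction the remaining steps --- verification on simple, then countably valued, then arbitrary $u\in L_\i\left(\mu;X\right)$ using the $\sigma$-additivity of $\nu_{\ell,u}$ and the integrable majorant $\|u\|_\i h$, and the isometry via Proposition \ref{prop. iso A to V} --- go through.
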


\begin{proof}
	Observe that for $v \in V_1\left( \mu ; X' \right)$ there holds
	$$
	\| v \|^*_\varphi = \sup_{\| u \|_\i \le 1} \int \langle v(\om), u(\om) \rangle \, d \mu(\om) = \int \esssup_{W \in \SS(X)} \sup_{x \in B_W} \langle v(\om), x \rangle \, d \mu(\om)
	$$
	according to Theorem \ref{thm. inf int} as can be seen by absorbing the pointwise a.e. restriction $\| u \|_\i \le 1$ into the integrand as an indicator of the ball $B_X$. Therefore $V_1\left(\mu; X' \right)$ is isometrically isomorphic to the space $L^1_{X'} \left[ X \right]$ defined in \cite[VIII]{mesu multi} through the identification remarked below \cite[Lem. VIII.3]{mesu multi}. Moreover, the definition \cite[VIII, Def. 5]{mesu multi} of singular functionals agrees with our definition of $F_{\varphi^*}(\mu)$ in the current situation by Lemma \ref{lem: F if mu sgm-f} since for any family of measurable sets on a $\sigma$-finite measure space there exists a countable subfamily whose intersection returns the essential intersection of the entire family by \cite[Thm. 1.108]{Lp spaces}. It is then obvious that $F_{\varphi^*}(\mu)$ and the so-called singular functionals are isometrically isomorphic if both carry their operator norm. In total
	$$
	L_\i\left( \mu ; X \right)^* = V_1\left( \mu; X' \right) \oplus F_1\left( \mu; X' \right).
	$$
	Since
	$$
	L_\i\left( \mu ; X \right)^* = A_1\left( \mu; X' \right) \oplus F_1\left( \mu; X' \right)
	$$
	by Theorem \ref{thm. xtnsn Giner} and because (\ref{eq. id A1 and V!}) defines an isometric embedding of $V_1\left( \mu; X' \right)$ into $A_1\left( \mu; X' \right)$ by Proposition \ref{prop. iso A to V}, we conclude that this embedding is surjective.
\end{proof}

In the following, we consider functions defined on a set $A \in \AA$ as trivially extended to all of $\Om$.

\begin{proposition} \label{prop. abs cont local}
	Let $\ell \in A_{\varphi^*}(\mu)$ such that for any $F \in \AA_f$ there exists a sequence of sets $F_n \in \AA$ with $F = \bigcup_{n \ge 1} F_n$ and elements $v_n \in V_{\varphi^*}\left( F_n \right)$ such that
	\begin{equation} \label{eq. proj cond}
		\ell \left( u \chi_{F_n} \right) = \int \langle v_n, u \rangle \, d \mu \quad \forall u \in L_\varphi(\mu).
	\end{equation}
	Then there exists a unique $v \in V_{\varphi^*}(\mu)$ with
	\begin{equation} \label{eq. density repr}
		\ell \left( u \right) = \int \langle v, u \rangle \, d \mu \quad \forall u \in L_\varphi(\mu).
	\end{equation}
	Moreover, if each such $v_n$ is strongly measurable, then is $v$ is integrally strongly measurable. If in addition $\varphi$ is dualizable i.a.e. for every element of $L_{\varphi^*}(\mu)$ and the minimum of $\varphi^*$ at the origin is strict outside a $\sigma$-finite set, then $v$ is uniquely determined as an element of $L_{\varphi^*}^\sigma(\mu)$.
\end{proposition}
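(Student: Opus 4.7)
The plan is to assemble $v$ by patching the given local representers $v_n$, first into a single weak* measurable function $v_F$ on each $F \in \AA_f$, then into a linear weak* integrand over $\AA_{a\sigma}$, and finally to use absolute continuity of $\ell$ to extend the integral representation to all of $L_\varphi(\mu)$. The opening step is a compatibility observation: if $v_n \in V_{\varphi^*}(F_n)$ and $v_m' \in V_{\varphi^*}(F_m')$ arise from hypothesized decompositions of two sets $F, F' \in \AA_f$, then both represent $\ell$ on functions supported in $F_n \cap F_m'$, so testing $\int \langle v_n - v_m', x \chi_A\rangle \, d\mu = 0$ against all $x \in X$ and all $\AA_f$-sets $A \subset F_n \cap F_m'$ forces $v_n = v_m'$ weak* a.e.\ on $F_n \cap F_m'$. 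After disjointifying $\{F_n\}$ and setting $v_F := v_n$ on the $n$-th piece, the $\sigma$-additivity of $A \mapsto \ell(u\chi_A)$ (a consequence of $\ell \in A_{\varphi^*}(\mu)$) yields
\begin{equation*}
\ell(u\chi_F) \;=\; \sum_n \ell(u\chi_{F_n}) \;=\; \sum_n \int \langle v_n, u\rangle \, d\mu \;=\; \int \langle v_F, u\rangle \, d\mu
\end{equation*}
for every $u \in L_\varphi(\mu)$.

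Next I would globalize. The same testing argument shows $v_F = v_{F'}$ weak* a.e.\ on $F \cap F'$ for any $F, F' \in \AA_f$, so for each $\Sigma \in \AA_\sigma$ I pick an exhausting increasing sequence $F_k \in \AA_f$ and define $v_\Sigma := v_{F_k}$ on $F_k$, obtaining a linear weak* integrand $\{v_\Sigma\}_{\Sigma \in \AA_{a\sigma}}$; note that $\AA_{a\sigma} = \AA_\sigma$ here since $\mu$ has no atom of infinite measure. For the global representation (\ref{eq. density repr}), given $u \in L_\varphi(\mu)$, absolute continuity of $\nu_{\ell, u}$ together with Proposition \ref{prop. sgm spprt} furnishes $\Sigma \in \AA_\sigma$ outside which $\nu_{\ell, u}$ vanishes, so $\ell(u) = \ell(u\chi_\Sigma) = \int \langle v_\Sigma, u\rangle \, d\mu$, which equals $\int \langle v, u\rangle \, d\mu$ by the exhausting-integral convention. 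The bound $\vvvert v \vvvert^*_\varphi \le \vvvert \ell \vvvert^*_\varphi$ is then immediate, so $v \in V_{\varphi^*}(\mu)$; uniqueness in $V_{\varphi^*}(\mu)$ is just the kernel characterization of $\vvvert \cdot \vvvert^*_\varphi$ discussed immediately after the definition of that space.

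For the addenda, strong measurability of each $v_n$ makes $v_F = \sum_n v_n \chi_{F_n}$ strongly measurable as a countable disjoint sum of strongly measurable functions, whence $v_\Sigma$ is strongly measurable on every $\Sigma \in \AA_\sigma$, which is the required integral strong measurability. Under the dualizability assumption, Corollary \ref{cor. Lux-Orl equi} identifies the Luxemburg norm $\vvvert \cdot \vvvert_{\varphi^*}$ on $L_{\varphi^*}(\mu)$ with the operator norm $\vvvert \cdot \vvvert^*_\varphi$, so any two representers in $L_{\varphi^*}^\sigma(\mu)$ differ by an element whose $L_{\varphi^*}(\mu)$-Luxemburg norm is zero; the strict-minimum hypothesis on $\varphi^*$ pins down such an element to zero outside the exceptional $\sigma$-finite set, while $\sigma$-finite concentration and the testing argument handle the remainder, yielding uniqueness in $L_{\varphi^*}^\sigma(\mu)$. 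The main obstacle is the globalization step: extending the representation from $\sigma$-finite pieces to all of $L_\varphi(\mu)$ despite the absence of a single globally weak* measurable function — this is precisely why $V_{\varphi^*}(\mu)$ is built from compatible families rather than single functions, and why absolute continuity of $\nu_{\ell, u}$, which concentrates the action of $\ell$ on $\sigma$-finite pieces of its argument, is indispensable.
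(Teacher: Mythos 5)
Your construction of $v$ --- compatibility of the local representers via testing against $x\chi_A$, disjointification, $\sigma$-additivity of $\nu_{\ell,u}$ to sum the pieces on each $F\in\AA_f$, and then globalization to a linear weak* integrand using Proposition \ref{prop. sgm spprt} to concentrate each $\nu_{\ell,u}$ on a $\sigma$-finite set --- is essentially the paper's own argument, as is the uniqueness via the kernel of the operator seminorm (equivalently, injectivity of the embedding in Proposition \ref{prop. iso A to V}) and the strong-measurability addendum. One small point of hygiene: not every $x\chi_A$ lies in $L_\varphi(\mu)$, so the testing step should be routed through almost decomposability (Corollary \ref{cor. L a decomp}) to produce enough admissible test functions $x\chi_{A_\e}$; this is the same level of detail the paper suppresses.

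The genuine gap is in the final addendum. The claim ``$v$ is uniquely determined as an element of $L^\sigma_{\varphi^*}(\mu)$'' has an existence half and a uniqueness half, and you only argue the uniqueness half (two representers in $L^\sigma_{\varphi^*}(\mu)$ differ by an element of zero operator norm, hence of zero Luxemburg norm by Corollary \ref{cor. Lux-Orl equi}, hence vanish a.e.). What is missing is the reason why the object $v$ you built --- a priori only a compatible family $\left\{ v_\Sigma \right\}_{\Sigma\in\AA_\sigma}$ of strongly measurable functions, each $v_\Sigma\in L_{\varphi^*}(\Sigma)$ by the reverse H\"older inequality --- collapses to a \emph{single} function vanishing off \emph{one} $\sigma$-finite set. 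Nothing in your sketch rules out that $\ell$ acts non-trivially on an uncountable disjoint family of finite-measure sets, in which case no $\sigma$-finitely concentrated representer exists at all; the phrase ``$\sigma$-finite concentration and the testing argument handle the remainder'' asserts exactly the point at issue. This is precisely where both additional hypotheses are consumed: one bounds the modulars $\int_\Sigma \varphi^*\left[\om, v_\Sigma(\om)\right] d\mu$ uniformly over $\Sigma\in\AA_\sigma$ (via Lemma \ref{lem: modular-norm} and dualizability), picks $\Sigma_0$ attaining their supremum and enlarged to contain the exceptional set where the minimum of $\varphi^*$ at the origin fails to be strict, and then argues that any non-trivial action of $\ell$ off $\Sigma_0$ would produce a representer that is non-zero on a positive-measure set where $\varphi^*(\om,\cdot)>0$ away from the origin, strictly increasing the modular past its supremum --- a contradiction. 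Without this concentration argument the addendum is not proved.
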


\begin{proof}
	Uniqueness: such a representation is unique by Proposition \ref{prop. iso A to V}. Existence: we may assume $\mu$ to be $\sigma$-finite since if to every $\Sigma \in \AA_\sigma$ there corresponds $v_\Sigma \in V_{\varphi}(\Sigma)$ with
	$$
	\nu_{\ell, u} \left( \Sigma \right) = \int \langle v_\Sigma(\om), u(\om) \rangle \, d \mu(\om) \quad \forall u \in L_\varphi(\mu),
	$$
	then this defines a linear weak* integrand hence an equivalence class $v \in V_{\varphi^*}(\mu)$ representing $\ell$ by Proposition \ref{prop. sgm spprt}. We may even assume $\mu$ to be finite since every element of $A_{\varphi^*}(\mu)$ is $\sigma$-additive and every $\sigma$-finite set can be written as a disjoint union of sets having finite measure.
	
	We have $v_n = v_m$ weak* a.e. on $F_n \cap F_m$ by the considerations on the kernel of the operator norm on $V_{\varphi^*}(\mu)$. Therefore
	$$
	v(\om) = v_n(\om) \text{ if } \om \in F_n \cap F_m
	$$
	is a well-defined a.e. equivalence class of weak* measurable functions. We have
	$$
	G_n := \bigcup_{m = 1}^{n -1} F_m; \quad \ell(u) = \sum_{n = 1}^\i \ell \left( u \chi_{F_n \setminus G_n} \right) = \sum_{n = 1}^\i \int_{F_n \setminus G_n} \langle v_n, u \rangle \, d \mu = \int \langle v, u \rangle \, d \mu.
	$$
	The series converges by $\sigma$-additivity of $\ell$. Measurability: this is obvious by our construction of $v$. Addendum: for $\Sigma \in \AA_\sigma$ there exists a unique $v_\Sigma \in L_{\varphi^*}\left(\Sigma\right)$ representing $\ell$ on $\Sigma$. We have
	$$
	\sup_{\Sigma \in \AA_\sigma} \| v_\Sigma \|_{\varphi^*} \le \sup_{\Sigma \in \AA_\sigma} \vvvert v_\Sigma \vvvert^*_\varphi \le \vvvert \, \ell \, \vvvert^*_\varphi \le 1
	$$
	so there exists $\Sigma_0 \in \AA_\sigma$ with
	\begin{equation} \label{eq. ass sup}
		\int_{\Sigma_0} \varphi^* \left[\om, v_{\Sigma_0}(\om) \right] \, d \mu(\om) = \sup_{\Sigma \in \AA_\sigma} \int_\Sigma \varphi^* \left[ \om, v_\Sigma \left( \om \right) \right] \, d \mu \left( \om \right) \le 1.
	\end{equation}
	We may assume $\Sigma_0$ to contain the $\sigma$-finite set off which the minimizer of $\varphi^*$ at the origin is isolated. If $v_{\Sigma_0}$ could be extended outside of $\Sigma_0$ in a non-trivial way to still represent $\ell$, then there were $u \in L_\varphi^\sigma(\mu)$ concentrated on $\Om \setminus \Sigma_0$ with $\ell \left( u \right) = b > 0$. Since $u$ is concentrated on a $\sigma$-finite set, we can extend $v_{\Sigma_0}$ to this set, which contradicts the definition of $\Sigma_0$ as the extension would surpass the supremum in (\ref{eq. ass sup}) if the minimizer of $\varphi^*$ at the origin is isolated.
\end{proof}

\begin{theorem} \label{thm. A = V}
	Identifying $v \in V_{\varphi^*}(\mu)$ with the continuous linear functional
	\begin{equation} \label{eq. v induces functional}
		L_\varphi(\mu) \to \R \colon u \mapsto \int \langle v \left( \om \right), u \left( \om \right) \rangle \, d \mu \left( \om \right),
	\end{equation}
	induces an isometric isomorphism
	\begin{equation} \label{eq. A = V}
		A_{\varphi^*}(\mu) = V_{\varphi^*}(\mu).
	\end{equation}
	If moreover $X'$ has the Radon-Nikodym property with respect to the restriction of $\mu$ to sets of finite measure, then elements of $V_{\varphi^*}(\mu)$ are integrally strongly measurable. If in addition $\varphi$ is dualizable i.a.e. for every $v \in L_{\varphi^*}(\mu)$, the minimum of $\varphi^*$ at the origin is strict outside a $\sigma$-finite set and $v$ is identified with the continuous linear function (\ref{eq. v induces functional}), then (\ref{eq. A = V}) induces an isomorphism
	\begin{equation} \label{eq. A = L}
		A_{\varphi^*}(\mu) = L_{\varphi^*}(\mu).
	\end{equation}
\end{theorem}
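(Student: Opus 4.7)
The plan is to verify the hypothesis of Proposition \ref{prop. abs cont local} for an arbitrary $\ell \in A_{\varphi^*}(\mu)$; since Proposition \ref{prop. iso A to V} already provides the isometric embedding $V_{\varphi^*}(\mu) \hookrightarrow A_{\varphi^*}(\mu)$, only surjectivity remains to be shown for (\ref{eq. A = V}). The bridge to classical duality theory will be the almost embedding Lemma \ref{lem: a emb}: on a finite measure set it sandwiches $L_\varphi$ between $L_\i$ and $L_1$ and so grants access to the Castaing-Valadier representation (Proposition \ref{prop. CaVa}), which I then transport back into the Orlicz setting.

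Concretely, fix $F \in \AA_f$ and apply Lemma \ref{lem: a emb} to $\mu|_F$ to obtain an isotonic family $\Om_n \subset F$ with $\mu(F\setminus\Om_n)\to 0$ and continuous inclusions $L_\i(\Om_n; X)\hookrightarrow L_\varphi(\Om_n)\hookrightarrow L_1(\Om_n; X)$. Put $F_n:=\Om_n\setminus\Om_{n-1}$ with $\Om_0=\emptyset$, so the $F_n$ cover $F$ up to a null set. On the finite measure space $(F_n,\mu|_{F_n})$, the restriction of $\ell$ to $L_\i(F_n; X)\subset L_\varphi(\mu)$ lies in $A_1(F_n; X')$, because the set function $A\mapsto \ell(u\chi_A)$ is absolutely continuous with respect to $\mu$ for every $u\in L_\varphi(\mu)$ by hypothesis. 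Proposition \ref{prop. CaVa} then produces $v_n \in V_1(F_n; X')$ with $\ell(u)=\int\langle v_n,u\rangle\,d\mu$ for every $u\in L_\i(F_n; X)$.

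Two further steps promote this into the Orlicz setting. First, $L_\i(F_n; X)$ is a linear, almost decomposable subspace of $L_\varphi(F_n)$, and on it the inequality $\int\langle v_n,u\rangle\,d\mu\le\|\ell\|^*_\varphi\|u\|_\varphi$ holds; after rescaling, the reverse Hölder Lemma \ref{lem: rev H�lder} lifts $v_n$ to an element of $V_{\varphi^*}(F_n)$ with $\vvvert v_n\vvvert^*_\varphi\le\|\ell\|^*_\varphi$. Second, for arbitrary $u\in L_\varphi(\mu)$, I truncate by $u_k:=u\chi_{F_n\cap\{\|u\|\le k\}}\in L_\i(F_n; X)$: the $\sigma$-additivity of $\nu_{\ell,u}$ yields $\ell(u_k)\to\ell(u\chi_{F_n})$, and dominated convergence, with dominant $|\langle v_n,u\chi_{F_n}\rangle|$ (integrable because $v_n\in V_{\varphi^*}(F_n)$ tested against the sign-adjusted function $u\,\sgn\langle v_n,u\rangle\chi_{F_n}$, which shares the Luxemburg norm of $u\chi_{F_n}$ by evenness of $\varphi$), gives $\int\langle v_n,u_k\rangle\,d\mu\to\int\langle v_n, u\chi_{F_n}\rangle\,d\mu$. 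Hence $\ell(u\chi_{F_n})=\int\langle v_n,u\rangle\,d\mu$ for all $u\in L_\varphi(\mu)$, and Proposition \ref{prop. abs cont local} assembles the desired $v\in V_{\varphi^*}(\mu)$, establishing (\ref{eq. A = V}).

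For the two addenda: under RNP of $X'$ on $\AA_f$, the classical Radon-Nikodym theorem on the finite measure space $F_n$ allows the weak* representative $v_n$ to be chosen strongly measurable, and the corresponding clause of Proposition \ref{prop. abs cont local} transfers this into integral strong measurability of $v$. For (\ref{eq. A = L}), Corollary \ref{cor. Lux-Orl equi} (available thanks to dualizability i.a.e.) identifies $\vvvert\cdot\vvvert_{\varphi^*}$ with $\vvvert\cdot\vvvert^*_\varphi$ on $L_{\varphi^*}(\mu)$; the addendum of Proposition \ref{prop. abs cont local} then places $v$ uniquely in $L_{\varphi^*}^\sigma(\mu)$, and the strict-minimum hypothesis on $\varphi^*$ at the origin, combined with Theorem \ref{thm. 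Lvarphi = Lvarphisigma characterization} applied to $\varphi^*$, forces $L_{\varphi^*}(\mu)=L_{\varphi^*}^\sigma(\mu)$. The main obstacle is the interplay of the almost embedding with the reverse Hölder upgrade and the truncation extension — these must combine to patch purely local $L_\i$-representations into a single coherent Orlicz-type representation, driven essentially by the $\sigma$-additivity inherent in absolute continuity.
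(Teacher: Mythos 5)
Your proposal is correct and follows essentially the same route as the paper's proof: Proposition \ref{prop. iso A to V} for the embedding, Lemma \ref{lem: a emb} to reach the $L_\i$--$L_1$ duality of Proposition \ref{prop. CaVa}, Lemma \ref{lem: rev H�lder} to upgrade the representing function to $V_{\varphi^*}$, an approximation-from-below step to extend the identity from $L_\i$ to $L_\varphi$, and Proposition \ref{prop. abc cont local} (together with the vector-valued Radon--Nikodym theorem for the addenda) to patch the local representations together. The only differences are cosmetic: you disjointify the isotonic family and carry out the extension by explicit truncation and dominated convergence where the paper cites Lemma \ref{lem: decomp ss abs cont dense} and continuity from below.
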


%The Radon-Nikodym property always holds if $\mu$ is purely atomic, cf. \cite[p. 62]{vector measures}.

\begin{proof}
	By Proposition \ref{prop. iso A to V} it remains to represent a given $\ell \in A_{\varphi^*}(\mu)$ by some $v \in V_{\varphi^*}(\mu)$ this way. Let $F \in \AA_f$ and consider the functional $\ell_F(u) = \ell \left( u \chi_F \right)$. If each $\ell_F$ permits a representation via $v_F \in V_{\varphi^*}\left(F\right)$ by (\ref{eq. v induces functional}), then (\ref{eq. A = V}) follows by Proposition \ref{prop. abs cont local}.
	
	Let $F_\e$ be an isotonic family with $F_\e \uparrow F$ as in Lemma \ref{lem: a emb}. Consider the mapping
	$$
	\nu_\e \colon L_\i \left( \mu ; X \right) \to \R \colon u \mapsto \nu_{\ell, u} \left( F_\e \right) = \ell \left( u \chi_{F_\e} \right).
	$$
	It is linear continuous since $\| v \chi_{F_\e} \|_\varphi \le C_\e \| v \|_\i$. Proposition \ref{prop. CaVa} yields $v_\e \in V_1\left(\mu; X' \right)$ with
	\begin{equation} \label{eq. nu int repr}
		\nu_\e (u) = \int \langle v_\e, u \rangle \, d \mu \quad \forall u \in L_\i \left( \mu ; X \right).
	\end{equation}
	As $L_\i\left( \mu; X \right)$ is linear and almost decomposable, we may invoke Lemma \ref{lem: rev Hölder} to find that (\ref{eq. nu int repr}) defines an element $v_\e \in V_{\varphi^*}\left( F_\e \right)$. Lemma \ref{lem: decomp ss abs cont dense} together with the absolute continuity of $\ell$ then implies that the functional induced by $v_\e$ through (\ref{eq. nu int repr}) agrees with $\nu_\e$ on all of $L_\varphi(\mu)$ hence we conclude existence of a representing function $v_F \in V_{\varphi^*}\left( F \right)$ as required by Proposition \ref{prop. abs cont local}. The first claim has been proved.
	
	Ad first addendum and (\ref{eq. A = L}): arguing as in the first step, we may reduce the problem to the set $F \in \AA_f$ by the addendum in Proposition \ref{prop. abs cont local}. Consider the restriction of the mapping $\nu_\e$ (not relabelled)
	$$
	\nu_\e \colon \AA \times X \to \R \colon \left( A, x \right) \mapsto \ell \left( x \chi_A \chi_{F_\e} \right).
	$$
	We may regard $A \mapsto \nu_\e \left( A \right)$ as an $X'$-valued vector measure because $\nu_\e \left( A, \cdot \right) \in X'$ by $\| v \chi_{F_\e} \|_\varphi \le C_\e \| v \|_\i$. Since $\ell$ is absolutely continuous, the vector measure $\nu_\e$ is weak* $\sigma$-additive. Let $\Om_j \in \AA$ be a countable measurable partition of $\Om$ and pick for $\nu_\e \left( \Om_j \right)$ an $x_j \in B_X$ with $\| \nu_\e\left( \Om_j \right) \|_{X'} < 2^{-j} + \nu_\e \left( \Om_j, x_j \right)$ so that absolute continuity of $\ell$ yields the estimate
	$$
	\sum_{j \ge 1} \| \nu_\e\left( \Om_j \right) \|_{X'} \le 1 + \ell \left( \sum_{j \ge 1} x_j \chi_{\Om_j} \chi_{F_\e} \right)
	\le 1 + C_\e \vvvert \ell \vvvert^*_\varphi < \i.
	$$
	Consequently, $\nu_\e$ is $\sigma$-additive in norm convergence and its total variation
	$$
	\| \nu_\e \| \left( F \right) := \sup \left\{ \sum_{i = 1}^n \| \nu_\e \left( A_i \right) \|_{X'} \st A_i \in \AA \text{ a finite partition of } F \right\}
	$$
	is finite. Applying the Radon-Nikodym theorem, we deduce existence of a density $v_\e \in L_1 \left( \mu ; X' \right)$ with $\nu_\e \left( A \right) = \int_A v_\e \, d \mu$ for all $A \in \AA$. We claim that
	$$
	\ell \left( u \chi_{F_\e} \right) = \int_{F_\e} \langle v_\e, u \rangle \, d \mu \quad \forall u \in L_\varphi(\mu).
	$$
	Since this identity holds if $u$ is simple, Lemma \ref{lem: rev Hölder} and the remark below it imply $\vvvert v_\e \vvvert^*_\varphi \le 1$ hence, if $\varphi$ is dualizable, $v_\e \in L_{\varphi^*}(\mu)$ with $\vvvert v_\e \vvvert_{\varphi^*} \le 1$. Lemma \ref{lem: decomp ss abs cont dense} and the absolute continuity of $\ell$ then imply that $v_\e$ induces an integral representation on $F_\e$ for all $u \in L_\varphi(\mu)$. The claim follows by Proposition \ref{prop. abs cont local} since $F_\e \uparrow F$.
\end{proof}

\begin{corollary} \label{cor. C*}
	Let $\varphi$ be real-valued. Then
	\begin{equation} \label{eq. C*}
		C_\varphi(\mu)^* = V_{\varphi^*}(\mu)
	\end{equation}
	as an isometric isomorphism via the identification of $v \in V_{\varphi^*}(\mu)$ with the functional
	\begin{equation} \label{eq. C* 2}
		\ell \left( u \right) = \int \langle v \left( \om \right), u \left( \om \right) \rangle \, d \mu \left( \om \right), \quad u \in C_\varphi(\mu).
	\end{equation}
\end{corollary}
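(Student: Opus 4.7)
The plan is to derive Corollary~\ref{cor. C*} from Theorem~\ref{thm. A = V} by showing that the restriction map
$$
\rho\colon A_{\varphi^*}(\mu) \to C_\varphi(\mu)^*, \quad \ell \mapsto \ell|_{C_\varphi(\mu)}
$$
is an isometric isomorphism. Composing with the identification $V_{\varphi^*}(\mu) = A_{\varphi^*}(\mu)$ from Theorem~\ref{thm. A = V}, the integral pairing (\ref{eq. C* 2}) will carry over and yield the claim.

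First I would verify that $\rho$ is isometric. The upper bound $\|\rho(\ell)\|_{C_\varphi(\mu)^*} \le \vvvert \ell \vvvert^*_\varphi$ is trivial. For the reverse, I fix $u \in L_\varphi(\mu)$ with $\|u\|_\varphi \le 1$ and exploit the absolute continuity of $\ell$: by Proposition~\ref{prop. sgm spprt}, the measure $\nu_{\ell, u}$ is concentrated on some $\Sigma \in \AA_\sigma$, so $\ell(u) = \ell(u\chi_\Sigma)$ with $u\chi_\Sigma \in L_\varphi^\sigma(\mu)$. Since $\varphi$ is real-valued, $C_\varphi(\mu)$ is almost decomposable by Lemma~\ref{lem: C a decomp}, so Lemma~\ref{lem: decomp ss abs cont dense} delivers a monotone sequence $u_n = u\chi_\Sigma\chi_{\Omega_n} \in C_\varphi(\mu)$ with $\Omega_n \uparrow \Sigma$ modulo a null set and $\|u_n\|_\varphi \le 1$. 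The $\sigma$-additivity of $\nu_{\ell, u}$ gives $\ell(u_n) = \nu_{\ell, u}(\Omega_n) \to \nu_{\ell, u}(\Sigma) = \ell(u)$, whence $|\ell(u)| \le \|\rho(\ell)\|_{C_\varphi(\mu)^*}$. Injectivity of $\rho$ follows at once.

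For surjectivity I extend any $\ell' \in C_\varphi(\mu)^*$ to $\tilde\ell \in L_\varphi(\mu)^*$ by Hahn-Banach, apply Theorem~\ref{thm. xtnsn Giner} to decompose $\tilde\ell = \tilde\ell_a + \tilde\ell_d + \tilde\ell_f$, and check that $\tilde\ell_d + \tilde\ell_f$ annihilates $C_\varphi(\mu)$; then $\rho(\tilde\ell_a) = \ell'$. Invoking Corollary~\ref{cor. C sigma fin}, which applies since $\varphi$ is real-valued and $\mu$ has no atom of infinite measure, I may pick for any $u \in C_\varphi(\mu) = C_\varphi^\sigma(\mu)$ a $\sigma$-finite $\Sigma$ carrying $u$. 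The diffuse measure $\nu_{\tilde\ell_d, u}$ is concentrated on $\Sigma$ yet vanishes on every $\sigma$-finite set by definition (\ref{eq. d def}), so $\tilde\ell_d(u) = 0$.

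The main obstacle will be controlling $\tilde\ell_f$ on $C_\varphi(\mu)$, since the converse part of Lemma~\ref{lem: F if mu sgm-f} is only stated for $\sigma$-finite $\mu$. I would circumvent this by restricting attention to the slice of $L_\varphi(\mu)$ consisting of functions supported on $\Sigma$; on this slice $\mu$ is $\sigma$-finite, and the restriction of $\tilde\ell_f$ remains purely finitely additive, so Lemma~\ref{lem: F if mu sgm-f} supplies an evanescent sequence $C_n \subset \Sigma$ with $\tilde\ell_f(v\chi_{\Sigma \setminus C_n}) = 0$ for all such $v$. Then $\tilde\ell_f(u) = \tilde\ell_f(u\chi_{C_n})$, and the absolute continuity of the norm of $u$ forces $\|u\chi_{C_n}\|_\varphi \to 0$, giving $\tilde\ell_f(u) = 0$ and completing the argument.
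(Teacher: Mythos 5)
Your proposal is correct and follows essentially the same route as the paper: both reduce to Theorem \ref{thm. A = V} by computing that $D_{\varphi^*}(\mu) \oplus F_{\varphi^*}(\mu)$ annihilates $C_\varphi(\mu)$ (diffuse parts die on $\sigma$-finite sets via Corollary \ref{cor. C sigma fin}, purely finitely additive parts die by concentration on evanescent sets against absolute continuity of the norm) while $A_{\varphi^*}(\mu)$ acts faithfully through approximation from below via Lemmas \ref{lem: C a decomp} and \ref{lem: decomp ss abs cont dense}. The only cosmetic difference is that you package this as ``the restriction map is an isometric bijection'' using Hahn--Banach for surjectivity, whereas the paper invokes the quotient identification $U^* = X'/U^\bot$ together with the norm decomposition (\ref{eq. norm decom}); these are the same argument.
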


\begin{proof}
	We claim that
	\begin{equation} \label{eq. annihilator}
		C_\varphi(\mu)^\bot = D_{\varphi^*}(\mu) \oplus F_{\varphi^*}(\mu).
	\end{equation}
	As $\mu$ has no atom of infinite measure, there holds $C_\varphi(\mu) = C^\sigma_\varphi(\mu)$ by Corollary \ref{cor. C sigma fin} so that $D_{\varphi^*}(\mu)$ is contained in the annihilator. Fix $u \in C^\sigma_\varphi(\mu)$ and pick $\Sigma \in \AA_\sigma$ off which $u$ vanishes. Let $\nu$ be a finite measure defined by $d \nu = f d \mu$ for a positive integrable function $f$ on $\Sigma$ and $\nu \left( \Om \setminus \Sigma \right) = 0$. Fix $\ell_f \in F_{\varphi^*}(\mu)$. Then there exists a sequence $A_n \in \AA$ with $\nu \left( A_n \right) < \frac{1}{n}$ and $\nu_{\ell_f, u} \left( \Om \setminus A_n \right) = 0$ by \cite[Thm. 1.19]{fadd}. As $u$ has absolutely continuous norm, there holds $\lim_n \| u - u \chi_{\Om \setminus A_n} \|_\varphi = 0$ hence $\ell_f(u) = \lim_n \ell_f \left( u \chi_{\Om \setminus A_n} \right) = 0$ whence $\ell_f$ is contained in the annihilator.
	
	By Theorem \ref{thm. xtnsn Giner} it remains to prove that no non-trivial element of $A_{\varphi^*}(\mu)$ vanishes on all of $C_\varphi(\mu)$. Let $\ell_a \in A_{\varphi^*}(\mu)$ and $u \in L_\varphi(\mu)$ with $\ell_a(u) > 0$. We may assume $u \in L^\sigma_\varphi(\mu)$ by Proposition \ref{prop. sgm spprt}. Invoking Lemma \ref{lem: decomp ss abs cont dense} together with the almost decomposability of $C_\varphi(\mu)$ by Lemma \ref{lem: C a decomp} we find a sequence $u_n \in C_\varphi(\mu)$ converging to $u$ from below hence $\ell_a \left( u_n \right) > 0$ eventually by the $\sigma$-additivity of $\ell_a$.
	
	Having computed (\ref{eq. annihilator}) we now use that for a Banach space $X$ and a closed subspace $U$ there holds $U^* = X' / U^\bot$ through the isometric isomorphism induced by $x' + U^\bot \mapsto \left. x' \right|_U$. In the situation at hand, this implies by Theorems \ref{thm. xtnsn Giner} and \ref{thm. A = V} that
	\begin{align*}
		C_\varphi(\mu)^*
		& = \left[ V_{\varphi^*}(\mu) \oplus D_{\varphi^*}(\mu) \oplus F_{\varphi^*}(\mu) \right] / C_\varphi(\mu)^\bot \\
		& = \left[ V_{\varphi^*}(\mu) \oplus D_{\varphi^*}(\mu) \oplus F_{\varphi^*}(\mu) \right] / \left[ D_{\varphi^*}(\mu) \oplus F_{\varphi^*}(\mu) \right] 
		= V_{\varphi^*}(\mu)
	\end{align*}
	where the action of a functional is described by (\ref{eq. C* 2}). The norm of this quotient space is the operator norm by the decomposition (\ref{eq. norm decom}) so that the isomorphism induced by (\ref{eq. C* 2}) indeed is isometric.
\end{proof}

\begin{corollary} \label{cor. RNP necessary}
	If all elements of $V_{\varphi^*}(\mu)$ are integrally strongly measurable, then $X'$ has the Radon-Nikodym property w.r.t. the restriction of $\mu$ to any set of finite measure. In particular, this is necessary for $V_{\varphi^*}(\mu) = L_{\varphi^*}(\mu)$ to hold.
\end{corollary}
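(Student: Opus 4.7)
Fix $F \in \AA_f$ and a vector measure $\nu \colon \AA(F) \to X'$ that is $\mu|_F$-absolutely continuous with finite total variation $|\nu|(F)$; I must produce a Bochner density. The plan has three steps: extract a weak* density, show its truncations lie in $V_{\varphi^*}(\mu)$, and invoke the hypothesis to upgrade to strong measurability.

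The functional $u \mapsto \int u\, d\nu$ on $L_\i(F; X)$ is bounded and absolutely continuous, so by Proposition \ref{prop. CaVa} it is represented by a weak* measurable $v \colon F \to X'$ satisfying $\int_A \langle v, x\rangle\, d\mu = \langle \nu(A), x\rangle$ for all $A \in \AA(F)$ and all $x \in X$. Let $h = d|\nu|/d\mu \in L_1(\mu|_F)$, let $F_\e$ be the isotonic family from Lemma \ref{lem: a emb} with continuous embedding $L_\varphi(F_\e) \to L_1(F_\e; X)$ of norm $C_\e$, and set $G_n = \{h \le n\} \cap F_{1/n}$, so that $G_n \uparrow F$ up to a null set. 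For a simple $u = \sum_i x_i \chi_{A_i} \in L_\varphi^\sigma(\mu)$,
$$
\Bigl| \int_{G_n} \langle v, u\rangle\, d\mu \Bigr|
= \Bigl| \sum_i \langle \nu(A_i \cap G_n), x_i\rangle \Bigr|
\le \sum_i \|x_i\|\, |\nu|(A_i \cap G_n)
\le n \int_{G_n} \|u\|\, d\mu
\le n C_{1/n} \|u\|_\varphi,
$$
using $h \le n$ on $G_n$ and Lemma \ref{lem: a emb} in the last two steps. Since the simple functions in $L_\varphi^\sigma(\mu)$ have almost decomposable closure by Lemma \ref{lem: E a decomp}, Lemma \ref{lem: rev H�lder} then places $v\chi_{G_n}$, up to rescaling by $n C_{1/n}$, inside $V_{\varphi^*}(\mu)$.

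By hypothesis each $v \chi_{G_n}$ is integrally strongly measurable; since $G_n \in \AA_f$ is $\sigma$-finite, $v|_{G_n}$ is strongly measurable, and passing to the pointwise a.e. limit along $G_n \uparrow F$ yields that $v|_F$ is strongly measurable. A standard simple-function approximation then gives $\int_F \|v\|_{X'}\, d\mu = |\nu|(F) < \i$, so $v$ is the sought Bochner density. The ``in particular'' clause is immediate since $L_{\varphi^*}(\mu)$ consists of strongly measurable functions by definition, so $V_{\varphi^*}(\mu) = L_{\varphi^*}(\mu)$ triggers the main hypothesis. I expect the main technical hurdle to be a careful verification that the one-sided bound on simple test functions symmetrizes to cover negative parts and that the almost decomposability prerequisite of Lemma \ref{lem: rev H�lder} is correctly matched by the class of simple functions in $L_\varphi^\sigma(\mu)$, so that $v\chi_{G_n}$ genuinely defines an element of $V_{\varphi^*}(\mu)$ rather than merely a compatible functional on a dense subspace.
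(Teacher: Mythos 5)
Your proof is correct, but it takes a genuinely different route from the paper. The paper argues by contradiction: it invokes Theorems \ref{thm. xtnsn Giner} and \ref{thm. A = V} to get $L_1\left( F; X \right)^* = V_\i\left( F; X' \right)$, cites the Diestel--Uhl characterization of the Radon-Nikodym property to produce, under failure of RNP, a weak* density $v \in V_\i\left( F; X' \right) \setminus L_\i\left( F; X' \right)$, pushes its truncations into $V_{\varphi^*}(\mu)$ via the adjoint of the dense embedding $L_\varphi\left( F_n \right) \to L_1\left( F_n; X \right)$ from Lemma \ref{lem: a emb}, and derives a contradiction with the hypothesis. You instead verify the RNP directly: starting from an arbitrary $\mu|_F$-continuous vector measure of bounded variation, you extract a weak* density via Proposition \ref{prop. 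CaVa}, use the scalar density $h = d\left| \nu \right| / d\mu$ and the truncation $G_n = \left\{ h \le n \right\} \cap F_{1/n}$ to place $v \chi_{G_n}$ into $V_{\varphi^*}(\mu)$ through Lemma \ref{lem: rev H�lder}, and then let the hypothesis upgrade to strong measurability, after which Bochner integrability follows from the bounded variation. Both arguments rest on the same foundations (Proposition \ref{prop. CaVa} and Lemmas \ref{lem: a emb}, \ref{lem: rev H�lder}); yours is constructive and avoids citing the $L_1$--$L_\i$ duality characterization of RNP, at the cost of essentially re-deriving the relevant half of it, while the paper's is shorter given the cited literature. Two points deserve a little more care in your write-up: first, the hypothesis speaks of elements of $V_{\varphi^*}(\mu)$, i.e. equivalence classes modulo the kernel of $\vvvert \cdot \vvvert^*_\varphi$, so what you obtain is a strongly measurable representative $w_n$ of $v \chi_{G_n}$ inducing the same integrals against $x \chi_A$; you should glue the $w_n$ (consistent a.e. on overlaps by a countable norming-set argument for their separable ranges) rather than literally asserting $v|_{G_n}$ is strongly measurable. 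Second, the inequality $\int_F \| v \|_{X'} \, d\mu \le \left| \nu \right|(F)$, which is the direction you actually need for Bochner integrability, is where the countable norming set for the essentially separable range of the glued density enters; it is standard but is the one nontrivial step hidden in your ``standard simple-function approximation''.
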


\begin{proof}
	Arguing by contradiction, we suppose there were $F \in \AA_f$ on which the restriction of $\mu$ fails the Radon-Nikodym property. Theorems \ref{thm. xtnsn Giner} and \ref{thm. A = V} yield $L_1\left( F; X \right)^* = V_\i\left( F; X' \right)$ and by \cite[§4.1, Thm. 1]{vector measures} we know then that there exists
	$$
	v \in V_\i\left( F; X' \right) \setminus L_\i\left( F; X' \right).
	$$
	Lemma \ref{lem: a emb} yields an isotonic exhausting sequence $F_n$ with $\lim_n \mu\left( F \setminus F_n \right) = 0$ such that
	$$
	L_\i\left(F_n ; X \right) \to L_\varphi\left( F_n \right) \to L_1\left( F_n ; X \right).
	$$
	As the second embedding in this chain is dense, its adjoint operator is an embedding, too, so that $v \chi_{F_n} \in V_{\varphi^*}(\mu)$. If each $v \chi_{F_n}$ were strongly measurable, then its limit from below $v$ were likewise, which would yield $v \in L_\i\left( F; X' \right)$ by Lemma \ref{lem: rev Hölder} and since Orlicz functions are dualizable by Lemma \ref{lem: dualizable suff cond}. We have arrived at a contradiction.
\end{proof}

As a consequence of our duality theory, we obtain a characterization of reflexivity for the Orlicz space $L_\varphi(\mu)$.

\begin{theorem} \label{thm. rflxv}
	Let the range space $X$ be reflexive and let the conjugate Orlicz integrands $\varphi$ and $\varphi^*$ be real-valued, dualizable i.a.e. for every element of $L_{\varphi^*}(\mu)$ and $L_\varphi(\mu)$, respectively.
	%, and such that the set where their minimum at the origin is not strict is $\sigma$-finite.
	Then $L_\varphi(\mu)$ is reflexive if and only if
	\begin{equation} \label{eq. reflexivity}
		L_\varphi(\mu) = C_\varphi^\sigma(\mu) \text{ and }
		L_{\varphi^*}(\mu) = C_{\varphi^*}^\sigma(\mu).
	\end{equation}
	Hence, if $\varphi \in \Delta_2$ and $\varphi^* \in \Delta_2$, then $L_\varphi(\mu)$ is reflexive. If $\mu$ is non-atomic, then the $\Delta_2$-conditions are also necessary for $L_\varphi(\mu)$ to be reflexive.
\end{theorem}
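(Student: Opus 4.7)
The plan is to establish two equivalences. First, under the hypotheses of the theorem, the dualizability of $\varphi$ and the reflexivity of $X$ (which gives $X'$ the Radon-Nikodym property) allow Theorem~\ref{thm. A = V} to identify $A_{\varphi^*}(\mu) = L_{\varphi^*}(\mu)$, and symmetrically $A_\varphi(\mu) = L_\varphi(\mu)$ inside $L_{\varphi^*}(\mu)^*$. Hence by the decomposition in Theorem~\ref{thm. xtnsn Giner},
\[
L_\varphi(\mu)^* = L_{\varphi^*}(\mu) \oplus D_{\varphi^*}(\mu) \oplus F_{\varphi^*}(\mu),
\]
and the core equivalence to prove is that $L_\varphi(\mu) = C_\varphi^\sigma(\mu)$ holds if and only if $D_{\varphi^*}(\mu) = F_{\varphi^*}(\mu) = \{0\}$, together with the analogue obtained by swapping the roles of $\varphi$ and $\varphi^*$.

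For the forward direction of this core equivalence, fix $u \in L_\varphi(\mu) = C_\varphi^\sigma(\mu)$, so $u$ has both absolutely continuous norm and $\sigma$-finite support $\Sigma$. For $\ell_d \in D_{\varphi^*}(\mu)$ the diffuse measure $\nu_{\ell_d,u}$ must vanish on every $F \in \AA_f$ by the defining property (\ref{eq. d def}); partitioning $\Sigma$ disjointly into pieces of finite measure and invoking $\sigma$-additivity yields $\nu_{\ell_d,u}(\Sigma) = 0$, hence $\ell_d(u) = \nu_{\ell_d,u}(\Om) = 0$ since $u$ vanishes off $\Sigma$. For $\ell_f \in F_{\varphi^*}(\mu)$, restricting to $\Sigma$ and applying Lemma~\ref{lem: F if mu sgm-f} yields a sequence $C_n \in \AA$ with $\mu(\lim_n C_n) = 0$ such that $\ell_f(u \chi_{\Sigma \setminus C_n}) = 0$; absolute continuity of the norm of $u$ gives $\| u \chi_{C_n} \|_\varphi \to 0$, so $\ell_f(u) = 0$. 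Conversely, the proof of Corollary~\ref{cor. C*} identifies $C_\varphi(\mu)^\perp = D_{\varphi^*}(\mu) \oplus F_{\varphi^*}(\mu)$ inside $L_\varphi(\mu)^*$; if this annihilator is trivial then $C_\varphi(\mu) = L_\varphi(\mu)$, which equals $C_\varphi^\sigma(\mu)$ by Corollary~\ref{cor. C sigma fin} as $\mu$ has no atom of infinite measure.

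With this equivalence in hand, reflexivity follows at once in one direction: if both equalities in (\ref{eq. reflexivity}) hold, then $L_\varphi(\mu)^* = L_{\varphi^*}(\mu)$ and $L_{\varphi^*}(\mu)^* = L_\varphi(\mu)$ canonically via integration, so the composite $L_\varphi(\mu) \to L_\varphi(\mu)^{**} = L_{\varphi^*}(\mu)^* = L_\varphi(\mu)$ coincides with the evaluation embedding and is surjective. The main obstacle is the converse: assuming $L_\varphi(\mu)$ reflexive, both $L_\varphi(\mu)^*$ and each of its closed summands $D_{\varphi^*}(\mu)$ and $F_{\varphi^*}(\mu)$ inherit reflexivity, and I must show these summands are then necessarily trivial. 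My approach is to argue, via Lemma~\ref{lem: F if mu sgm-f} and the almost decomposability of $L_\varphi(\mu)$, that any non-zero $\ell_f \in F_{\varphi^*}(\mu)$ produces a sequence of disjointly supported normalized elements in $L_\varphi(\mu)$ on which $\ell_f$ acts like a non-trivial functional on $\ell^\infty$, thereby yielding an isomorphic copy of $\ell^\infty$ or $c_0$ sitting inside $F_{\varphi^*}(\mu)$ and contradicting reflexivity; an analogous construction handles $D_{\varphi^*}(\mu)$ by using (\ref{eq. d def}) in place of Lemma~\ref{lem: F if mu sgm-f}. This forces $L_\varphi(\mu)^* = L_{\varphi^*}(\mu)$, and, since $L_{\varphi^*}(\mu)$ is then also reflexive, the symmetric argument gives $L_{\varphi^*}(\mu)^* = L_\varphi(\mu)$, so the core equivalence applied in both directions yields (\ref{eq. reflexivity}).

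The $\Delta_2$ addendum is immediate from Lemma~\ref{lem: linear domain 2}: the condition $\varphi \in \Delta_2$ implies $L_\varphi(\mu) = C_\varphi^\sigma(\mu)$ unconditionally, and is equivalent to it when $\mu$ is non-atomic; applying this to both $\varphi$ and $\varphi^*$ combined with the reflexivity characterization just proved produces the final statement.
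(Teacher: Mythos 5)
Your setup, your ``if'' direction, the core equivalence $L_\varphi(\mu) = C_\varphi^\sigma(\mu) \iff D_{\varphi^*}(\mu) = F_{\varphi^*}(\mu) = \left\{ 0 \right\}$ (which is essentially the annihilator computation from Corollary \ref{cor. C*}), and the $\Delta_2$ addendum are all sound and close to the paper. The genuine gap is the ``only if'' direction, which you yourself flag as the main obstacle and then only sketch. The proposed construction does not work as described. First, a non-zero $\ell_f \in F_{\varphi^*}(\mu)$ concentrates on a sequence $C_n$ with $\mu \left( \lim_n C_n \right) = 0$ in the sense that $\ell_f \left( u \chi_{\Om \setminus C_n} \right) = 0$; but then $\ell_f \left( u \chi_{C_n \setminus C_m} \right) = \ell_f \left( u \chi_{C_n} \right) - \ell_f \left( u \chi_{C_m} \right) = 0$, so the natural disjointly supported pieces are annihilated by $\ell_f$ rather than acted on ``like a functional on $\ell^\i$.'' Second, even granting a sequence of disjointly supported normalized elements, its closed span in an Orlicz space need not be isomorphic to $c_0$ or $\ell^\i$ (in $L_p$ it is $\ell_p$, which is reflexive), so no contradiction with reflexivity follows without substantial further work; and such a copy would in any case sit inside $L_\varphi(\mu)$, not inside $F_{\varphi^*}(\mu)$ as you write. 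The analogous claim for $D_{\varphi^*}(\mu)$ is even less substantiated.

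The paper avoids all of this with a short biduality argument that you should adopt: if $L_\varphi(\mu)$ is reflexive, then $C_\varphi(\mu)$ is reflexive as a closed subspace (Lemma \ref{lem: C closed subspace}); by Corollary \ref{cor. C*} one has $C_\varphi(\mu)^* = V_{\varphi^*}(\mu)$, and $V_{\varphi^*}(\mu)^*$ contains $L_\varphi(\mu)$ acting through the integral pairing, so surjectivity of the canonical embedding $C_\varphi(\mu) \to C_\varphi(\mu)^{**} = V_{\varphi^*}(\mu)^*$ forces $L_\varphi(\mu) \subset C_\varphi(\mu)$, i.e. $L_\varphi(\mu) = C_\varphi^\sigma(\mu)$ by Corollary \ref{cor. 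C sigma fin}. Reflexivity then passes to $L_{\varphi^*}(\mu)$, which is a closed subspace of the dual of the reflexive space $C_\varphi(\mu)$ by Corollary \ref{cor. Lux-Orl equi}, and the same argument applied to $\varphi^*$ yields the second identity in (\ref{eq. reflexivity}). No triviality of $D_{\varphi^*}(\mu)$ or $F_{\varphi^*}(\mu)$ needs to be established directly; it falls out afterwards from the equivalence you already proved.
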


By the almost embedding result Lemma \ref{lem: a emb}, we know that $L_\varphi(\mu)$ contains a copy of the range space $X$ whence reflexivity of $X$ is clearly a necessary assumption unless in the trivial case when no set of positive measure exists, which we ruled out in §\ref{sec. intro}.
%This contrasts with \cite{Fenchel-Orlicz}, where a reflexive Orlicz space with an Orlicz function on a non-reflexive range space is presented. The catch is that the author of \cite{Fenchel-Orlicz} does not require continuity of the Orlicz function at the origin, which makes such pathologies possible. At least for Orlicz functions, there is no need to consider such situations by our construction in §\ref{sec. L}.

\begin{proof}
	Remember $C_\varphi(\mu) = C_\varphi^\sigma(\mu)$ and $C_{\varphi^*}(\mu) = C_{\varphi^*}^\sigma(\mu)$ by Corollary \ref{cor. C sigma fin} since we assume $\mu$ to have no atom of infinite measure. $\implies$: if $L_\varphi(\mu)$ is reflexive, then $C_\varphi(\mu)$ is as a closed subspace by Lemma \ref{lem: C closed subspace} thus
	$$
	L_\varphi(\mu) \supset C_\varphi(\mu) = C_\varphi(\mu)^{**} = V_{\varphi^*}(\mu)^* \supset L_\varphi(\mu) \implies L_\varphi(\mu) = C_\varphi^\sigma(\mu).
	$$
	More precisely, the dual space of $C_\varphi(\mu)$ is $V_{\varphi^*}(\mu)$ by means of the standard integral pairing, while the dual of $V_{\varphi^*}(\mu)$ contains $L_\varphi(\mu)$ with the functionals $L_\varphi(\mu)$ acting again through the integral pairing. Since the canonical embedding of $C_\varphi(\mu)$ into the bidual $V_{\varphi^*}(\mu)$ via the integral pairing is surjective by reflexivity, we deduce the last inclusion and consequently the final claim. By our assumptions, the space $L_{\varphi^*}(\mu)$ is a closed subspace of $C_{\varphi^*}(\mu)^*$ due to Corollary \ref{cor. Lux-Orl equi} hence it is reflexive if $L_\varphi(\mu)$ is. Consequently, the argument for $L_{\varphi^*}(\mu) = C_{\varphi^*}^\sigma(\mu)$ is the same as for the first identity.
	
	$\impliedby$: A Banach space is reflexive iff its unit ball is (sequentially) weakly compact. As our assumption $L_\varphi(\mu) = C_\varphi^\sigma(\mu)$ implies that any given sequence in $L_\varphi(\mu)$ vanishes off a $\sigma$-finite set, we may assume $\mu$ to be $\sigma$-finite. Now, the space $L_\varphi(\mu)$ is reflexive as
	$$
	L_\varphi(\mu)^{**} = C_\varphi(\mu)^{**} = L_{\varphi^*}(\mu)^* = C_{\varphi^*}(\mu)^* = L_\varphi(\mu)
	$$
	by Theorem \ref{thm. A = V}. More precisely, the space $C_\varphi(\mu)$ has $L_{\varphi^*}(\mu) = C_{\varphi^*}(\mu)$ as a dual space via the integral pairing, while this dual space has $L_\varphi(\mu) = C_\varphi(\mu)$ as a bidual via the same pairing so that the canonical embedding of $C_\varphi(\mu)$ into its bidual is surjective, i.e. reflexivity. 
	% Genauer: Sei u \in L_\varphi(\mu)^{**} = C_\varphi(\mu)^{**} = L_{\varphi^*}(\mu)^* = C_{\varphi^*}(\mu)^*. Dann existiert nach Cor. \ref{cor. C*} genau ein v \in L_\varphi(\mu), sodass <u,w> = int_Om <v, w> dmu(om) nach dem Cor. Somit ist die kanonische Einbettung L^varphi(mu) -> L^varphi(mu)** surjektiv.
	Lemma \ref{lem: linear domain 2} settles the addendum on the $\Delta_2$-conditions.
\end{proof}

%\begin{corollary}
%	Let $X$ be reflexive while $\varphi$ and $\varphi^*$ are real-valued. Then $L_\varphi(\mu)$ is reflexive if and only if $\dom I_\varphi = L_\varphi(\mu)$ and $\dom I_{\varphi^*} = L_{\varphi^*}(\mu)$.
%\end{corollary}
%
%\begin{proof}
%	$\implies$: If $X$ is separable, then $\varphi$ and $\varphi^*$ are dualizable so this follows by Theorem \ref{thm. rflxv} and Corollary \ref{cor. linear domain}. Hence, we also have $\dom I_\varphi = L_\varphi(\mu)$ if $X$ is not separable since every given $u \in L_\varphi(\mu)$ has almost separable range. Let $\ell \in L_\varphi(\mu)^*$ and pick $u_n \in L_\varphi(\mu)$ with $I^*_\varphi(\ell) = \lim_n \langle \ell, u_n \rangle - I_\varphi(u_n)$. Let $W \in \SS(X)$ such that each member of the sequence $u_n$ is almost $W$-valued and set $\phi = \varphi_W$. We denote the restriction of $\ell$ to $L_\phi(\mu)$ again by $\ell$. Then $I^*_\varphi(\ell) = I^*_\phi(\ell) < \i$ by Theorem \ref{thm. rflxv}.
%	
%	$\impliedby$: By Corollary \ref{cor. linear domain}.
%\end{proof}

Our final application of the duality theory obtained so far is the following representation result for the convex conjugate of integral functionals on a vector valued Orlicz space. Remember that the exhausting integral of an essential infimum function always exists, even if the infimum function does not exist globally.

\begin{theorem} \label{thm. conjugate B}
	Let $f \colon \Om \times X \to \left( - \i, \i \right]$ be an integrally separably measurable integrand.
	Then, if
	$$
	I_f \not \equiv \i \text{ on } L_\varphi \text{ where } I_f \left( u \right) = \int f \left[ \om, u \left( \om \right) \right] \, d \mu \left( \om \right),
	$$
	the convex conjugate $I^*_f$ of $I_f$ with respect to the norm topology is given by
	\begin{align} \label{eq. convex conjugate}
		I^*_f(\ell)
		& = I^*_f \left( \ell_a \right) + s_{\dom I_f} \left( \ell_d \right) + s_{\dom I_f} \left( \ell_f \right) \\
		& = \int \esssup_{W \in \SS \left( X \right) } \sup_{x \in W} \langle \ell_a(\om), x \rangle - f_\om(x) \, d \mu \left( \om \right) + \sup_{u \in \dom I_f} \ell_d \left( u \right) + \sup_{u \in \dom I_f} \ell_f \left( u \right) \nonumber
	\end{align}
	wherever $I^*_f$ is finite.	Let $\SS_u(X)$ be the separable subspaces of $X$ almost containing the range of $u$. The Fenchel-Moreau subdifferential
	% Auch nich-konvex: Sei V VR, L in V* und F : V -> [-infty, infty]. Dann F(w) >= F(x) - L(w - x) für alle w in V <=> inf_w F(w) + L(x - w) >= F(x) <=> L(x) - F*(L) >= F(x) <=> L(x) >= F(x) + F*(L).
	of $I_f$ on $\dom I_f$ is given by
	\begin{align} \label{eq. subdifferential representation}
		\p I_f \left( u \right) & = \p_a I_f \left( u \right) + \p_d I_f \left( u \right) + \p_f I_f \left( u \right) \\
		& = \bigcap_{W \in \SS_u(X) } \left\{ u^* \in V_{\varphi^*} \st u^*_W \left( \om \right) \in \p f_W \left[ \om, u \left( \om \right) \right] \text{ i.a.e.} \right\} \nonumber \\
		& + \left\{ \ell_d \in D_{\varphi^*} \st \langle \ell_d , v - u \rangle \le 0 \quad \forall v \in \dom I_f \right\}. \nonumber \\
		& + \left\{ \ell_f \in F_{\varphi^*} \st \langle \ell_f , v - u \rangle \le 0 \quad \forall v \in \dom I_f \right\}. \nonumber
	\end{align}
	Moreover, if $f$ is a convex integrand, denoting by $p \left( \om, v \right) = f' \left( \om, u \left( \om \right) ; v \right)$ its radial derivative, the closure of the radial derivative $v \mapsto I' \left( u ; v \right)$ at a point $u \in \dom \left( \p I_f \right)$ is given by
	\begin{equation} \label{eq. derivative closure}
		v \mapsto \textnormal{cl}_v I'_f \left( u ; v \right) = \int \essinf_{W \in \SS_v(X) } \textnormal{cl}_v p_W \left( \om, v \left( \om \right) \right) \, d \mu \left( \om \right) + s_{\p_d I \left( u \right)}(v) + s_{\p_f I \left( u \right)}(v).
	\end{equation}
	Finally, if $f$ is dualizable i.a.e. for $\ell_a \in V_{\varphi^*}(\mu)$, then $I^*_f\left(\ell_a \right) = I_{f^*}\left(\ell_a \right)$ with $I_{f^*}\left(\ell_a\right) = \int f^*\left[\om, \ell_a(\om) \right] \, d \bar{\mu}(\om)$ and the intersection in (\ref{eq. subdifferential representation}) over $W \in \SS_u(X)$ is to be replaced by $W = X$.
	%may be replaced by $W = X$ and the essential infimum function in (\ref{eq. derivative closure}) may be replaced by the pointwise closure $\textnormal{cl}_v p$ of the integrand.
	%Unklar, warum jener Integrand integral separabel messbar sein sollte.
\end{theorem}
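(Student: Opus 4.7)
The plan is to combine the dual-space decomposition from Theorem \ref{thm. xtnsn Giner} with Theorem \ref{thm. conjugate A} applied to the absolutely continuous component. First I would establish the conjugate identity (\ref{eq. convex conjugate}) via two inequalities, finiteness of $I^*_f$ being used only to ensure the three summands on the right-hand side can be added without a $+\i-\i$ ambiguity. For the $(\le)$ direction, the linear split of $\ell$ on any $u \in \dom I_f$ yields
\begin{equation*}
	\ell(u) - I_f(u) = [\ell_a(u) - I_f(u)] + \ell_d(u) + \ell_f(u) \le I^*_f(\ell_a) + s_{\dom I_f}(\ell_d) + s_{\dom I_f}(\ell_f),
\end{equation*}
and taking the supremum gives the bound (values outside $\dom I_f$ contributing $-\i$ to the left-hand side).

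For the reverse $(\ge)$ direction, I would adapt the combination argument from the proof of (\ref{eq. norm decom}). Given $\varepsilon > 0$, pick $u_1, u_2, u_3 \in \dom I_f$ that near-optimise $I^*_f(\ell_a)$, $s_{\dom I_f}(\ell_d)$, and $s_{\dom I_f}(\ell_f)$ respectively. By Proposition \ref{prop. sgm spprt} I enlarge a $\sigma$-finite representing set $\Sigma$ for $\ell_a$ to also contain the supports of $u_1$ and $u_3$; since $\ell_d(u_2) = \ell_d(u_2 \chi_{\Om \setminus \Sigma})$ by diffuseness, I may replace $u_2$ by $u_2 \chi_{\Om \setminus \Sigma} + u_1 \chi_\Sigma \in \dom I_f$. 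Applying \cite[Thm. 1.19]{fadd} to a finite measure equivalent to $\mu$ on $\Sigma$ and to the purely finitely additive $|\nu_{\ell_f, u_i}|$ for $i \in \{1, 3\}$, I extract $A_n \subset \Sigma$ with $\mu(A_n) \to 0$ on which these p.f.a.\ set functions concentrate. The test function $w_n = u_1 \chi_{\Sigma \setminus A_n} + u_3 \chi_{A_n} + u_2 \chi_{\Om \setminus \Sigma}$ then satisfies $\ell_a(w_n) \to \ell_a(u_1)$ (absolute continuity and $\ell_a$ vanishing off $\Sigma$), $\ell_d(w_n) = \ell_d(u_2)$ (diffuseness vanishes on $\Sigma$), $\ell_f(w_n) = \ell_f(u_3) + \ell_f(u_2 \chi_{\Om \setminus \Sigma})$ (by location of $A_n$), and $I_f(w_n) \to I_f(u_1) + I_f(u_2 \chi_{\Om \setminus \Sigma})$ by absolute continuity of the integral on $\Sigma$. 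Iterating an analogous extraction on $\Om \setminus \Sigma$ to capture the residual $\ell_f(u_2 \chi_{\Om \setminus \Sigma})$, then sending $n \to \i$ and $\varepsilon \to 0^+$, delivers the reverse inequality. The integral expansion of $I^*_f(\ell_a)$ in (\ref{eq. convex conjugate}) finally comes from Theorem \ref{thm. conjugate A} with $R = L_\varphi(\mu)$ (almost decomposable by Corollary \ref{cor. L a decomp}) and $S = V_{\varphi^*}(\mu)$.

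For the subdifferential representation (\ref{eq. subdifferential representation}) I would invoke the Fenchel-Young identity $\ell \in \p I_f(u) \iff I_f(u) + I^*_f(\ell) = \ell(u)$. Splitting via (\ref{eq. convex conjugate}),
\begin{equation*}
	[I_f(u) + I^*_f(\ell_a) - \ell_a(u)] + [s_{\dom I_f}(\ell_d) - \ell_d(u)] + [s_{\dom I_f}(\ell_f) - \ell_f(u)] = 0,
\end{equation*}
each bracket is nonnegative (respectively by Fenchel-Young on the $L_\varphi \times V_{\varphi^*}$ pairing and by $u \in \dom I_f$), so each must vanish. The first produces $\ell_a \in \p I_f(u)$ for the $V_{\varphi^*}$-pairing, whose pointwise characterisation is Theorem \ref{thm. conjugate A}; the other two are the normal-cone conditions $\langle \ell_d, v - u \rangle \le 0$ and $\langle \ell_f, v - u \rangle \le 0$ for all $v \in \dom I_f$. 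The radial-derivative identity (\ref{eq. derivative closure}) then follows from the convex-analytic identity $\cl_v I'_f(u; \cdot) = s_{\p I_f(u)}(\cdot)$ and the additivity of support functions on Minkowski sums: $s_{\p_d I_f(u)}$ and $s_{\p_f I_f(u)}$ appear by definition, while $s_{\p_a I_f(u)}(v)$ is computed by reapplying Theorem \ref{thm. inf int} to the tilted integrand $-p_W$ together with the pointwise identity $s_{\p f_\om(u(\om))}(v(\om)) = \cl_v p(\om, v(\om))$ restricted to subspaces $W \in \SS_v(X)$. The dualizability addendum transfers verbatim from Theorem \ref{thm. conjugate A} applied to $\ell_a$. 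The principal obstacle will be the combination step in the reverse inequality, namely ensuring $I_f(w_n) \to I_f(u_1) + I_f(u_2 \chi_{\Om \setminus \Sigma})$ while the $A_n$ simultaneously absorb the full p.f.a.\ action. This requires absolute continuity of $\int_\Sigma |f(u_i)| \, d\mu$ for $i \in \{1,3\}$ (available from $u_i \in \dom I_f$ on the $\sigma$-finite piece $\Sigma$) and the iteration on $\Om \setminus \Sigma$ to cover the non-$\sigma$-finite case, without which the diffuse component would be invisible.
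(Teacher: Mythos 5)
Your overall strategy coincides with the paper's: the inequality $\le$ in (\ref{eq. convex conjugate}) is immediate from splitting $\ell = \ell_a + \ell_d + \ell_f$, the reverse inequality is obtained by welding three near-optimal test functions together using Proposition \ref{prop. sgm spprt}, the Hewitt--Yosida approximation theorem and diffuseness, the subdifferential follows from Fenchel--Young applied to (\ref{eq. convex conjugate}), and the integral form of $I^*_f(\ell_a)$ comes from Theorem \ref{thm. conjugate A}. Two steps of your sketch, however, do not go through as written. In the combination step you force $A_n \subset \Sigma$. The sets supplied by \cite[Thm. 1.19]{fadd}, on which the purely finitely additive parts $\left| \nu_{\ell_f, u_i} \right|$ concentrate, are arbitrary measurable sets; nothing permits placing them inside $\Sigma$, since $\left| \nu_{\ell_f, u_i} \right| \left( \Om \setminus \Sigma \right)$ need not vanish. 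Hence your claims $\ell_f \left( u_1 \chi_{\Sigma \setminus A_n} \right) = 0$ and $\ell_f \left( u_3 \chi_{A_n} \right) = \ell_f \left( u_3 \right)$ ``by location of $A_n$'' fail, and the proposed iteration on $\Om \setminus \Sigma$ only chases the residual coming from $u_2$, not the lost purely finitely additive action of $u_1$ and $u_3$ outside $\Sigma$. The paper takes $A_n$ unrestricted and uses $\bar{u}_n = u_1 \chi_\Sigma \chi_{\Om \setminus A_n} + u_2 \chi_{\Om \setminus \Sigma} \chi_{\Om \setminus A_n} + u_3 \chi_{A_n}$; the cross terms then vanish because $\nu_{\ell_a, u_i}$ and $f \left( \cdot, u_i(\cdot) \right)$ are supported in $\Sigma$ (note it is the sets $\left\{ f \left( \cdot, u_i(\cdot) \right) \ne 0 \right\}$ that are $\sigma$-finite, not the supports of the $u_i$ themselves, which need not be $\sigma$-finite in $L_\varphi(\mu)$), while $\ell_d$ annihilates everything concentrated on $\Sigma$ and $\left| \nu_{\ell_\sigma, u_3} \right| \left( A_n \right) < n^{-1}$ controls the $\sigma$-additive leakage onto $A_n$.

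For (\ref{eq. derivative closure}), the identity $s_{\p_a I_f(u)}(v) = \int \essinf_{W} \textnormal{cl}_v p_W \left( \om, v(\om) \right) \, d\mu$ is precisely the nontrivial content, and Theorem \ref{thm. inf int} does not apply to it directly: the supremum runs over the set $\p_a I_f(u) \subset V_{\varphi^*}(\mu)$ of weak* measurable selections subject to a norm constraint, which is not an almost decomposable space of strongly measurable functions into a metric space. The paper instead sandwiches the candidate between $I'(u; \cdot)$ from above and $\sup_{\ell_a} \langle \ell_a, \cdot \rangle + s_{\p_d I(u)} + s_{\p_f I(u)} = \textnormal{cl}_v I'(u; \cdot)$ from below, and then proves lower semicontinuity of the candidate directly by a Fatou argument along an a.e. convergent subsequence admitting the integrable minorant $\langle \ell_a, v_{n_k} \rangle$. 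You would need to supply this (or an equivalent) lower semicontinuity argument; ``additivity of support functions on Minkowski sums'' alone does not produce the integral representation of the absolutely continuous summand.
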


\begin{proof}
	Ad (\ref{eq. convex conjugate}): clearly $I^*_f \left( \ell \right) \le I^*_f \left( \ell_a \right) + s_{\dom I_f} \left( \ell_d \right) + s_{\dom I_f} \left( \ell_f \right)$ so that it remains to prove the converse inequality. Let $u_i \in \dom I_f$ for $1 \le i \le 3$. By Proposition \ref{prop. sgm spprt} we find $\Sigma \in \AA_\sigma$ with
	$$
	\left| \nu_{\ell_a, u_i} \right| \left( \Om \setminus \Sigma \right) = 0; \quad f \left( \om, u_i(\om) \right) = 0 \text{ on } \Om \setminus \Sigma \quad \forall i.
	$$
	Let $\nu$ be a finite measure defined by $d \nu = f d \mu$ for $f$ a positive integrable function on $\Sigma$ and $\nu \left( \Om \setminus \Sigma \right) = 0$. There exists for $n \in \N$ an $A_n \in \AA$ with
	$$
	\nu \left( A_n \right) < \frac{1}{n}; \quad \left| \nu_{\ell_\sigma, u_i} \right| \left( A_n \right) < \frac{1}{n}; \quad \left| \nu_{\ell_f, u_i} \right| \left( \Om \setminus A_n \right) = 0 \quad \forall i
	$$
	by \cite[Thm. 1.19]{fadd}. Setting $\bar{u}_n = u_1 \chi_\Sigma \chi_{\Om \setminus A_n} + u_2 \chi_{\Om \setminus \Sigma} \chi_{\Om \setminus A_n} + u_3 \chi_{A_n}$ we have $\bar{u}_n \in \dom I_f$ so that
	\begin{align*}
		I^*_f \left( \ell \right)
		& \ge \ell \left( \bar{u}_n \right) - \int f \left[ \om, \bar{u}_n \left( \om \right) \right] \, d \mu \left( \om \right) \\
		& = \ell_a \left( u_1 \chi_{\Om \setminus A_n} \right) + \ell_d \left( u_2 \chi_{\Om \setminus A_n} \right) + \ell_\sigma \left( u_3 \chi_{A_n} \right) + \ell_f \left( u_3 \chi_{A_n} \right) \\
		& - \int_{\Om \setminus A_n} f \left[ \om, u_1 \left( \om \right) \right] \, d \mu \left( \om \right) - \int_{A_n} f \left[ \om, u_3 \left( \om \right) \right] \, d \mu \left( \om \right) \\
		& \ge \ell_a \left( u_1 \chi_{\Om \setminus A_n} \right) + \ell_d \left( u_2 \chi_{\Om \setminus A_n} \right) - \frac{1}{n} + \ell_f \left( u_3 \right) \\
		& - \int_{\Om \setminus A_n} f \left[ \om, u_1 \left( \om \right) \right] \, d \mu \left( \om \right) - \int_{A_n} f \left[ \om, u_3 \left( \om \right) \right] \, d \mu \left( \om \right) \\
		& \to \ell_a \left( u_1 \right) + \ell_d \left( u_2 \right) + \ell_f \left( u_3 \right) - \int_\Om f \left[ \om, u_1 \left( \om \right) \right] \, d \mu \left( \om \right) \text{ as } n \to \i.
	\end{align*}
	Taking the supremum over all $u_i$ concludes the proof by Theorem \ref{thm. conjugate A}. Observe that the finiteness of $I^*_f(\ell)$ enters so that the integrand $\langle \ell_a(\om), x \rangle - f \left[ \om, x \right]$ may be restricted to a suitable $\sigma$-finite set where $\ell_a$ is well-defined a.e. as a function.
	
	Ad (\ref{eq. subdifferential representation}): by the Fenchel-Young equality and (\ref{eq. convex conjugate}) there holds
	\begin{align*}
		& \ell \in \p I_f \left( u \right)
		\iff I_f \left( u \right) + I^*_f \left( \ell_a \right) + s_{\dom I_f} \left( \ell_d \right) + s_{\dom I_f} \left( \ell_f \right) = \langle \ell, u \rangle \\
		& \iff I_f \left( u \right) + I^*_f \left( \ell_a \right) = \langle \ell_a, u \rangle \land s_{\dom I_f} \left( \ell_d \right) = \langle \ell_d, u \rangle \land s_{\dom I_f} \left( \ell_f \right) = \langle \ell_f, u \rangle,
	\end{align*}
	which is equivalent to $\ell$ fulfilling $\ell_{a, W} \left( \om \right) \in \p f_W \left[ \om, u \left( \om \right) \right]$ a.e. while $\langle \ell_d, v - u \rangle \le 0$ and $\langle \ell_f, v - u \rangle \le 0$ for all $v \in \dom I_f$. The first assessment follows by Theorem \ref{thm. conjugate A}.
	
	Ad (\ref{eq. derivative closure}): since $\p_d I_f(u)$ and $\p_f I_f (u)$ are non-empty cones, the functions
	$$
	v \mapsto s_{\p_d I \left( u \right)}(v) = \sup_{\ell_d \in \p_d I_f (u) } \langle \ell_f, v \rangle; \quad v \mapsto s_{\p_f I \left( u \right)}(v)
	$$
	take values in $\left\{ 0, +\i \right\}$. Let $v \in \dom I_f$. Remember that the subdifferential of a convex function at a point $u$ consists of those continuous linear functionals that are dominated by the radial derivative of the function at $u$. In particular, the closure of the sublinear derivative functional is the supremum of the subgradients. One-sided difference quotients of convex functions being monotone decreasing, we have
	\begin{align*}
		I' \left( u ; v \right)
		& \ge \int p \left( \om, v \left( \om \right) \right) \, d \mu \left( \om \right) + s_{\p_d I \left( u \right)}(v) + s_{\p_f I \left( u \right)}(v) \\
		& \ge \int \essinf_{W \in S (X) } \textnormal{cl}_v p_W \left( \om, v \left( \om \right) \right) \, d \mu \left( \om \right) + s_{\p_d I \left( u \right)}(v) + s_{\p_f I \left( u \right)}(v) \\
		& \ge \int \langle \ell_a \left( \om \right), v \left( \om \right) \rangle \, d \mu \left( \om \right) + s_{\p_d I \left( u \right)}(v) + s_{\p_f I \left( u \right)}(v)
	\end{align*}
	for any $\ell_a \in \p_a I_f \left( u \right)$ by (\ref{eq. subdifferential representation}). Taking the supremum over all such $\ell_a$ obtains
	$$
	I' \left( u ; v \right)
	\ge \int \essinf_{W \in S (X) } \textnormal{cl}_v p_W \left( \om, v \left( \om \right) \right) \, d \mu \left( \om \right) + s_{\p_d I \left( u \right)}(v) + s_{\p_f I \left( u \right)}(v)
	\ge \textnormal{cl}_v I' \left( u ; v \right).
	$$
	Therefore the claim will obtain if we prove that the function (\ref{eq. derivative closure}) is lower semicontinuous. For this, let $v_n \to v \in L_\varphi$. It suffices to extract a subsequence $n_k$ such that
	\begin{equation} \label{eq. lsc along subsequence}
		\liminf_k \int \essinf_{W \in S (X) } \textnormal{cl}_v p_W \left( \om, v_{n_k} \left( \om \right) \right) \, d \mu \left( \om \right)
		\ge \int \essinf_{W \in \SS_v(X) } \textnormal{cl}_v p_W \left( \om, v \left( \om \right) \right) \, d \mu \left( \om \right).
	\end{equation}
	By extracting a subsequences, we may assume the left-hand integrals in (\ref{eq. lsc along subsequence}) to be finite. Hence we find a set $\Sigma \in \AA_\sigma$ outside of which the pertaining integrands vanish. As the right-hand integral is exhausting and the integrand has an integrable minorant $\langle \ell_a(\om), v(\om) \rangle$, we find $A \in \AA_\sigma$ with $\Sigma \subset A$ over which it attains its value.	By restricting $f$ on $A$ to a suitable separable subspace $W_0$, we may replace the essential infimum functions in both sides of (\ref{eq. lsc along subsequence}) by $\textnormal{cl}_v p_{W_0}$ attaining the essential infimum function as explained below Proposition \ref{prop. divergent subintegral}. For $\ell \in \p I_f \left( u \right)$, choose a subsequence $n_k$ with $v_{n_k} \to v$ a.e. and such that the $L_1(\mu)$-convergent sequence $\langle \ell_a, v_{n_k} \rangle$ has an integrable minorant $m$.
	% Die Konvergenz in L_1(\mu) sieht man, indem man absolut summierbare Teilfolge n_m auswählt und int | < \ell_a, v_{n_m} > | d mu <= sum_m \| v_{n_1} \|_\varphi + sum_{k = 1}^{m - 1} \| v_{n_{k + 1} } - v_{n_k} \|_\varphi beobachtet. Das gilt für jede TFe.
	This implies
	$$
	\textnormal{cl}_v p_{W_0} \left( \om, v_{n_k} \left( \om \right) \right) \ge \langle \ell_a \left( \om \right), v_{n_k} \left( \om \right) \rangle \ge m \left( \om \right) \text{ a.e.}
	$$
	so that Fatou's lemma yields (\ref{eq. lsc along subsequence}). The addendum on (\ref{eq. subdifferential representation}) follows by the corresponding addendum in Theorem \ref{thm. conjugate A}.
\end{proof}

\appendix
%\appendixpage		% adds title 'appendices'
%\addappheadtotoc	% adds a similar title to the table of contents.

\section{Multifunctions}

We compile here auxiliary results about (Effros) measurable multifunctions. Throughout this section, the metric space $M$ is separable unless stated otherwise and $\Gamma \colon \Om \to \mathcal{P} \left( M \right)$ is a multifunction.

\begin{lemma} \label{lem: retain measurability}
	Let the multifunction $\Gamma$ be closed and measurable. Then
	\begin{enumerate}[label = \textnormal{\alph*)}]
		\item Its graph $\graph \Gamma$ is $\AA \otimes \BB \left( M \right)$-measurable. \label{en. it. graph measurable}
		\item The multifunction $\p \Gamma \colon \Om \to \mathcal{P} \left( M \right)$ is measurable. \label{en. it. boundary measurable}
		% Lass Bernd das prüfen. Bliebe das mit Wijsman-Mb.keit gültig statt Separabilität?
	\end{enumerate} 
\end{lemma}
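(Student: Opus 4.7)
The plan is to use throughout the Castaing characterization of Effros measurability: for a closed-valued multifunction $\Gamma$ in a separable metric space $M$, Effros measurability is equivalent to the $\AA$-measurability of $\om \mapsto d(x, \Gamma(\om))$ for every $x \in M$. One direction follows from $\{d(x, \Gamma(\cdot)) < r\} = \Gamma^-(B_r(x))$; the converse uses a countable dense sequence to recover any open set from such sublevels.

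For part (a), I would observe that the joint function $(\om, x) \mapsto d(x, \Gamma(\om))$ is then a Carath\'eodory function, being $\AA$-measurable in $\om$ by the Castaing characterization and $1$-Lipschitz---thus continuous---in $x$. Since $M$ is separable, Carath\'eodory functions on $\Om \times M$ are jointly $\AA \otimes \BB(M)$-measurable by the standard argument approximating in the $x$-variable through a countable dense set. Because $\Gamma(\om)$ is closed,
\[
\graph \Gamma = \{(\om, x) \in \Om \times M : d(x, \Gamma(\om)) = 0\} \in \AA \otimes \BB(M).
\]

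For part (b), I would exploit the decomposition $\p \Gamma(\om) = \Gamma(\om) \cap \overline{M \setminus \Gamma(\om)}$. Setting $\Psi(\om) := \overline{M \setminus \Gamma(\om)}$, a closed-valued multifunction, I would first establish its Effros measurability. Fix a countable dense sequence $\{y_k\}_{k \ge 1} \subset M$; for open $O \subset M$, closure does not affect intersection with open sets, so
\[
\Psi^-(O) = \{\om : (M \setminus \Gamma(\om)) \cap O \ne \emptyset\} = \{\om : O \not\subset \Gamma(\om)\}.
\]
Density of $\{y_k\}$ and closedness of $\Gamma(\om)$ yield $O \subset \Gamma(\om)$ iff $y_k \in \Gamma(\om)$ for every $k$ with $y_k \in O$, giving
\[
\Psi^-(O) = \bigcup_{k : y_k \in O} \{\om : d(y_k, \Gamma(\om)) > 0\} \in \AA.
\]

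Finally, I would apply part (a) to both $\Gamma$ and $\Psi$, so that their graphs lie in $\AA \otimes \BB(M)$ and hence so does $\graph(\p \Gamma) = \graph \Gamma \cap \graph \Psi$. Since $\p \Gamma(\om)$ is closed, Effros measurability would then follow from the measurable projection theorem (Aumann-Himmelberg), available in the paper's standing setting of complete $\mu$ and Polish $M$, via $(\p \Gamma)^-(O) = \pi_\Om\bigl(\graph(\p \Gamma) \cap (\Om \times O)\bigr)$. The main obstacle I anticipate is precisely this last step: intersections of Effros measurable closed-valued multifunctions need not be Effros measurable on a general separable metric space, and some completeness hypothesis on $\mu$ or $M$ is indispensable to justify the measurable projection that recovers $(\p \Gamma)^-(O)$ from the jointly measurable graph.
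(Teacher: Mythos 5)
Your part~\ref{en. it. graph measurable} is correct and is essentially the paper's own argument: the author simply cites \cite[Thm.~14.8]{Variational Analysis} and says to replace $\Q^d$ by a dense sequence, which is precisely the Carath\'eodory property of $(\om,x)\mapsto d(x,\Gamma(\om))$ and the identity $\graph\Gamma=\left\{(\om,x) : d(x,\Gamma(\om))=0\right\}$ that you write down. Your computation of $\Psi^-(O)$ for $\Psi(\om)=\overline{M\setminus\Gamma(\om)}$ is also correct, and it in fact gives an elementary proof that $\left\{\om : O\subset\Gamma(\om)\right\}=\bigcap_{k\,:\,y_k\in O}\left\{\om : d(y_k,\Gamma(\om))=0\right\}$ lies in $\AA$ --- a fact the paper obtains instead from Hess' theorem on the Wijsman topology.

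The gap in part~\ref{en. it. boundary measurable} is exactly where you anticipated it. The lemma is stated in the appendix's setting of an abstract measurable space $(\Om,\AA)$ and a merely separable metric $M$: no measure, no completeness of $\AA$, and no Polishness of $M$ are assumed, so the Aumann--Himmelberg projection theorem is not available; and even where it applies it only yields $(\p\Gamma)^-(O)\in\AA_\mu$, which is strictly weaker than the assertion and than what the downstream results need. Corollary~\ref{cor. open graph measurable} feeds part~\ref{en. it. boundary measurable} back into part~\ref{en. it. graph measurable}, and Lemma~\ref{lem: joint measurability}~\ref{en. it. joint measurability 1} claims $\AA$- (not $\AA_\mu$-) measurability of a normal integrand, so the completion cannot be smuggled in here. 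The paper circumvents the projection entirely by a set-theoretic identity: it writes $\Gamma^-(O)$ as the disjoint union of $\left\{\om : O\subset\Gamma(\om)\right\}$ and $(\p\Gamma)^-(O)$, so that $(\p\Gamma)^-(O)$ is a difference of two sets already known to be in $\AA$. In your notation this reads $(\p\Gamma)^-(O)=\Gamma^-(O)\cap\Psi^-(O)$; the inclusion $\subset$ is trivial, while $\supset$ is the purely topological observation that an open \emph{connected} $O$ meeting both $\Gamma(\om)$ and $M\setminus\Gamma(\om)$ must meet $\p\Gamma(\om)$, since otherwise $O=(O\cap\interior\Gamma(\om))\cup(O\setminus\Gamma(\om))$ would be a partition into nonempty open sets. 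You already have every measurable ingredient; what is missing is this topological identity in place of the projection step. (Be aware that both routes share the same caveat: the identity can fail for disconnected $O$, so one must restrict to a base of connected open sets, e.g.\ balls in a normed space --- a point the paper's proof passes over silently.)
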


\begin{proof}
	Ad \ref{en. it. graph measurable}: the proof for $M = \R^d$ is contained in \cite[Thm. 14.8]{Variational Analysis} and may be adapted without further ado by replacing $\Q^d$ with a dense sequence in $M$.
%	Let $x_n \in M$ be a dense sequence. Since $\Gamma$ is closed, there holds for any $y \in M$ and all $\om \in \Om$, that $x \in \Gamma \left( \om \right)$ iff for every $r \in \Q^+$ exists $x_n$ such that $x \in B_r \left( x_n \right)$ and $\Gamma \left( \om \right) \cap B_r \left( x_n \right) \ne \emptyset$. Hence
%	$$
%	\graph \Gamma = \bigcap_{r \in \Q^+} \bigcup_{n \in \N} \Gamma^- \left( B_r \left(x_n \right) \right) \times B_r \left( x_n \right) \in \AA \otimes \BB \left( M \right)
%	$$
%	as $\Gamma$ is Effros measurable.
	Ad \ref{en. it. boundary measurable}: let $O \subset M$ be open. The set
	$$
	V \left( O \right)
	= \left\{ A \in \CL \left( M \right) \st O \subset A \right\}
	= \bigcap_{x \in O} \left\{ A \in \CL \left( M \right) \st d_x (A) = 0 \right\}
	$$
	is closed in the Wijsman topology $\tau_W$. By Hess' theorem \cite[Thm. 6.5.14]{closed sets} the multifunction $\Gamma$ is $\tau_W$-measurable as a single-valued mapping to $\CL \left( M \right)$. Hence
	\begin{align*}
		\Gamma^- \left( O \right)
		& = \left\{ \om \st O \subset \Gamma \left( \om \right) \right\} \dot{\cup} \left\{ \om \st \p \Gamma \left( \om \right) \cap O \ne \emptyset \right\} \\
		& = \Gamma^{-1} \left[ V \left( O \right) \right] \dot{\cup} \left\{ \om \st \p \Gamma \left( \om \right) \cap O \ne \emptyset \right\} \\
		& = \Gamma^{-1} \left[ V \left( O \right) \right] \dot{\cup} \left( \p \Gamma \right)^{-} \left( O \right)
	\end{align*}
	so that $\left( \p \Gamma \right)^{-} \left( O \right)$ is measurable as a difference of measurable sets.
\end{proof}

\begin{corollary} \label{cor. open graph measurable}
	If $\Gamma$ is open and measurable, then $\graph \Gamma \in \AA \otimes \BB \left( M \right)$.
\end{corollary}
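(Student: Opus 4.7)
The plan is to reduce to Lemma~\ref{lem: retain measurability}\ref{en. it. graph measurable} via the closed-valued complement multifunction $\Gamma^c \colon \om \mapsto M \setminus \Gamma(\om)$. Granted Effros measurability of $\Gamma^c$, the cited lemma delivers $\graph \Gamma^c \in \AA \otimes \BB(M)$, and the identity
$$\graph \Gamma = (\Om \times M) \setminus \graph \Gamma^c$$
transfers the conclusion to $\Gamma$ itself. So everything reduces to verifying that $\Gamma^c$, which is closed-valued by openness of $\Gamma$, is Effros measurable.

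For this verification, fix an open $O \subset M$. Exploiting separability, I would pick a countable dense set $D \subset M$ and write $O = \bigcup_k B_{q_k}(y_k)$ as a countable union of basic balls indexed by $y_k \in D$ and $q_k \in \Q^+$. Openness of $\Gamma(\om)$ then yields the pointwise equivalence $O \subset \Gamma(\om) \iff B_{q_k}(y_k) \subset \Gamma(\om)$ for every $k$, and hence
$$(\Gamma^c)^-(O) = \{\om : O \not\subset \Gamma(\om)\} = \bigcup_k (\Gamma^c)^-\bigl(B_{q_k}(y_k)\bigr),$$
reducing the question to basic balls. For such a ball $B = B_r(y)$, the equivalence $B \cap \Gamma^c(\om) \ne \emptyset \iff d(y, \Gamma^c(\om)) < r$ further reduces the task to measurability of the distance function $\om \mapsto d(y, \Gamma^c(\om))$.

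The main obstacle, and the crux of the argument, is recovering measurability of $\om \mapsto d(y, \Gamma^c(\om))$ from the Effros measurability of $\Gamma$ alone. I would handle this through the representation $d(y, \Gamma^c(\om)) = \sup \{q \in \Q^+ : B_q(y) \subset \Gamma(\om)\}$, and then unfold the condition $B_q(y) \subset \Gamma(\om)$ via the countable base: openness of $\Gamma(\om)$ lets the elementary inclusions be assembled from the already-measurable Effros sets $\Gamma^-\bigl(B_{q'}(y')\bigr)$ for $(y', q') \in D \times \Q^+$. This combinatorial step, expressing the $\Gamma^+$-type condition as a Boolean combination of $\Gamma^-$-type conditions through the countable base, is where the separability hypothesis is used essentially.
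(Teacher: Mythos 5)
Your route is genuinely different from the paper's: the paper writes $\Gamma = \cl \Gamma \setminus \p \cl \Gamma$ and applies both parts of Lemma \ref{lem: retain measurability} to the two \emph{closed} multifunctions $\cl \Gamma$ and $\p \cl \Gamma$, whereas you pass to the closed-valued complement $\Gamma^c$ and try to apply part \ref{en. it. graph measurable} to it. Your reductions are correct up to the last step: everything does come down to the measurability of the sets $\{ \om \colon B_q(y) \subset \Gamma(\om) \}$ for $y$ in a countable dense set and $q \in \Q^+$. But the step you yourself identify as the crux --- recovering these sets as countable Boolean combinations of the Effros sets $\Gamma^-\bigl(B_{q'}(y')\bigr)$ --- is not merely left unproved, it is impossible. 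Two open sets with the same closure meet exactly the same open sets, so the entire family $\{\Gamma^-(O) \colon O \text{ open}\}$ cannot distinguish them, while the predicate $B_q(y) \subset \Gamma(\om)$ can.

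Concretely, take $M = \R^2$, $\Om = \{a, b\}$ with $\AA = \{\emptyset, \Om\}$, $\Gamma(a) = B_1(0)$ and $\Gamma(b) = B_1(0) \setminus \{0\}$. Every nonempty open subset of $B_1(0)$ meets $B_1(0) \setminus \{0\}$, so $\Gamma^-(O) \in \{\emptyset, \Om\}$ for every open $O$ and $\Gamma$ is open-valued and Effros measurable. Yet $(\Gamma^c)^-\bigl(B_{1/2}(0)\bigr) = \{b\} \notin \AA$, so $\Gamma^c$ is \emph{not} Effros measurable and Lemma \ref{lem: retain measurability} is unavailable for it; likewise $\{\om \colon B_q(0) \subset \Gamma(\om)\} = \{a\} \notin \AA$ for $0 < q \le 1$, so the combinatorial step fails. (The same example shows the section $\{\om \colon (\om, 0) \in \graph \Gamma\} = \{a\}$ is non-measurable, hence $\graph \Gamma \notin \AA \otimes \BB(M)$: the unqualified statement really requires the values to be regular open, i.e. $\Gamma = \interior \cl \Gamma$, which is exactly the identity $\Gamma = \cl \Gamma \setminus \p \cl \Gamma$ on which the paper's own proof rests.) So the gap is not a missing computation but a structural one: Effros measurability of an open-valued $\Gamma$ only controls $\cl \Gamma$, and no argument using the sets $\Gamma^-(O)$ alone can capture $\graph \Gamma$ itself; any repair must, as in the paper, route the containment conditions through the closed multifunctions $\cl \Gamma$ and $\p \cl \Gamma$.
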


\begin{proof}
	The multifunctions $\cl \Gamma$ and $\p \cl \Gamma$ are measurable with measurable graphs by Lemma \ref{lem: retain measurability}. As $\Gamma$ is open, we have $\Gamma = \cl \Gamma \setminus \p \cl \Gamma$ so that $\graph \Gamma = \graph \cl \Gamma \setminus \graph \p \cl \Gamma$ belongs to $\AA \otimes \BB \left( M \right)$.
\end{proof}

\begin{lemma} \label{lem: solid multifunctions simpler measurability}
	Let $\cl \Gamma = \cl \interior \Gamma$. Then $\Gamma$ is measurable iff $\Gamma^- \left( \left\{ x \right\} \right)$ are measurable for $x \in M$.
	% Die Bedingung bleibt hinreichend, wenn M zu separablem topologischem Raum T wird.
\end{lemma}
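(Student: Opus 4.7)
The plan is to handle the two implications separately. The forward direction is essentially a direct translation of Effros measurability via a countable base of balls, while the reverse direction is the real content and is where the solidity hypothesis $\cl \Gamma = \cl \interior \Gamma$ plays its essential role. Throughout I will fix a countable dense subset $D \subset M$, available since $M$ is separable.

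For the forward direction, I would observe that for any $x \in M$
\begin{equation*}
\{\omega \mid x \in \cl \Gamma(\omega)\} = \bigcap_{n \ge 1} \Gamma^-\bigl(B_{1/n}(x)\bigr),
\end{equation*}
since $x$ lies in the closure of $\Gamma(\omega)$ iff every open ball around $x$ meets $\Gamma(\omega)$. Effros measurability of $\Gamma$ renders each set on the right measurable, so this countable intersection is measurable. Passing to $\cl \Gamma$ is harmless because $\Gamma^-(O) = (\cl \Gamma)^-(O)$ for any open $O \subset M$; under the solidity hypothesis, this identification lets us read off measurability of $\Gamma^-(\{x\})$ from the display above.

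For the reverse direction, assume $\Gamma^-(\{x\})$ is measurable for every $x \in M$ and let $O \subset M$ be open. I would prove the identity
\begin{equation*}
\Gamma^-(O) = \bigcup_{x \in D \cap O} \Gamma^-(\{x\}),
\end{equation*}
which exhibits $\Gamma^-(O)$ as a countable union of measurable sets. The inclusion $\supset$ is immediate because $D \cap O \subset O$. For $\subset$, fix $\omega \in \Gamma^-(O)$ and choose $y \in \Gamma(\omega) \cap O$. Then $y \in \Gamma(\omega) \subset \cl \Gamma(\omega) = \cl \interior \Gamma(\omega)$ by solidity, so every open neighbourhood of $y$ meets $\interior \Gamma(\omega)$. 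Since $O$ itself is such a neighbourhood, $\interior \Gamma(\omega) \cap O$ is a nonempty open subset of $M$ and consequently contains some $x \in D$. As $\interior \Gamma(\omega) \subset \Gamma(\omega)$, this $x$ lies in $\Gamma(\omega)$, hence $\omega \in \Gamma^-(\{x\})$ for some $x \in D \cap O$, which establishes $\subset$.

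The main obstacle is precisely the point at which solidity is used. Without the hypothesis $\cl \Gamma = \cl \interior \Gamma$, the point $y$ supplied by $\omega \in \Gamma^-(O)$ could be an isolated boundary point of $\Gamma(\omega)$ with no interior point of $\Gamma(\omega)$ nearby, in which case no point of $D \cap O$ need belong to $\Gamma(\omega)$ and the identity would fail. Solidity guarantees that $\Gamma(\omega)$ clusters around interior points, so that $D$ probes $\Gamma$ densely enough to detect any open set it meets; this is exactly what is needed to reduce the Effros test on open sets to the pointwise tests $\Gamma^-(\{x\})$.
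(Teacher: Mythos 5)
Your reverse direction is correct and is essentially the paper's own argument: the paper likewise fixes a dense sequence $\left\{ x_n \right\}$ in $O$ and uses $\cl \Gamma = \cl \interior \Gamma$ to rewrite $\Gamma^- \left( O \right)$ as $\left\{ \om \mid \interior \Gamma \left( \om \right) \cap O \ne \emptyset \right\} = \bigcup_n \left\{ \om \mid x_n \in \interior \Gamma \left( \om \right) \right\}$, which is sandwiched between $\bigcup_n \Gamma^- \left( \left\{ x_n \right\} \right)$ and $\Gamma^- \left( O \right)$. Your justification of the inclusion $\subset$ (the point $y \in \Gamma(\om) \cap O$ lies in $\cl \interior \Gamma(\om)$, so $O$ meets $\interior \Gamma(\om)$, which then contains a point of $D$) is exactly this step, spelled out more carefully than in the paper.

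The forward direction, however, has a genuine gap. Your display establishes measurability of $\left( \cl \Gamma \right)^- \left( \left\{ x \right\} \right) = \left\{ \om \mid x \in \cl \Gamma \left( \om \right) \right\}$, whereas the lemma asks for $\Gamma^- \left( \left\{ x \right\} \right) = \left\{ \om \mid x \in \Gamma \left( \om \right) \right\}$; the two differ on $\left\{ \om \mid x \in \cl \Gamma \left( \om \right) \setminus \Gamma \left( \om \right) \right\}$, and solidity does not close this difference. Concretely, take $M = \R$, $\Om = \left[ 0, 1 \right]$ with the Lebesgue $\sigma$-algebra, $A \subset \Om$ non-measurable, and $\Gamma \left( \om \right) = \left( 0, 1 \right)$ for $\om \in A$, $\Gamma \left( \om \right) = \left( 0, 1 \right]$ otherwise. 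Then $\cl \Gamma \left( \om \right) = \cl \interior \Gamma \left( \om \right) = \left[ 0, 1 \right]$ for every $\om$, and $\Gamma^- \left( O \right) \in \left\{ \emptyset, \Om \right\}$ for every open $O$ because an open set meeting $\left( 0, 1 \right]$ necessarily meets $\left( 0, 1 \right)$; so $\Gamma$ is Effros measurable and solid, yet $\Gamma^- \left( \left\{ 1 \right\} \right) = \Om \setminus A$ is not measurable. Thus no argument starting from $\left( \cl \Gamma \right)^- \left( \left\{ x \right\} \right)$ can recover the claim, and in fact the example shows the forward implication fails for multifunctions that are not closed-valued. You are in good company: the paper's own proof of this direction is the one-line appeal to ``pre-images of compact sets under measurable multifunctions are measurable'', a fact valid for closed-valued multifunctions but not here, so it suffers from the same defect. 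Only the reverse implication is used elsewhere in the paper, and that is the half you prove correctly.
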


\begin{proof}
	$\implies$: pre-images of compact sets under measurable multifunctions are measurable. $\impliedby$: let $O \subset M$ be open and $\left\{ x_n \right\} \subset O$ a dense sequence. From $\cl \Gamma = \cl \interior \Gamma$ follows
	\begin{align*}
		\Gamma^- \left( O \right)
		= \left\{ \om \st \Gamma \left( \om \right) \cap O \ne \emptyset \right\}
		& = \left\{ \om \st \interior \Gamma \left( \om \right) \cap O \ne \emptyset \right\} \\
		& = \bigcup_{n \ge 1} \left\{ \om \st \interior \Gamma \left( \om \right) \cap \left\{ x_n \right\} \ne \emptyset \right\}.
	\end{align*}
	As measurability of the multifunctions $\interior \Gamma$ and $\cl \interior \Gamma$ is equivalent, the last set is measurable and our claim obtains.
\end{proof}

\begin{lemma} \label{lem: strong mb completion}
	Let $M$ be an arbitrary metric space and $\AA_\mu$ the completion of $\AA$ w.r.t. $\mu$. A function $u \colon \Om \to M$ is $\AA_\mu$-$\BB \left( M \right)$-measurable and almost separably valued iff there exists a strongly $\AA$-$\BB \left( M \right)$-measurable function $v \colon \Om \to M$ with $u = v$ a.e.
\end{lemma}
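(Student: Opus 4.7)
The plan is to reduce the claim to the characterization of strong measurability recorded in the introduction: a function into a metric space is strongly measurable iff it is measurable and has a separable range. Once that is in hand, the ``if'' direction is immediate. Indeed, if $v$ is strongly $\AA$-$\BB(M)$-measurable then $v(\Om)$ is separable and $v$ is in particular $\AA_\mu$-$\BB(M)$-measurable; any $u$ with $u = v$ off a $\mu$-null set inherits both properties, since for every Borel set $B \subset M$ the symmetric difference $u^{-1}(B) \triangle v^{-1}(B)$ is contained in the null set $\{u \ne v\}$, hence lies in $\AA_\mu$.

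For the ``only if'' direction, let $u$ be $\AA_\mu$-$\BB(M)$-measurable and almost separably valued. First I absorb the exceptional null set: pick $N_0 \in \AA$ with $\mu(N_0) = 0$ and a closed separable $S \subset M$ with $u(\Om \setminus N_0) \subset S$, fix $x_0 \in S$, and replace $u$ on $N_0$ by $x_0$. This yields an everywhere $S$-valued, $\AA_\mu$-measurable function $\tilde u$ that agrees with $u$ off $N_0$, so the task reduces to representing $\tilde u$ a.e.\ by a strongly $\AA$-measurable function.

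Next I approximate $\tilde u$ uniformly by countably-valued $\AA_\mu$-measurable functions. Fix a dense sequence $(x_n) \subset S$ and, for each $k \in \N$, set
$$
u_k(\om) = x_{n_k(\om)}, \quad n_k(\om) = \min\{n \in \N \st \tilde u(\om) \in B(x_n, 1/k)\},
$$
so that $d(u_k, \tilde u) < 1/k$ pointwise and each level set $\{u_k = x_n\}$ lies in $\AA_\mu$. Using the definition of $\AA_\mu$, choose $B_{k,n} \in \AA$ and a null $\AA$-set $N_{k,n}$ such that $\{u_k = x_n\}$ coincides with $B_{k,n}$ on $\Om \setminus N_{k,n}$. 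Set $N_k = \bigcup_n N_{k,n}$ and $N_\infty = N_0 \cup \bigcup_k N_k$; both are null $\AA$-sets. Off $N_k$ the $B_{k,n}$ form an $\AA$-measurable partition of $\Om \setminus N_k$, so the formula
$$
v_k(\om) = \begin{cases} x_n & \om \in B_{k,n} \setminus N_k, \\ x_0 & \om \in N_k, \end{cases}
$$
defines a countably-valued $\AA$-measurable function with $v_k = u_k$ on $\Om \setminus N_k$.

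Finally I pass to the limit. On $\Om \setminus N_\infty$ we have $v_k = u_k \to \tilde u = u$, while on $N_\infty$ the sequence $(v_k)$ may fail to converge. Define $v$ as the pointwise limit of $v_k$ where this limit exists in $S$ (equivalently, where $v_k$ is $d$-Cauchy, an $\AA$-measurable condition since each $v_k$ is $\AA$-measurable and $S$ is a separable metric space), and set $v = x_0$ otherwise. Then $v$ is $\AA$-$\BB(M)$-measurable with range in $S$, hence strongly $\AA$-measurable by the characterization from the introduction, and $v = u$ on $\Om \setminus N_\infty$, i.e.\ $\mu$-a.e. The only delicate point in the argument is the disjointification step that upgrades the $\AA_\mu$-measurable countably-valued approximations $u_k$ to genuinely $\AA$-measurable ones $v_k$ on a common cocountable null set, which I expect to be the main technical obstacle; everything else is routine.
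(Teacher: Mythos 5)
Your argument is correct and proves the right statement, but it takes a slightly longer route than the paper. For the forward direction the paper skips the countably-valued approximation entirely: after modifying $u$ on a null set to obtain an everywhere separably valued, $\AA_\mu$-measurable function $w$, it takes a countable family of balls $B_n$ generating the topology of $w(\Om)$, writes each $w^{-1}(B_n)$ as an $\AA$-set up to a null $\AA$-set $N_n$, and redefines $w$ by a constant on $N = \bigcup_n N_n$; the resulting function has $\AA$-measurable preimages of all the $B_n$, hence is $\AA$-measurable with separable range, i.e. strongly measurable. You apply the same ``correct countably many $\AA_\mu$-sets simultaneously'' idea to the level sets of the canonical countably-valued approximants $u_k$ and then pass to a pointwise limit; this is valid and self-contained, at the cost of the extra limiting step you yourself identify as the technical crux. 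One small imprecision there: the parenthetical claim that ``the limit exists in $S$'' is \emph{equivalent} to ``$(v_k(\om))$ is $d$-Cauchy'' implicitly uses completeness of $S$, which is not available since $M$ is an arbitrary metric space (a Cauchy sequence need not converge, and the measurability of the exact convergence set then requires a further argument, e.g. passing to the completion of $S$). This is harmless and easily repaired: you already know $v_k \to u$ on $\Om \setminus N_\i$ with $N_\i \in \AA$ null, so simply define $v = \lim_k v_k$ on $\Om \setminus N_\i$ and $v = x_0$ on $N_\i$; then $v$ is a pointwise limit of countably valued $\AA$-measurable functions, hence $\AA$-measurable with separable range, and $v = u$ a.e. Your backward direction coincides with the paper's.
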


\begin{proof}
	$\implies$: modify $u$ on a null set to obtain a separably valued $\AA_\mu$-measurable function $w$ and take a sequence $B_n$ of balls generating the topology of $w \left(\Om\right)$. Express $w^{-1} \left( B_n \right)$ as a disjoint union of two sets $\Om_n \in \AA$ and $M_n$ such that $M_n \subset N_n$ for a null set $N_n \in \AA$. For $N = \bigcup_{n \ge 1} N_n$ define $w$ to agree with $w$ on $\Om \setminus N$ and assign any constant value on $N$. Any $v^{-1} \left( B_n \right)$ is $\AA$-measurable whence $v$ is strongly $\AA$-measurable and $u = v$ a.e.
	% $v^{-1} \left( B_n \right) = A_n \setminus N \in \AA$ if $x \not \in B_n$ and $v^{-1} \left( B_n \right) = A_n \cup N \in \AA$ if $x \in B_n$.
	
	$\impliedby$: let $N \in \AA$ be negligible with $u = v$ on $\Om \setminus N$. If $B \in \BB \left( M \right)$ then
	$$
	u^{-1} \left( B \right) = \underbrace{ \left( v^{-1} \left( B \right) \cap N^c \right) }_{\in \AA} \cup \underbrace{ \left( u^{-1} \left( B \right) \cap N \right) }_{\subset N \text{ with } \mu \left( N \right) = 0} \in \AA_\mu.
	$$
	Hence $u$ is $\AA_\mu$-$\BB \left( M \right)$-measurable and $u \left( \Om \setminus N \right) = v \left( \Om \setminus N \right)$ is separable.
\end{proof}

\section{Integrands}

% Kapitel ist geprüft und verbessert den 20.08.2021.

We compile here auxiliary results about measurability of integrands. Since none of the standard references \cite{closed sets, mesu multi, Normal Integrands Hess', Normal Integrands Roc's} contain these statements directly in the required form, we give proofs.

\begin{definition}[infimal measurability] \label{def. infimal measurability}
	An integrand $f \colon \Om \times T \to \left[ - \i, \i \right]$ is called infimally measurable iff	the sets $S^-_f \left( O \times I \right) = \left\{ \om \st \epi f_\om \cap O \times I \ne \emptyset \right\}$ for $O \subset T$ open and $I \subset \R$ an open interval are measurable.
\end{definition}

\begin{lemma} \label{lem: equivalence infimal measurability}
	For an integrand $f \colon \Om \times T \to \left[ - \i, \i \right]$ the following are equivalent:
	\begin{enumerate}[label = \textnormal{\alph*)}]
		\item $f$ is infimally measurable;
		\item For $O \subset T$ open the functions $\inf_O f_\om$ are measurable;
		\item For $\alpha \in \left[ -\i, \i \right]$ the strict sublevel multifunctions
		$$
		L_\alpha \colon \Om \to \mathcal{P} \left( T \right) \colon \om \mapsto \lev_{< \alpha} f_\om = \left\{ x \st f_\om (x) < \alpha \right\}
		$$
		are Effros measurable.
	\end{enumerate}
\end{lemma}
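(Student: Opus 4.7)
The plan is to observe that all three conditions are equivalent to the single assertion that for every open $O \subset T$ and every real $\alpha$, the set $\{\om \in \Om \st \inf_O f_\om < \alpha\}$ is measurable. Two elementary identities will do the translation.

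First I would record that for any open $O \subset T$ and any open interval $J = (p, q) \subset \R$, there holds
$$
S^-_f(O \times J) = \{\om \st \inf_O f_\om < q\},
$$
independently of the left endpoint $p$. Indeed, a point $(x, r) \in O \times J$ lies in $\epi f_\om$ iff $r \ge f_\om(x)$, and such an $r$ exists in the open interval $J$ precisely when $f_\om(x) < q$. Second, by the very definition of the strict sublevel multifunction,
$$
L_\alpha^-(O) = \{\om \st \exists x \in O \colon f_\om(x) < \alpha\} = \{\om \st \inf_O f_\om < \alpha\}.
$$

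For (a) $\Leftrightarrow$ (b): assuming (a), specialising to $J = (-n, \alpha)$ with $n \in \N$ and $\alpha \in \R$ shows via the first identity that all strict sublevels of $\om \mapsto \inf_O f_\om$ at real levels are measurable, hence this function is measurable as a map into $[-\i, \i]$. Conversely, measurability of $\inf_O f_\om$ for every open $O$ renders $\{\inf_O f_\om < q\}$ measurable for every $q \in \R$, which by the identity yields (a). For (b) $\Leftrightarrow$ (c): the second identity shows that Effros measurability of every $L_\alpha$ (i.e.\ measurability of $L_\alpha^-(O)$ for all open $O \subset T$ and all $\alpha \in [-\i, \i]$) is equivalent to measurability of the sets $\{\inf_O f_\om < \alpha\}$ for all such $O$ and $\alpha$, and hence to measurability of the function $\inf_O f_\om$ for every open $O$.

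I do not anticipate any real obstacle: the lemma is essentially a dictionary between three parallel formulations of the same measurability requirement (through the epigraphical multifunction, through the infimum functions on opens, and through the strict sublevel multifunctions), and the two identities above make each translation transparent. The only minor care needed is to handle the endpoint values $\alpha \in \{-\i, +\i\}$ in (c), which is automatic since $L_{-\i} \equiv \emptyset$ and $L_{+\i}(\om) = \dom f_\om$ corresponds to $\{\inf_O f_\om < +\i\}$, both covered by the standard definition of measurability for $[-\i, \i]$-valued functions.
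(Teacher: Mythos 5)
Your proof is correct and follows essentially the same route as the paper, which likewise reduces all three conditions to the single chain of identities $S^-_f\left( O \times (\alpha,\beta) \right) = \left\{ \inf_O f_\om < \beta \right\} = L^-_\beta(O)$. Your added care with the extended levels $\alpha \in \{-\i,+\i\}$ and the irrelevance of the left endpoint is a harmless elaboration of what the paper leaves implicit.
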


\begin{proof}
	For an open subset $O \subset T$ and an interval $I = (\alpha, \beta)$ there holds
	\begin{equation*}
		S^-_f\left( O \times I \right)
		= \left\{ \epi f_\om \cap O \times I \ne \emptyset \right\}
		= \left\{ \inf_O f_\om < \beta \right\}
		= L^-_\beta(O). \qedhere
	\end{equation*}
\end{proof}

\begin{lemma} \label{lem: infimal measurability and normality}
	If the integrand $f \colon \Om \times T \to \left[ - \i, \i \right]$ is pre-normal, then $f$ is infimally measurable. If $T$ is second countable, the converse is true as well.
\end{lemma}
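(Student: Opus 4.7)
The plan is to unpack both directions directly from the definitions. For the forward direction, suppose $f$ is pre-normal, i.e.\ the epigraphical multifunction $S_f \colon \Om \to \PP(T \times \R)$ with $S_f(\om) = \epi f_\om$ is Effros measurable. Then by definition, for every open $U \subset T \times \R$ the set $S_f^-(U)$ is measurable. In particular, products $O \times I$ with $O \subset T$ open and $I \subset \R$ an open interval are open in the product topology, so $S_f^-(O \times I)$ is measurable. By Definition \ref{def. infimal measurability} this is exactly infimal measurability. No second countability is needed here.

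For the converse, I would use second countability to reduce arbitrary open sets in $T \times \R$ to the basic products appearing in the definition of infimal measurability. Fix a countable base $\left\{ O_n \right\}_{n \in \N}$ of $T$ and recall that $\left\{ I_m \right\}_{m \in \N}$, the open intervals with rational endpoints together with $(-\i, q)$ and $(q, \i)$ for $q \in \Q$, form a countable base of $\R$. Then $\left\{ O_n \times I_m \right\}_{n, m \in \N}$ is a countable base for $T \times \R$. Given any open $U \subset T \times \R$, express it as $U = \bigcup_{(n, m) \in J} O_n \times I_m$ for some $J \subset \N^2$. Since preimages of multifunctions commute with arbitrary unions in the sense
\[
S_f^-(U) = \bigcup_{(n, m) \in J} S_f^-(O_n \times I_m),
\]
and each set on the right is measurable by infimal measurability, the countable union $S_f^-(U)$ is measurable. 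Hence $S_f$ is Effros measurable and $f$ is pre-normal.

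The only subtle point is ensuring that the preimage identity above is correct: this is immediate because $\epi f_\om$ meets $U$ iff it meets at least one of the basic open sets $O_n \times I_m$ composing $U$, which is a trivial set-theoretic observation. No significant obstacle is expected; the argument is essentially a measurability transfer via a countable base, and the forward direction is tautological from the definitions.
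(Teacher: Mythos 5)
Your proof is correct and follows essentially the same route as the paper: the forward direction is immediate from the definitions, and the converse writes an arbitrary open subset of $T \times \R$ as a countable union of basic products $O_n \times I_m$ and uses that $S_f^-$ commutes with unions. The only cosmetic difference is that the paper restricts to bounded rational intervals $\left( \alpha_k, \beta_k \right)$, which already suffice as a base of $\R$; including the half-infinite intervals is harmless.
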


\begin{proof}
	$\implies$: recall Definition \ref{def. infimal measurability}. $\impliedby$: let $O_n \subset T$ be a base sequence of open sets. By definition of the product topology, every open set $U \subset T \times \R$ may be written as $U = \bigcup_{k \in \N} O_{n_k} \times \left( \alpha_k, \beta_k \right)$ with $\alpha_k, \beta_k \in \Q$ whence there follows measurability of the set
	\begin{equation*}
		S^-_f \left( U \right) = \bigcup_{k \in \N} S^-_f \left( O_{n_k} \times \left( \alpha_k, \beta_k \right) \right). \qedhere
	\end{equation*}
\end{proof}

We call a map $F \colon T \to \left[ - \i, \i \right]$ upper semicontinuous if its hypograph is closed. When $F$ is $\left[ -\i, \i \right)$-valued, this coincides with other known characterizations of upper semicontinuity such as open sublevel sets.

\begin{lemma} \label{lem: normal sums}
	If $f \colon \Om \times T \to \left[ -\i, \i \right]$ is pre-normal and $g \colon T \to \R$ is upper semicontinuous, then $h_\om (x) = f_\om (x) + g \left( x \right)$ is infimally measurable.
\end{lemma}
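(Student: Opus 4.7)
My plan is to reduce to Lemma \ref{lem: equivalence infimal measurability} by showing that for every open $O \subset T$ and every $\beta \in \R$, the set $\{\om \in \Om : \inf_{x \in O} h_\om(x) < \beta\}$ lies in $\AA$. The natural device is to insert a rational threshold $r$ between the values of $f_\om$ and $\beta - g$, decoupling the two summands so that measurability for $f$ and openness coming from $g$ can be applied separately.

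Concretely, I will establish the identity
\begin{equation*}
\left\{ \om : \inf_{x \in O} h_\om(x) < \beta \right\} = \bigcup_{r \in \Q} \left\{ \om : \inf_{x \in O_r} f_\om(x) < r \right\}, \quad O_r := O \cap \{g < \beta - r\}.
\end{equation*}
For the nontrivial inclusion, suppose $f_\om(x_0) + g(x_0) < \beta$ for some $x_0 \in O$. Since $g$ is $\R$-valued, $\beta - g(x_0)$ is a real number that strictly majorizes $f_\om(x_0) \in [-\i, \i)$, hence the interval $(f_\om(x_0), \beta - g(x_0))$ contains some $r \in \Q$, and this $r$ satisfies $f_\om(x_0) < r$ together with $g(x_0) < \beta - r$. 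The converse is immediate by summing the two inequalities.

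With this decomposition the conclusion is quick. Upper semicontinuity of $g$ means $\{g \ge s\}$ is closed for each $s \in \R$, so each $\{g < \beta - r\}$ is open in $T$ and hence $O_r$ is open. By Lemma \ref{lem: infimal measurability and normality} the pre-normal integrand $f$ is infimally measurable, so by Lemma \ref{lem: equivalence infimal measurability} each set $\{ \om : \inf_{x \in O_r} f_\om(x) < r \}$ belongs to $\AA$. Taking the countable union over $r \in \Q$ gives the desired measurability, and invoking Lemma \ref{lem: equivalence infimal measurability} once more yields infimal measurability of $h$.

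The only nontrivial step is recognizing the rational-threshold decomposition; the rest is bookkeeping. The one technical care-point is making sure the rational $r$ can always be inserted, which is precisely where the hypothesis that $g$ takes values in $\R$ (rather than in $[-\i, \i]$) is used, and where one notes that $f_\om = -\i$ causes no trouble because any real $r < \beta - g(x_0)$ still strictly dominates $-\i$.
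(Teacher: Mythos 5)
Your proof is correct, including the two boundary cases ($f_\om(x_0)=+\i$ contributes nothing to the infimum of $h_\om$, and $f_\om(x_0)=-\i$ still gives $h_\om(x_0)=-\i<\beta$), and it is a close cousin of the paper's argument, but the mechanism differs in a way worth recording. The paper stays at the level of epigraphs: it introduces the open set $V=\left\{ (x,r) \st x\in O,\ r<\beta-g(x) \right\}\subset T\times\R$, proves $S^-_h\left[ O\times(\alpha,\beta) \right]=S^-_f(V)$, and concludes by applying pre-normality of $f$ --- Effros measurability of $\om\mapsto\epi f_\om$ --- to the non-product open set $V$. Your rational-threshold identity is exactly the covering of that $V$ by countably many basic product opens, $V=\bigcup_{r\in\Q}O_r\times(-\i,r)$ with $O_r=O\cap\{g<\beta-r\}$, pushed through to the infimum-function formulation of Lemma \ref{lem: equivalence infimal measurability}. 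What this buys you is that you only ever use infimal measurability of $f$ (obtained from pre-normality via Lemma \ref{lem: infimal measurability and normality}), never Effros measurability with respect to arbitrary open subsets of $T\times\R$; so your argument in fact establishes the slightly stronger statement that $h$ is infimally measurable whenever $f$ merely is, with no pre-normality and no second countability of $T$ required. The paper's version is a line shorter because it invokes pre-normality in one stroke. One small point to make explicit when closing the loop with the second condition of Lemma \ref{lem: equivalence infimal measurability}: measurability of $\left\{ \inf_O h_\om<\beta \right\}$ for all real $\beta$ does suffice for measurability of the $\left[-\i,\i\right]$-valued function $\om\mapsto\inf_O h_\om$, since the rays $\left[-\i,\beta\right)$ generate the Borel $\sigma$-algebra of the extended line; this is routine but worth a clause.
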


\begin{proof}
	By upper semicontinuity of $g$ the set $V = \left\{ \left( x, r \right) \st x \in O, \, r < \beta - g \left( x \right) \right\}$ is open. The claim obtains if we show that
	$$
	S^-_h \left[ O \times \left( \alpha, \beta \right) \right] = \left\{ \epi h_\om \cap O \times \left( \alpha, \beta \right) \ne \emptyset \right\} = \left\{ \epi f_\om \cap V \ne \emptyset \right\} = S^-_f \left( V \right).
	$$
	We check the set identity: let $\left( x, r \right) \in \epi h_\om \cap O \times \left( \alpha, \beta \right)$ so that $f_\om (x) + g \left( x \right) \le r$ while $x \in O$ and $r \in \left( \alpha, \beta \right)$. Then $f_\om (x) < \beta - g \left( x \right)$ so that $\epi f_\om \cap V $ is non-empty.
	
	Conversely, if $\left( x, r \right) \in \epi f_\om \cap V$, then $f_\om (x) + g \left( x \right) < r + g \left( x \right) < \beta$ so that the intersection $\epi h_\om \cap O \times \left( \alpha, \beta \right)$ is non-empty.
\end{proof}

\begin{lemma} \label{lem: upper semi normal}
	Let $M$ be separable. Suppose $f \colon \Om \times M \to \left[ - \i, \i \right]$ is such that
	\begin{enumerate}[label = \textnormal{\alph*)}]
		\item For all $\om \in \Om$, $x \mapsto f_\om (x)$ is upper semicontinuous; \label{en. it. normality via partial properties 1}
		\item For all $x \in M$, $\om \mapsto f_\om (x)$ is measurable. \label{en. it. normality via partial properties 2}
	\end{enumerate}
	Then $f$ is a pre-normal integrand.
\end{lemma}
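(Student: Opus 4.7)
The plan is to reduce the claim to infimal measurability via the machinery already in place and then exploit separability of $M$ to replace the infimum over an open set by a countable one.

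Since $M$ is a separable metric space, it is second countable, so by Lemma \ref{lem: infimal measurability and normality} pre-normality of $f$ is equivalent to its infimal measurability, which by Lemma \ref{lem: equivalence infimal measurability} amounts to measurability of $\om \mapsto \inf_{x \in O} f_\om(x)$ for every open $O \subset M$. I would therefore fix once and for all a countable dense sequence $\{x_n\}_{n \ge 1} \subset M$ and reduce matters to the pointwise identity
$$
\inf_{x \in O} f_\om(x) = \inf_{x_n \in O} f_\om(x_n), \quad \om \in \Om.
$$
Granting this identity, the right-hand side is, for each fixed open $O$, a countable infimum of the functions $\om \mapsto f_\om(x_n)$ that are measurable by hypothesis b), and hence yields the desired measurability of $\om \mapsto \inf_{x \in O} f_\om(x)$.

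To establish the pointwise identity, the inequality $\le$ is immediate because the right-hand side ranges over a subset of $O$. For $\ge$, fix $x \in O$; the case $f_\om(x) = \i$ is trivial, so assume $f_\om(x) < \i$ and pick $r \in \R$ with $r > f_\om(x)$. By the upper semicontinuity hypothesis a), the hypograph of $f_\om$ is closed, so the strict sublevel set $U := \{y \in M \colon f_\om(y) < r\}$ is open and contains $x$; hence $U \cap O$ is a non-empty open subset of $M$ and meets $\{x_n\}$ by density. Any such $x_n$ lies in $O$ and satisfies $f_\om(x_n) < r$, so the right-hand infimum is at most $r$. Letting $r \downarrow f_\om(x)$ completes the argument.

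The only real obstacle is the direction of semicontinuity: what makes the density argument work is precisely that upper semicontinuity forces the strict sublevel sets to be open, so that every $x \in O$ with value below a threshold can be approached by some $x_n$ whose value is also below that threshold. Lower semicontinuity alone would not yield this, and indeed would fail to approximate a genuine minimiser. Once this observation is isolated, the proof is routine and I anticipate no further technical difficulty.
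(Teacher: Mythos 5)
Your proof is correct, but it takes a different route from the paper's. The paper exploits hypothesis a) to note that the epigraphical multifunction is ``solid'' in the sense $\cl \epi f_\om = \cl \interior \epi f_\om$, so that Lemma \ref{lem: solid multifunctions simpler measurability} reduces Effros measurability of $S_f$ to measurability of the singleton preimages $S_f^-\left(\left\{ (x,\alpha) \right\}\right) = \left\{ \om \st f_\om(x) \le \alpha \right\}$, which is exactly hypothesis b); this is a three-line argument. You instead pass through infimal measurability via the second-countability converse in Lemma \ref{lem: infimal measurability and normality} and the characterization in Lemma \ref{lem: equivalence infimal measurability}, and then verify directly that $\inf_O f_\om$ equals the countable infimum over a dense sequence intersected with $O$, the openness of the strict sublevel sets $\left\{ f_\om < r \right\}$ for real $r$ (which is what closedness of the hypograph gives, even for $\left[-\i,\i\right]$-valued $f_\om$) being precisely what lets the dense points see every competitor. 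Both arguments use upper semicontinuity and separability in the same essential way -- openness of strict sublevels plus a countable dense family -- but yours is a self-contained Carath\'eodory-type computation that bypasses the solid-multifunction lemma, at the cost of invoking two other appendix lemmas; the paper's is shorter given its toolkit. Your closing remark that lower semicontinuity would not permit the dense-subset reduction is accurate and correctly identifies where a) enters.
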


\begin{proof}
	By \ref{en. it. normality via partial properties 1} holds $\cl \epi f_\om = \cl \interior \epi f_\om$ so that Lemma \ref{lem: solid multifunctions simpler measurability} makes it sufficient to observe that for all $\left( x, \alpha \right) \in M \times \R$ the set $S_f^- \left( \left\{ \left( x, \alpha \right) \right\} \right) = \left\{ \om \st f_\om (x) \le \alpha \right\}$ is measurable by \ref{en. it. normality via partial properties 2}.
\end{proof}

\begin{lemma} \label{lem: Lipschitz regularization properties}
	Let $f \colon M \to \left[ - \i, \i \right]$ be a function and $\lambda > 0$. If for some $x_0 \in M$ the Lipschitz regularization
	$$
	f_\lambda \left( x \right) = \inf_{y \in M} f \left( y \right) + \lambda d \left( x, y \right)
	$$
	is finite, then $f_\lambda$ is finite-valued and Lipschitz continuous with constant $\lambda$.
\end{lemma}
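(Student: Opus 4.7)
The plan is to derive everything from the single triangle-inequality estimate
$$
d(x, y) \le d(x, z) + d(z, y) \quad \forall x, y, z \in M,
$$
which after multiplication by $\lambda$ and addition of $f(y)$ gives, upon taking the infimum over $y \in M$,
$$
f_\lambda(x) \le f_\lambda(z) + \lambda d(x, z) \quad \forall x, z \in M. \qquad (*)
$$
This is the only real computation in the proof; everything else is bookkeeping on the two-point inequality $(*)$.

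First I would use $(*)$ with $z = x_0$ to obtain $f_\lambda(x) \le f_\lambda(x_0) + \lambda d(x, x_0) < \infty$ for every $x \in M$, which shows that $f_\lambda < \infty$ everywhere. To rule out the value $-\infty$ I would apply $(*)$ in the reverse direction with the roles of $x$ and $x_0$ interchanged, namely $f_\lambda(x_0) \le f_\lambda(x) + \lambda d(x_0, x)$. Because $f_\lambda(x_0) \in \R$ by assumption, this forces $f_\lambda(x) \ge f_\lambda(x_0) - \lambda d(x_0, x) > -\infty$, proving finiteness.

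Having established finiteness, the Lipschitz bound follows by applying $(*)$ twice, once with a pair $(x_1, x_2)$ and once with $(x_2, x_1)$, to obtain
$$
f_\lambda(x_1) - f_\lambda(x_2) \le \lambda d(x_1, x_2), \quad f_\lambda(x_2) - f_\lambda(x_1) \le \lambda d(x_1, x_2),
$$
so that $|f_\lambda(x_1) - f_\lambda(x_2)| \le \lambda d(x_1, x_2)$.

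There is no real obstacle here; the only subtle point to be careful about is the order in which finiteness is established. One must first secure $f_\lambda < \infty$ everywhere before invoking the reverse inequality to rule out $-\infty$, since the estimate $(*)$ is only informative when the right-hand side is not $+\infty$. After that, the Lipschitz inequality is a direct and symmetric consequence of $(*)$, and no measurability, lower semicontinuity, or completeness hypothesis on $M$ is needed.
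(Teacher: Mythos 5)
Your proof is correct and follows essentially the same route as the paper: both derive the two-point inequality $f_\lambda(x) \le f_\lambda(z) + \lambda d(x,z)$ from the triangle inequality and then read off finiteness everywhere and the Lipschitz bound. Your explicit attention to the order in which $+\infty$ and $-\infty$ are excluded is a minor refinement of the paper's terser remark that $f_\lambda(x_1)$ infinite forces $f_\lambda \equiv -\infty$ or $f_\lambda \equiv \infty$, but the substance is identical.
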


\begin{proof}
	For $x_1, x_2, y \in M$ holds $f \left( y \right) + \lambda d \left( x_1, y \right) \le f \left( y \right) + \lambda d \left( x_2, y \right) + \lambda d \left( x_1, x_2 \right)$ so that $f_\lambda \left( x_1 \right) \le f_\lambda \left( x_2 \right) + \lambda d \left( x_1, x_2 \right)$. In particular, if $f_\lambda \left( x_1 \right) \in \R$ for some $x_1 \in M$, then $f_\lambda$ is Lipschitz continuous with constant $\lambda$. Also, if $f_\lambda \left( x_1 \right)$ is infinite, then $f_\lambda \equiv - \i$ or $f_\lambda \equiv \i$.
\end{proof}

\begin{lemma} \label{lem: Lipschitz regularization pre-normal}
	Let $M$ be separable. If $f \colon \Om \times M \to \left[ -\i, \i \right]$ is a pre-normal integrand, the Lipschitz regularization
	$$
	f_{\om, \lambda} (x) = \inf_{y \in M} h_\om (y) + \lambda d \left( x, y \right), \quad \lambda > 0
	$$
	also is a pre-normal integrand. Moreover, for all $\om \in \Om$, the partial map $x \mapsto f_{\om, \lambda} (x)$ is upper semicontinuous.
\end{lemma}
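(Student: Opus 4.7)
The plan is to reduce this to an application of Lemma \ref{lem: upper semi normal} by verifying its two hypotheses for the regularized integrand $f_{\om,\lambda}$. The upper semicontinuity in the second variable is almost immediate from the preceding lemma, and the measurability in the first variable follows from the infimal measurability transfer provided by Lemma \ref{lem: normal sums}.

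First I would dispatch the upper semicontinuity addendum: by Lemma \ref{lem: Lipschitz regularization properties}, for every $\om \in \Om$ the partial map $x \mapsto f_{\om,\lambda}(x)$ is either Lipschitz continuous with constant $\lambda$ (when finite at some point) or else identically $-\i$ or $+\i$. In each of the three cases the map is continuous, hence upper semicontinuous. This also handles hypothesis \ref{en. it. normality via partial properties 1} of Lemma \ref{lem: upper semi normal}.

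Next I would verify hypothesis \ref{en. it. normality via partial properties 2}, i.e. that for each fixed $x \in M$ the function $\om \mapsto f_{\om,\lambda}(x)$ is measurable. Fix $x$ and set $g(y) = \lambda d(x,y)$; this is continuous and real-valued, hence upper semicontinuous in the sense of Lemma \ref{lem: normal sums}. Applying Lemma \ref{lem: normal sums} to the pre-normal integrand $f$ and to $g$, we see that the integrand $h_\om(y) = f_\om(y) + \lambda d(x,y)$ is infimally measurable. Since $M$ itself is an open subset of $M$, Lemma \ref{lem: equivalence infimal measurability} then yields that
\[
\om \mapsto \inf_{y \in M} h_\om(y) = f_{\om,\lambda}(x)
\]
is measurable.

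With both hypotheses of Lemma \ref{lem: upper semi normal} in place (and using that $M$ is separable, which is the standing assumption), we conclude that $f_{\om,\lambda}$ is a pre-normal integrand. The only subtlety worth noting is the trichotomy in Lemma \ref{lem: Lipschitz regularization properties}: one must confirm that the constant values $\pm\i$ are legitimate upper semicontinuous functions in the sense used by Lemma \ref{lem: upper semi normal}, but this is a triviality since the hypograph of any constant function is closed. No further obstacles arise.
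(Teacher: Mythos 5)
Your proposal is correct and follows essentially the same route as the paper: Lemma \ref{lem: normal sums} gives infimal measurability of $f_\om(y) + \lambda d(x,y)$ for fixed $x$, whence $\om \mapsto f_{\om,\lambda}(x)$ is measurable via Lemma \ref{lem: equivalence infimal measurability}, while Lemma \ref{lem: Lipschitz regularization properties} gives the Lipschitz-or-constant trichotomy and hence upper semicontinuity in $x$, so Lemma \ref{lem: upper semi normal} applies. Your explicit remarks on taking $O = M$ and on the constant $\pm\i$ cases merely spell out steps the paper leaves implicit.
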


\begin{proof}
	By Lemma \ref{lem: normal sums} the integrand $h_\om (y) + \lambda d \left( x, y \right)$ is infimally measurable for $x \in M$ and $\lambda > 0$ so that $f_{\om, \lambda} (x)$ is measurable in $\om $ and Lipschitz continuous or assumes a constant value $\left\{ - \i, \i \right\}$ in $x$. Either way, the partial map is upper semicontinuous, hence Proposition \ref{lem: upper semi normal} obtains the claim.
\end{proof}

\begin{lemma} \label{lem: joint measurability}
	Let $M$ be separable and $f \colon \Om \times M \to \left( - \i, \i \right]$ an integrand.
	\begin{enumerate}[label = \textnormal{\alph*)}]
		\item If $f$ is normal, then it is $\AA$-$\BB \left( M \right)$-measurable. \label{en. it. joint measurability 1}
		\item If $f$ is $\AA_\mu$-$\BB \left( M \right)$-measurable, then it is pre-normal w.r.t. $\AA_\mu$. \label{en. it. joint measurability 2}
	\end{enumerate}
\end{lemma}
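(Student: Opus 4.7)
The plan is to handle the two parts by working with the epigraphical multifunction $S_f \colon \om \mapsto \epi f_\om$ and moving between measurability of $\epi f \subset \Om \times M \times \R$ and joint measurability of $f$ itself via slicing. The ambient facts I will use are that $M$, hence $M \times \R$, is separable metric and in particular second countable, so that $\BB(M \times \R) = \BB(M) \otimes \BB(\R)$ and the appendix's earlier results apply to the range space $M \times \R$ without further ado.

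For part \ref{en. it. joint measurability 1}, the hypothesis that $f$ is normal gives closed-valuedness and Effros measurability of $S_f$ valued in the separable metric space $M \times \R$, so Lemma \ref{lem: retain measurability}\ref{en. it. graph measurable} applies and yields $\epi f = \graph S_f \in \AA \otimes \BB(M) \otimes \BB(\R)$. Joint measurability of $f$ would then drop out by slicing: for each $\alpha \in \R$ the sublevel set $\{f \le \alpha\}$ is the preimage of $\epi f$ under the product-measurable map $(\om, x) \mapsto (\om, x, \alpha)$, and hence lies in $\AA \otimes \BB(M)$. For part \ref{en. it. joint measurability 2} my plan is to chain the equivalences already in the appendix: Lemma \ref{lem: infimal measurability and normality} reduces pre-normality of $f$ w.r.t. $\AA_\mu$ to infimal measurability w.r.t. $\AA_\mu$, and Lemma \ref{lem: equivalence infimal measurability} reduces this further to showing that $\om \mapsto \inf_{x \in O} f_\om(x)$ is $\AA_\mu$-measurable for every open $O \subseteq M$. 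Writing
$$
\left\{ \om \colon \inf_{x \in O} f_\om(x) < \beta \right\} = \pi_\Om \left( \left\{ (\om, x) \in \Om \times O \colon f(\om, x) < \beta \right\} \right),
$$
this becomes the statement that the $\Om$-projection of an $\AA_\mu \otimes \BB(M)$-measurable set is $\AA_\mu$-measurable, which I would conclude from the measurable projection theorem for complete measure spaces.

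The main obstacle is precisely this projection step, because the classical measurable projection theorem is stated for a Polish (or more generally Suslin) target factor, whereas the hypothesis of the lemma only gives $M$ separable metric. To close this gap I would pass to the Polish completion $\bar{M}$ of $M$, extend $f$ to $\bar f \colon \Om \times \bar{M} \to \left( -\i, \i \right]$ by setting $\bar f = +\i$ off $\Om \times M$, and apply the Polish version of the projection theorem to the analogous set on $\Om \times \bar{M}$. The subtle point is that $M$ need not be Borel in $\bar{M}$, so the extension and the lifting of the set in the display above have to be arranged using the identity $\BB(M) = \{ B \cap M \colon B \in \BB(\bar{M}) \}$, and one has to verify that the extra mass $+\i$ assigned on $\bar{M} \setminus M$ never contributes below the threshold $\beta$, so that the $\Om$-projection of the lifted set agrees with the one we actually want. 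Completeness of $\AA_\mu$ is essential for the resulting projection to remain inside $\AA_\mu$, which is why both the hypothesis and the conclusion of part \ref{en. it. joint measurability 2} are phrased with respect to the completion.
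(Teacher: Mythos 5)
Your part \ref{en. it. joint measurability 1} is correct and is in fact shorter than the paper's argument. The paper first truncates to reduce to integrands bounded below, passes to Lipschitz regularizations via Lemma \ref{lem: Lipschitz regularization pre-normal}, and then uses Corollary \ref{cor. open graph measurable} on the open strict sublevel multifunctions; you instead apply Lemma \ref{lem: retain measurability}\ref{en. it. graph measurable} directly to the closed, measurable epigraphical multifunction valued in the separable metric space $M \times \R$ and slice $\epi f$ at height $\alpha$. Both routes ultimately rest on the same graph-measurability lemma adapted from Rockafellar--Wets, the identity $\BB(M\times\R)=\BB(M)\otimes\BB(\R)$ is available by second countability, and the slicing step $\left\{ f \le \alpha \right\} = \left\{ (\om,x) \st (\om,x,\alpha) \in \epi f \right\}$ is unobjectionable, so this half is a genuine simplification of what the paper does.

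Part \ref{en. it. joint measurability 2} is where your route diverges more substantially and where there is a real gap. The paper derives Effros measurability of $\cl S_f$ from an $\AA_\mu$-measurable Castaing representation supplied by the Aumann selection theorem applied to the measurable graph $\epi f$; you reduce to infimal measurability via Lemmas \ref{lem: infimal measurability and normality} and \ref{lem: equivalence infimal measurability} and invoke the measurable projection theorem. For complete (hence Polish) $M$ these two arguments are essentially equivalent in strength and yours closes cleanly. But the device you propose for incomplete $M$ --- extending $f$ by $+\i$ to the completion $\bar{M}$ --- cannot be made to work. Since $M$ need not be Borel in $\bar{M}$, the extended integrand is not $\AA_\mu \otimes \BB ( \bar{M} )$-measurable; all you know is that $\left\{ f < \beta \right\}$ is the trace on $\Om \times M$ of some $\bar{E} \in \AA_\mu \otimes \BB ( \bar{M} )$, and $\pi_\Om ( \bar{E} )$ may strictly exceed $\pi_\Om ( \left\{ f < \beta \right\} )$ with no control over the excess. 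In fact no patch can succeed, because the statement itself fails for general separable metric $M$: take $\Om = \left[ 0, 1 \right]$ with Lebesgue measure, $M \subset \left[ 0, 1 \right]$ a non-Lebesgue-measurable set with the subspace topology, and $f ( \om, x ) = 0$ if $\om = x$ and $f ( \om, x ) = \i$ otherwise. Then $\left\{ f \le 0 \right\}$ is the trace of the closed diagonal, so $f$ is $\AA_\mu \otimes \BB ( M )$-measurable, yet $S_f^- ( M \times ( -1, 1 ) ) = M \notin \AA_\mu$, so $f$ is not pre-normal. The paper's own appeal to the Aumann theorem tacitly presupposes a Polish (or Suslin) range as well, and in every application of the lemma $M$ is a closed separable subspace of $X$, hence Polish; under that additional hypothesis your projection argument is a legitimate alternative proof, but the extension-by-$+\i$ step should be dropped rather than ``verified''.
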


\begin{proof}
	Ad \ref{en. it. joint measurability 1}: Lemmas \ref{lem: equivalence infimal measurability} and \ref{lem: infimal measurability and normality} guarantee that truncation of an integrand retains pre-normality hence we may reduce to the case when $f$ is bounded below.
	% \inf \max|min{ f_\om, alpha } = \max|min{ \inf f_\om, alpha}
	Since $f = \lim_{\lambda \to \i} f_\lambda$ pointwise as a monotone limit for the Lipschitz regularization $f_\lambda$ of $f$ according to \cite[Prop. 1.33]{Gamma for Beginners}, we may reduce to considering $f_\lambda$. Lemma \ref{lem: Lipschitz regularization pre-normal} shows that for all $\om \in \Om$, the partial map $x \mapsto f_{\om, \lambda} (x)$ is upper semicontinuous. Consequently, for $\alpha \in \left[ - \i, \i \right]$, the strict sublevel multifunction
	$$
	\Om \to \mathcal{P} \left( T \right) \colon \om \mapsto L_{f, \alpha} \left( \om \right) := \left\{ x \in T \st f_\om (x) < \alpha \right\}
	$$
	is open and measurable hence by Corollary \ref{cor. open graph measurable} its graph
	$$
	\graph L_{f, \alpha} = \left\{ \left( \om, x \right) \st f_\om (x) < \alpha \right\}
	$$
	belongs to $\AA$-$\BB \left( M \right)$ whence $f$ is $\AA$-$\BB \left( M \right)$-measurable.
	
	Ad \ref{en. it. joint measurability 2}: the Aumann theorem \cite[Thm. 6.10]{Lp spaces} yields an $\AA_\mu$-measurable Castaing representation for the closure of the epigraphical multifunction $\om \mapsto \epi f_\om$ hence its measurability follows \cite[Thm. III.9]{mesu multi}. As the Effros measurability of $\cl S_f$ and $S_f$ is equivalent, the claim obtains.
\end{proof}

\section*{Acknowledgement}

\thanks{This paper is part of my doctoral thesis written under the supervision of Professor Bernd Schmidt. I would like to thank him for several useful suggestions, and for his careful criticism of the manuscript. I also wish to thank Emmanuel Giner for sending me a copy of his doctoral thesis and making helpful comments.}

\end{document}